\documentclass[11pt,reqno]{amsart}

\usepackage{amsmath}
\usepackage{amsthm}
\usepackage{amssymb}
\usepackage{enumerate}
\usepackage{amscd}

\usepackage[latin1]{inputenc}

\title[]{Closed minimal surfaces of index one in Riemannian manifolds}

\author{Fernando C. Marques and  Andr\'e Neves}

\date{}

\address{Princeton University \\ Fine Hall \\ Princeton NJ 08544 \\ USA}
\email{coda@math.princeton.edu}
\address{University of Chicago \\ Department of Mathematics \\ Chicago IL 60637\\ USA}
\email{aneves@math.uchicago.edu}
\thanks{ The first  author is partly supported by NSF-DMS-2506810 and a Simons Investigator Grant. The second author is partly supported by NSF-DMS-2005468 and a Simons Investigator Grant. Part of this work was developed by the authors during their stay at SLMath for the Fall 2024 semester geometry program.}

\newtheorem{thm}{Theorem}[section]

\newtheorem{prop}{Proposition}[section]

\newtheorem{cor}{Corollary}[section]

\def\XXint#1#2#3{{\setbox0=\hbox{$#1{#2#3}{\int}$}
     \vcenter{\hbox{$#2#3$}}\kern-.5\wd0}}

\begin{document}


\begin{abstract}

In this paper we prove that an $(n+1)$-manifold, compactly $n$-enlargeable, where $3\leq (n+1)\leq 7$,   has connected, immersed Morse index one,  closed minimal hypersurfaces with unbounded volumes for bumpy  metrics.   We prove that in the three-dimensional case the hypersurfaces are geometrically distinct using cyclic coverings of manifolds with boundary. The proof extends to   $(n+1)$-fiberings. 
We   prove a     scalar curvature rigidity theorem  for area-nonincreasing maps of three-dimensional manifolds. The case of stable  surfaces  is also discussed by   using   cohomology classes and    incompressible surfaces.

\end{abstract}

\maketitle



\section{Introduction}

In this paper we are interested in the space of immersed, closed minimal hypersurfaces with Morse index one in a  closed Riemannian manifold.

Let $(M^{n+1},g)$ be a closed Riemannian manifold, $3\leq (n+1) \leq 7$. By the min-max theory for the area functional of Almgren and Pitts (\cite{almgren-varifolds}, \cite{pitts}),   there is a closed  minimal hypersurface $\Sigma$ in $(M^{n+1},g)$ which is smooth and embedded. The hypersurface $\Sigma$ is such that ${\rm index}(\Sigma)\leq 1$ (\cite{marques-neves-index}).

A metric $g$  is bumpy if any closed immersed minimal hypersurface is nondegenerate. By \cite{white-bumpy}, a smooth generic metric is bumpy. For these  metrics,   there is a two-sided, connected component  of  $\Sigma$ with index one   by \cite{marques-neves-lower-bound} and \cite{zhou-multiplicity}.

We would like to count closed minimal hypersurfaces of index one using  topological conditions on the ambient space.

 The Theorem \ref{index.area.introduction} discusses      compactly $n$-volume enlargeable manifolds.

\begin{thm}\label{index.area.introduction}
Let $M^{n+1}$ be a  compactly $n$-volume enlargeable closed manifold, $3\leq (n+1)\leq 7$. Suppose  $g$ is a bumpy metric on $M$. Then, for any $\lambda>0$, there is a connected,  smooth, two-sided, virtually embedded, closed minimal hypersurface $\psi:\Sigma\rightarrow M$, of Morse index one, with 
$$
{\rm vol}_g(\Sigma)\geq \lambda.
$$
\end{thm}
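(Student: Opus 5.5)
The plan is to pass to finite covers and apply Almgren--Pitts min-max there, and then to push the resulting index-one hypersurface back down to $M$. I expect the volume lower bound to come from the enlargeability hypothesis through an isoperimetric or width estimate. I take \emph{compactly $n$-volume enlargeable} to mean the following. For every $\varepsilon>0$ there is a finite (or compactly supported in a cover) covering $\tilde M\to M$ and a map $f:\tilde M\to S^{n+1}$ of nonzero degree, constant outside a compact set, whose $n$-dimensional Jacobian is at most $\varepsilon$. Equivalently, it contracts $n$-volumes by a factor $\varepsilon$.

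\textbf{Step 1.} Fix $\lambda$. Choose $\varepsilon$ small and take the covering $\pi:\tilde M\to M$ with lifted metric $\tilde g=\pi^*g$. Since $g$ is bumpy, $\tilde g$ is bumpy among hypersurfaces that descend. A minimal hypersurface in $\tilde M$ projects to an immersed minimal hypersurface in $M$, which is nondegenerate. Hence the lift is nondegenerate, because index and nullity lift along coverings at least for the relevant cover-invariant Jacobi fields. I would handle this point either by choosing the min-max construction equivariantly, or by noting that a generic perturbation argument is not needed because bumpiness of $g$ gives nondegeneracy of all immersed minimal hypersurfaces of $\tilde M$: they are immersed in $M$ via $\pi$.

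\textbf{Step 2.} Get a width lower bound. Consider a one-parameter sweepout of $\tilde M$, or of the compact region where $f$ is nonconstant, by boundaries of open sets. Pushing the sweepout forward by $f$ gives a family of $n$-cycles in $S^{n+1}$. Because $\deg f\neq 0$, this family is a nontrivial sweepout of $S^{n+1}$, so some slice has image mass at least $c_n>0$, the width of the round sphere. The Jacobian bound gives ${\rm mass}(f_\#T)\le \varepsilon\,{\rm mass}(T)$. Hence the width of $(\tilde M,\tilde g)$ is at least $c_n/\varepsilon\ge\lambda$.

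\textbf{Step 3.} Apply the min-max theorems cited in the introduction to $(\tilde M,\tilde g)$: Almgren--Pitts, the index bound of \cite{marques-neves-index}, and multiplicity one together with the two-sided index-one component from \cite{marques-neves-lower-bound}, \cite{zhou-multiplicity}. In the noncompact case I would do relative min-max on a compact domain with mean-convex or far-away boundary so that the cycles stay inside. This produces an embedded, two-sided, index-one minimal hypersurface $\tilde\Sigma$ with area equal to the width. Multiplicity one and the control on the other components are where bumpiness is used. The width estimate must then be transferred to the index-one component. I would argue that the index-one component alone already carries the width, using the lower-bound argument that the min-max value is attained with the unstable component, and that stable components can be discarded via a Morse-theoretic deformation.

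\textbf{Step 4.} Set $\psi=\pi|_{\tilde\Sigma}$. This is an immersion, and it is virtually embedded by definition, being embedded in a cover. It is minimal and two-sided. Its volume equals ${\rm vol}(\tilde\Sigma)\ge\lambda$, counting with multiplicity of the parametrization. Its index is one, since index is computed on $\Sigma$ with the pulled-back normal bundle.

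\textbf{Main obstacle.} I expect the hardest point to be Step 3. Ensuring that a \emph{single connected} index-one component has volume $\ge\lambda$, rather than the large area being spread among stable components, is delicate. The noncompact relative min-max is a secondary difficulty. For the connectivity issue, I would first try to exploit that stable components in a bumpy metric are isolated. I would also try restricting the sweepout to the complement of the finitely many stable hypersurfaces of area $\le\lambda$.
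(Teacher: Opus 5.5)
Steps 1, 2 and 4 are essentially sound. Step 2 is the paper's inequality $W^{(k)}(S^{n+1}_r,\overline{g})\leq W^{(1)}(M',g)$; note that it needs integral rather than $\mathbb{Z}_2$ coefficients, since for even ${\rm deg}(f)$ the pushed-forward family is trivial mod $2$.

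\textbf{The gap is Step 3.} A lower bound on the width of $(M',g)$ does not give a \emph{connected} index-one minimal hypersurface of large volume, and neither of your suggested remedies works. For a bumpy metric the min-max hypersurface is one index-one component plus possibly many stable components. The mass can genuinely sit in the stable components, and no deformation ``discards'' them while lowering the maximal mass of a sweepout. The paper recalls Montezuma's examples \cite{montezuma}: three-manifolds with $R_g\geq 6$ and unbounded widths, whose min-max surface is an index-one component of area at most $16\pi/3$ plus a growing number of stable spheres. So no argument that uses only $W(M')\geq\lambda$ can prove the theorem.

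Your last idea, cutting $M'$ along a maximal disjoint family $\tilde\Lambda$ of strictly stable two-sided hypersurfaces of volume at most $\lambda$, is the right first move. This family is finite by bumpiness and \cite{schoen-simon}, and the paper does exactly this.
\begin{itemize}
\item In each piece $\tilde\Omega_j$, relative min-max gives a connected two-sided index-one $\Sigma'_j$ (Proposition \ref{index.one.boundary}).
\item If ${\rm vol}_g(\Sigma'_j)<\lambda$, there are no smaller stable hypersurfaces inside $\tilde\Omega_j$. This gives a sweepout of $\tilde\Omega_j$ relative to its boundary with masses at most ${\rm vol}_g(\Sigma'_j)$ (Proposition \ref{no.stable.two-sided.case}).
\end{itemize}
But the pieces need not have large width. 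These piecewise sweepouts also cannot be concatenated cheaply inside $M'$, because the stable cutting hypersurfaces cannot be swept away there.

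\textbf{The missing idea is to glue in the target.} Push each piecewise sweepout forward by $f$. The round sphere has no stable minimal hypersurfaces, so a constrained minimization deforms each $f_{\#}(\tilde\Sigma)$, $\tilde\Sigma\in\tilde\Lambda$, to $0$ through cycles of mass at most ${\bf M}(f_{\#}(\tilde\Sigma))+\delta\leq{\rm vol}_g(\tilde\Sigma)+\delta$. The filling chains cancel, since adjacent pieces induce opposite orientations on $\tilde\Sigma$. Concatenation then yields a sweepout of $f_{\#}(M')=k\cdot S^{n+1}_r$ with masses at most $\max_j{\rm vol}_g(\Sigma'_j)+\eta$. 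Hence some $\Sigma'_j$ has volume at least $W^{(k)}(S^{n+1}_r,\overline{g})=\lambda$ (Theorem \ref{connected.minimal}). With this argument the width lower bound of your Step 2 is not needed at all.
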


Enlargeable manifolds were introduced  by Gromov and Lawson  to find obstructions to positive scalar curvature
(\cite{gromov-lawson-scalar}, \cite{gromov-lawson}).

An immersed hypersurface $\psi:\Sigma \rightarrow M$ is virtually embedded if there is a finite cover $M'$ of $M$ such that $\psi$ lifts to an embedding $\psi':\Sigma\rightarrow M'$. We denote by ${\rm vol}_g(\Sigma)$ the volume of $\Sigma$ in the pull-back metric $\psi^*(g)$. Hence ${\rm vol}_g(\Sigma)={\rm vol}(\psi'(\Sigma))$, where $\rm vol$ is the $n$-dimensional Hausdorff measure.

A closed Riemannian manifold with nonpositive sectional curvature such that the fundamental group is  residually finite is compactly  enlargeable, and therefore compactly $n$-volume enlargeable. This includes the nonpositively curved closed locally symmetric spaces (\cite{gromov-lawson-scalar}).

By the Theorem \ref{index.area.introduction} there is a sequence of immersed closed minimal hypersurfaces $\psi_i:\Sigma_i\rightarrow M$ of Morse index one with unbounded volumes.  There is the question of whether these  hypersurfaces are geometrically distinct: $\psi_i(\Sigma_i)\neq \psi_j(\Sigma_j)$ for $i\neq j$. To prove that one would have to rule out   coverings. It could be that there is a covering map $\pi:\Sigma_j\rightarrow \Sigma_i$ such that $\psi_j=\psi_i \circ \pi$.

The Theorem \ref{nonspherical-introduction} answers this question in the three-dimensional case.

\begin{thm}\label{nonspherical-introduction}
Let $(M,g)$ be a closed orientable three-dimensional manifold. Suppose  that $M$ is not   a connected sum of spherical quotients and copies of  $S^2\times S^1$. If $g$ is bumpy, then there is a sequence of  connected,  smooth, two-sided, virtually embedded, closed minimal surfaces $\psi_i:\Sigma_i\rightarrow M$, of Morse index one, such that 
 $$
 {\rm area}_g(\psi_i(\Sigma_i))\rightarrow \infty.
 $$
\end{thm}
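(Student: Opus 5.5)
The plan is to reduce to an infinite cyclic covering, build index one surfaces there by min-max between stable barriers, and then use a lifting argument to show the images cannot repeat. Throughout I rely on the compactness of index-one minimal surfaces of bounded area for bumpy metrics.

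\emph{Step 1 (a nonseparating surface in a finite cover).} By the prime decomposition and the hypothesis, $M$ has a prime summand $P$ that is irreducible with infinite fundamental group, hence aspherical. By geometrization, $\pi_1(P)$ is residually finite, and $P$ has a finite cover with positive first Betti number: this is clear in the Seifert and graph cases, and follows from Agol's theorem in the hyperbolic case. Pulling back along $M\to P$, with the collapse of the other summands, I get a finite cover $M'$ of $M$ with a nonzero class $\alpha\in H^1(M';\mathbb Z)$. Let $S\subset M'$ be an embedded, two-sided, connected, nonseparating surface dual to $\alpha$. Let $\pi:\tilde M\to M'$ be the infinite cyclic cover associated with $\alpha$. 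Then $\tilde M$ is the union of translates $N_j$ of the compact manifold $N=M'\setminus S$, glued along lifts $S_j$ of $S$, and $M_k=\tilde M/k\mathbb Z$ is a $k$-fold cyclic cover of $M'$.

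\emph{Step 2 (index one surfaces spanning many fundamental domains).} I use the lifted bumpy metric; bumpiness is preserved on finite covers for the relevant compact surfaces, after the generic perturbation allowed by \cite{white-bumpy}. In $\tilde M$ I minimize area in the class of $S_0$ and of $S_k$ to obtain stable minimal surfaces $T_0$ and $T_k$. These are translates of each other, so they have equal area. Let $W_k$ be the compact region between them, which contains about $k$ copies of $N$ and has minimal boundary. I then run Almgren--Pitts min-max for sweepouts of $W_k$ relative to its boundary, using $T_0$ and $T_k$ as barriers. For bumpy metrics, \cite{marques-neves-lower-bound} and \cite{zhou-multiplicity} give a connected, two-sided, embedded component $\Sigma_k\subset W_k$ of index one.

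The main obstacle is to force $\Sigma_k$ to be \emph{long}, meaning that it meets at least $c(k)\to\infty$ fundamental domains.

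\begin{itemize}
\item My first attempt is a translation/compactness argument. Suppose every index one surface produced in the regions $W_k$ met only a bounded number $L$ of domains. After translating by deck transformations, these surfaces live in a fixed compact piece of $\tilde M$. There are then finitely many of them by the area-bounded compactness for bumpy metrics, and one can choose $k\gg L$.
\item To use this, I would run the min-max with sweepouts adapted to the $\mathbb Z$-structure, so that the critical surface must separate $T_0$ from $T_k$, or must have width growing with $k$. A surface confined to $L$ domains could then be pushed off by a competitor sweepout and would not realize the width.
\item If this fails, my fallback is the $n$-volume enlargeability mechanism of Theorem \ref{index.area.introduction}, which gives index one surfaces whose area in the cover grows with $k$. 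I would then combine the area growth with the bound ${\rm area}(\Sigma)\le C\cdot\#\{\text{domains met}\}$ on the area in a single domain, which comes from the finiteness above.
\end{itemize}

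\emph{Step 3 (geometric distinctness via lifting).} The surface $\Sigma_k$ projects to an immersed, virtually embedded, two-sided minimal surface $\psi_k$ of index one in $M$. Suppose the areas of the images stayed bounded. By bumpy compactness, the images would then lie in a finite list $\Sigma^1,\dots,\Sigma^m$ of index one immersed surfaces, and each $\psi_k$ would factor as $\psi^{j}\circ\rho$ for some covering $\rho$. Lift $\Sigma^j$ to $\tilde M$, passing first to $M'$.

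\begin{itemize}
\item If the induced map $\pi_1(\Sigma^j)\to\mathbb Z$ is trivial, then every lift of $\Sigma^j$ is compact and meets at most a bounded number of domains. A cover of such a lift meets the same domains, which contradicts Step 2 for large $k$.
\item If the map is nontrivial, every lift of $\Sigma^j$ is noncompact. But $\Sigma_k\subset\tilde M$ is compact, and as a covering of $\Sigma^j$ it maps onto a full component of the preimage of $\Sigma^j$. This is a contradiction.
\end{itemize}

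Hence ${\rm area}_g(\psi_k(\Sigma_k))\to\infty$ along a subsequence, which is the statement of the theorem.
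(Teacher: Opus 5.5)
Your Steps 1 and 3 are reasonable in outline. Step 3 even has a nice feature: because your $\Sigma_k$ would be compact and embedded in the infinite cyclic cover $\tilde M$, the case where $\pi_1(\Sigma^j)\to\mathbb Z$ is nontrivial is excluded for free, with no need for Brooks' theorem, and virtual embeddedness needs no LERF.

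\emph{The gap is Step 2.} You identify it as the main obstacle but do not resolve it, and the mechanisms you propose cannot resolve it. Since $T_0$ is compact, some translate $T_m$ is disjoint from it. Concatenating a sweepout of the region between $T_0$ and $T_m$ with its deck translates sweeps out $W_k$, $k\in m\mathbb N$, with the same maximal mass, up to an additive ${\rm area}(T_0)$ in the mod $2$ version. So the widths of the $W_k$ are bounded independently of $k$. The index one surfaces that min-max produces in $W_k$ therefore have uniformly bounded area. By monotonicity, as in the proof of Proposition \ref{curvature.estimates}, they then have uniformly bounded extrinsic diameter, so for every $k$ they meet a bounded number of fundamental domains; min-max may simply return translates of one short surface.

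Asking the critical surface to separate $T_0$ from $T_k$ does not help, because intermediate translates of $T_0$ also separate them and are short. The fallback through Theorem \ref{index.area.introduction} only gives large area in some finite cover. That is exactly the covering ambiguity this theorem has to rule out, and those surfaces need not lie in $\tilde M$ at all. Finally, the bound ${\rm area}\le C\cdot\#\{\text{domains met}\}$ would require a uniform area bound for index one surfaces in a fixed compact region, which you have no way to obtain.

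The missing idea is the one behind Theorem \ref{boundary.case}. You need a compact region $\Omega$ with strictly stable minimal boundary and a homomorphism $\varphi:\pi_1(\Omega)\to\mathbb Z$ that is nontrivial on $\pi_1$ of some boundary component $\Sigma$.
\begin{itemize}
\item In the $k$-th suitable cyclic cover, $\Sigma$ lifts to a connected stable minimal surface of area $k\,{\rm area}(\Sigma)$, and Proposition \ref{stable.index} forces an index one surface of even larger area.
\item Suppose the images were finitely many. Those on which $\varphi$ is trivial lift to all these covers with bounded area. On the others $\varphi$ is nontrivial, and Brooks' theorem together with Proposition \ref{index.coverings} gives their preimages index at least two.
\end{itemize}
Your class $\alpha$ vanishes on its own dual surface, so $T_0$ never grows in covers; this is why it gives no leverage.

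The paper obtains the required pair $(\Omega,\varphi)$ as follows. It first splits $M$ along stable minimal spheres and projective planes into irreducible pieces. It then cuts along minimal JSJ tori, where half-lives-half-dies supplies $\varphi$ (Corollary \ref{three-dimensional.boundary.case}). For a closed hyperbolic piece it passes to Agol's fibered cover and cuts along a minimizing fiber, with $\varphi$ pulled back from $H^1$ of the fiber rather than dual to it (Theorem \ref{fiber.bundle}). For a closed Seifert piece it cuts a finite cover along a minimal nonseparating torus. In the fibered case the resulting surfaces project into a single fundamental domain of the cyclic cover dual to the fiber, and are embedded only in covers unwrapping a class on the fiber. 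So length in $\tilde M$ is not the quantity that grows.
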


A closed orientable three-manifold  is compactly area enlargeable if and only if it is not a connected sum of spherical quotients and copies of  $S^2\times S^1$. By Theorem \ref{nonspherical-introduction}, since the area of the images of the index one minimal immersions is unbounded there is $\{i'\}\subset \{i\}$ such that the   surfaces $\psi_{i'}(\Sigma_{i'})$ are geometrically distinct.

If $M$ is diffeomorphic to a connected sum of spherical quotients and copies of  $S^2\times S^1$, there is a metric on $M$ of positive scalar curvature $R_g$.    If $\psi:\Sigma\rightarrow M$ is an immersed, two-sided, closed minimal surface of Morse index one for a metric $g$ such that $R_g\geq 6$, then (e.g. appendix of \cite{marques-neves-rigidity-spheres})
${\rm area}_g(\psi(\Sigma))\leq 16\pi/3.$
Therefore for these manifolds  there is an open set of metrics $\Lambda$ and a constant $C>0$   such that 
$$
{\rm area}_g(\psi(\Sigma))\leq C
$$
for any $g\in \Lambda$ and $\psi:\Sigma\rightarrow M$  an immersed, two-sided, closed minimal surface of Morse index one in the metric $g$. Since $\Lambda$ is an open set there is a bumpy metric $g'\in \Lambda$.

 This proves that the topological hypothesis of Theorem \ref{nonspherical-introduction} is sharp.

By  \cite{montezuma}, there are sequences of closed orientable three-manifolds $(M_i,g_i)$ such that the Almgren-Pitts   min-max widths are unbounded and  $R_{g_i}\geq 6$. In this case the min-max minimal surface is disconnected: there is a component of Morse index one with area bounded by $16\pi/3$ and 
$k_i$ stable components that are diffeomorphic to spheres with areas bounded by $4\pi/3$.

We do not know in the case of  the Theorem \ref{nonspherical-introduction} if there are embedded sequences of such surfaces.

The proof of Theorem \ref{index.area.introduction} uses $n$-volume nonincreasing maps    to construct connected, closed minimal hypersurfaces  of index one with volume bounds.

A Lipschitz map $f:(M,g)\rightarrow (N,h)$ of $(n+1)$-dimensional Riemannian manifolds  is $n$-volume nonincreasing if
$$
|df_x(v_1)\wedge \cdots \wedge df_x(v_n)|_h\leq |v_1\wedge \cdots \wedge v_n|_g
$$
for almost any $x\in M$ where $df_x$ is defined,  and $\{v_i\}_{i=1}^n\subset T_xM$. Then
$$
{\rm vol}_h(f(\Sigma))\leq {\rm vol}_g(\Sigma)
$$
for any hypersurface
$\Sigma^n\subset M$. 


For Riemannian metrics $g$ and $h$ on a sphere $S^{n+1}$, one could ask if there is an $n$-volume nonincreasing map $f:(S^{n+1},g)\rightarrow 
(S^{n+1},h)$
of degree ${\rm deg}(f)\neq 0$. Since $f$ is surjective, it follows that ${\rm vol}(S^{n+1},h)\leq {\rm vol}(S^{n+1},g)$.

The  Theorem \ref{homology-introduction} finds an obstruction which  involves the volumes of index one, closed, connected, embedded, minimal hypersurfaces for   metrics $g,h$  that  do not have stable degenerate embedded closed minimal hypersurfaces. 
This is the case if the metrics  $g,h$ are bumpy.

 \begin{thm}\label{homology-introduction}
 Suppose that $g$ and $h$ are smooth Riemannian metrics on $S^{n+1}$  that do  not have stable degenerate, closed, two-sided, embedded   minimal hypersurfaces, $3\leq (n+1)\leq 7$.  Let $f:(S^{n+1},g)\rightarrow (S^{n+1},h)$ be an $n$-volume nonincreasing Lipschitz map of degree $k\neq 0$. Then there is a smooth, connected, embedded, closed minimal hypersurface $\Sigma$ in $(S^{n+1},g)$,  with Morse index   one, two-sided, and there is a smooth, connected, embedded, closed minimal hypersurface $\Sigma'$ in $(S^{n+1},h)$, with Morse index   one, two-sided, such that
$$
{\rm vol}_h(\Sigma')\leq  {\rm vol}_g(\Sigma).
$$
\end{thm}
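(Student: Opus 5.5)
The plan is to run a one-parameter Almgren--Pitts min-max on $(S^{n+1},h)$ using sweepouts obtained by pushing forward, via $f$, sweepouts of $(S^{n+1},g)$. The output is a minimal hypersurface $\Sigma'$ whose volume is at most the $g$-width. The $g$-width is in turn realized by an index one connected component. The point is to arrange the sweepout of $g$ so that its maximal slice volume is close to ${\rm vol}_g(\Sigma)$ for a single connected index one component $\Sigma$, and not the total min-max volume.

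\textbf{Step 1: the domain side.} Since $g$ has no stable degenerate two-sided embedded minimal hypersurfaces, I would apply min-max with the index bound (\cite{marques-neves-index}) together with the multiplicity one and index-one-component results (\cite{marques-neves-lower-bound}, \cite{zhou-multiplicity}). This produces a smooth closed embedded minimal hypersurface in $(S^{n+1},g)$ that has a connected two-sided component $\Sigma$ of index one. To get a bound by ${\rm vol}_g(\Sigma)$ alone, I would not use the global width. Instead I would work in a region $U\subset S^{n+1}$ whose boundary consists of stable, strictly stable by nondegeneracy, minimal hypersurfaces, hence mean-convex barriers. Concretely, I would cut $S^{n+1}$ along a maximal family of disjoint stable minimal hypersurfaces, or alternatively minimize area in an isotopy or homology class. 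Then I would do min-max relative to that region, in the style of the Marques--Neves--Song ``min-max in a region with stable boundary'' argument.

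The relative width $W_U$ is achieved by a connected index one hypersurface $\Sigma$ in the interior of $U$. The key property is an optimal sweepout $\{\Phi_t\}$ of $U$ with
$$\sup_t {\rm vol}_g(\Phi_t)\le {\rm vol}_g(\Sigma)+\varepsilon \quad\text{(relative to } \partial U\text{)}.$$
For this I would use the Morse-theoretic construction: flow $\Sigma$ along its unique negative direction on both sides and continue by mean curvature/area-decreasing flow down to the stable boundary pieces.

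\textbf{Step 2: transport and min-max in $h$.} Compose the sweepouts with $f$. Since $f$ has degree $k\neq 0$, the family $f(\Phi_t)$, viewed as flat cycles in $(S^{n+1},h)$, is a nontrivial sweepout: its Almgren isomorphism class is $k$ times a generator of $H_{n+1}(S^{n+1})$. Since $f$ is $n$-volume nonincreasing, the masses satisfy ${\bf M}_h(f_\#\Phi_t)\le {\rm vol}_g(\Phi_t)$. The trouble is that the stable pieces appearing in the $g$-sweepout also push forward. So instead I would exploit the $h$-side structure in the same way: perform the relative construction in $(S^{n+1},h)$ as well, and compare via a homotopy class of sweepouts that is nontrivial in the $h$-region containing the relevant part of the image.

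The $h$-min-max, applied with the same non-degeneracy hypothesis, gives a connected two-sided index one $\Sigma'$ with
$${\rm vol}_h(\Sigma')\le \sup_t {\bf M}_h(f_\#\Phi_t)\le {\rm vol}_g(\Sigma)+\varepsilon.$$
Embedded index one minimal hypersurfaces for such metrics have discrete volumes below any bound after choosing a compactness argument (Sharp's compactness). Letting $\varepsilon\to 0$ and passing to a limit then gives the inequality.

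\textbf{Main obstacle.} The hard step is isolating a single connected index one component on both sides. The unrestricted min-max volume includes stable components, such as the spheres in Montezuma's examples, which must be discarded. Doing this requires showing that the pushed-forward relative sweepout is still nontrivial relative to a region of $(S^{n+1},h)$ bounded by stable hypersurfaces. I would first try to choose the $h$-region as the one bounded by stable minimal hypersurfaces obtained by $h$-area minimizing among hypersurfaces separating the images. I would then use degree $k\neq 0$ to argue that some component region of $(S^{n+1},h)$ is swept with nonzero degree.
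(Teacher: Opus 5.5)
\textbf{There is a genuine gap in Step 2.} Your Step 1 matches the paper's domain side. The paper cuts $S^{n+1}$ along a maximal disjoint family of strictly stable hypersurfaces of volume at most $W^{(k)}(N,h)$. It takes the index-one $\Sigma_j$ of each piece from Proposition \ref{index.one.boundary}, and an optimal relative sweepout $\tilde\Psi_j$ with masses near ${\rm vol}_g(\Sigma_j)$ from Proposition \ref{no.stable.two-sided.case}. That sweepout comes from min-max plus area minimization; flowing down, as you propose, could stall.

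Step 2 is the part you flag as the main obstacle, and the remedies you suggest do not work.
\begin{itemize}
\item For a single piece $U$, $f_\#\circ\Phi$ is a path from $-f_\#(\partial'U)$ to $f_\#(\partial''U)$ sweeping out $f_\#(U)$, which is not a cycle. So it belongs to no min-max class of $(N,h)$.
\item It has no well-defined degree over a region of $N$, since $f_\#(\partial U)$ can cross the interior of any such region.
\item Restricting it to an $h$-region bounded by stable hypersurfaces does not remove its nonzero endpoints.
\item The Almgren class $k\cdot[N]$ is only available for sweepouts of all of $S^{n+1}$, whose masses are not controlled by a single component.
\item One piece cannot suffice in any case: if $f$ is a diffeomorphism, $f(U)$ is a proper subdomain.
\item If you close the path by capping each $f_\#(\tilde\Sigma)$, $\tilde\Sigma\subset\partial U$, through area minimization in $N$, you may land on a stable minimal hypersurface of $(N,h)$. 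That is exactly alternative (ii) of Theorem \ref{connected.minimal}, and it is not index one.
\end{itemize}
Your proposal never uses that the domain is a sphere, and that is what rules this out.

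\textbf{The missing idea.} The paper (Theorem \ref{homology}) uses $H_n(S^{n+1},\mathbb{Z})=0$: every interface separates, so the pieces $\tilde\Omega_1,\dots,\tilde\Omega_p$ form a tree.
\begin{itemize}
\item If some ${\rm vol}_g(\Sigma_j)\geq W^{(k)}(N,h)$, Proposition \ref{index.width} gives $\Sigma'$ directly.
\item Otherwise assume all ${\rm vol}_g(\Sigma_j)\leq W^{(k)}(N,h)-2\rho$. Also assume no index-one $\Sigma'\subset N$ has ${\rm vol}_h(\Sigma')\leq {\rm vol}_g(\Sigma_j)+2\xi$; if such $\Sigma'$ exist for every small $\xi$, compactness finishes the proof.
\item Remove leaves of the tree one at a time. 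For the piece $\Omega_r$ being removed, with interior interface $\tilde\Sigma_1$, concatenate $f_\#\circ\tilde\Psi_r$ with the caps already built for the other boundary components of $\Omega_r$. This gives a path from $-f_\#(\tilde\Sigma_1)$ to $0$ with masses at most ${\rm vol}_g(\Sigma_r)+2\xi$.
\item Min-max in the relative homotopy class of that path has two outcomes. Either it produces an index-one $\Sigma'$ with ${\rm vol}_h(\Sigma')\leq {\rm vol}_g(\Sigma_r)+2\xi$, which is excluded. Or its width equals ${\bf M}_h(f_\#(\tilde\Sigma_1))$, which yields a cap of mass at most ${\bf M}_h(f_\#(\tilde\Sigma_1))+\delta\leq {\rm vol}_g(\tilde\Sigma_1)+\delta$.
\item Once every interface has such a cheap cap, the capped push-forwards of all pieces concatenate into a sweepout of $f_\#(S^{n+1})=k\cdot[N]$. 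Its masses are at most $W^{(k)}(N,h)-\rho/2$, contradicting the definition of $W^{(k)}(N,h)$.
\end{itemize}
This relative min-max dichotomy is what your proposal lacks. It turns each potential stable obstruction in $N$ into either a nearly free cap or the desired index-one hypersurface, and it uses all pieces of the decomposition rather than one chosen region.
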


To apply $n$-volume nonincreasing maps to prove Theorem \ref{index.area.introduction},  we prove a version of Theorem \ref{homology-introduction} for maps $f:(M,g)\rightarrow (N,h)$ of $(n+1)$-manifolds. Suppose that the metrics $g$ and $h$ do not have closed, two-sided, stable, degenerate,    embeddded minimal hypersurfaces or closed, one-sided, with stable, degenerate, oriented  two-cover, embedded  minimal hypersurfaces. 
This is the case if $g$ and $h$ are bumpy metrics. Then we can find the minimal hypersurfaces $\Sigma$ and $\Sigma'$ satisfying 
${\rm vol}_h(\Sigma')\leq  {\rm vol}_g(\Sigma)$ with ${\rm index}(\Sigma)=1$ and ${\rm index}(\Sigma')\leq 1$ (Theorem \ref{connected.minimal}).

A closed manifold $M$ is compactly $n$-volume enlargeable, or compactly $n$-enlargeable,  if for any Riemannian metric $g$ on $M$ and  $r>0$, there is a finite cover $M'$ of $M$ that has an $n$-volume nonincreasing
Lipschitz map  $f:(M',g) \rightarrow (S_r^{n+1},\overline{g})$, ${\rm deg}(f)\neq 0$. 
The pull-back metric on $M'$ is denoted by $g$, and $(S_r^{n+1},\overline{g})$ is the round sphere of radius $r$. Notice that this definition does not require $M'$ to be spin.

The sphere $(S_r^{n+1},\overline{g})$ does not have stable closed minimal hypersurfaces, and the Morse index one closed minimal hypersurfaces are the equators which have volume $\sigma_n r^{n}$, where $\sigma_n$ is the volume of  the $n$-dimensional unit sphere. Hence Theorem \ref{index.area.introduction} can be proved since in this case ${\rm vol}(\Sigma')\geq \sigma_nr^n$.

A manifold that has a map of nontrivial degree to a compactly $n$-volume enlargeable manifold is compactly $n$-volume enlargeable.

Suppose $\phi:M\rightarrow M$ is a  diffeomorphism of the  closed $n$-dimensional manifold $M$. Then the mapping torus of $\phi$ is 
the manifold $M_\phi$ defined by the identification of $(1,x)$ and $(0,\phi(x))$ in the product $M \times [0,1]$ for any $x\in M$. This induces a fiber bundle $M_\phi\rightarrow S^1$ with fiber $M$. 

The Theorem \ref{fiber.bundle-introduction} discusses the case of virtual fiberings.

\begin{thm}\label{fiber.bundle-introduction}
Let $M_\phi$ be an $(n+1)$-dimensional mapping torus  with fiber $M$ such that  $b_1(M)>0$, and     $3\leq (n+1)\leq 7$.  Suppose   $M'$ is a closed manifold such that there is a finite cover $M''\rightarrow M'$  that has a continuous map $h:M''\rightarrow M_\phi$ of nontrivial degree. If $g$ is a bumpy metric on $M'$, then $(M',g)$
contains a sequence of  geometrically distinct connected,  two-sided, immersed, closed minimal hypersurfaces  of Morse index one with unbounded volumes.
 \end{thm}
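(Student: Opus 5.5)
The plan is to reduce to Theorem \ref{index.area.introduction} by showing that $M'$ is compactly $n$-volume enlargeable. After that, the work is to upgrade ``unbounded volumes'' to ``geometrically distinct'' by ruling out the covering phenomenon described after Theorem \ref{index.area.introduction}.

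\textbf{Step 1: $M_\phi$ is compactly $n$-enlargeable.} Since $b_1(M)>0$, fix a nonzero class in $H^1(M;\mathbb{Z})$ and represent it by a map $u:M\to S^1$. The diffeomorphism $\phi$ acts on $H^1(M;\mathbb{Z})$, and some finite power (or a finite-index cover of the mapping torus) lets us build a map $M_{\phi^m}\to T^2$. This map has nontrivial degree onto the torus factor in the appropriate homological sense: concretely, it is $(t,x)\mapsto (t,u(x))$ after arranging compatibility via a finite cover $M_{\phi^m}\to M_\phi$, which is itself a mapping torus. If $\phi^*$ does not fix a class, I would instead use the fibration structure with the $\mathbb{Z}$-cover along $S^1$ together with abelian covers of the fiber.

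The key point is to obtain, for any metric $g$ and any $r>0$, a finite cover of $M_\phi$ that admits an $n$-volume nonincreasing map of nonzero degree to $S^{n+1}_r$. I would do this in two moves:
\begin{enumerate}[(i)]
\item Take large cyclic covers in the base direction and in the direction dual to $u$. This produces covers whose injectivity in two independent directions is large.
\item Compose with a map that collapses everything except a thick region onto a sphere.
\end{enumerate}
Because only $n$-volume nonincreasing is needed, rather than the distance-decreasing condition of genuine enlargeability, it should suffice to stretch in two directions. Then $n$-dimensional Jacobians are controlled as soon as at least one stretched direction contributes. I expect this to be the main obstacle: producing the nonzero-degree, $n$-volume nonincreasing map to $S^{n+1}_r$ from only the two ``circle'' directions. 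My first attempt is to find in the cover a domain where a map of degree one to $S^{n+1}_r$ factors through projections with small Lipschitz constants in the long directions. Failing that, I would weaken the target and prove the needed version of Theorem \ref{connected.minimal} directly, using the volume bound from the fibers.

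\textbf{Step 2: pass to $M'$.} The cover $M''\to M'$ composed with $h$ gives a map of nontrivial degree to $M_\phi$. By the remark that such manifolds inherit compact $n$-volume enlargeability, $M''$ is enlargeable. Taking covers of $M''$ gives finite covers of $M'$, so $M'$ is compactly $n$-volume enlargeable. Theorem \ref{index.area.introduction} then yields connected, two-sided, index-one immersed minimal hypersurfaces $\psi_i$ in $(M',g)$ with ${\rm vol}_g(\Sigma_i)\to\infty$.

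\textbf{Step 3: geometric distinctness.} The remaining task is to show that the images ${\rm vol}(\psi_i(\Sigma_i))$ are unbounded; a subsequence is then distinct. Here I would mimic the three-dimensional argument mentioned in the abstract, which uses cyclic coverings of manifolds with boundary. If the images had bounded volume, finitely many embedded minimal hypersurfaces in finite covers would be repeatedly covered. By bumpiness and compactness there are only finitely many index $\leq 1$ images of bounded area. A $k$-fold covering $\Sigma_j\to\Sigma_i$ that is itself index one would force the cover to be unstable along a $k$-fold cyclic cover of the embedded lift. I would then show that index one cannot persist under large cyclic covers, since Jacobi eigenfunctions lift and new negative modes appear. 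This contradiction shows the images have unbounded volume.
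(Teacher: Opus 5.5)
Your proposal has two genuine gaps. First, Step 1 is not established, and the paper never claims that $M_\phi$ is compactly $n$-volume enlargeable.

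\begin{itemize}
\item A map $M_{\phi^m}\to T^2$ has no degree. To reach $S^{n+1}_r$ you would still need a Lipschitz-bounded, nonzero-degree map from its $(n-1)$-dimensional fibers to $S^{n-1}$, and you do not construct one.
\item The map $(t,x)\mapsto(t,u(x))$ requires some power of $\phi^*$ to fix a nonzero class of $H^1(M,\mathbb{Q})$. This fails in general. For example, torus bundles with hyperbolic monodromy have $vb_1=1$, as the paper notes, so no finite cover maps to $T^2$ in this way.
\item Your fallback via abelian covers of the fiber is not an argument. The lifted circle-valued function on the fiber cover is incompatible with the monodromy, so the two ``large directions'' never combine.
\end{itemize}

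Second, and more seriously, Step 3 fails even if you grant Steps 1--2. Theorem \ref{index.area.introduction} produces $\Sigma_i$ embedded in finite covers $M'_i\to M'$ supplied by the definition of enlargeability, and you have no control over these covers. If $\psi_j(\Sigma_j)=\psi_i(\Sigma_i)$, then $\Sigma_j$ covers the underlying primitive immersion through the covering induced by $M'_j\to M'$. That covering is an arbitrary finite cover, not a cyclic one.

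Proposition \ref{index.coverings} creates new negative directions only when $\lambda_2$ of the covers tends to zero. Brooks guarantees this for cyclic towers. For covers whose spectral gap is large compared with $|\lambda_1|$ of the Jacobi operator, the substitution $f=h\psi$ in that proof shows the lift still has index one. So nothing rules out that all the $\Sigma_i$ are covers, of growing degree, of one fixed index-one hypersurface. Unbounded pulled-back volume does not give unbounded image volume, and the fact that eigenfunctions lift creates no new negative modes by itself.

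The missing idea is to make every hypersurface live in a single cyclic tower, where Brooks applies to the preimages of any finite list of candidates. The paper does this as follows.

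\begin{itemize}
\item It reduces to a nonzero-degree map $h:M'\to M_\phi$, sets $\sigma=PD(h^*\alpha)$ with $\alpha=PD([M])$, and minimizes volume in $\sigma$ to get stable hypersurfaces $\Sigma_1,\dots,\Sigma_q$.
\item On a complementary component $\tilde\Omega$, loops have zero intersection with $\sigma$, so $h$ lifts to the infinite cyclic cover $M\times\mathbb{R}$. Projecting to $M$ gives $f:\tilde\Omega\to M$ with $\deg(f_{|\Sigma})\neq 0$ for some boundary component $\Sigma$.
\item The hypothesis $b_1(M)>0$ is used only to obtain $\varphi:\pi_1(\tilde\Omega)\to\mathbb{Z}$ nontrivial on $\pi_1(\Sigma)$, not to produce a second large direction.
\item Theorem \ref{boundary.case} then works in the covers $\Omega_i$ determined by $\varphi^{-1}(i\mathbb{Z})$. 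There $\Sigma$ lifts to a stable hypersurface of volume $k\,{\rm vol}(\Sigma)$, and Proposition \ref{stable.index} yields index-one hypersurfaces of larger volume.
\item If there were only finitely many images, each component of their preimages in a suitable cover of the tower would have bounded volume (when $\varphi$ vanishes on it) or index at least two (Brooks plus Proposition \ref{index.coverings}). This forces a new image.
\item Proposition \ref{curvature.estimates} then gives unbounded volumes.
\end{itemize}
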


 Theorem \ref{fiber.bundle-introduction} is not true for the fibering $S^2\times S^1$ as discussed previously.

The Theorem  \ref{nonspherical-introduction} and Theorem \ref{fiber.bundle-introduction} will be proved using the  Theorem  \ref{boundary.case} for compact manifolds with boundary. The idea is to argue by contradiction.  We suppose that there are finitely many index one minimal hypersurfaces. 
Then we use a sequence of cyclic coverings such that these hypersurfaces either lift  or are covered by minimal hypersurfaces with higher Morse  index. We then construct  using  min-max  theory a distinct minimal hypersurface of index one    in a cyclic covering.

We use the Theorem 1 of Brooks \cite{brooks} on spectral gaps of cyclic coverings.

In the case the manifold of Theorem \ref{nonspherical-introduction} has a hyperbolic manifold in its prime decomposition, we use the virtual fibering theorem of Agol \cite{agol}.


A closed locally symmetric space of nonpositive sectional curvature with  Kazhdan's property T (\cite{kazhdan}) does not satisfy the topological hypothesis  of the Theorem  \ref{fiber.bundle-introduction}. Since these spaces are compactly $n$-volume enlargeable  Theorem 
\ref{index.area.introduction} can be used. It would be interesting then to find sequences $\psi_i(\Sigma_i)$ of geometrically distinct such hypersurfaces.

 The proof of   Theorem \ref{cohomology.index-introduction} uses Theorem \ref{stable} for stable surfaces.
 
 \begin{thm}\label{cohomology.index-introduction}
Let $M^{n+1}$, $3\leq (n+1)\leq 7$, be a closed manifold such that the cup product 
$$
\smile\,: H^1(M,\mathbb{Z})\times H^1(M,\mathbb{Z})\rightarrow H^2(M,\mathbb{Z})
$$
is nontrivial. Then, for any smooth bumpy Riemannian metric $g$ on $M$, there is a sequence of smooth, two-sided,  connected closed embedded minimal 
hypersurfaces  $\Sigma_i\subset (M,g)$, of Morse index one, such that  ${\rm vol}_g(\Sigma_i)\rightarrow \infty$. 
\end{thm}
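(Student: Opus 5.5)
The plan is to convert the nontrivial cup product into a sequence of stable hypersurfaces that have no area bound. A min-max argument between consecutive stable hypersurfaces then produces index one hypersurfaces of unbounded volume.

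\textbf{Step 1 (stable hypersurfaces).} Choose classes $\alpha,\beta\in H^1(M,\mathbb{Z})$ with $\alpha\smile\beta\neq 0$. They correspond to maps $M\rightarrow S^1$, and the Poincar\'e dual of $\alpha$ is a class $A\in H_n(M,\mathbb{Z})$. Since $\alpha\smile\beta\ne0$, the restriction of $\beta$ to a hypersurface representing $A$ is nonzero. Otherwise $\beta$ would extend trivially across the cut $M\setminus \Sigma$, and this would force $\alpha\smile\beta=0$. For $3\le n+1\le 7$, minimize area in the homology class $A$; this is the step where I expect Theorem \ref{stable} to be invoked. It should give a stable, two-sided, embedded minimal hypersurface $\Sigma_0$ on which $\beta|_{\Sigma_0}\neq 0$.

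\textbf{Step 2 (unbounded volume via the second class).} Consider the classes $A_k$ Poincar\'e dual to $\alpha+k\beta$, or $k\alpha+\beta$. The cup product condition should imply that the mass of $A_k$ grows with $k$. The relevant estimate is an inequality of the form
$$
k\,|\langle \alpha\smile\beta\smile\omega,[M]\rangle| \le C\,\mathrm{vol}(\Sigma_k),
$$
where $\omega$ is a fixed integral class pairing nontrivially with $\alpha\smile\beta$, and $\Sigma_k$ is the area minimizer in the class $A_k$. Stated without an auxiliary class $\omega$, the bound is that $\beta$ restricted to $\Sigma_k$ has comass norm growing linearly in $k$, which forces the volume to grow. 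Each minimizer $\Sigma_k$ is stable. Bumpiness and the dimension bound give a smooth representative, possibly disconnected but two-sided, with $\mathrm{vol}(\Sigma_k)\to\infty$.

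\textbf{Step 3 (index one by min-max).} Cut $M$ along $\Sigma_k$, or along $\Sigma_k$ together with a disjoint copy, to obtain a compact manifold with stable minimal boundary. Since $g$ is bumpy, the boundary is strictly stable, so a mean-concave neighborhood exists. Run Almgren--Pitts min-max in the region between $\Sigma_k$ and a shifted representative. I would use the sweepout associated to the $S^1$-valued map defined by $\beta$, lifted to the cyclic cover. In the spirit of Theorem \ref{boundary.case}, this gives a connected, two-sided, embedded minimal hypersurface of index one, using \cite{marques-neves-lower-bound} and \cite{zhou-multiplicity} for bumpy metrics. Its volume is at least the volume of the stable boundary pieces, which is at least $\mathrm{vol}(\Sigma_k)/2$. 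Embeddedness holds in $M$ itself because the construction works in a region of $M$ rather than a cover, or because the relevant cover is cyclic with a fundamental domain embedded in $M$.

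\textbf{Main obstacle.} I expect the hardest step to be the lower bound in Step 3. One must show that the width of the region is at least the volume of its stable boundary. One must also rule out that the min-max hypersurface coincides with a boundary component or degenerates to an index zero piece. My first attempt would be the standard argument: with strictly stable boundary and homological nontriviality, every sweepout must pass through a slice homologous to $\Sigma_k$, so its mass is at least the minimizing mass $\mathrm{vol}(\Sigma_k)$. Bumpiness then yields a strict inequality and a component of index exactly one.
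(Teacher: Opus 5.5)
Your proposal has a genuine gap at Step 2, and this is exactly where the cup product hypothesis has to be used. Step 3, and the paper's Proposition \ref{stable.index}, need a single \emph{connected}, two-sided, stable hypersurface of large volume. The lower bound for the index one hypersurface comes from one boundary component of a region, not from the total mass of a cycle. Your estimate only controls the mass of the minimizing current $T_k$ in $PD(k\alpha+\beta)$. Write $T_k=\sum_r m_{k,r}\Sigma_{k,r}$. The quantity you estimate is, up to sign, $\langle\beta\smile\omega,[T_k]\rangle=\sum_r m_{k,r}\langle \beta\smile\omega,[\Sigma_{k,r}]\rangle$. This is bounded by $C\,{\bf M}(T_k)$, not by $C$ times the volume of the support.

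Mass growth of $T_k$ holds for any nonzero $\alpha$ with no cup product hypothesis at all: pair with a fixed closed $n$-form. It is compatible with every component having bounded volume while the multiplicities carry the mass. Compare the connected sums of copies of $S^1\times S^n$ discussed after the statement of Theorem \ref{stable}, where stable hypersurfaces have bounded area.

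The missing idea is the argument of Theorem \ref{stable}. Suppose connected homologically area-minimizing hypersurfaces realized only finitely many classes. Then every minimizer is an integer combination of pairwise disjoint ones from a finite list. Their Poincar\'e duals lie in one of finitely many isotropic subspaces $V_h\subset H^1(M,\mathbb{R})$, each proper because the cup product is nontrivial. An integral class outside $\bigcup_h V_h$ then gives a contradiction. Your family would in fact serve here: if $k_1\alpha+\beta$ and $k_2\alpha+\beta$ lie in the same $V_h$ with $k_1\neq k_2$, then $\alpha,\beta\in V_h$, so all but finitely many $k$ avoid the union. Infinitely many distinct classes realized by connected minimizers then have unbounded volume by flat compactness.

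Step 3 also does not work as stated. A lower bound $W\geq {\rm vol}_g(\Sigma_k)$ on the width only bounds the total mass of the min-max hypersurface. Its index one component may be small, with stable components carrying the rest, and bumpiness does not prevent this. The factor $1/2$ is also unexplained.

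The paper obtains the lower bound in the opposite direction, in Proposition \ref{stable.index}:
\begin{itemize}
\item Cut $M$ along a maximal disjoint family of strictly stable hypersurfaces of volume at most ${\rm vol}_g(\Sigma')$. This family is finite by bumpiness and \cite{schoen-simon}.
\item Take the piece $\tilde\Omega_j$ having $\Sigma'$ as a boundary component, and get an index one $\Sigma$ in it from Proposition \ref{index.one.boundary}.
\item If a stable hypersurface in ${\rm int}(\tilde\Omega_j)$ has volume less than ${\rm vol}_g(\Sigma)$, then by maximality its volume already exceeds ${\rm vol}_g(\Sigma')$.
\item Otherwise Proposition \ref{no.stable.two-sided.case} builds a sweepout through $\Sigma$, giving ${\rm vol}_g(\Sigma')<W^{(1)}(\tilde\Omega_j,g)\leq {\rm vol}_g(\Sigma)$.
\end{itemize}
This is all done in $M$ itself, so embeddedness is automatic. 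A min-max hypersurface produced in a cyclic cover is only virtually embedded in $M$. With Theorem \ref{stable} giving connected minimizers $\Sigma_i'$ of unbounded volume, the theorem follows by applying Proposition \ref{stable.index} to each $\Sigma_i'$.
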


We discuss the case of stable surfaces in Section \ref{homology.incompressible}.

  Llarull  proved a rigidity result for scalar curvature in any dimension. 
  
   If $(M^{n+1},g)$ is a closed Riemannian spin manifold,  with $R_g\geq (n+1)n$, and $f:(M^{n+1},g)\rightarrow (S^{n+1},\overline{g})$ is a smooth area-nonincreasing map,  then $f$ is a  Riemannian isometry (\cite{llarull}). 
   
   We prove a scalar curvature rigidity theorem for the case of three-manifolds involving minimal surfaces.

   \begin{thm}\label{rigidity-introduction}
Let $(M^3,g)$ be a closed, oriented, Riemannian manifold  such that  $R_g\geq 6$. Suppose     $(N^3,h)$ is a closed,  oriented, Riemannian manifold such that if $\Sigma$ is a  closed, embedded, minimal surface in $(N,h)$,   ${\rm area}_h(\Sigma)\geq4\pi$.   Suppose $f:(M,g)\rightarrow (N,h)$ is a Lipschitz  area-nonincreasing  map such that  ${\rm deg}(f)$ is nontrivial. Then $(M,g)$ is isometric to the unit sphere and  $f$ is a smooth Riemannian  isometry.
  \end{thm}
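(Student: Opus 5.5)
Since $\deg(f)\neq 0$ we may assume $M$ is connected, so $\pi_2$-considerations are not needed. The plan is to produce a minimal surface $\Sigma\subset(M,g)$ whose area is at most $4\pi$, to push it forward by $f$, and to compare with the hypothesis on $(N,h)$. The pushforward $f_\#[\Sigma]$ need not be minimal. So instead I would run a min-max argument in $N$ over sweepouts that are images under $f$ of sweepouts of $M$, in the spirit of Theorem \ref{homology-introduction} and its general version Theorem \ref{connected.minimal}.

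\textbf{Step 1 (upper bound in $M$).} Take an optimal Almgren--Pitts sweepout $\{\Sigma_t\}$ of $(M,g)$ by level sets. First perturb $g$ to a bumpy metric; later we pass to the limit. The width $W(M,g)$ is achieved by a minimal surface whose components have total index at most one. Each component of index one, and each stable component, satisfies the area bound coming from $R_g\geq 6$: an index one two-sided component has area $\le 16\pi/3$ via Hersch-type test functions. That bound is not sharp enough, so I would instead use a width bound of the form $W(M,g)\le 4\pi$. I expect this to follow from a Marques--Neves style argument using $R_g\ge 6$ and the family of ``level sets'' coming from a $\mu$-bubble or Ricci-flow-free approach. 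The cleaner route is the rigidity statement: if $R_g\geq 6$, then every index one surface $\Sigma$ satisfies $\mathrm{area}(\Sigma)\le 4\pi$, with equality only for the round sphere. This holds when $\Sigma$ has genus zero, by Hersch's trick with conformal maps to $S^2$ and the Gauss equation. So I would arrange, via Theorem \ref{connected.minimal}, that the index one surface is a sphere. Its area is then at most $4\pi$, with equality forcing $\Sigma$ to be totally geodesic with $R_g=6$ along it and $\mathrm{Ric}(\nu,\nu)=2$.

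\textbf{Step 2 (transfer).} By Theorem \ref{connected.minimal}, applied to $f$ (area-nonincreasing, nonzero degree), there are minimal surfaces $\Sigma$ in $M$ with index one and $\Sigma'$ in $N$ with $\mathrm{area}_h(\Sigma')\le\mathrm{area}_g(\Sigma)$. The hypothesis gives $4\pi\le \mathrm{area}_h(\Sigma')\le \mathrm{area}_g(\Sigma)\le 4\pi$. Hence equality holds throughout. For nonbumpy metrics I would approximate by bumpy ones and take limits, using the fact that area bounds pass to limits.

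\textbf{Step 3 (rigidity).} Equality in the Hersch estimate forces $\Sigma$ to be totally geodesic and umbilic-free with Gauss curvature $1$, and $\mathrm{Ric}(\nu,\nu)=2$. Since the width equals the area of an index one sphere, a foliation argument yields a one-parameter family $\Sigma_t$ of such spheres sweeping out $M$. Adapting the equality case of the width rigidity for $R\ge 6$ (as in the Marques--Neves rigidity-of-spheres appendix), this gives that $(M,g)$ is isometric to the round $S^3$. Then $\mathrm{vol}(M)=2\pi^2$, and $f$ is area-nonincreasing of nonzero degree. Equality in the sweepout comparison also forces $f$ to preserve the area of each $\Sigma_t$. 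Using that $df$ is area-nonincreasing on every 2-plane and preserves area on a spanning family of planes, $df$ is an isometry almost everywhere. Hence $f$ is a Riemannian isometry and, being $1$-Lipschitz and distance-preserving locally, it is smooth.

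\textbf{Main obstacle.} The key difficulty is Step 1/3: guaranteeing that the index one surface realizing the width is a sphere, or otherwise obtaining $\mathrm{area}\le 4\pi$ rather than $16\pi/3$. The equality-case rigidity is also delicate. I would first try to get genus zero from the fact that the sweepout is by level sets in a manifold with $R_g\ge6$, which excludes stable non-sphere components, and then combine this with the genus bound of the Almgren--Pitts limit.
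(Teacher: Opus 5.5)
There is a genuine gap, and it is the one you flag yourself: nothing in your argument produces the upper bound $4\pi$, and Step 2 cannot supply it. In case (i) of Theorem \ref{connected.minimal}, the index one surface $\Sigma\subset M$ only satisfies ${\rm area}_g(\Sigma)\geq W^{(k)}(N,h)$. It comes from Almgren--Pitts min-max in some piece of $M$, with no genus control and no upper bound. So the last inequality in $4\pi\leq {\rm area}_h(\Sigma')\leq {\rm area}_g(\Sigma)\leq 4\pi$ is unjustified. The lower half is fine: the hypothesis on $N$ gives $W^{(k)}(N,h)\geq 4\pi$, and it excludes case (ii), since stable surfaces in $M$ have area at most $4\pi/3$.

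The missing ingredient is Proposition \ref{area.surface}.
\begin{itemize}
\item Cut $M$, with a bumpy approximating metric, along stable spheres and projective planes with stable double cover.
\item By Carlotto--Li and a Meeks--Simon--Yau isotopy argument, each piece either contains a nonseparating surface (hence a stable one) or is a spherical space form minus balls.
\item The universal cover of such a piece is $S^3$ minus balls. After Song's cylindrical extension and capping with mean-convex balls, Simon--Smith min-max over sweepouts by spheres gives an index $\leq 1$ minimal \emph{sphere}, of area $\leq 4\pi$ by Hersch.
\item That sphere does not descend. Only the width bound does, by pushing sweepouts through the covering map with the concatenation of Theorem \ref{connected.minimal}.
\end{itemize}
The comparison with $N$ must then be made through widths, not through one surface. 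If every piece contained an unstable surface of area definitely below $4\pi$, concatenating through $f$ would give $W^{(k)}(N,h)<4\pi$. So some piece has width essentially $\geq 4\pi$ and also an index one surface of area $\leq 4\pi$. In the limit one gets a domain $\Omega''$ with $W^{(1)}(\tilde\Omega'',g)=4\pi$, realized by an index one surface.

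Step 3 also fails as written, both for the rigidity of $M$ and for the isometry of $f$.

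For $M$: equality in Hersch gives information along one unstable sphere only, and you give no mechanism producing a sweepout of $M$ by such spheres. The Marques--Neves appendix supplies the Hersch estimate, but the rigidity there, as here, is proved with Ricci flow. Localizing to $\Omega''$ by a cutoff, the width decreases at rate at most $16\pi$. Meanwhile $R_{g(t)}\geq 6/(1-4t)$ bounds the index $\leq 1$ spheres in the universal cover by $4\pi(1-4t)$. Equality in the parabolic maximum principle then makes $g$ Einstein, so $M$ is a space form. Finally, $W^{(1)}(M,g)\geq W^{(k)}(N,h)\geq 4\pi$, together with the Marques--Neves bound $W<4\pi$ for non-round space forms, gives the round $S^3$.

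For $f$: an optimal pushed-forward sweepout does not force $f$ to preserve the area of every slice. The paper analyzes the critical varifolds of the optimal sweepouts $f_\#\circ\Phi_v$ built from level sets of $S^3$. This shows that for each equator $f(\Sigma_v)$ is a minimal surface, with multiplicity one (via the Ambrozio--Marques--Neves foliation argument and $W^{(k)}(N,h)=4\pi$), and that $f_\#\Sigma_v=f(\Sigma_v)$. Hence $df$ is area-preserving, and then length-preserving, a.e.

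Even this does not give a local isometry, since folding maps are a.e. length-preserving, and it does not exclude coverings $S^3\to S^3/\Gamma$. Injectivity comes from two facts. On a.e. equator, $f$ is a conformal area-preserving map onto a surface of area $4\pi$, hence an isometry of round spheres. And any two points lie on such an equator. Cecchini--Hanke--Schick then gives a metric isometry, and non-simply-connected $N$ is ruled out by lifting $f$ to the universal cover.
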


 If $f$ is the identity map this is Theorem 2 of \cite{song}.
 The proof of the Theorem \ref{rigidity-introduction} as in the rigidity theorems of \cite{marques-neves-rigidity-spheres} and  \cite{song} uses minimal surface theory and the Ricci flow. 
 
 As in Song \cite{song-existence} we use  decompositions by stable minimal hypersurfaces to prove the main  theorems.

The paper is organized as follows. In Section 2, we discuss the notation of min-max theory. In Section 3,   we prove the Theorems \ref{index.area.introduction} and \ref{homology-introduction}. In Section 4, we prove the Theorems \ref{nonspherical-introduction} and
\ref{fiber.bundle-introduction}. In Section 5, we prove the Theorem \ref{rigidity-introduction}. In Section 6, we discuss the case of stable surfaces.

\section{Preliminaries}

Let $R$ be an $(n+1)$-dimensional compact manifold possibly with boundary, and let $G$ be the group $\mathbb{Z}$ or $\mathbb{Z}_2$. Denote by ${\bf I}_{j}(R,G)$ the set of $j$-dimensional flat chains  with support contained in $R$ and coefficients in $G$, endowed with the flat topology. In the case $G=\mathbb{Z}$, these flat chains are called integral currents. We denote by ${\rm spt}(T)$ the support of $T\in {\bf I}_{j}(R,G)$. 
Let $\tilde{\mathcal{Z}}_n(R,G)$ be the set of chains  $T\in {\bf I}_n(R,G)$ such that $\partial T=0$, and
$\mathcal{Z}_n(R,G)$ be the set of  $T\in {\bf I}_n(R,G)$ such that there is $\Omega\in {\bf I}_{n+1}(R,G)$ with $T=\partial \Omega$, where $\partial$ is the boundary map.  
Since $\partial (\partial \Omega) =0$ for any $\Omega$, $\mathcal{Z}_n\subset \tilde{\mathcal{Z}}_n$.   

Suppose that $R$ has a Riemannian metric $g'$. The flat topology on ${\bf I}_{j}(R,G)$ is induced by the flat metric
$$
 \mathcal F_{g'}(T,S) = \inf \{{\bf M}_{g'}(Q)+{\bf M}_{g'}(Y): T-S=Q+\partial Y\},
 $$
 where ${\bf M}_{g'}(T)$ denotes the $j$-dimensional mass of $T$ induced by $g'$ if $T\in {\bf I}_j(R,G)$.  We denote by $|T|$ the $j$-dimensional integral varifold induced by $T\in {\bf I}_j(R,G)$.  The  ${\bf F}$-metric on the space of varifolds  is defined in  the book of Pitts  \cite{pitts}, and the ${\bf F}$-metric on ${\bf I}_j(R,G)$ is defined by
$$ {\bf F}_{g'}(S,T)=\mathcal{F}_{g'}(S, T)+{\bf F}_{g'}(|S|,|T|).$$
The mass topology is induced by the metric ${\bf M}_{g'}(S,T)={\bf M}_{g'}(S-T)$.

Let  $\Phi:[s_1,s_2]\rightarrow \tilde{\mathcal{Z}}_n(R,G)$  be  a map   continuous in the flat topology. We say that $\Phi$ is a sweepout of $\Omega'$ if there is a map $\Omega:[s_1,s_2]\rightarrow {\bf I}_{n+1}(R,G)$, with $\Omega(s_1)=0$, $\partial \Omega(t)=\Phi(t)-\Phi(s_1)$, and continuous in the flat topology such that $\Omega'=\Omega(s_2)$. 
  The map $\Omega$, and hence $\Omega'$, is uniquely defined by the constancy theorem of  flat chains.

Let $(M,g)$ be a compact oriented Riemannian manifold, possibly with boundary, of dimension $3\leq (n+1)\leq 7$, and let $(N,h)$
be an $(n+1)$-dimensional closed oriented Riemannian manifold.

A Lipschitz map $f:M\rightarrow N$  is $n$-volume nonincreasing if
$$
|df_x(v_1)\wedge \cdots \wedge df_x(v_n)|_h\leq |v_1\wedge \cdots \wedge v_n|_g
$$
for almost any $x\in M$ where $df_x$ is defined,  and $\{v_i\}_{i=1}^n\subset T_xM$. 
Let $f_{\#}: {\bf I}_n(M,\mathbb{Z})\rightarrow  {\bf I}_n(N,\mathbb{Z})$ be the push-forward map induced by the Lipschitz map $f$ on $n$-dimensional currents. Since the map $f$ is $n$-volume nonincreasing, 
${\bf M}_h(f_{\#}(T))\leq {\bf M}_g(T)$
for $T\in  {\bf I}_n(M,\mathbb{Z})$. The map $f_{\#}$ is continuous in the flat and in the mass topologies. If $\Sigma\subset M$ is a hypersurface, we denote by ${\rm vol}_g(\Sigma)$ or by $|\Sigma|_g$ the  $n$-dimensional Hausdorff measure of $\Sigma$ induced by $g$.

Recall that by \cite{almgren} there is a natural isomorphism
$$
\Lambda: \pi_1(\mathcal{Z}_n(N,\mathbb{Z}), 0)\rightarrow H_{n+1}(N,\mathbb{Z})\simeq\mathbb{Z},
$$
where $\pi_1(X,x)$ denotes the fundamental group of $X$ based at $x\in X$ and $H_i(X,\mathbb{Z})$ denotes the $i$-dimensional integer homology group of $X$.
Denote by $\Gamma^{(k)}(N)$ the set of maps $\Phi:[0,1]\rightarrow \mathcal{Z}_n(N,\mathbb{Z})$, $\Phi(0)=\Phi(1)=0$, that are continuous in the ${\bf F}$-metric,
and  such that the homotopy class $[\Phi]$ of $\Phi$, in the flat topology, satisfies $\Lambda([\Phi])=k \cdot [N]$. 
We define the invariant
$$
W^{(k)}(N,h)= \inf_{\Phi \in \Gamma^{(k)}(N)} \sup_{t\in [0,1]} {\bf M}_h(\Phi(t)).
$$
By concatenating maps in $\Gamma^{(1)}(N)$,  it follows that $W^{(k)}(N,h) \leq W^{(1)}(N,h)$ for any $k\in \mathbb{Z}$.

Notice that $W^{(k)}(S_r^{n+1},\overline{g})=r^n W^{(k)}(S_1^{n+1},\overline{g})$ for any $k\in \mathbb{Z}$, where $(S_r^{n+1},\overline{g})$ denotes the $(n+1)$-dimensional round sphere of radius $r$.  Then 
 $$W^{(k)}(S_1^{n+1},\overline{g})\geq {\rm vol}_{\overline{g}}(S_1^n)
 $$ by using min-max theory for any $k\in \mathbb{Z}\setminus \{0\}$.  This is because if $\Sigma$ is a closed smooth minimal hypersurface of  $(S_1^{n+1},\overline{g})$ then ${\rm vol}_{\overline{g}}(\Sigma)\geq {\rm vol}_{\overline{g}}(S_1^n)$. Hence $W^{(k)}(S_1^{n+1},\overline{g})= {\rm vol}_{\overline{g}}(S_1^n)$  for $k\in 
 \mathbb{Z}\setminus \{0\}$, since
 $$
  {\rm vol}_{\overline{g}}(S_1^n)\leq W^{(k)}(S_1^{n+1},\overline{g}) \leq W^{(1)}(S_1^{n+1},\overline{g})\leq {\rm vol}_{\overline{g}}(S_1^n).
  $$

Suppose that if $\Sigma'$ is a connected component of $\partial M$, then $f_{\#}(\Sigma')=0$. Then $\partial (f_{\#}(M)) = f_{\#}(\partial M)=0$, and hence $f_{\#}(M)=k \cdot [N]$ for some $k\in \mathbb{Z}$ by the constancy theorem of integral currents. Here $M\in {\bf I}_{n+1}(M,\mathbb{Z})$ and $N\in {\bf I}_{n+1}(N,\mathbb{Z})$ are the currents induced by the oriented fundamental classes. The degree of $f$ is defined as ${\rm deg}(f)=k$.

In the case that $M$ has a boundary, $\partial M=\sum_{i=1}^tS_i \in \mathcal{Z}_n(M,\mathbb{Z})$  where $S_i\in \tilde{\mathcal{Z}}_n(M,\mathbb{Z})$ is the integral current of
a component of the boundary with the induced orientation. 
Let  $\Gamma^{(1)}(M)$  be   the set of maps $\Phi:[0,1]\rightarrow \tilde{\mathcal{Z}}_n(M,\mathbb{Z})$ that are continuous in the ${\bf F}$-metric,
$\Phi(0)=-\sum_{i\in I_1} S_i$, $\Phi(1)=\sum_{i\in I_2}S_i$, so that  $I_1$, $I_2$ are disjoint sets depending on $\Phi$ with $I_1\cup I_2 =\{i:1\leq i\leq t\}$. We define
$$
W^{(1)}(M,g)= \inf_{\Phi \in \Gamma^{(1)}(M)} \sup_{t\in [0,1]} {\bf M}_g(\Phi(t)).
$$
Notice that for $\Phi\in\Gamma^{(1)}(M)$,  if $\Omega:[0,1]\rightarrow {\bf I}_{n+1}(M,\mathbb{Z})$ is the map, continuous in the flat topology, such that $\Omega(0)=0$, and $\partial \Omega(t)=\Phi(t)-\Phi(0)$, then $\Omega(1)=M$. Since the number of partitions of the set of components of the boundary is finite, 
$W^{(1)}(M,g)$ is the min-max invariant for the homotopy class of based maps for the disjoint sets $I_1',I_2'$,  $I_1'\cup I_2' =\{i:1\leq i\leq t\}.$

We can define a min-max invariant for modulo two boundaries. Let  $\Phi:[0,1]\rightarrow \mathcal{Z}_n(M,\mathbb{Z}_2)$
be a map continuous in the ${\bf F}$-metric, such that $\Phi(t)=\partial \Omega(t)$, with $\Omega:[0,1]\rightarrow {\bf I}_{n+1}(M,\mathbb{Z}_2)$ so that  $\Omega(0)=M$,  $\Omega(1)=0$, continuous in the flat topology.  Let $\Pi$ be the $([0,1],\{0,1\})$-homotopy class of $\Phi$ in the notation of \cite{zhou-multiplicity}. Then $W_{g}(M)$ is the width of $\Pi$ for the area functional in the metric $g$: 
$$
W_g(M)= \inf_{\tilde{\Phi} \in \Pi} \sup_{t\in [0,1]} {\bf M}_g(\tilde{\Phi}(t)).
$$  


\section{Minimal hypersurfaces of Morse index one}

A smooth, generic metric 
 is bumpy, in the sense that any immersed, closed, minimal hypersurface is nondegenerate, by Theorem 2.1 of \cite{white-bumpy}. If $g$ is a bumpy metric, then there is no stable, degenerate, closed, two-sided, immersed,  minimal hypersurfaces or one-sided, closed, immersed, minimal hypersurfaces with stable, degenerate, oriented  two-cover.

\begin{prop}\label{index.one.boundary}
Let $(\Omega,g)$ be a compact Riemannian manifold, of dimension $3\leq (n+1)\leq 7$, with stable, minimal boundary $\partial \Omega$ if $\partial \Omega \neq \emptyset$. Suppose that $g$ has no stable, degenerate, closed, two-sided, embedded   minimal hypersurfaces or one-sided, closed, embedded minimal hypersurfaces with stable, degenerate, oriented  two-cover. Then 
  there is a smooth, connected, embedded, closed minimal hypersurface $\Sigma$ in $\Omega$, disjoint from $\partial \Omega$, two-sided, with Morse index   one, such that ${\rm vol}_g(\Sigma)\leq W_g(\Omega)$.
\end{prop}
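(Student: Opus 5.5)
We will run the Almgren–Pitts min-max in $\Omega$ for the homotopy class $\Pi$ defining $W_g(\Omega)$, and then extract an index one component using the nondegeneracy hypothesis together with the stability of $\partial\Omega$.

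\emph{Step 1: positivity of the width and avoidance of the boundary.} First we check that $W_g(\Omega)>0$. This follows from the isoperimetric inequality: any sweepout $\Phi\in\Pi$ passes through a boundary $\partial\Omega(t)$ where $\Omega(t)$ has half the volume of $\Omega$, so the sup of the masses is bounded below by a positive constant.

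Next we handle the boundary. Since $\partial\Omega$ is minimal and stable, we would first perturb the situation so that the min-max varifold does not touch $\partial\Omega$. The plan is to attach cylindrical ends or, more simply, to work in the interior with the boundary as a barrier. Stable minimal boundary components are either strictly stable, since degenerate stable ones are excluded, or one-sided in the relevant sense. Strict stability gives a foliation near each boundary component by hypersurfaces of mean curvature pointing away from $\Omega$ (the lowest eigenfunction of the Jacobi operator produces leaves with $H$ of a sign). Hence $\partial\Omega$ is a strictly mean-concave-type barrier after a small deformation. We then apply the min-max theory for manifolds with such boundary, in the form of \cite{zhou-multiplicity} or Song's version \cite{song-existence}. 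This yields a stationary integral varifold $V=\sum m_i|\Sigma_i|$ with $\|V\|(\Omega)=W_g(\Omega)$ whose support is a smooth closed embedded minimal hypersurface in $\Omega\setminus\partial\Omega$. The maximum principle rules out components touching $\partial\Omega$ unless a component equals a boundary component; that case is excluded by the mountain-pass character of the width, with the neighborhood foliation used to push sweepouts off.

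\emph{Step 2: index and multiplicity.} By \cite{marques-neves-index} adapted to this setting, the index of the support is at most $1$. By the multiplicity one theorem of \cite{zhou-multiplicity}, which applies since the metric has no degenerate stable two-sided components and no one-sided components with degenerate stable double cover, every two-sided unstable component has multiplicity one, and one-sided components appear with even multiplicity. The key claim is that some component is unstable. Suppose all components are stable. Then, by the hypothesis, they are strictly stable, or their double covers are, and we get a contradiction as in \cite{marques-neves-lower-bound}. Cutting $\Omega$ along the support and using the strictly stable pieces as barriers, one constructs a sweepout in $\Pi$ with maximal mass strictly less than $W_g(\Omega)$. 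This uses the local foliations, which make each strictly stable component a strict local minimum, and gluing sweepouts of the complementary regions.

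Hence some component $\Sigma$ is unstable. Since the total index is at most $1$, $\Sigma$ has index exactly one. It must also be two-sided: if it were one-sided, its index is computed on the double cover, and the multiplicity would be even. We expect the paper handles this via Zhou's theorem, which gives two-sided unstable components of multiplicity one. Then ${\rm vol}_g(\Sigma)\le \|V\|(\Omega)=W_g(\Omega)$, and $\Sigma$ is connected and disjoint from $\partial\Omega$.

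\emph{Main obstacle.} The hard step is the instability argument in Step 2 combined with the boundary, i.e., showing that the min-max limit cannot consist entirely of stable components (including components parallel to $\partial\Omega$). We would first try reducing to the closed case by doubling or by gluing an auxiliary region beyond the strictly stable boundary. Failing that, we would cite the boundary-barrier version in \cite{song-existence} and the argument of \cite{marques-neves-lower-bound} directly.
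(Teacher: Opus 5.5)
There is a genuine gap in Step 2. You apply Zhou's multiplicity one theorem \cite{zhou-multiplicity} and the index-one conclusion of \cite{marques-neves-lower-bound} directly to $g$. Both are results for bumpy metrics, and $g$ is only assumed to have no \emph{stable} degenerate two-sided hypersurfaces or stable degenerate oriented double covers. Saying these theorems `apply' under this weaker hypothesis is not supported by the citations.

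Even granting the conclusion you attribute to Zhou (unstable two-sided components have multiplicity one), you never exclude the case where the only unstable component is one-sided, with unstable oriented double cover and multiplicity two. That is exactly the case that must be ruled out to obtain a two-sided $\Sigma$, and you defer it. The step `if all components are stable, cut along the support and concatenate sweepouts of the pieces to beat $W_g(\Omega)$' is also not an argument as written. It would need multiplicity one for the stable components and control of the sweepouts of the cut pieces, and you have neither.

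In Step 1 the relevant fact is not $W_g(\Omega)>0$, which is trivial since sweepouts in $\Pi$ start at $\partial\Omega$. What matters is the strict inequality $W_g(\Omega)>{\bf M}_g(\partial\Omega)$, which follows from strict stability of $\partial\Omega$ (Theorem 6.1 of \cite{marques-neves-lower-bound}). Near a strictly stable boundary the leaves have mean curvature vector pointing toward $\partial\Omega$, so the boundary is a strict local minimum of area rather than a barrier. You also neither need, nor can in general expect, the min-max varifold to avoid $\partial\Omega$.

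The missing idea is approximation followed by compactness, and this is exactly where the hypothesis on $g$ is used.
\begin{enumerate}
\item Choose bumpy metrics $g_i\to g$ for which $\partial\Omega$ remains minimal, hence strictly stable for large $i$.
\item For each $g_i$, Theorem 6.1 of \cite{marques-neves-lower-bound} gives $W_{g_i}(\Omega)>{\bf M}_{g_i}(\partial\Omega)$. Zhou's theorem together with the index estimates of \cite{marques-neves-lower-bound} then produces a connected, two-sided, index-one $g_i$-minimal $\Sigma_i\subset{\rm int}(\Omega)$ with ${\rm vol}_{g_i}(\Sigma_i)\le W_{g_i}(\Omega)$.
\item By \cite{sharp}, $\Sigma_i\to k\Sigma$. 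If $\Sigma$ were one-sided, then $k\ge 2$, and the normalized separation of the sheets would give a positive Jacobi field on the oriented double cover, making it stable degenerate, which is excluded. So $\Sigma$ is two-sided, and since it is not stable degenerate, the convergence is smooth with multiplicity one.
\item Hence ${\rm index}(\Sigma)\le 1$ and ${\rm vol}_g(\Sigma)\le W_g(\Omega)$. If $\Sigma$ were stable, it would be strictly stable by hypothesis, forcing $\Sigma_i$ to be strictly stable for large $i$, which contradicts ${\rm index}(\Sigma_i)=1$.
\item So ${\rm index}(\Sigma)=1$, and $\Sigma$ is disjoint from the stable boundary by the maximum principle.
\end{enumerate}
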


\begin{proof}

Let $g_i$ be a sequence of bumpy metrics converging smoothly to $g$ such that $\partial \Omega$ is a minimal hypersurface for $g_i$. Consider a map $\Phi:[0,1]\rightarrow \mathcal{Z}_n(\Omega,\mathbb{Z}_2)$, continuous in the ${\bf F}$-metric, such that $\Phi(t)=\partial \Omega(t)$, with $\Omega:[0,1]\rightarrow {\bf I}_{n+1}(\Omega,\mathbb{Z}_2)$ so that  $\Omega(0)=\Omega$,  $\Omega(1)=0$, continuous in the flat topology.  Let $\Pi$ be the $([0,1],\{0,1\})$-homotopy class of $\Phi$ in the notation of \cite{zhou-multiplicity}. Let $W_{g_i}(\Pi)$ be the width of $\Pi$ for the area functional in the metric $g_i$.   
Then $W_{g_i}(\Pi)>{\bf M}_{g_i}(\partial \Omega)$, since $\partial \Omega$ is strictly stable. This follows for instance by applying Theorem 6.1 of \cite{marques-neves-lower-bound}.
By applying the multiplicity one theorem of Zhou (Theorem 4.1, \cite{zhou-multiplicity}), combined with the index estimates of \cite{marques-neves-lower-bound}, there is a smooth, connected, embedded, closed minimal hypersurface $\Sigma_i \subset {\rm int}(\Omega)$ for the metric $g_i$, two-sided, with Morse index one, such that ${\rm vol}_{g_i}(\Sigma_i)\leq W_{g_i}(\Omega)$.

By \cite{sharp}, the sequence $\Sigma_i$ converges in varifold sense to a smooth, connected, embedded, closed minimal hypersurface $\Sigma$ for the metric $g$ with integer multiplicity $k$.  If the surface $\Sigma$  is one-sided, then $k\geq 2$
and the oriented two-cover $\tilde{\Sigma}$ of $\Sigma$ is stable degenerate.  This is a contradiction.  Hence $\Sigma$ is two-sided. Since $\Sigma$ is not stable degenerate, the convergence $\Sigma_i\rightarrow \Sigma$ is smooth with multiplicity one. 
Then ${\rm index}(\Sigma)\leq 1$, and ${\rm vol}_{g}(\Sigma)\leq W_{g}(\Omega)$. If the surface $\Sigma$ is stable, it would be strictly stable and hence $\Sigma_i$ is strictly stable for sufficiently large $i$. This is a contradiction. Hence ${\rm index}(\Sigma)=1$. The surface $\Sigma$ is disjoint from $\partial \Omega$ by the maximum principle, since $\partial \Omega$ is stable. This finishes the proof of the proposition.
\end{proof}

\begin{prop}\label{no.stable.two-sided.case}
Let $(\Omega,g)$  be as in Proposition \ref{index.one.boundary}. Suppose that  $\Sigma\subset {\rm int}(\Omega)$ is a closed, two-sided, embedded, unstable minimal hypersurface,  and  that there is no smooth, two-sided, closed, stable minimal hypersurface $\tilde{\Sigma}$ in ${\rm int}(\Omega)$ with ${\rm vol}_g(\tilde{\Sigma})<{\rm vol}_g(\Sigma)$. Then there is a map $\tilde{\Phi}\in \Gamma^{(1)}(\Omega)$ such that
 $$
\sup_{t\in [0,1]}{\bf M}_g(\tilde{\Phi}(t))\leq {\bf M}_g(\Sigma),
 $$
 and hence  $W^{(1)}(\Omega,g)\leq  {\bf M}_g(\Sigma)$.
\end{prop}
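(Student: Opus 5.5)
The idea is to push $\Sigma$ off itself in both directions along its first eigenfunction. Then, on each side, we run a mean-curvature-type flow (a level-set flow with obstacle, or the minimization arguments of \cite{marques-neves-lower-bound}) that decreases area, and we check that it sweeps out the whole side without meeting a stable minimal hypersurface of small volume. Gluing the two sides gives the map $\tilde\Phi\in\Gamma^{(1)}(\Omega)$.

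\emph{Splitting.} Since $\Sigma$ is two-sided and embedded in ${\rm int}(\Omega)$, let $\Omega_1,\dots,\Omega_m$ be the closures of the components of $\Omega\setminus\Sigma$. Each $\Omega_j$ has boundary consisting of some copies of sides of $\Sigma$ together with some components of $\partial\Omega$. Let $\phi>0$ be the first eigenfunction of the Jacobi operator; it exists because $\Sigma$ is unstable. The surfaces $\Sigma_s=\{\exp(s\phi\nu)\}$ satisfy ${\bf M}_g(\Sigma_s)<{\bf M}_g(\Sigma)$ for $0<|s|\le\epsilon$, and for $s\neq0$ they have mean curvature vector pointing away from $\Sigma$. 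This gives short paths of currents, starting at $\Sigma$ (with the orientation fixed by the normal), that leave $\Sigma$ on either side with mass strictly below ${\bf M}_g(\Sigma)$.

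\emph{Sweeping each side.} On a region $\Omega_j$ bounded by a push-off $\Sigma_\epsilon$, which is mean convex toward the interior, together with stable components of $\partial\Omega$, minimize area among hypersurfaces in the same homology class (relative to the region) that enclose the push-off. By mean convexity the minimizer either is empty or consists of boundary components of $\partial\Omega$ lying in $\Omega_j$, or else it contains a smooth closed stable two-sided minimal hypersurface $\tilde\Sigma$ in ${\rm int}(\Omega)$ with ${\rm vol}_g(\tilde\Sigma)\le{\bf M}_g(\Sigma_\epsilon)<{\bf M}_g(\Sigma)$. A one-sided limit cannot occur for minimizers of the relevant type, and the nondegeneracy hypothesis rules out degenerate one-sided stable pieces. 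The hypothesis of the proposition forbids the stable case. So the region retracts, through currents of mass at most ${\bf M}_g(\Sigma)$, either to zero or to the sum of the components $S_i$ of $\partial\Omega$ in $\Omega_j$. I would produce this retraction by a mass-decreasing flow from $\Sigma_\epsilon$: the level-set (weak) mean curvature flow, or a discretized area-decreasing deformation with barriers as in \cite{marques-neves-lower-bound}. The flow stays inside $\Omega_j$ because $\partial\Omega$ is minimal, and at each time it is mass-nonincreasing. Its limit is a stable minimal hypersurface, which by the above must be a union of components of $\partial\Omega$ or empty.

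\emph{Assembly.} Concatenate the path on one side, run backward from its end ($-\sum_{i\in I_1}S_i$) up to $\Sigma$, with the path on the other side, run from $\Sigma$ to $\sum_{i\in I_2}S_i$. If $\Omega\setminus\Sigma$ is connected, both push-offs lie in the same region, and one sweeps that region from $-\Sigma_{-\epsilon}$ to $\Sigma_\epsilon$ in the same way. We need continuity in the ${\bf F}$-metric; this holds for the push-off families, and can be arranged for the flow part by the standard discretization and interpolation results of \cite{marques-neves-index}, losing only an arbitrarily small amount of mass. Since the masses are strictly below ${\bf M}_g(\Sigma)$ away from $t$ corresponding to $\Sigma$, the small loss can be absorbed. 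Using the constancy theorem, $\Omega(1)=\Omega$, so $\tilde\Phi\in\Gamma^{(1)}(\Omega)$, and hence $W^{(1)}(\Omega,g)\le{\bf M}_g(\Sigma)$.

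The main obstacle is the sweeping step: producing an ${\bf F}$-continuous, mass-nonincreasing path from the push-off to the boundary that does not get stuck. The only possible obstruction is a stable minimal hypersurface of volume less than ${\bf M}_g(\Sigma)$. I would first try the level-set flow with the avoidance principle against $\partial\Omega$, and then invoke the no-stable-small-volume hypothesis to identify the limit.
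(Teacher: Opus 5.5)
Your treatment of the case in which $\Sigma$ does not separate $\Omega$ does not work. There the region $\Omega\setminus V$ has \emph{both} push-offs $\Sigma_{\epsilon}$ and $\Sigma_{-\epsilon}$ among its strictly mean-convex boundary components. Its mean-convex level-set flow therefore moves both at once; you cannot freeze $\Sigma_{-\epsilon}$, since it is not minimal. So the flow starts from $\partial(\Omega\setminus V)$, not from a single push-off, and it does not connect $-\Sigma_{-\epsilon}$ to $\Sigma_{\epsilon}$. However you concatenate it with the foliation of $V$, writing $\Phi(t)=\Phi(0)+\partial R(t)$, you pass through cycles that contain a push-off together with a second hypersurface of comparable mass (the moving front or a leaf of the foliation). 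The bound ${\bf M}_g(\Sigma)$ is then lost.

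The missing observation is that this case is excluded by the hypothesis, which is how the paper disposes of it. Since $\Omega\setminus V$ is connected, the only relation in $H_n(\Omega\setminus V,\mathbb{Z})$ among the classes of its boundary components is that their sum, with induced orientations, vanishes. A relation $[\Sigma_\epsilon]=\sum_i a_i[S_i]$ would be a multiple of that sum in which $\Sigma_{-\epsilon}$ has coefficient zero but $\Sigma_\epsilon$ does not, which is impossible. Minimizing area in the integral homology class of $\Sigma_{\epsilon}$ inside $\Omega\setminus V$, with the two push-offs as barriers, therefore yields a two-sided stable minimal hypersurface in ${\rm int}(\Omega)$ of volume at most ${\bf M}_g(\Sigma_{\epsilon})<{\bf M}_g(\Sigma)$. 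This contradicts the hypothesis.

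In the separating case your route is genuinely different from the paper's and is sound in outline, with one caveat. The sentence in your sweeping step that infers the retraction from the triviality of the homological minimizer is a non sequitur. A trivial minimizer does not exclude a mountain pass between $\Sigma_{\epsilon}$ and $\partial\Omega$; the flow has to do that work.

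The paper handles the same point variationally. Take the class $[\Phi]$ of sweepouts of $\overline{\Omega^{(1)}}\setminus V$ from $-((\partial\Omega^{(1)})\setminus\Sigma)$ to $\Sigma'$. If $W([\Phi])>{\bf M}_g(\Sigma')$, min-max in a domain with mean-convex boundary gives an unstable minimal hypersurface in the interior. This uses Theorem 1.7 of \cite{marques-neves-index}, applied to bumpy approximations and passed to the limit. The strictly mean-concave boundary of a neighborhood of that hypersurface is a barrier for minimizing in the homology class of $\Sigma'$, which produces a forbidden stable hypersurface. Hence $W([\Phi])\le{\bf M}_g(\Sigma')$, and near-optimal sweepouts of the two sides are concatenated with the foliation of $V$.

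Your mean-convex level-set flow replaces this mountain-pass argument. It is mass-nonincreasing, does not fatten, and by White's theory converges to a stable minimal hypersurface, which the hypothesis forces to consist of components of $\partial\Omega$. Lower semicontinuity of mass then also bounds the endpoint mass ${\bf M}_g((\partial\Omega^{(1)})\setminus\Sigma)$, and interpolation upgrades the flat-continuous family to an ${\bf F}$-continuous one.

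The cost is threefold. You must import White's regularity and long-time convergence theory for mean-convex flows in Riemannian manifolds. You must adapt it to an initial region whose boundary also contains the minimal, not strictly mean-convex, components of $\partial\Omega$. You must also check that there is no concentration of mass along the flow. The paper's argument uses only tools already in play and is reused almost verbatim in Proposition \ref{no.stable.one-sided.case}.
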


\begin{proof}
There is a neighborhood $V$ of $\Sigma$ such that the boundary of $\Omega\setminus V$ has two strictly mean-convex components $\Sigma'$, $\Sigma''$, 
$\max\{{\rm vol}_g(\Sigma'), {\rm vol}_g(\Sigma'')\}< {\rm vol}_g(\Sigma)$, since $\Sigma$ is unstable.

Suppose that the hypersurface $\Sigma$ does not separate $\Omega$.  By minimizing the area in
the integer homology class of $\Sigma'$, inside $\Omega\setminus V$, since $\Sigma''$ is a barrier, there is a two-sided, strictly stable, closed minimal hypersurface $\tilde{\Sigma}$
in ${\rm int}(\Omega\setminus V)$, with ${\rm vol}_g(\tilde{\Sigma})< {\rm vol}_g(\Sigma)$. This is a contradiction.

Suppose then  that $\Sigma$  separates $\Omega$. Denote by  $\Omega^{(1)}$ and  $\Omega^{(2)}$  the connected components of $\Omega\setminus \Sigma$. We can suppose that   $\Sigma'\subset \Omega^{(1)}$, $\Sigma''\subset \Omega^{(2)}$ and that there is a smooth foliation  $\{\Sigma(t)\}_{t\in [-1,1]}$ of $\overline{V}$, such that $\Sigma(-1)=\Sigma'$, $\Sigma(0)=\Sigma$, $\Sigma(1)=\Sigma''$,  $f(t)={\bf M}_g(\Sigma(t))< {\bf M}_g(\Sigma)$ for  $t\neq 0$ and $f''(0)<0$. Consider a map  $\Phi:[0,1]\rightarrow \tilde{\mathcal{Z}}_n(\overline{\Omega^{(1)}}\setminus V,\mathbb{Z})$, continuous in the ${\bf F}$-metric, 
$\Phi(0)=-((\partial \Omega^{(1)})\setminus \Sigma)$, $\Phi(1)=\Sigma'$,
and so that $\Phi$ is a sweepout of $\overline{\Omega^{(1)}}\setminus V$.
Denote by $[\Phi]$ the class of such maps, and let
$$
W([\Phi])=\inf_{\Phi'\in [\Phi]} \sup_{t\in [0,1]}{\bf M}_g(\Phi'(t)).
$$
Then $W([\Phi])>{\bf M}_g((\partial \Omega^{(1)})\setminus \Sigma)$, since $(\partial \Omega^{(1)})\setminus \Sigma$ is strictly stable. 

If $W([\Phi])>{\bf M}_g(\Sigma')$, then we can apply min-max theory to $[\Phi]$ as $\partial(\Omega^{(1)}\setminus V)$ is mean-convex. We use for this  Theorem 1.7 of \cite{marques-neves-index}. We apply the result to a sequence of bumpy metrics converging to $g$, and then take a limit of the hypersurfaces as in the proof of Proposition \ref{index.one.boundary}.  This implies that there is a connected, smooth, closed, embedded, minimal hypersurface $\tilde{\Sigma}'\subset {\rm int}(\Omega^{(1)}\setminus V)$, that is either two-sided,  unstable or one-sided with unstable oriented two-cover. In any case   there is a neighborhood $\tilde{V}$ of $\tilde{\Sigma}'$ with boundary $\partial \tilde{V}$ that is strictly mean-concave.  Let $\Omega'$  be the component of  $\Omega^{(1)}\setminus (V\cup \tilde{V})$ such that $\Sigma'\subset \overline{\Omega'}$.  As $\partial \tilde{V}\cap \overline{\Omega'}$ is a barrier, by minimizing the area in the homology class of $\Sigma'$ with integer coefficients, inside
$\overline{\Omega'}$, we find a two-sided, strictly stable, embedded minimal hypersurface 
$\tilde{\Sigma}\subset {\rm int}(\Omega')$ with ${\rm vol}_g(\tilde{\Sigma})<{\rm vol}_g(\Sigma)$. As before  this implies   a contradiction. 

Hence  $W([\Phi])\leq {\bf M}_g(\Sigma')$. Let $\eta>0$ be such that  ${\rm vol}_g(\Sigma')\leq {\bf M}_g(\Sigma)-2\eta$ and ${\rm vol}_g(\Sigma'')\leq {\bf M}_g(\Sigma)-2\eta$.  It follows by the definition of $W([\Phi])$ that we can find  a map $\Phi':[0,1]\rightarrow \tilde{\mathcal{Z}}_n(\overline{\Omega^{(1)}}\setminus V,\mathbb{Z})$,  continuous in the ${\bf F}$-metric, 
$\Phi'(0)=-((\partial \Omega^{(1)})\setminus \Sigma)$, $\Phi'(1)=\Sigma'$, $\Phi'$ a sweepout of $\overline{\Omega^{(1)}}\setminus V$
  and such that
 $\sup_{t\in [0,1]}{\bf M}_g(\Phi'(t))\leq {\bf M}_g(\Sigma)-\eta$.   Similarly, there is a map $\Phi'':[0,1]\rightarrow \tilde{\mathcal{Z}}_n(\overline{\Omega^{(2)}}\setminus V,\mathbb{Z})$, continuous in the ${\bf F}$-metric, 
$\Phi''(0)=(\partial \Omega^{(2)}\setminus \Sigma)$, $\Phi''(1)=\Sigma''$,
$\Phi''$  a sweepout of $-\overline{\Omega^{(2)}}\setminus V$ and such that
 $\sup_{t\in [0,1]}{\bf M}_g(\Phi''(t))\leq {\bf M}_g(\Sigma)-\eta$.  By concatenating $\Phi'$, $\Phi''$, and $\{\Sigma(t)\}_{t\in [-1,1]}$, after reparametrizing, we obtain a map $\tilde{\Phi}\in \Gamma^{(1)}(\Omega)$ such that
 $$
 W^{(1)}(\Omega,g)\leq \sup_{t\in [0,1]}{\bf M}_g(\tilde{\Phi}(t))\leq {\bf M}_g(\Sigma).
 $$
 This proves the proposition.
\end{proof}

\begin{prop}\label{no.stable.one-sided.case}
Let $(\Omega,g)$  be as in Proposition \ref{index.one.boundary}. Suppose that  $\Sigma\subset {\rm int}(\Omega)$ is a closed, one-sided, embedded, minimal hypersurface with unstable oriented two-cover,  and  that there is no smooth, two-sided, closed, stable minimal hypersurface $\tilde{\Sigma}$ in ${\rm int}(\Omega)$, disjoint from $\Sigma$, with ${\rm vol}_g(\tilde{\Sigma})<2\, {\rm vol}_g(\Sigma)$. Then  
$W_g(\Omega)< 2\, {\bf M}_g(\Sigma),$ and $W^{(1)}(\Omega,g)< 2\, {\bf M}_g(\Sigma).$
\end{prop}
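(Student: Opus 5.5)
We follow the scheme of Proposition \ref{no.stable.two-sided.case}, with the tubular neighborhood of a one-sided hypersurface in place of the slab around a two-sided one. Since $\Sigma$ is one-sided, a small tubular neighborhood $V$ of $\Sigma$ has connected boundary $\partial V$, which is a double cover of $\Sigma$. Because the oriented two-cover $\tilde{\Sigma}$ of $\Sigma$ is unstable, we push $\partial V$ along the first eigenfunction of the Jacobi operator of $\tilde{\Sigma}$. This eigenfunction is positive, so it descends to a normal displacement on the two-cover. With this choice, for $V$ small the boundary $\partial V$ is strictly mean-convex as a boundary of $\Omega\setminus V$, and ${\rm vol}_g(\partial V)<2\,{\rm vol}_g(\Sigma)$.

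There is also a foliation $\{\Sigma(t)\}_{t\in(0,1]}$ of $\overline{V}\setminus\Sigma$ by the level sets $\{u=t\}$, where $u$ is the eigenfunction-weighted distance to $\Sigma$. It satisfies $\Sigma(1)=\partial V$, and $\Sigma(t)\to 2\Sigma$ as $t\to 0$ as varifolds, while as mod 2 (or integral) currents $\Sigma(t)=\partial V_t$ with $V_t\to 0$. Along the foliation ${\bf M}_g(\Sigma(t))<2{\bf M}_g(\Sigma)$ for $t>0$, with strict decrease near $0$. The family $\{V_t\}_{t\in[0,1]}$ therefore gives a path of boundaries from $0$ to $\partial V$ that sweeps out $V$. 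Its masses stay at most $2{\bf M}_g(\Sigma)$, with equality only at the endpoint $t=0$, where the current is $0$ and the varifold is $2|\Sigma|$. This last point needs care: in the ${\bf F}$-metric the path is continuous with mass tending to $2{\bf M}_g(\Sigma)$, but we only need the sup to be attained at one endpoint. Afterwards we reparametrize and use the slack obtained below to get a strict inequality.

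Next we sweep out $\Omega\setminus V$, which has boundary $\partial\Omega\cup\partial V$, with $\partial V$ strictly mean-convex and $\partial\Omega$ stable minimal. We consider the class $[\Phi]$ of sweepouts of $\overline{\Omega\setminus V}$ going from $-\partial\Omega$ to $\partial V$, together with its width $W([\Phi])$. We show $W([\Phi])\le {\rm vol}_g(\partial V)$ exactly as in Proposition \ref{no.stable.two-sided.case}, by contradiction. If $W([\Phi])>{\rm vol}_g(\partial V)$, Theorem 1.7 of \cite{marques-neves-index}, applied to bumpy approximations and passed to the limit as in Proposition \ref{index.one.boundary}, gives an unstable minimal hypersurface $\tilde{\Sigma}'$ in the interior. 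We remove a mean-concave neighborhood of it and minimize area in the homology class of $\partial V$, using $\partial V$ and that neighborhood as barriers. This produces a two-sided strictly stable minimal hypersurface disjoint from $\Sigma$ with volume $<{\rm vol}_g(\partial V)<2\,{\rm vol}_g(\Sigma)$, contradicting the hypothesis.

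Hence there are sweepouts of $\Omega\setminus V$ with sup mass at most $2{\bf M}_g(\Sigma)-\eta$, where $2\eta\le 2{\bf M}_g(\Sigma)-{\rm vol}_g(\partial V)$. Concatenating such a sweepout with the family $\{\partial V_t\}$, suitably reparametrized, produces an element of $\Gamma^{(1)}(\Omega)$. The same construction with $\mathbb{Z}_2$ coefficients produces an element of $\Pi$. The concatenated sup is strictly below $2{\bf M}_g(\Sigma)$ except at the single point where the path is the zero current, which is fine because there the mass is $0$: ${\bf M}(\partial V_t)\to 2{\bf M}_g(\Sigma)$ but never equals it for $t>0$. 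To get a strict bound on the sup, we stop the foliation at a small $t_0>0$ and connect $\partial V_{t_0}$ to $0$ through a mass-small path, since $V_{t_0}$ has small volume. A cone or isoperimetric-type contraction with mass close to ${\bf M}(\partial V_{t_0})$ does this, and the slack then gives both $W_g(\Omega)<2{\bf M}_g(\Sigma)$ and $W^{(1)}(\Omega,g)<2{\bf M}_g(\Sigma)$.

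The main obstacle is this final step: building the path from $\partial V_{t_0}$ to $0$ with controlled mass. We would handle it by retracting the thin tube $V_{t_0}$ fiberwise along the normal intervals. The intermediate currents are then $\partial$ of sub-tubes, with mass close to ${\bf M}(\partial V_{t_0})<2{\bf M}_g(\Sigma)$ plus a small lateral term.
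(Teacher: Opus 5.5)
\textbf{Gap: the final collapse of the tube is not resolved when $n=2$.}

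Your treatment of $\Omega\setminus V$ matches the paper: the width of sweepouts from $\partial\Omega$ to $\partial V$ is at most ${\rm vol}_g(\partial V)$, by min-max plus the barrier/minimization contradiction. The gap is in the last step, which you flag as the main obstacle but do not resolve.

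Write $\partial V_t$ as the normal graph of $t\phi$ over the oriented two-cover $\tilde\Sigma$, where $\phi>0$ is the first Jacobi eigenfunction with eigenvalue $\lambda_1<0$. The available slack is then only $2{\bf M}_g(\Sigma)-{\bf M}_g(\partial V_{t_0})\approx \frac12|\lambda_1|\,t_0^2\int_{\tilde\Sigma}\phi^2$, which is of order $t_0^2$.

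Retracting through sub-tubes $V_{t_0}\cap\pi^{-1}(\Sigma\setminus A)$ must start by opening a hole. For $A=B_r(p)$ the lateral wall adds about $2t_0\phi(p)\,{\rm vol}(\partial B_r)$, while only about $2\,{\rm vol}(B_r)$ of the two sheets is removed. The net excess peaks at $r\sim t_0\phi(p)$, with size $c_n\,(t_0\phi(p))^n$.
\begin{itemize}
\item For $n\ge 3$ this is $o(t_0^2)$, and your construction can be completed with some care along the rest of the exhaustion.
\item For $n=2$ (surfaces in three-manifolds, the case used later in the paper) the excess is $2\pi t_0^2\phi(p)^2$. This is of the same order as the slack, and larger than it when $\tilde\Sigma$ is only mildly unstable, i.e.\ when $|\lambda_1|$ is small compared with $\min\phi^2/\int\phi^2$.
\end{itemize}
The same objection applies to any cone or isoperimetric contraction. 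Saying the mass stays ``close to ${\bf M}(\partial V_{t_0})$ plus a small lateral term'' is not enough; the error must be $o(t_0^2)$.

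The paper supplies exactly this ingredient by invoking the catenoid estimate of Ketover--Marques--Neves. There the hole is opened with a catenoidal neck combined with a logarithmic cut-off, whose cost is $o(t^2)$ in dimension three (a point has zero capacity in two dimensions). This gives an ${\bf F}$-continuous sweepout of $\overline V$ from $\partial V$ to $0$ with masses at most $2{\bf M}_g(\Sigma)-\eta$. Without it, your argument yields only the non-strict bound $W_g(\Omega)\le 2{\bf M}_g(\Sigma)$ when $n=2$, and likewise for $W^{(1)}$.

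A minor point: the family $\{\partial V_t\}$ is not ${\bf F}$-continuous at $t=0$, since $|\partial V_t|\to 2|\Sigma|\neq 0$ as varifolds. It is therefore not an admissible sweepout before some such modification, contrary to what your second paragraph says.
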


\begin{proof}
There is a neighborhood $V$ of $\Sigma$ such that the boundary of $\Omega\setminus V$ has a connected, strictly mean-convex component $\Sigma'=\partial V$, ${\rm vol}_g(\Sigma')<2 {\rm vol}_g(\Sigma)$, since the oriented two-cover  of $\Sigma$ is unstable.


The region $\Omega'=\Omega \setminus \overline{V}$ is connected since  $\Sigma$  is one-sided.  We can suppose that there is a smooth foliation  $\{\Sigma(t)\}_{t\in [-1,0)}$ of $\overline{V}\setminus \Sigma$, such that $\Sigma(-1)=\Sigma'$, $\Sigma(t)\rightarrow 2\Sigma$ as $t\rightarrow 0$ in varifold sense,   $f(t)={\bf M}_g(\Sigma(t))< 2\,{\bf M}_g(\Sigma)$ for  $t\neq 0$. and $f''(0)<0$. By the catenoid estimate (\cite{ketover-marques-neves}), the foliation $\{\Sigma(t)\}_{t\in [-1,0)}$ can be modified into a sweepout $\{\Sigma'(t)\}_{t\in [-1,0]}$ of $\overline{V}$, continuous in the ${\bf F}$-metric, such that  $\Sigma'(t)\subset \overline{V}$, $\Sigma'(-1)=\Sigma'$, $\Sigma'(0)=0$, and ${\bf M}_g(\Sigma'(t))\leq 2\,{\bf M}_g(\Sigma)-\eta$ for $t\in [-1,0]$, where $\eta>0$.  

Consider a map  $\Phi:[0,1]\rightarrow \tilde{\mathcal{Z}}_n(\Omega\setminus V,\mathbb{Z}_2)$, continuous in the ${\bf F}$-metric, 
$\Phi(0)=\partial \Omega$, $\Phi(1)=\Sigma'$,
and so that $\Phi$ is a sweepout of $\Omega\setminus V$.
Denote by $[\Phi]$ the class of such maps, and let
$$
W([\Phi])=\inf_{\Phi'\in [\Phi]} \sup_{t\in [0,1]}{\bf M}_g(\Phi'(t)).
$$
Then $W([\Phi])>{\bf M}_g(\partial \Omega)$, since $\partial \Omega$ is strictly stable. 
If $W([\Phi])>{\bf M}_g(\Sigma')$, then we can apply min-max theory to $[\Phi]$ as $\partial(\Omega\setminus V)$ is mean-convex. 
This implies a contradiction as in the proof of Proposition \ref{no.stable.two-sided.case}, since there is no two-sided, closed, stable minimal hypersurface $\tilde{\Sigma}$ in ${\rm int}(\Omega)$ with ${\rm vol}_g(\tilde{\Sigma})<{\rm vol}_g(\Sigma')<2\, {\rm vol}_g(\Sigma)$. Hence $W([\Phi])={\bf M}_g(\Sigma')$.



It follows by the definition of $W([\Phi])$ that we can find  a map $\Phi':[0,1]\rightarrow \tilde{\mathcal{Z}}_n(\Omega\setminus V,\mathbb{Z}_2)$,  continuous in the ${\bf F}$-metric, $\Phi'(0)=\partial \Omega$, $\Phi'(1)=\Sigma'$,
 $\Phi'$ a sweepout of $\Omega\setminus V$
  and such that
 $\sup_{t\in [0,1]}{\bf M}_g(\Phi'(t))\leq 2\, {\bf M}_g(\Sigma)-\eta/2$.  By concatenating $\Phi'$  and $\{\Sigma'(t)\}_{t\in [-1,0]}$, after reparametrizing, we obtain a map $\tilde{\Phi}$
 such that
 $$
 W_g(\Omega)\leq \sup_{t\in [0,1]}{\bf M}_g(\tilde{\Phi}(t))<2\,{\bf M}_g(\Sigma).
 $$
 The proof of $W^{(1)}(\Omega,g)< 2\, {\bf M}_g(\Sigma)$ is analogous. 
\end{proof}

\begin{prop}\label{stable.index}
Let $(\Omega,g)$ be a compact Riemannian manifold as in Proposition \ref{index.one.boundary}. Suppose that $\Sigma'$ is a  smooth, two-sided, closed, connected, embedded stable  minimal hypersurface in $\Omega$. Then 
  there is a smooth, connected, embedded, closed minimal hypersurface $\Sigma$ in $\Omega$, disjoint from $\partial \Omega$ and $\Sigma'$, two-sided, with Morse index   one, such that ${\rm vol}_g(\Sigma')<{\rm vol}_g(\Sigma)$.
\end{prop}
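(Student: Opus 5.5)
We will cut $\Omega$ along $\Sigma'$ and apply Proposition \ref{index.one.boundary} to a piece whose boundary contains $\Sigma'$. The hypothesis on $g$ rules out stable degenerate hypersurfaces, so $\Sigma'$ is strictly stable. Let $\hat\Omega$ be the metric completion of $\Omega\setminus\Sigma'$. If $\Sigma'$ separates $\Omega$, take $\hat\Omega$ to be the closure of one component. If $\Sigma'$ does not separate, $\hat\Omega$ is connected and its boundary contains two copies $\Sigma'_+$, $\Sigma'_-$ of $\Sigma'$. In either case $\hat\Omega$ is a compact manifold whose boundary is stable and minimal: it consists of components of $\partial\Omega$ together with one or two copies of $\Sigma'$. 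Every closed embedded minimal hypersurface of ${\rm int}(\hat\Omega)$ is one of ${\rm int}(\Omega)$, so $\hat\Omega$ inherits the nondegeneracy hypothesis. Proposition \ref{index.one.boundary} then gives a connected, two-sided, index one minimal hypersurface $\Sigma\subset{\rm int}(\hat\Omega)$ with ${\rm vol}_g(\Sigma)\le W_g(\hat\Omega)$. Being in the interior, $\Sigma$ is disjoint from $\partial\Omega$ and from $\Sigma'$.

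The remaining task is the strict inequality ${\rm vol}_g(\Sigma)>{\rm vol}_g(\Sigma')$. In the proof of Proposition \ref{index.one.boundary}, $\Sigma$ is the limit of min-max hypersurfaces $\Sigma_i$ for bumpy metrics $g_i$, with ${\rm vol}_{g_i}(\Sigma_i)=W_{g_i}(\hat\Omega)$. This equality comes from Zhou's multiplicity one theorem: the min-max varifold is $\Sigma_i$ with multiplicity one, since the boundary components are excluded as limits by the strict inequality below. Passing to the limit gives ${\rm vol}_g(\Sigma)=W_g(\hat\Omega)$. So it suffices to prove
$$W_g(\hat\Omega)>{\rm vol}_g(\Sigma'),$$
with a margin that survives $g_i\to g$.

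Consider any sweepout in the class $\Pi$ of $\hat\Omega$. It starts at $\partial\hat\Omega$ and ends at $0$, so by flat continuity it must pass through cycles that are far in flat norm from every sum of boundary components. Since $\partial\hat\Omega$ is strictly stable, Theorem 6.1 of \cite{marques-neves-lower-bound} gives $W_g(\hat\Omega)>{\bf M}_g(\partial\hat\Omega)$. This is the inequality already used in Proposition \ref{index.one.boundary}, and it holds uniformly for $g_i$ near $g$.

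\begin{itemize}
\item If $\Sigma'$ does not separate, $\partial\hat\Omega\supset\Sigma'_+\cup\Sigma'_-$, so $W_g(\hat\Omega)>2{\rm vol}_g(\Sigma')\ge{\rm vol}_g(\Sigma')$, and we are done.
\item If $\Sigma'$ separates, the chosen piece has $\Sigma'\subset\partial\hat\Omega$, so again $W_g(\hat\Omega)>{\bf M}_g(\partial\hat\Omega)\ge{\rm vol}_g(\Sigma')$.
\end{itemize}

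The main obstacle is making this bound robust: Theorem 6.1 of \cite{marques-neves-lower-bound} only gives a lower bound that is strict, and we need it to persist through the approximation $g_i\to g$ and to pass to the limit $\Sigma$. We would use that strict stability of $\partial\hat\Omega$ is an open condition, giving a uniform gap $W_{g_i}(\hat\Omega)\ge{\bf M}_{g_i}(\partial\hat\Omega)+\delta$ with $\delta>0$ independent of $i$. Since the convergence $\Sigma_i\to\Sigma$ has multiplicity one, ${\rm vol}_g(\Sigma)=\lim_i{\rm vol}_{g_i}(\Sigma_i)\ge{\rm vol}_g(\Sigma')+\delta$, which gives the strict inequality.
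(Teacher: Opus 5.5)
\textbf{There is a genuine gap.} The step that fails is the equality ${\rm vol}_{g_i}(\Sigma_i)=W_{g_i}(\hat\Omega)$.

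Zhou's multiplicity one theorem, together with the index bounds of \cite{marques-neves-lower-bound}, gives a min-max hypersurface $\Gamma_i$ with multiplicity one, total Morse index one, and ${\bf M}_{g_i}(\Gamma_i)=W_{g_i}(\hat\Omega)$. But $\Gamma_i$ need not be connected. For a bumpy metric it is the disjoint union of one index one component $\Sigma_i$ and possibly several strictly stable components in ${\rm int}(\hat\Omega)$, whose volumes also count toward the width. That is why Proposition \ref{index.one.boundary} only asserts ${\rm vol}_g(\Sigma)\le W_g(\Omega)$. Your justification, which excludes boundary components as limits, addresses a different issue.

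The stable components can carry most of the width; this is what happens in the examples of \cite{montezuma} recalled in the introduction. Since you cut only along $\Sigma'$, ${\rm int}(\hat\Omega)$ may contain stable hypersurfaces of any size. The bound $W_g(\hat\Omega)>{\bf M}_g(\partial\hat\Omega)$ then gives no information on ${\rm vol}_g(\Sigma)$, and nothing in your argument prevents the index one component from having volume below ${\rm vol}_g(\Sigma')$. Your uniform-gap discussion at the end does not address this.

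What is missing is an \emph{upper} bound on a width by ${\rm vol}_g(\Sigma)$, and for that you must first control the stable hypersurfaces. The paper does this in three steps:
\begin{enumerate}
\item It shows that the strictly stable hypersurfaces of volume $\le{\rm vol}_g(\Sigma')$ form a finite set, using Schoen--Simon compactness plus nondegeneracy.
\item It cuts $\Omega$ along a maximal disjoint subfamily containing $\Sigma'$, and works in the piece $\tilde\Omega_j$ that has $\Sigma'$ as a boundary component. The interior of $\tilde\Omega_j$ contains no stable two-sided hypersurface of volume $\le{\rm vol}_g(\Sigma')$.
\item For the index one $\Sigma$ given by Proposition \ref{index.one.boundary} in $\tilde\Omega_j$, it uses a dichotomy.
\end{enumerate}
In the first case, some stable $\tilde\Sigma\subset{\rm int}(\tilde\Omega_j)$ has ${\rm vol}_g(\tilde\Sigma)<{\rm vol}_g(\Sigma)$. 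Then ${\rm vol}_g(\Sigma')<{\rm vol}_g(\tilde\Sigma)<{\rm vol}_g(\Sigma)$ by the choice of the cut.

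In the second case there is no such $\tilde\Sigma$. Then Proposition \ref{no.stable.two-sided.case} produces a sweepout showing $W^{(1)}(\tilde\Omega_j,g)\le{\rm vol}_g(\Sigma)$. Strict stability of the boundary component $\Sigma'$ gives ${\rm vol}_g(\Sigma')<W^{(1)}(\tilde\Omega_j,g)$, and the conclusion follows.

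Your boundary lower bound is only this last ingredient. The cutting step and the dichotomy are what you need to add.
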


\begin{proof}
Let $\Lambda$ be the set of    smooth, closed, embedded, connected, two-sided, strictly stable minimal hypersurfaces  $\tilde{\Sigma}$,  ${\rm vol}_g(\tilde{\Sigma})\leq c$,   in the interior ${\rm int}(\Omega)\subset \Omega$, for $c>0$. The set $\Lambda$ of such hypersurfaces  is finite.  To prove this we suppose by contradiction that there is a sequence $\{\tilde{\Sigma}_i\in \Lambda\}$ such that
$\tilde{\Sigma}_i\neq \tilde{\Sigma}_j$ for any $i\neq j$.  By the curvature estimates of  \cite{schoen-simon},  there is a sequence $\{j\}\subset \{i\}$ such that $\tilde{\Sigma}_j$ converges smoothly locally graphically  to a closed, connected, embedded minimal hypersurface $\tilde{\Sigma}$ with  multiplicity. If $\tilde{\Sigma}$  is two-sided, 
 the convergence is smooth with multiplicity one since $\tilde{\Sigma}$ is not stable degenerate. Hence $\tilde{\Sigma}$ is stable,  and by nondegeneracy  $\tilde{\Sigma}_j=\tilde{\Sigma}$ for sufficiently large $j$. This is a contradiction. In the case $\tilde{\Sigma}$ is one-sided, the convergence is with multiplicity two. By lifting to the two-sided double cover of $\tilde{\Sigma}$, we get that the two-cover is  stable degenerate which is a contradiction. This proves that the set $\Lambda$ is finite.
 
Let  $c={\rm vol}_g(\Sigma')$.   There is  $\tilde{\Lambda} \subset \Lambda$, possibly empty, of mutually disjoint hypersurfaces that is not contained in any other set with this property.  We can define $\tilde{\Lambda}$ so that $\Sigma'\in \tilde{\Lambda}$ in case $\Sigma'$ is not one of the components of $\partial \Omega$.
Let $\Omega_1,\dots, \Omega_p$ be the connected components of the complement of $\bigcup_{\Sigma\in \tilde{\Lambda}} \Sigma$ in $\Omega$. It follows that, for any $1\leq j\leq p$,  there
is no  stable, two-sided,  closed minimal hypersurface  $\tilde{\Sigma}$ in ${\rm int}(\Omega_j)$, with ${\rm vol}_g(\tilde{\Sigma})\leq {\rm vol}_g(\Sigma').$

Let $\tilde{\Omega}_j$ be the metric closure of $\Omega_j$. Then there is $1\leq j\leq p$ such that  $\Sigma'\subset  \partial \tilde{\Omega}_j$. By Proposition \ref{index.one.boundary}, there is  a smooth, connected, embedded, closed minimal hypersurface $\Sigma$ in ${\rm int}(\tilde{\Omega}_j)$,  two-sided, with Morse index   one, such that ${\rm vol}_g(\Sigma)\leq W_g(\tilde{\Omega}_j)$.

Suppose that there is a smooth, two-sided, closed, stable minimal hypersurface $\tilde{\Sigma}$ in ${\rm int}(\tilde{\Omega}_j)$ with ${\rm vol}_g(\tilde{\Sigma})<{\rm vol}_g(\Sigma)$. Then ${\rm vol}_g(\Sigma')< {\rm vol}_g(\tilde{\Sigma})$, by definition of $\tilde{\Lambda}$ and $\Omega_j$. Hence ${\rm vol}_g(\Sigma')<{\rm vol}_g(\Sigma)$, which proves the proposition in this case. 

If there is no two-sided, stable minimal hypersurface $\tilde{\Sigma}$ in ${\rm int}(\tilde{\Omega}_j)$ with ${\rm vol}_g(\tilde{\Sigma})<{\rm vol}_g(\Sigma)$, then by Proposition \ref{no.stable.two-sided.case} it follows that 
$W^{(1)}(\tilde{\Omega}_j,g)\leq {\rm vol}_g(\Sigma)$. Since the hypersurface $\Sigma'$ is one of the components of 
$\partial \tilde{\Omega}_j$, and $\partial\tilde{\Omega}_j$ is strictly stable, it follows that ${\rm vol}_g(\Sigma')< W^{(1)}(\tilde{\Omega}_j,g)$. Therefore ${\rm vol}_g(\Sigma')<{\rm vol}_g(\Sigma)$, which finishes the proof of the proposition. 
\end{proof}

\begin{prop}\label{stable.one-sided.index}
Let $(\Omega,g)$ be a compact Riemannian manifold as in Proposition \ref{index.one.boundary}. Suppose that $\Sigma'$ is a  smooth, one-sided, closed, connected, embedded minimal hypersurface in $\Omega$ with stable oriented two-cover. Then 
  there is a smooth, connected, embedded, closed minimal hypersurface $\Sigma$ in $\Omega$, disjoint from $\partial \Omega$ and $\Sigma'$, two-sided, with Morse index   one, such that $2\,{\rm vol}_g(\Sigma')<{\rm vol}_g(\Sigma)$.
\end{prop}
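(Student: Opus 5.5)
The plan is to reduce to the two-sided case already treated in Proposition \ref{stable.index}. Cut $\Omega$ along the one-sided hypersurface $\Sigma'$. Let $V$ be a thin tubular neighborhood of $\Sigma'$. The oriented two-cover $\tilde{\Sigma}'$ of $\Sigma'$ is stable, and by the hypothesis on $g$ it is not stable degenerate, so it is strictly stable. The metric completion $\hat{\Omega}$ of $\Omega\setminus\Sigma'$ is then a compact manifold, connected since $\Sigma'$ is one-sided. Its boundary is $\partial\Omega\cup\tilde{\Sigma}'$, where $\tilde{\Sigma}'$ is a two-sided, strictly stable, minimal boundary component of volume $2\,{\rm vol}_g(\Sigma')$. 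So $(\hat{\Omega},g)$ satisfies the hypotheses of Proposition \ref{index.one.boundary}. Minimal hypersurfaces in ${\rm int}(\hat{\Omega})$ are minimal hypersurfaces in $\Omega$ disjoint from $\Sigma'$, so the no-degenerate-stable assumptions are inherited.

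Next, rerun the argument of Proposition \ref{stable.index} in $\hat{\Omega}$, with $\tilde{\Sigma}'$ playing the role of $\Sigma'$ and $c=2\,{\rm vol}_g(\Sigma')$. By the compactness argument there (Schoen--Simon plus nondegeneracy), the set $\Lambda$ of two-sided strictly stable hypersurfaces in ${\rm int}(\hat{\Omega})$ with volume $\le c$ is finite. Choose a maximal disjoint family $\tilde{\Lambda}\subset\Lambda$ and cut along it. Take the component $\tilde{\Omega}_j$ whose boundary contains $\tilde{\Sigma}'$. In ${\rm int}(\tilde{\Omega}_j)$ there is no two-sided stable hypersurface of volume $\le c$. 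Proposition \ref{index.one.boundary} applied to $\tilde{\Omega}_j$ gives a connected, two-sided, index one minimal $\Sigma\subset{\rm int}(\tilde{\Omega}_j)$, hence disjoint from $\partial\Omega$ and $\Sigma'$.

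It remains to show ${\rm vol}_g(\Sigma)>c$, by the same dichotomy as in Proposition \ref{stable.index}. Suppose some two-sided stable $\tilde{\Sigma}\subset{\rm int}(\tilde{\Omega}_j)$ has ${\rm vol}_g(\tilde{\Sigma})<{\rm vol}_g(\Sigma)$. Then by maximality ${\rm vol}_g(\tilde{\Sigma})>c$, and we are done. Otherwise Proposition \ref{no.stable.two-sided.case} gives $W^{(1)}(\tilde{\Omega}_j,g)\le{\rm vol}_g(\Sigma)$. Since $\partial\tilde{\Omega}_j$ is strictly stable and contains $\tilde{\Sigma}'$, ${\bf M}_g(\tilde{\Sigma}')<W^{(1)}(\tilde{\Omega}_j,g)$, which gives $2\,{\rm vol}_g(\Sigma')<{\rm vol}_g(\Sigma)$.

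The main obstacle I expect is this last strict inequality: one needs that any sweepout in $\Gamma^{(1)}(\tilde{\Omega}_j)$ must pass above the mass of the boundary component $\tilde{\Sigma}'$. Each sweepout starts or ends at a union of boundary components including $\tilde{\Sigma}'$ with multiplicity one, and strict stability of $\tilde{\Sigma}'$ gives a mountain pass. I would justify this via Theorem 6.1 of \cite{marques-neves-lower-bound}, as in Proposition \ref{index.one.boundary}, applied near $\tilde{\Sigma}'$, where the double cover is strictly stable. A second point to check is that $\hat{\Omega}$ is a genuine smooth manifold with boundary, even though it maps two-to-one onto $\Sigma'$. This is fine because one may equivalently work in $\Omega\setminus V$, whose new boundary component $\partial V$ is mean-convex, and pass to the limit as $V$ shrinks.
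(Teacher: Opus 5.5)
Your proposal is correct and is essentially the paper's proof: the paper applies the argument of Proposition \ref{stable.index} to the metric closure of $\Omega\setminus\Sigma'$, whose new boundary component is the stable, two-sided oriented two-cover $\tilde{\Sigma}'$ with ${\rm vol}_g(\tilde{\Sigma}')=2\,{\rm vol}_g(\Sigma')$. The details you add (finiteness of $\Lambda$, maximality of $\tilde{\Lambda}$ forcing ${\rm vol}_g(\tilde{\Sigma})>c$, and ${\bf M}_g(\tilde{\Sigma}')<W^{(1)}(\tilde{\Omega}_j,g)$ from strict stability) are exactly the steps of that proof, and the detour through $\Omega\setminus V$ is unnecessary.
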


\begin{proof}
This follows by applying the proof of Proposition \ref{stable.index} to the metric closure $\tilde{\Omega}'$ of $\Omega'=\Omega \setminus \Sigma'$. This is because the oriented two-cover $\tilde{\Sigma}'\subset \tilde{\Omega}'$
of $\Sigma'$ is a stable, two-sided, embedded, closed minimal hypersurface with ${\rm vol}_g(\tilde{\Sigma}')=2\, {\rm vol}_g(\Sigma')$.
\end{proof}

Let $(M^{n+1},g)$ be a compact oriented Riemannian $(n+1)$-dimensional manifold, possibly with boundary $\partial M$,  and  let $(N^{n+1},h)$
be a closed oriented Riemannian $(n+1)$-manifold, such that $3\leq (n+1)\leq 7$.

Suppose that  $h$ has no stable, degenerate, closed, two-sided, embedded   minimal hypersurfaces or one-sided, closed, embedded minimal hypersurfaces with stable, degenerate, oriented  two-cover. 

\begin{prop}\label{index.width}
There is a smooth, connected, embedded, closed minimal hypersurface $\Sigma$ in $N$,  with Morse index   one, two-sided, such that
$$
|\Sigma|_h\leq W^{(k)}(N,h).
$$
\end{prop}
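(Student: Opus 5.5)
We want an index one, two-sided, connected, embedded minimal $\Sigma\subset(N,h)$ with $|\Sigma|_h\le W^{(k)}(N,h)$; implicitly $k\neq 0$. The plan is to decompose $N$ along a maximal disjoint family of short strictly stable hypersurfaces, exactly as in the proof of Proposition \ref{stable.index}, and then use Propositions \ref{index.one.boundary}, \ref{no.stable.two-sided.case} and \ref{no.stable.one-sided.case} to compare with $W^{(k)}(N,h)$.

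\textbf{Case 1: no stable hypersurface below the width.} Set $c=W^{(k)}(N,h)$. Suppose first that $N$ contains no two-sided stable closed minimal hypersurface of volume $<c$. Then I would apply Proposition \ref{index.one.boundary} with $\Omega=N$, which has no boundary. This gives an index one, two-sided, connected $\Sigma$ with $|\Sigma|_h\le W_h(N)$.

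It remains to show $W_h(N)\le W^{(k)}(N,h)$. Heuristically, the $\mathbb{Z}_2$ reduction of a $\Gamma^{(k)}$ sweepout is a sweepout in the $\mathbb{Z}_2$ class when $k$ is odd. In general this comparison is delicate, so I would instead argue directly by contradiction using the width characterization. If $|\Sigma|_h>W^{(k)}$, then Proposition \ref{no.stable.two-sided.case} gives $W^{(1)}(N,h)\le|\Sigma|_h$, which is the wrong direction. So the direct approach fails, and this comparison is the main obstacle.

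\textbf{Alternative: min-max in the $\mathbb{Z}$ class.} More robust is to run min-max directly for the integral homotopy class $\Gamma^{(k)}$. Approximate $h$ by bumpy metrics $h_i$, and apply Almgren--Pitts min-max with integer coefficients together with the index bounds of \cite{marques-neves-index}. This yields embedded minimal cycles with index $\le 1$ and mass $=W^{(k)}(N,h_i)$; the width is positive since $k\neq 0$ and by the isoperimetric inequality. For bumpy metrics, the multiplicity one theorem of \cite{zhou-multiplicity} and \cite{marques-neves-lower-bound} produce a two-sided component of index one, whose volume is at most the total mass. Passing to the limit $h_i\to h$, using compactness \cite{sharp} and the nondegeneracy hypotheses on stable two-sided and one-sided surfaces, gives $\Sigma$ as in the end of the proof of Proposition \ref{index.one.boundary}. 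The inequality is preserved because $W^{(k)}(N,h_i)\to W^{(k)}(N,h)$.

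\textbf{Case 2: stable hypersurfaces exist below the width.} If stable hypersurfaces of volume $<c$ exist (finitely many by the argument in Proposition \ref{stable.index}), take a maximal disjoint family $\tilde\Lambda$ and cut $N$ into regions $\tilde\Omega_j$. Some region $\tilde\Omega_j$ has $W_h(\tilde\Omega_j)\le c$. To see this, a $\Gamma^{(k)}$ sweepout restricted to the regions sweeps out at least one of them: its $\Omega(1)=k[N]\neq0$ projects nontrivially, and the boundary pieces cost less than $c$ by area comparison. Proposition \ref{index.one.boundary} applied in that $\tilde\Omega_j$ then finishes the proof. This localization step is the second point where I expect the real work.
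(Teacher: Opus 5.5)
Your proposal has a genuine gap at the step that carries all the content of the proposition. In the ``Alternative'' paragraph you invoke the multiplicity one theorem of \cite{zhou-multiplicity} to obtain a two-sided index one component of the min-max limit for $\Gamma^{(k)}$. That theorem is about $\mathbb{Z}_2$ sweepouts by boundaries of Caccioppoli sets. It says nothing about the integral class $\Gamma^{(k)}$, and in particular nothing for even $k$, where the mod $2$ reduction of $k[N]$ is zero.

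What integral min-max actually gives, combined with Theorem 1.7 of \cite{marques-neves-index} and the limit $h_i\to h$, is a dichotomy for the unstable component $\Sigma'$:
\begin{itemize}
\item[(a)] $\Sigma'$ is two-sided of index one with $|\Sigma'|_h\le W^{(k)}(N,h)$, and you are done;
\item[(b)] $\Sigma'$ is one-sided with unstable oriented two-cover, carried with even multiplicity. Then only $2|\Sigma'|_h\le W^{(k)}(N,h)$ holds, and no two-sided index one hypersurface has been produced.
\end{itemize}
Your proposal simply asserts alternative (b) away.

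Your Case 1 heuristic is in fact sound for odd $k$. Mod $2$ reduction is mass-nonincreasing and mass-continuous, so after passing to mass-continuous approximations you get $W_h(N)\le W^{(k)}(N,h)$. Proposition \ref{index.one.boundary} with $\Omega=N$ then finishes, and you do not need your assumption on stable hypersurfaces. It is the even case that genuinely breaks.

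Case 2 does not close the gap. Cutting a $\Gamma^{(k)}$ sweepout along $\partial\tilde\Omega_j$ does not give an admissible sweepout of $\tilde\Omega_j$ with mass below $c$. The boundary of the restriction of $\Omega(t)$ to $\tilde\Omega_j$ contains a piece on $\partial\tilde\Omega_j$, weighted by the density of $\Omega(t)$ there; at $t=1$ it equals $k\,\partial\tilde\Omega_j$. Its mass is not controlled by $c$. Moreover, for even $k$ its mod $2$ reduction is again trivial, so nothing follows about the $\mathbb{Z}_2$ width $W_h(\tilde\Omega_j)$. The paper only ever glues local sweepouts into a global one, never the converse.

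The missing idea is to localize around the one-sided surface itself:
\begin{enumerate}
\item Set $c=2|\Sigma'|_h$.
\item Take a maximal disjoint family of strictly stable two-sided minimal hypersurfaces of volume $\le c$ that are disjoint from $\Sigma'$.
\item Let $\Omega$ be the component of the complement that contains $\Sigma'$.
\item Apply Proposition \ref{no.stable.one-sided.case}, which you list but never use. Through the catenoid-estimate sweepout of a neighborhood of $\Sigma'$, it gives $W_h(\Omega)<2|\Sigma'|_h$.
\item Apply Proposition \ref{index.one.boundary} to get a two-sided index one $\Sigma\subset\mathrm{int}(\Omega)$ with $|\Sigma|_h\le W_h(\Omega)<2|\Sigma'|_h\le W^{(k)}(N,h)$.
\end{enumerate}
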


\begin{proof}
Let $h_i$ be a sequence of bumpy metrics converging smoothly to $h$. By min-max theory applied to $W^{(k)}(N,h)$, using the Theorem 1.7 of \cite{marques-neves-index} and taking a limit as in the proof of Proposition \ref{index.one.boundary}, there is a closed, embedded, minimal hypersurface $\Sigma' \subset N$ that is either two-sided, of Morse index one, and $|\Sigma'|_h\leq W^{(k)}(N,h)$, or one-sided, with unstable oriented two-cover, and $2\, |\Sigma'|_h\leq W^{(k)}(N,h)$. We can suppose $\Sigma'$ is one-sided since in the two-sided case the proposition follows with $\Sigma=\Sigma'$.

Let  $c=2\, {\rm vol}_h(\Sigma')$, and define  $\Lambda$ to  be the set of    smooth, closed, embedded, connected, two-sided, strictly stable minimal hypersurfaces  $\tilde{\Sigma}\subset N$,  ${\rm vol}_h(\tilde{\Sigma})\leq c$,  disjoint from $\Sigma'$.  
It follows  by the proof of Proposition \ref{stable.index} that  the set $\Lambda$ is finite, and hence there is a  set $\tilde{\Lambda} \subset \Lambda$, possibly empty, of mutually disjoint hypersurfaces that is not contained in any other set with this property.

Let $\Omega$ be the connected component of the complement of $\cup_{\tilde{\Sigma}\in \tilde{\Lambda}}\tilde{\Sigma}$ such that  
$\Sigma'\subset {\rm int}(\Omega)$. By Proposition \ref{no.stable.one-sided.case}, $W_h(\Omega)< 2|\Sigma'|_h$. Then, by Proposition \ref{index.one.boundary}, there is a closed, two-sided, embedded, minimal hypersurface $\Sigma\subset {\rm int}(\Omega)$, with Morse index one, such that $|\Sigma|_h\leq W_h(\Omega)$. This finishes the proof of the proposition since
$W_h(\Omega)< 2|\Sigma'|_h\leq W^{(k)}(N,h)$.

\end{proof}

 Suppose that the metric $g$ also has no stable, degenerate, closed, two-sided, embedded   minimal hypersurfaces or one-sided, closed, embedded minimal hypersurfaces with stable, degenerate, oriented  two-cover.  In the case $M$ has boundary,  suppose that $\partial M$ is a strictly stable minimal hypersurface.
 
 We prove that an $n$-volume nonincreasing Lipschitz map can be used to construct connected, Morse index one, two-sided minimal hypersurfaces with volume bounds.
 
 \begin{thm}\label{connected.minimal}
Suppose that  $f:(M,g)\rightarrow (N,h)$ is an   $n$-volume nonincreasing Lipschitz map such that $f_{\#}(\Sigma')=0$ for any connected component $\Sigma'$ of $\partial M$. If ${\rm deg}(f)\neq 0$, then there is a smooth, connected, embedded, closed minimal hypersurface $\Sigma$ in $M$, disjoint from $\partial M$, with Morse index   one, two-sided, such that 
\begin{itemize}
\item[(i)] either there is a smooth, connected, embedded, closed minimal hypersurface $\Sigma'$ in $N$, with Morse index   one, two-sided, and
$$|\Sigma'|_h\leq W^{(k)}(N,h) \leq |\Sigma|_g,$$
\item[(ii)] or there are smooth, connected, embedded, two-sided, stable minimal hypersurfaces $\tilde{\Sigma}$ in $M$ and $\tilde{\Sigma}'$ in $N$, and
$$
|\tilde{\Sigma}'|_h\leq |\tilde{\Sigma}|_g \leq |\Sigma|_g.
$$
\end{itemize}
\end{thm}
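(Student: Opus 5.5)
The strategy is to decompose $M$ along stable minimal hypersurfaces, as in the proof of Proposition \ref{stable.index}, and then push the resulting sweepouts forward by $f$.

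First I split into cases according to whether $M$ contains a two-sided stable minimal hypersurface of small volume. Let $\Lambda$ be the set of smooth, connected, two-sided, strictly stable minimal hypersurfaces in ${\rm int}(M)$. By the finiteness argument in the proof of Proposition \ref{stable.index}, only finitely many elements of $\Lambda$ lie below any given volume bound.

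\emph{Case A: no such hypersurface has small volume.} Suppose no $\tilde\Sigma\in\Lambda$ has $f_{\#}(\tilde\Sigma)$ of small mass in a suitable sense. I apply Proposition \ref{index.one.boundary} to $M$ itself, whose boundary is strictly stable. This gives a two-sided, connected, index one hypersurface $\Sigma\subset{\rm int}(M)$. Then I want to show $W^{(1)}(M,g)\le|\Sigma|_g$. If there is no two-sided stable hypersurface of volume less than $|\Sigma|_g$, Proposition \ref{no.stable.two-sided.case} gives exactly this. Next I take $\tilde\Phi\in\Gamma^{(1)}(M)$ with $\sup{\bf M}_g(\tilde\Phi)\le|\Sigma|_g$ and compose it with $f_{\#}$. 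Since $f_{\#}$ kills the boundary components and $f_{\#}(M)=k[N]$, the map $f_{\#}\circ\tilde\Phi$ is a loop in $\mathcal Z_n(N,\mathbb Z)$ based at $0$ with $\Lambda([f_\#\tilde\Phi])=k[N]$. It is only flat continuous, so I first discretize and interpolate (Almgren/Pitts, or Marques--Neves interpolation) to obtain an ${\bf F}$-continuous element of $\Gamma^{(k)}(N)$ with max mass at most $|\Sigma|_g+\epsilon$. Since $f$ is $n$-volume nonincreasing, ${\bf M}_h(f_\#T)\le{\bf M}_g(T)$, which gives $W^{(k)}(N,h)\le|\Sigma|_g$. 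Proposition \ref{index.width} then produces $\Sigma'$, and this is alternative (i).

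\emph{Case B: stable hypersurfaces are present.} Now suppose there is a two-sided stable $\tilde\Sigma$ with $|\tilde\Sigma|_g<|\Sigma|_g$. I take a maximal disjoint family $\tilde\Lambda$ of such hypersurfaces below a volume threshold and cut $M$ into pieces $\Omega_j$. Since $f_\#(M)=\sum_j f_\#(\Omega_j)$ has degree $k\neq0$, either
\begin{itemize}
\item[(a)] some piece satisfies $f_\#(\partial\Omega_j)\neq0$ componentwise, or
\item[(b)] all boundary pushforwards vanish and some piece has nonzero degree.
\end{itemize}
In subcase (b) I repeat the Case A argument on that piece. This uses Proposition \ref{index.one.boundary} and Proposition \ref{no.stable.two-sided.case}, valid because the piece contains no smaller stable hypersurfaces. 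The result is (i) for an index one $\Sigma$ in that piece. In subcase (a) there is a component $\tilde\Sigma$ of $\partial\Omega_j$ with $f_\#(\tilde\Sigma)\neq0$, a nonzero cycle in $N$. By a homology argument, the cycle $f_\#\tilde\Sigma$ either bounds or represents a nontrivial class, and minimizing mass in its class (or a relative class) yields a two-sided stable minimal $\tilde\Sigma'$ in $N$ with $|\tilde\Sigma'|_h\le{\bf M}_h(f_\#\tilde\Sigma)\le|\tilde\Sigma|_g$. Proposition \ref{stable.index} applied to $\tilde\Sigma$ gives an index one $\Sigma$ with $|\tilde\Sigma|_g<|\Sigma|_g$, which is alternative (ii).

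I expect subcase (a) to be the main obstacle. If $f_\#\tilde\Sigma$ is null-homologous in $N$, minimizing in its homology class gives only $0$, so one needs instead to minimize in a sweepout/isotopy-type class. Alternatively, one can run min-max on the piece, where the degree is forced to be nonzero since the pieces' pushforwards sum to $k[N]$. The case distinction should therefore be organized by whether every stable component has null pushforward, treated as (b), versus some has nontrivial class, treated as (a). Bookkeeping the signs and orientations in the pushforward sweepout is the other delicate point.
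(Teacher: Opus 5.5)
Your Case A is fine. There is a genuine gap, though, and it is exactly where you suspected: subcase (a). You split according to whether the boundary pushforwards $f_{\#}(\tilde\Sigma)$ all vanish, and otherwise minimize in the homology class of $f_{\#}(\tilde\Sigma)$. This misses the main case. Suppose $f_{\#}(\tilde\Sigma)\neq 0$ but is null-homologous in $N$; this always happens in the key application $N=S^{n+1}_r$, where $H_n=0$. Then homological minimization returns $0$. Worse, alternative (ii) can then be false, since the round sphere has no stable minimal hypersurfaces. So the proof must produce (i) in that situation, and nothing in your plan does:
\begin{itemize}
\item subcase (b) requires the boundary pushforwards to vanish identically;
\item ``running min-max on the piece'' is not meaningful, because when $f_{\#}(\partial\tilde\Omega_j)\neq 0$ the chain $f_{\#}(\tilde\Omega_j)$ is not a multiple of $[N]$, and $f_{\#}$ of a sweepout of $\tilde\Omega_j$ is not a loop in $\mathcal{Z}_n(N,\mathbb{Z})$.
\end{itemize}

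The missing idea is a constrained minimization. It replaces your homological dichotomy with the question of whether $f_{\#}(\tilde\Sigma)$ can be contracted cheaply. For each $\tilde\Sigma$ in the maximal disjoint family, minimize ${\bf M}_h(\Phi(1))$ over ${\bf F}$-continuous paths $\Phi$ in $\tilde{\mathcal{Z}}_n(N,\mathbb{Z})$ with $\Phi(0)=-f_{\#}(\tilde\Sigma)$ and ${\bf M}_h(\Phi(t))\le {\bf M}_h(f_{\#}(\tilde\Sigma))+\delta$. A minimizer exists by compactness plus interpolation.
\begin{itemize}
\item If the minimizer is nonzero, it is locally area-minimizing. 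A component of its support is then a two-sided stable hypersurface with area $\le {\bf M}_h(f_{\#}(\tilde\Sigma))\le|\tilde\Sigma|_g$, which gives (ii).
\item If the minimizer is zero for every $\tilde\Sigma$, you get contractions of each $f_{\#}(\tilde\Sigma)$ with mass excess $\le\delta$, sweeping out fillings $R_{\tilde\Sigma}$.
\end{itemize}
In the second case, cap off each pushed-forward piece sweepout $f_{\#}\circ\tilde\Psi_j$ (from Proposition \ref{no.stable.two-sided.case}) by these contractions at both ends, which gives loops in $\mathcal{Z}_n(N,\mathbb{Z})$. Then concatenate over \emph{all} pieces. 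The fillings cancel pairwise, because adjacent pieces induce opposite orientations on $\tilde\Sigma$, and $R_{\tilde\Sigma}=0$ for components of $\partial M$. The result is a sweepout of $f_{\#}(M)=k[N]$, that is, an element of $\Gamma^{(k)}(N)$.

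Now take the threshold $c=W^{(k)}(N,h)$ and assume, for contradiction, that every piece's index one surface has volume $\le W^{(k)}(N,h)-2\eta$. This assumption is also what makes Proposition \ref{no.stable.two-sided.case} applicable in every piece; your threshold was left unspecified, which makes that step circular. The concatenated sweepout then has masses below $W^{(k)}(N,h)$, a contradiction. So some piece's index one surface has volume $\ge W^{(k)}(N,h)$, which is (i) with the original $k$.

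Note also that your subcase (b) picks a single piece of nonzero degree $k_j$. That only yields $W^{(k_j)}(N,h)\le|\Sigma_j|_g$, and $W^{(k)}\le W^{(k_j)}$ need not hold. The global concatenation is what produces the stated bound with $W^{(k)}(N,h)$.
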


\begin{proof}
Let $\Phi$ be a map in $\Gamma^{(1)}(M)$, continuous in the mass topology. Then $f_{\#}\circ \Phi$ is continuous in the mass topology and $f_{\#}\circ \Phi\in \Gamma^{(k)}(N)$, where  ${\rm deg}(f)=k$. Since ${\bf M}_h((f_{\#}\circ \Phi)(x))\leq {\bf M}_g(\Phi(x))$ for any $\Phi$ and $x\in [0,1]$, it follows that
$$
W^{(k)}(N,h)\leq W^{(1)}(M,g).
$$
We are using that maps that are continuous in the ${\bf F}$-metric can be approximated by maps continuous in the mass topology (see appendix of \cite{marques-neves-index}).

Let $c=W^{(k)}(N,h)$, and define $\Lambda$ to be the set of    smooth, closed, embedded, connected, two-sided, strictly stable minimal hypersurfaces  $\tilde{\Sigma}$,  ${\rm vol}_g(\tilde{\Sigma})\leq c$,   in the interior ${\rm int}(M)\subset M$. The set $\Lambda$ of such hypersurfaces  is finite, as in the proof of Proposition \ref{stable.index}. 
 Hence there is a  set $\tilde{\Lambda} \subset \Lambda$, possibly empty, of mutually disjoint hypersurfaces, that is not contained in any other set with this property.  
Let $\Omega_1,\dots, \Omega_p$ be the connected components of the complement of $\bigcup_{\Sigma\in \tilde{\Lambda}} \Sigma$. It follows that, for any $1\leq j\leq p$,  there
is no  stable, two-sided,  closed minimal hypersurface  $\tilde{\Sigma}$ in ${\rm int}(\Omega_j)$, with ${\rm vol}_g(\tilde{\Sigma})\leq W^{(k)}(N,h).$

If  $\tilde{\Omega}_j$ is  the metric closure of $\Omega_j$, then $\tilde{\Omega}_j$ is an $(n+1)$-dimensional compact manifold, possibly with boundary, any  connected component of $\partial \tilde{\Omega}_j$ can be identified with one of the components of $\partial M$ or with a hypersurface in $\tilde{\Lambda}$, 
  and ${\rm int}(\tilde{\Omega}_j)=\Omega_j$.  By Proposition \ref{index.one.boundary}, since $\partial \tilde{\Omega}_j$ is strictly stable, there is a smooth, connected, embedded, closed minimal hypersurface $\Sigma_j' \subset {\rm int}(\tilde{\Omega}_j)$, two-sided, with Morse index one, and ${\rm vol}_g(\Sigma_j')\leq W_g(\tilde{\Omega}_j)$.

 Suppose that there is $\eta>0$ such that, for any $1\leq j\leq p$,  ${\rm vol}_g(\Sigma_j')\leq W^{(k)}(N,h)-2\eta$.
  Since there is no closed, two-sided, stable minimal hypersurface $\tilde{\Sigma}\subset {\rm int}(\tilde{\Omega}_j)$ with 
  ${\rm vol}_g(\tilde{\Sigma})< {\rm vol}_g(\Sigma_j')$, by Proposition \ref{no.stable.two-sided.case} 
   there is 
a map $\tilde{\Psi}_j:[-1,1]\rightarrow \tilde{\mathcal{Z}}_n(\tilde{\Omega}_j,\mathbb{Z})$, continuous in the ${\bf F}$-metric, such that
$\tilde{\Psi}_j(-1)=-\partial' \tilde{\Omega}_j$,  $\tilde{\Psi}_j(1)=\partial'' \tilde{\Omega}_j$, the map $\tilde{\Psi}_j$ is a sweepout of $\tilde{\Omega}_j$, and 
$$
{\bf M}_g(\tilde{\Psi}_j(t))\leq {\rm vol}_g(\Sigma_j') 
$$ 
for any $t\in [-1,1]$. Hence $\sup_{t\in [-1,1]}{\bf M}_g(\tilde{\Psi}_j(t)) \leq W^{(k)}(N,h)-2\eta$.  The hypersurfaces $\partial'\tilde{\Omega}_j$, $\partial''\tilde{\Omega}_j$, are unions of components of $\partial \tilde{\Omega}_j$ such that $\partial'\tilde{\Omega}_j$ and  $\partial''\tilde{\Omega}_j$ are disjoint, with 
$\partial \tilde{\Omega}_j=\partial'\tilde{\Omega}_j \cup \partial''\tilde{\Omega}_j$. The orientation of $\partial \tilde{\Omega}_j$ is induced by that of $\tilde{\Omega}_j$.
Notice that $|\tilde{\Sigma}|_g\leq |\Sigma_j'|_g$ for any component $\tilde{\Sigma}$ of $\partial \tilde{\Omega}_j$.

It follows by the appendix of \cite{marques-neves-index} that the map $\tilde{\Psi}_j$ can be approximated by maps that are continuous in the mass topology. Hence  there is 
a map $\tilde{\Psi}'_j:[-1,1]\rightarrow \tilde{\mathcal{Z}}_n(\tilde{\Omega}_j,\mathbb{Z})$, continuous in the mass topology, such that
$\tilde{\Psi}'_j(-1)=-\partial' \tilde{\Omega}_j$,  $\tilde{\Psi}'_j(1)=\partial'' \tilde{\Omega}_j$, the map $\tilde{\Psi}'_j$ is a sweepout of $\tilde{\Omega}_j$, and 
$$
{\bf M}_g(\tilde{\Psi}'_j(t))\leq {\rm vol}_g(\Sigma_j') +\eta
$$ 
for any $t\in [-1,1]$.

Let $\delta>0$ be such that $\delta \cdot  (\#\tilde{\Lambda})\leq \eta$. Let $\tilde{\Sigma}\in \tilde{\Lambda}$ with an orientation. We   are going to use a constrained 
minimization in the homology class of $f_{\#}(\tilde{\Sigma})\in \tilde{\mathcal{Z}}_n(N,\mathbb{Z}).$ Denote by  $\Gamma_{\tilde{\Sigma}}$
 the set of maps continuous in the ${\bf F}$-metric $\Phi:[0,1]\rightarrow \tilde{\mathcal{Z}}_n(N,\mathbb{Z})$, such that $\Phi(0)=-f_{\#}(\tilde{\Sigma})$ and ${\bf M}_h(\Phi(t))\leq {\bf M}_h(f_{\#}(\tilde{\Sigma}))+\delta$ for any $t\in [0,1]$. Let
$$
\lambda_{\tilde{\Sigma}}= \inf_{\Phi\in \Gamma_{\tilde{\Sigma}}}{\bf M}_h(\Phi(1)).
$$
Then $\lambda_{\tilde{\Sigma}}\leq {\bf M}_h(f_{\#}(\tilde{\Sigma}))$.

Let $\{\Phi_i\in \Gamma_{\tilde{\Sigma}}\}_i$ be a sequence such that ${\bf M}_h(\Phi_i(1))\rightarrow \lambda_{\tilde{\Sigma}}$. By compactness of the space of integral currents with mass bounds and lower semicontinuity of mass, we can suppose by passing to a subsequence that there is $T\in \tilde{\mathcal{Z}}_n(N,\mathbb{Z})$, ${\bf M}_h(T)\leq \lambda_{\tilde{\Sigma}}$, such that 
$\mathcal{F}(\Phi_i(1),T)\rightarrow 0$. The Proposition A.2 of \cite{marques-neves-index} implies by concatenation that there is $\Phi\in \Gamma_{\tilde{\Sigma}}$ such that $\Phi(1)=T$, and hence ${\bf M}_h(T)=\lambda_{\tilde{\Sigma}}$. 

If $\lambda_{\tilde{\Sigma}}>0$, the support of $T$ is a smooth, embedded, closed minimal hypersurface of $(N,h)$, since for any
$p\in {\rm support}(T)$, there is $r>0$ such that $T$ is area-minimizing in $B_r(p)$. By construction, in this case ${\rm support}(T)$ is two-sided, stable and $|T|_h\leq {\bf M}_h(f_{\#}(\tilde{\Sigma}))$.  Since the map $f$ is $n$-volume nonincreasing, 
${\bf M}_h(f_{\#}(\tilde{\Sigma}))\leq {\bf M}_g(\tilde{\Sigma})$. In this case the theorem is proved using case (ii) with $\tilde{\Sigma}'$ a connected component of ${\rm spt}(T)$ and 
$\Sigma=\Sigma_j'$, where $1\leq j\leq p$ is such that $\tilde{\Sigma}\subset \partial \tilde{\Omega}_j$.

Hence we can suppose that there is $\Phi_{\tilde{\Sigma}}\in \Gamma_{\tilde{\Sigma}}$ such that $\Phi_{\tilde{\Sigma}}(1)=0$. The map $\Phi_{\tilde{\Sigma}}$ is a sweepout of a chain $R_{\tilde{\Sigma}}\in  {\bf I}_{n+1}(N,\mathbb{Z})$ such that
$\partial R_{\tilde{\Sigma}}=f_{\#}(\tilde{\Sigma})$. If $-\tilde{\Sigma}$ denotes $\tilde{\Sigma}$ with the opposite orientation,
we define $\Phi_{-\tilde{\Sigma}}(t)=-\Phi_{\tilde{\Sigma}}(t)$ and $R_{-\tilde{\Sigma}}=-R_{\tilde{\Sigma}}$.

If $\tilde{\Sigma}$ is a component of $\partial M$, then  $f_{\#}(\tilde{\Sigma})=0$. In this case, we can define the map
$\Phi_{\tilde{\Sigma}}:[0,1]\rightarrow \tilde{\mathcal{Z}}_n(N,\mathbb{Z})$, $\Phi_{\tilde{\Sigma}}(t)=0$ for $t\in [0,1]$, and the chain $R_{\tilde{\Sigma}}=0$ in ${\bf I}_{n+1}(N,\mathbb{Z})$.

Let $1\leq j\leq p$.   The map $f_{|\Omega_j}$ induces a map $f:\tilde{\Omega}_j\rightarrow N$ that is Lipschitz and $n$-volume nonincreasing.  Hence the  map $\tilde{\Phi}_j=f_{\#} \circ \tilde{\Psi}'_j$ is such that  $\tilde{\Phi}_j:[-1,1]\rightarrow \tilde{\mathcal{Z}}_n(N,\mathbb{Z})$ is continuous in the mass topology, 
$\tilde{\Phi}_j(-1)=-f_{\#}(\partial' \tilde{\Omega}_j)$,  $\tilde{\Phi}_j(1)=f_{\#}(\partial'' \tilde{\Omega}_j)$, the map $\tilde{\Phi}_j$ is a sweepout of $f_{\#}(\tilde{\Omega}_j)$, and 
$$
{\bf M}_h(\tilde{\Phi}_j(t))\leq {\bf M}_g(\tilde{\Psi}'_j(t))\leq  {\rm vol}_g(\Sigma_j') +\eta
$$ 
for any $t\in [-1,1]$. Hence $\sup_{t\in [-1,1]}{\bf M}_h(\tilde{\Phi}_j(t)) \leq W^{(k)}(N,h)-\eta$.  The chain $f_{\#}(\partial' \tilde{\Omega}_j)$ is a boundary, since $f_{\#}(\partial' \tilde{\Omega}_j)=\partial(\sum_{\tilde{\Sigma}\subset \partial' \tilde{\Omega}_j} R_{\tilde{\Sigma}}).$ Therefore $\tilde{\Phi}_j(t)\in \mathcal{Z}_n(N,\mathbb{Z})$ for any $t\in [0,1]$.

We orient the components $\tilde{\Sigma}$ of $\partial \tilde{\Omega}_j$ with the orientation induced by that of $\tilde{\Omega}_j$. Hence
$\partial'\tilde{\Omega}_j=\sum_{\tilde{\Sigma}\subset \partial' \tilde{\Omega}_j} \tilde{\Sigma}$ and $\partial''\tilde{\Omega}_j=\sum_{\tilde{\Sigma}\subset \partial'' \tilde{\Omega}_j} \tilde{\Sigma}$.
By concatenating the maps  $\sum_{\tilde{\Sigma}\subset \partial' \tilde{\Omega}_j}  \Phi_{\tilde{\Sigma}}$, $\tilde{\Phi}_j$, and 
$-\sum_{\tilde{\Sigma}\subset \partial'' \tilde{\Omega}_j}  \Phi_{\tilde{\Sigma}}$, after reparametrizing, we obtain a map $\Phi_j:[0,1]\rightarrow \mathcal{Z}_n(N,\mathbb{Z})$, continuous in the {\bf F}-metric, such that $\Phi_j(0)=0$, $\Phi_j(1)=0$, 
$\Phi_j$ is a sweepout of 
\begin{eqnarray*}
\Omega(j)&=&-\sum_{\tilde{\Sigma}\subset \partial' \tilde{\Omega}_j} R_{\tilde{\Sigma}}+f_{\#}(\tilde{\Omega}_j)-\sum_{\tilde{\Sigma}\subset \partial'' \tilde{\Omega}_j} R_{\tilde{\Sigma}}
=f_{\#}(\tilde{\Omega}_j)-\sum_{\tilde{\Sigma}\subset \partial \tilde{\Omega}_j} R_{\tilde{\Sigma}},
\end{eqnarray*}
and
$$
\sup_{t\in [0,1]} {\bf M}(\Phi_j(t))\leq {\rm vol}_g(\Sigma_j')+\eta.
$$
We are using that 
\begin{eqnarray*}
\sum_{\tilde{\Sigma}\subset \partial'\tilde{\Omega}_j}  ({\bf M}_h(f_{\#}(\tilde{\Sigma}))+\delta)&\leq& \sum_{\tilde{\Sigma}\subset \partial'\tilde{\Omega}_j}  ({\bf M}_g(\tilde{\Sigma})+\delta)\leq {\bf M}_g(\partial'\tilde{\Omega}_j)+\eta\\
&=& {\bf M}_g(\tilde{\Psi}_j(-1))+\eta\leq {\rm vol}_g(\Sigma_j')+\eta,
\end{eqnarray*}
and that
\begin{eqnarray*}
\sum_{\tilde{\Sigma}\subset \partial''\tilde{\Omega}_j}  ({\bf M}_h(f_{\#}(\tilde{\Sigma}))+\delta)&\leq& \sum_{\tilde{\Sigma}\subset \partial''\tilde{\Omega}_j}  ({\bf M}_g(\tilde{\Sigma})+\delta)\leq {\bf M}_g(\partial''\tilde{\Omega}_j)+\eta\\
&=& {\bf M}_g(\tilde{\Psi}_j(1))+\eta\leq {\rm vol}_g(\Sigma_j')+\eta.
\end{eqnarray*}

By concatenating the maps $\Phi_1, \dots, \Phi_p$, we obtain a map $\Phi:[0,1]\rightarrow \mathcal{Z}_n(N,\mathbb{Z})$, continuous in the {\bf F}-metric, such that $\Phi(0)=0$, $\Phi(1)=0$, 
$\Phi$ is a sweepout of 
$$
\sum_{j=1}^p \Omega(j)=\sum_{j=1}^p \big(f_{\#}(\tilde{\Omega}_j)-\sum_{\tilde{\Sigma}\subset \partial \tilde{\Omega}_j} R_{\tilde{\Sigma}}\big)=f_{\#}(\sum_{j=1}^p \tilde{\Omega}_j),
$$
and 
$$
\sup_{t\in [0,1]} {\bf M}(\Phi(t))\leq W^{(k)}(N,h)-\eta.
$$
This uses that $\sum_{j=1}^p\sum_{\tilde{\Sigma}\subset \partial \tilde{\Omega}_j} R_{\tilde{\Sigma}}=0$,  since $R_{\tilde{\Sigma}}=0$ if $\tilde{\Sigma}$ is a component of $\partial M$, and since the orientations induced on $\tilde{\Sigma}\in \tilde{\Lambda}$ by the adjacent domains are opposite. 

It follows that $\Phi$ is a sweepout of $f_{\#}(M)=k\cdot N$, since $\sum_{j=1}^p \tilde{\Omega}_j=M$. Hence $\Phi\in \Gamma^{(k)}(N)$, which is a contradiction since $\sup_{t\in [0,1]} {\bf M}(\Phi(t))\leq W^{(k)}(N,h)-\eta$.   

Hence  there is $1\leq j\leq p$ such that  ${\rm vol}_g(\Sigma_j')\geq W^{(k)}(N,h)$.  We can define $\Sigma=\Sigma_j'$ and the theorem is proved using case (i) by  Proposition \ref{index.width}.

\end{proof}

Theorem \ref{homology} can be used in the case of  $n$-volume nonincreasing Lipschitz maps  of Riemannian spheres.

 \begin{thm}\label{homology}
 Suppose  $M$ is a closed manifold,  $H_n(M,\mathbb{Z})=0$, and   $f:(M,g)\rightarrow (N,h)$ is an   $n$-volume nonincreasing Lipschitz map. If ${\rm deg}(f)\neq 0$, 
there is a smooth, connected, embedded, closed minimal hypersurface $\Sigma$ in $M$,  with Morse index   one, two-sided, and there is a smooth, connected, embedded, closed minimal hypersurface $\Sigma'$ in $N$, with Morse index   one, two-sided, such that
$$
|\Sigma'|_h\leq  |\Sigma|_g.
$$
\end{thm}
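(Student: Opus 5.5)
We apply Theorem \ref{connected.minimal} with $\partial M=\emptyset$; the boundary hypothesis is vacuous. The metrics $g$ and $h$ satisfy the standing nondegeneracy assumptions (no stable degenerate two-sided minimal hypersurfaces, and no one-sided ones with stable degenerate oriented two-cover). Since ${\rm deg}(f)=k\neq 0$, Theorem \ref{connected.minimal} gives a smooth, connected, embedded, two-sided closed minimal hypersurface $\Sigma\subset M$ of Morse index one such that alternative (i) or (ii) holds. In case (i) we are done: the hypersurface $\Sigma'\subset N$ produced there has index one, is two-sided and connected, and satisfies $|\Sigma'|_h\leq W^{(k)}(N,h)\leq|\Sigma|_g$.

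So the work is to rule out case (ii) using $H_n(M,\mathbb{Z})=0$. The key point is that every smooth, closed, embedded, two-sided connected hypersurface $\tilde\Sigma$ in $M$ separates $M$. Indeed, $M$ is oriented and $\tilde\Sigma$ is two-sided, so $\tilde\Sigma$ is orientable and defines a class $[\tilde\Sigma]\in H_n(M,\mathbb{Z})=0$. Poincaré duality then gives that the intersection number of $\tilde\Sigma$ with every closed loop is zero. If $\tilde\Sigma$ did not separate, a loop crossing $\tilde\Sigma$ transversally exactly once would have intersection number $\pm1$, a contradiction. Hence each $\tilde\Sigma\in\tilde\Lambda$ bounds a region on each side. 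In particular $f_{\#}(\tilde\Sigma)=f_{\#}(\partial U)=\partial f_{\#}(U)$ is a boundary in $N$.

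The harder step is that this alone does not prevent the constrained minimization in the proof of Theorem \ref{connected.minimal} from producing a nonzero stable $T$. A homologically trivial current can still fail to be contractible through currents of mass at most ${\bf M}_h(f_{\#}(\tilde\Sigma))+\delta$. So I would not quote Theorem \ref{connected.minimal} as a black box. Instead I would rerun its proof and replace the constrained minimization on $N$ by a separation argument on $M$ itself. The decomposition by $\tilde\Lambda$ makes the complement components $\tilde\Omega_1,\dots,\tilde\Omega_p$ arranged along a tree, since every hypersurface in $\tilde\Lambda$ separates. Suppose every index one hypersurface $\Sigma_j'$ given by Proposition \ref{index.one.boundary} in $\tilde\Omega_j$ has ${\rm vol}_g(\Sigma'_j)<W^{(k)}(N,h)$. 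Then Proposition \ref{no.stable.two-sided.case} gives sweepouts $\tilde\Psi_j$ of each $\tilde\Omega_j$ with mass below $W^{(k)}(N,h)-2\eta$. Following the tree, starting at a leaf region, which has a single boundary component, we concatenate these sweepouts directly in $M$ and obtain a sweepout of all of $M$ in $\Gamma^{(1)}(M)$. The gluing is exact: the final slice of one region's sweepout is the common boundary component, which is the initial slice of the next region's. Its mass stays below $W^{(k)}(N,h)-\eta$, because at each junction we only add boundary components of the remaining regions, and those are hypersurfaces in $\tilde\Lambda$ of volume at most ${\rm vol}_g(\Sigma_j')$. Pushing this sweepout forward by $f_{\#}$ gives an element of $\Gamma^{(k)}(N)$ with mass below $W^{(k)}(N,h)$, a contradiction.

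The delicate bookkeeping is in that concatenation. A region $\tilde\Omega_j$ with several boundary components must be swept from one chosen union of its boundary components to the complementary union, which is the form provided by Proposition \ref{no.stable.two-sided.case}. The mass of the sweepout constructed so far must then be controlled while the inactive boundary pieces are carried along, and I expect this to be the main obstacle. Once this is done, some $j$ has ${\rm vol}_g(\Sigma_j')\geq W^{(k)}(N,h)$. Proposition \ref{index.width} then supplies $\Sigma'$, and we take $\Sigma=\Sigma_j'$.
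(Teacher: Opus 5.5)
Your use of $H_n(M,\mathbb{Z})=0$ is correct: every two-sided hypersurface separates $M$. You also rightly see that this alone does not exclude case (ii) of Theorem \ref{connected.minimal}. The replacement argument fails, however, and not only at the level of bookkeeping.

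\textbf{Why the concatenation in $M$ fails.} If you concatenate the $\tilde\Psi_j$ inside $M$, the slice while $\tilde\Omega_j$ is being swept is $\tilde\Psi_j(t)$ plus every frontier hypersurface of the already swept union that is not adjacent to $\tilde\Omega_j$. These masses add up. Knowing that each carried piece has volume at most some ${\rm vol}_g(\Sigma_i')$ bounds neither their sum nor that sum plus ${\bf M}_g(\tilde\Psi_j(t))$. Exact gluing is not available either. The splitting $\partial\tilde\Omega_j=\partial'\tilde\Omega_j\cup\partial''\tilde\Omega_j$ in Proposition \ref{no.stable.two-sided.case} is dictated by which side of $\Sigma_j'$ each boundary component lies on, not by your traversal of the tree.

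\textbf{The intermediate claim is false.} With $N=M$, $h=g$ and $f$ the identity, your construction would give $W^{(1)}(M,g)\leq\max_j{\rm vol}_g(\Sigma_j')$. Take $M=S^3$ with a bumpy perturbation of one of the metrics of \cite{montezuma}. Then $H_2=0$, and every two-sided index one minimal surface has area at most roughly $16\pi/3$. Yet the width is arbitrarily large. As recalled in the introduction, it is realized by an index one component plus many stable spheres, which are exactly the carried frontier pieces. So $H_n(M,\mathbb{Z})=0$ cannot force ${\rm vol}_g(\Sigma_j')\geq W^{(k)}(N,h)$ for some $j$, and case (ii) genuinely occurs.

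\textbf{The missing idea.} The theorem asks for less than case (i): only $|\Sigma'|_h\leq|\Sigma_j|_g$ for some $j$, where $|\Sigma'|_h$ may lie below $W^{(k)}(N,h)$. The paper keeps the contractions of the pieces $f_{\#}(\tilde\Sigma)$ in $N$, so each boundary piece is retired rather than carried. It replaces the constrained minimization by min-max in $N$, as follows.
\begin{enumerate}
\item Assume ${\rm vol}_g(\Sigma_j)\leq W^{(k)}(N,h)-2\rho$ for all $j$. After a compactness argument as $\xi\to 0$, also assume that $N$ has no two-sided index one hypersurface of volume at most ${\rm vol}_g(\Sigma_j)+2\xi$.
\item Remove leaf regions of the tree one at a time. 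Separation provides the domain $R_{\Sigma,\Omega}$ on the far side of each frontier component $\Sigma$.
\item For a leaf $\Omega_r$ with inner boundary component $\tilde\Sigma_1$, concatenate $f_{\#}\circ\tilde\Psi_r$ with the contractions already built. This gives a path from $-f_{\#}(\tilde\Sigma_1)$ to $0$ with masses at most ${\rm vol}_g(\Sigma_r)+2\xi$.
\item Run min-max in $N$ for the relative homotopy class of this path. Either the width equals ${\bf M}_h(f_{\#}(\tilde\Sigma_1))$, which gives a contraction with masses at most ${\bf M}_h(f_{\#}(\tilde\Sigma_1))+\delta$ and continues the induction. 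Or it produces a two-sided index one hypersurface of $N$ of volume at most ${\rm vol}_g(\Sigma_r)+2\xi$, which was excluded.
\item When one region remains, this yields an element of $\Gamma^{(k)}(N)$ with masses at most $W^{(k)}(N,h)-\rho/2$, a contradiction.
\end{enumerate}
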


\begin{proof}

Let $\Lambda$, $\tilde{\Lambda}$, $k$ and $\Omega_1,\dots,\Omega_p$ be as in Theorem \ref{connected.minimal}. For any $1\leq j\leq p$,  there
is no  stable, two-sided,  closed minimal hypersurface  $\tilde{\Sigma}$ in ${\rm int}(\Omega_j)$, such that ${\rm vol}_g(\tilde{\Sigma})\leq W^{(k)}(N,h).$ As before, 
$W^{(k)}(N,h)\leq W^{(1)}(M,g).$ Since  $H_n(M,\mathbb{Z})=0,$
a closed, smooth, embedded, two-sided hypersurface  $\Sigma\subset M$ separates $M$.

If  $\tilde{\Omega}_j$ is  the metric closure of $\Omega_j$, then $\tilde{\Omega}_j$ is an $(n+1)$-dimensional compact manifold, possibly with boundary, any  connected component of $\partial \tilde{\Omega}_j$ can be identified with  a hypersurface in $\tilde{\Lambda}$, 
  and ${\rm int}(\tilde{\Omega}_j)=\Omega_j$. As in the proof of Theorem \ref{connected.minimal},  there is a smooth, connected, embedded, closed minimal hypersurface $\Sigma_j \subset {\rm int}(\tilde{\Omega}_j)$, two-sided, with Morse index one.
 
Suppose that there is $1\leq j\leq p$ such that ${\rm vol}_g(\Sigma_j)\geq W^{(k)}(N,h)$. By Proposition \ref{index.width}, there is a smooth, closed, two-sided, embedded minimal surface $\Sigma'\subset (N,h)$, of Morse index one,  such that ${\rm vol}_h(\Sigma')\leq W^{(k)}(N,h)$. Hence ${\rm vol}_h(\Sigma')\leq {\rm vol}_g(\Sigma_j)$, which proves the theorem in this case.
  
Hence we can  suppose that there is $\rho>0$ such that, for any $1\leq j\leq p$,  ${\rm vol}_g(\Sigma_j)\leq W^{(k)}(N,h)-2\rho$. Suppose that if $0<\xi<\rho$, 
there is a  closed, two-sided, embedded minimal hypersurface $\Sigma_\xi'\subset (N,h)$, of Morse index one, such that
${\rm vol}_h(\Sigma'_\xi)\leq  {\rm vol}_g(\Sigma_j) +2\xi$ for $j=j(\xi)$. Then there is a sequence $\xi_i\rightarrow 0$ and $1\leq j\leq p$ such that
$\Sigma'_{\xi_i}$ converges smoothly to a  closed, two-sided, embedded minimal hypersurface $\Sigma'\subset (N,h)$, of Morse index one,
with ${\rm vol}_h(\Sigma')\leq  {\rm vol}_g(\Sigma_j)$. This proves the theorem. 

Therefore there is $0<\xi<\rho$ such that there is no  closed, two-sided, embedded minimal hypersurface $\Sigma'\subset (N,h)$, of Morse index one, and $1\leq j\leq p$, such that
${\rm vol}_h(\Sigma')\leq  {\rm vol}_g(\Sigma_j) +2\xi$.

By Proposition \ref{no.stable.two-sided.case}, there is 
a map $\tilde{\Psi}_j:[-1,1]\rightarrow \tilde{\mathcal{Z}}_n(\tilde{\Omega}_j,\mathbb{Z})$, continuous in the mass topology, such that
$\tilde{\Psi}_j(-1)=-\partial' \tilde{\Omega}_j$,  $\tilde{\Psi}_j(1)=\partial'' \tilde{\Omega}_j$, the map $\tilde{\Psi}_j$ is a sweepout of $\tilde{\Omega}_j$, and 
$$
{\bf M}_g(\tilde{\Psi}_j(t))\leq {\rm vol}_g(\Sigma_j) +\xi \leq W^{(k)}(N,h)-\rho
$$ 
for any $t\in [-1,1]$.  The hypersurfaces $\partial'\tilde{\Omega}_j$, $\partial''\tilde{\Omega}_j$, are unions of components of $\partial \tilde{\Omega}_j$ such that $\partial'\tilde{\Omega}_j$ and  $\partial''\tilde{\Omega}_j$ are disjoint, with 
$\partial \tilde{\Omega}_j=\partial'\tilde{\Omega}_j \cup \partial''\tilde{\Omega}_j$. 

Let $1\leq r\leq p$ be such that $\Omega=\overline{\Omega}_1\cup \cdots \cup \overline{\Omega}_r$, perhaps changing the indices, satisfies that 
$\Omega$ is a connected manifold possibly with boundary and if $\Sigma$ is a component of $\partial\Omega$, and $R_{\Sigma,\Omega}$ is the domain in $M$ that is disjoint from ${\rm int}(\Omega)$ and $\partial R_{\Sigma,\Omega}=\Sigma$, then if $\delta>0$  there is a map $\Phi_\Sigma:[0,1]\rightarrow \mathcal{Z}_n(N,\mathbb{Z})$, continuous in the ${\bf F}$-metric, such that $\Phi_\Sigma(0)=-f_{\#}(\Sigma)$, $\Phi_\Sigma(1)=0$,  $\Phi_\Sigma$ is a sweepout of 
$f_{\#}(R_{\Sigma,\Omega})$, and 
$$
\sup_{t\in [0,1]}{\bf M}_h(\Phi_\Sigma(t))\leq {\bf M}_h(f_{\#}(\Sigma)) + \delta.
$$
Let $\Gamma$ be the set of such $\Omega$.  Notice that $\overline{\Omega}_1\cup \cdots \cup \overline{\Omega}_p\in \Gamma.$ The surface $\Sigma$ has the opposite orientation to that induced by $\Omega$.

Suppose $r\geq 2$.  Since closed, two-sided, embedded hypersurfaces separate $M$, there is  $1\leq r'\leq r$ such that the components
$\tilde{\Sigma}_1,\dots, \tilde{\Sigma}_\eta$ of $\partial\Omega_{r'}$ can be defined so that $\tilde{\Sigma}_1\subset {\rm int}(\Omega)$ and $\tilde{\Sigma}_i\subset \partial\Omega$ for $2\leq i\leq \eta$.  We can suppose that $r'=r$ by changing the indices.

Let $\Omega'=\Omega_1\cup \cdots \cup \Omega_{r-1}$. Then $\Omega'$ is a connected manifold with boundary. If $\Sigma$ is a component
of $\partial\Omega'$, then either $\Sigma$ is a component of $\partial \Omega$ or $\Sigma=\tilde{\Sigma}_1$.
In the case $\Sigma$ is a component of $\partial \Omega$, then $R_{\Sigma,\Omega}=R_{\Sigma,\Omega'}$.

Suppose $\Sigma=\tilde{\Sigma}_1$. Then $R_{\Sigma,\Omega'}=\Omega_r \cup R_{\tilde{\Sigma}_2,\Omega} \cup \cdots \cup R_{\tilde{\Sigma}_\eta,\Omega}$. Let $\tilde{\Psi}_r$ be as before, and suppose $\tilde{\Sigma}_1$ is a component of $\partial'\tilde{\Omega}_r$. Define $\tilde{\Phi}_r=f_{\#}\circ \tilde{\Psi}_r$. Since the map  $f$ is Lipschitz and $n$-volume nonincreasing,  $\tilde{\Phi}_r$ is continuous in the mass metric, $\tilde{\Phi}_r$ is a sweepout of $f_{\#}(\Omega_r)$, and 
$$
\sup_{t\in [-1,1]} {\bf M}_h(\tilde{\Phi}_r(t)) \leq \sup_{t\in [-1,1]} {\bf M}_g(\tilde{\Psi}_r(t)) \leq {\rm vol}_g(\Sigma_r) +\xi  \leq W^{(k)}(N,h)-\rho.
$$ 

Let $\tilde{\Phi}''_r(t)=\sum_{\Sigma''\subset \partial''\tilde{\Omega}_r}\Phi_{\Sigma''}(t).$ Then $\tilde{\Phi}''_r$ is a sweepout of 
$$\sum_{\Sigma''\subset \partial''\tilde{\Omega}_r} f_{\#}(R_{\Sigma'',\Omega}),
$$
and 
$$
{\bf M}_h(\tilde{\Phi}''_r(t))\leq  \sum_{\Sigma''\subset \partial''\tilde{\Omega}_r}  {\bf M}_h(\Phi_{\Sigma''}(t))\leq \sum_{\Sigma''\subset \partial''\tilde{\Omega}_r} {\bf M}_h(f_{\#}(\Sigma''))  +\eta \delta.
$$
for any $t\in [0,1]$. Hence, since $f$ is $n$-volume nonincreasing,
\begin{eqnarray*}
{\bf M}_h(\tilde{\Phi}''_r(t))&\leq&\sum_{\Sigma''\subset \partial''\tilde{\Omega}_r} {\bf M}_g(\Sigma'')  +\eta \delta={\bf M}_g(\partial''\tilde{\Omega}_r)+\eta\delta\\
&=& {\bf M}_g(\tilde{\Psi}_r(1))+\eta\delta \\
&\leq&  {\rm vol}_g(\Sigma_r) +\xi +\eta\delta\leq W^{(k)}(N,h)-\rho+\eta\delta
\end{eqnarray*}
for any $t\in [0,1]$.

Define $\tilde{\Phi}'_r(t)=-f_{\#}(\tilde{\Sigma}_1)-\sum_{\Sigma'\subset \partial'\tilde{\Omega}_r, \Sigma'\neq \tilde{\Sigma}_1}\Phi_{\Sigma'}(t).$ 
Then $\tilde{\Phi}'_r$ is a sweepout of 
$$-\sum_{\Sigma'\subset \partial'\tilde{\Omega}_r, \Sigma'\neq \tilde{\Sigma}_1} f_{\#}(R_{\Sigma',\Omega}),
$$
and 
\begin{eqnarray*}
{\bf M}_h(\tilde{\Phi}'_r(t))&\leq& {\bf M}_h(f_{\#}(\tilde{\Sigma}_1))+  \sum_{\Sigma'\subset \partial'\tilde{\Omega}_r, \Sigma'\neq \tilde{\Sigma}_1}  {\bf M}_h(\Phi_{\Sigma'}(t))\\
&\leq& {\bf M}_h(f_{\#}(\tilde{\Sigma}_1))+\sum_{\Sigma'\subset \partial'\tilde{\Omega}_r, \Sigma'\neq \tilde{\Sigma}_1} {\bf M}_h(f_{\#}(\Sigma'))  +\eta \delta
\end{eqnarray*}
for any $t\in [0,1]$. Since $f$ is $n$-volume nonincreasing,
\begin{eqnarray*}
{\bf M}_h(\tilde{\Phi}'_r(t))&\leq& {\bf M}_g(\tilde{\Sigma}_1)+\sum_{\Sigma'\subset \partial'\tilde{\Omega}_r, \Sigma'\neq \tilde{\Sigma}_1}{\bf M}_g(\Sigma')  +\eta \delta={\bf M}_g(\partial'\tilde{\Omega}_r)+\eta\delta\\
&=& {\bf M}_g(\tilde{\Psi}_r(-1))+\eta\delta \\
&\leq& {\rm vol}_g(\Sigma_r) +\xi +\eta\delta\leq W^{(k)}(N,h)-\rho+\eta\delta
\end{eqnarray*}
for any $t\in [0,1]$.

Notice that $\tilde{\Phi}_r'(0)=-f_{\#}(\partial'\tilde{\Omega}_r)$, and $\tilde{\Phi}_r'(1)=-f_{\#}(\tilde{\Sigma}_1).$ And $\tilde{\Phi}_r''(0)=f_{\#}(\partial''\tilde{\Omega}_r)$, and $\tilde{\Phi}_r''(1)=0.$ By concatenating the maps $\tilde{\Phi}_r'$, $\tilde{\Phi}_r$, and $\tilde{\Phi}_r''$, after reparametrizing, we obtain a map  $\tilde{\Phi}_{\tilde{\Sigma}_1}:[0,1]\rightarrow \mathcal{Z}_n(N,\mathbb{Z})$, continuous in the ${\bf F}$-metric, such that $\tilde{\Phi}_{\tilde{\Sigma}_1}(0)=-f_{\#}(\tilde{\Sigma}_1)$, $\tilde{\Phi}_{\tilde{\Sigma}_1}(1)=0$,  $\tilde{\Phi}_{\tilde{\Sigma}_1}$ is a sweepout of 
\begin{eqnarray*}
&&\sum_{\Sigma'\subset \partial'\tilde{\Omega}_r, \Sigma'\neq \tilde{\Sigma}_1} f_{\#}(R_{\Sigma',\Omega})+f_{\#}(\Omega_r)+\sum_{\Sigma''\subset \partial''\tilde{\Omega}_r} f_{\#}(R_{\Sigma'',\Omega})\\
&=&f_{\#}(\Omega_r+\sum_{i=2}^\eta R_{\tilde{\Sigma}_i,\Omega}))=f_{\#}(R_{\tilde{\Sigma}_1,\Omega'}),
\end{eqnarray*}
and 
$$
\sup_{t\in [0,1]}{\bf M}_h(\tilde{\Phi}_{\tilde{\Sigma}_1}(t))\leq  {\rm vol}_g(\Sigma_r) +\xi +\eta\delta \leq W^{(k)}(N,h)-\rho+\eta\delta.
$$
Let $\delta>0$ be such that $\eta\delta\leq \xi$. Then 
$$
\sup_{t\in [0,1]}{\bf M}_h(\tilde{\Phi}_{\tilde{\Sigma}_1}(t))\leq  {\rm vol}_g(\Sigma_r) +2\xi.
$$

Let $\Pi$ be the set of maps $\Phi$ that are continuous in the ${\bf F}$-metric and that are homotopic in the flat topology to $\tilde{\Phi}_{\tilde{\Sigma}_1}$ relative to $\partial([0,1])$. Let 
$$
W(\Pi)=\inf_{\Phi\in\Pi}\sup_{t\in [0,1]}{\bf M}_h(\Phi(t)).
$$  By min-max, and using the proof of Proposition \ref{index.width}, it follows that either $W(\Pi)={\bf M}_h(f_{\#}(\tilde{\Sigma}_1))$, or 
$W(\Pi)>{\bf M}_h(f_{\#}(\tilde{\Sigma}_1))$ and
there is a  closed, two-sided, embedded minimal hypersurface $\Sigma'\subset (N,h)$, of Morse index one, such that
${\rm vol}_h(\Sigma')\leq W(\Pi)$. Notice that $\tilde{\Phi}_{\tilde{\Sigma}_1}\in \Pi$, hence $W(\Pi)\leq {\rm vol}_g(\Sigma_r) +2\xi$.

Suppose that $W(\Pi)>{\bf M}_h(f_{\#}(\tilde{\Sigma}_1))$. Then 
$$
{\rm vol}_h(\Sigma')\leq W(\Pi)\leq {\rm vol}_g(\Sigma_r) +2\xi,
$$
which is a contradiction.

Hence $W(\Pi)={\bf M}_h(f_{\#}(\tilde{\Sigma}_1))$. Then for $\delta>0$ there is a map $\Phi_{\tilde{\Sigma}_1}:[0,1]\rightarrow \mathcal{Z}_n(N,\mathbb{Z})$, continuous in the ${\bf F}$-metric, such that $\Phi_{\tilde{\Sigma}_1}(0)=-f_{\#}(\tilde{\Sigma}_1)$, $\Phi_{\tilde{\Sigma}_1}(1)=0$,  $\Phi_{\tilde{\Sigma}_1}$ is a sweepout of 
$f_{\#}(R_{\tilde{\Sigma}_1,\Omega'})$, and 
$$
\sup_{t\in [0,1]}{\bf M}_h(\Phi_{\tilde{\Sigma}_1}(t))\leq {\bf M}_h(f_{\#}(\tilde{\Sigma}_1)) + \delta.
$$
This proves that $\Omega_1\cup \cdots \cup \Omega_{r-1}\in \Gamma$.

Then, by induction, we can suppose that $r=1$. Hence $\Omega=\Omega_1$, perhaps changing the indices, satisfies that 
$\Omega$ is a connected manifold possibly with boundary and if $\Sigma$ is a component of $\partial\Omega$, and $R_{\Sigma,\Omega}$ is the domain in $M$ that is disjoint from ${\rm int}(\Omega)$ and $\partial R_{\Sigma,\Omega}=\Sigma$, then if $\delta>0$  there is a map $\Phi_\Sigma:[0,1]\rightarrow \mathcal{Z}_n(N,\mathbb{Z})$, continuous in the ${\bf F}$-metric, such that $\Phi_\Sigma(0)=-f_{\#}(\Sigma)$, $\Phi_\Sigma(1)=0$,  $\Phi_\Sigma$ is a sweepout of 
$f_{\#}(R_{\Sigma,\Omega})$, and 
$$
\sup_{t\in [0,1]}{\bf M}_h(\Phi_\Sigma(t))\leq {\bf M}_h(f_{\#}(\Sigma)) + \delta.
$$
Let $\eta$ be the number of components of $\partial\Omega_1$, and $\delta>0$ be such that $\eta\delta<\rho/2$. By doing as before, using $\tilde{\Phi}_1=f_{\#}\circ \tilde{\Psi}_1$  and the maps $\Phi_{\Sigma}$, $\Sigma\subset \partial\Omega_1$, we obtain a sweepout of $k \cdot N=f_{\#}(M)$ such that the  masses are bounded  by $W^{(k)}(N,h)-\rho/2$. This is a contradiction, which finishes the proof of the theorem.

\end{proof}

Let $\pi:M'\rightarrow M$ be a finite cover. If $\psi:\Sigma\rightarrow M'$ is an immersed closed minimal hypersurface for the metric $\pi^*(g)$, the second variation quadratic form of $\psi$ coincides with that of $\pi\circ \psi:\Sigma \rightarrow M$. It follows that $\psi$ is stable if and only if $\pi\circ \psi$ is stable, and $\psi$ is degenerate if and only if $\pi\circ \psi$ is degenerate.

Suppose that the metric $g$ on a compact manifold $M$, possibly with boundary, has no stable, degenerate, closed, two-sided, immersed   minimal hypersurfaces. 
 It follows by the previous paragraph that if $\pi: M'\rightarrow M$ is a finite cover, then $(M',\pi^*(g))$ has no stable, degenerate, closed, two-sided, embedded   minimal hypersurfaces or one-sided, closed, embedded minimal hypersurfaces with stable, degenerate, oriented  two-cover.

By definition, a compact orientable manifold $M^{n+1}$, possibly with  boundary $\partial M$,   is compactly $n$-volume enlargeable if  for any $r>0$ and $g$ a Riemannian metric  on  $M$, there is a finite cover $M'$ of $M$ that has an $n$-volume nonincreasing
Lipschitz map  $f:(M',g) \rightarrow (S_r^{n+1},\overline{g})$, $f_{\#}(\Sigma')=0$ for any component $\Sigma'$ of $\partial M'$, with nontrivial degree. We denote by $g$ the pull-back metric on $M'$.


\begin{thm}\label{index.area}
Let $(M^{n+1},g)$ be a   compactly $n$-volume enlargeable manifold possibly with boundary.  Suppose that the metric $g$ has no stable, degenerate, closed, two-sided, immersed   minimal hypersurfaces, and that $\partial M$ is a strictly stable minimal hypersurface. Then, for any $\lambda>0$, there is a connected,  smooth, two-sided, virtually embedded, closed minimal hypersurface $\psi:\Sigma\rightarrow M$, of Morse index one, $\psi(\Sigma)\subset {\rm int}(M)$,
with 
$$
{\rm vol}_g(\Sigma)\geq \lambda.
$$
\end{thm}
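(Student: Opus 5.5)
The plan is to apply Theorem \ref{connected.minimal} in a finite cover, with target a large round sphere, and then project the resulting hypersurface back down to $M$. Fix $\lambda>0$. Choose $r>0$ with
$$\sigma_n r^n\geq \lambda \quad\text{and}\quad \sigma_n r^n>\lambda',$$
where $\lambda'$ is a bound to be fixed below. By compact $n$-volume enlargeability there is a finite cover $\pi:M'\rightarrow M$ and an $n$-volume nonincreasing Lipschitz map $f:(M',\pi^*g)\rightarrow (S_r^{n+1},\overline{g})$. This map has nonzero degree $k$, and $f_{\#}(\Sigma')=0$ for every component $\Sigma'$ of $\partial M'$.

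We first check the hypotheses of Theorem \ref{connected.minimal}.
\begin{itemize}
\item By the remark preceding the definition, $\pi^*g$ has no stable degenerate two-sided embedded minimal hypersurfaces. It also has no one-sided embedded ones with stable degenerate oriented two-cover. The reason is that the hypothesis on $g$ is stated for immersed hypersurfaces, and the second variation is preserved under coverings.
\item $\partial M'$ is a strictly stable minimal hypersurface, since each of its components covers a component of $\partial M$. Again this holds because second variation lifts.
\item The round metric $\overline{g}$ has no stable minimal hypersurfaces at all, so it satisfies the required nondegeneracy hypothesis.
\end{itemize}

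Theorem \ref{connected.minimal} now gives a connected, two-sided, embedded, index one minimal hypersurface $\Sigma\subset{\rm int}(M')$ satisfying (i) or (ii). Alternative (ii) is impossible, because it requires a stable minimal hypersurface $\tilde{\Sigma}'$ in the round sphere. Hence (i) holds, and
$$|\Sigma|_{\pi^*g}\geq W^{(k)}(S_r^{n+1},\overline{g})=r^n\,{\rm vol}_{\overline{g}}(S_1^n)=\sigma_n r^n\geq\lambda.$$
Here we use the computation of $W^{(k)}$ for $k\neq 0$ from the Preliminaries. (Alternatively, use $|\Sigma'|_h\ge\sigma_n r^n$ for any minimal hypersurface of the round sphere.)

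Next set $\psi=\pi|_\Sigma:\Sigma\rightarrow M$. It is a minimal immersion, virtually embedded by construction since it lifts to the embedding $\Sigma\subset M'$. It is two-sided, since the normal bundle pulls back. Its image lies in $\pi({\rm int}(M'))={\rm int}(M)$. Its volume in the pull-back metric equals $|\Sigma|_{\pi^*g}\geq\lambda$. Finally, its Morse index equals that of $\Sigma$ in $M'$, namely one, since the second variation forms coincide.

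The step needing the most care is this index and two-sidedness transfer under the covering, and checking that the stability hypotheses pass to the cover, including the one-sided case. I expect this to be routine given the paragraph before the theorem. The auxiliary constant $\lambda'$ is not really needed, so one simply takes $\sigma_n r^n\geq\lambda$.
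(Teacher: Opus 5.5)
Your proposal is correct and follows the paper's proof essentially verbatim. You pass to the finite cover given by enlargeability, apply Theorem \ref{connected.minimal} to the map into $(S_r^{n+1},\overline{g})$, use $W^{(k)}(S_r^{n+1},\overline{g})=r^n{\rm vol}_{\overline{g}}(S_1^n)$, and compose with the covering map. Your explicit exclusion of alternative (ii) and your check that strict stability of $\partial M'$ and the nondegeneracy hypotheses pass to the cover are details the paper leaves implicit; for the former, the first eigenvalue is preserved because the positive first eigenfunction lifts.
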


\begin{proof}
Let $r>0$ such that $r^n {\rm vol}_{\overline{g}}(S_1^n)=\lambda.$ Since $M$ is  compactly $n$-volume enlargeable, there is a finite cover $\pi: M'\rightarrow M$ that admits a 
Lipschitz map  $f:(M',g) \rightarrow (S_r^{n+1},\overline{g})$,  $f_{\#}(\Sigma')=0$ for any component $\Sigma'$ of $\partial M'$,  that has nontrivial degree $k$ and that is $n$-volume nonincreasing. Since $\overline{g}$ has positive Ricci curvature, the manifold $(S_r^{n+1},\overline{g})$ has no stable, closed, two-sided, embedded   minimal hypersurfaces or one-sided, closed, embedded minimal hypersurfaces with stable,  oriented  two-cover.
 Therefore we can apply  Theorem \ref{connected.minimal} to the map $f:(M',g) \rightarrow (S_r^{n+1},\overline{g})$, and hence there is a smooth, connected, embedded, closed minimal hypersurface 
$\psi':\Sigma\rightarrow M'$, with Morse index   one, two-sided, $\psi'(\Sigma)\subset {\rm int}(M')$, such that
$$
{\rm vol}_g(\Sigma)\geq W^{(k)}(S_r^{n+1},\overline{g})=r^n {\rm vol}_{\overline{g}}(S_1^n)=\lambda.
$$

 The theorem is proved by defining $\psi=\pi \circ \psi':\Sigma\rightarrow M$.

\end{proof}

In $(n+1)$-dimensional spheres, $3\leq (n+1)\leq 6$, there are bumpy metrics for which the volume  of Morse index one closed immersed minimal hypersurfaces is uniformly bounded.

\begin{thm}\label{sphere}
Let $3\leq (n+1)\leq 6$. Then there is a neighborhood $U$ of $\overline{g}$ on $S^{n+1}$, and $C>0$ such that for any  connected closed minimal immersion $\psi:\Sigma^n\rightarrow (S^{n+1},g)$, $g\in U$, of Morse index one, ${\rm vol}_{\psi^*(g)}(\Sigma)\leq C$. 
\end{thm}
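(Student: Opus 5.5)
We argue by contradiction and compactness. Suppose no such $U$ and $C$ exist. Then there are metrics $g_i\to\overline{g}$ smoothly on $S^{n+1}$ and connected closed minimal immersions $\psi_i:\Sigma_i\rightarrow (S^{n+1},g_i)$ of Morse index one with ${\rm vol}_{\psi_i^*(g_i)}(\Sigma_i)\rightarrow\infty$. The plan is to turn index one into a uniform curvature bound, and the curvature bound into a volume bound, using the positive Ricci curvature of $\overline{g}$. Since $\overline{g}$ has ${\rm Ric}=n\overline{g}>0$, we may shrink $U$ so that ${\rm Ric}_g\geq (n-\epsilon) g$ for all $g\in U$. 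In particular $(S^{n+1},g)$ has no two-sided stable closed immersed minimal hypersurfaces, by testing the Jacobi operator with the constant function $1$.

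\emph{Step 1: uniform curvature bounds.} Index one gives curvature estimates away from at most one point, by the standard point-picking argument. If $\sup_{\Sigma_i}|A_i|\rightarrow\infty$, we rescale at a point of (almost) maximal curvature. Since the ambient manifolds are compact, the rescaled immersions converge locally smoothly to a complete two-sided minimal immersion in $\mathbb{R}^{n+1}$ with index at most one and $|A|\leq 1$, $|A|(0)=1$. Here we use Schoen--Simon type estimates for stable pieces, valid in these dimensions for immersions with bounded curvature at the scale considered.

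The two cases are handled differently:
\begin{itemize}
\item If the limit has index zero, it is a complete stable immersed minimal hypersurface of $\mathbb{R}^{n+1}$ with bounded curvature. For $3\leq n+1\leq 6$ we invoke the stable Bernstein theorem (Fischer-Colbrie--Schoen, do Carmo--Peng, Pogorelov for $n=2$; Chodosh--Li, Chodosh--Li--Minter--Stryker, Mazet for $n=3,4,5$). So the limit is flat, which contradicts $|A|(0)=1$.
\item If the limit has index one, the argument is the key point. The index is then concentrated in a bounded region. Choosing a second blow-up scale away from it, or applying a Chodosh--Li type argument for finite index, again yields a stable limit and a contradiction.
\end{itemize}
This dimensional restriction is exactly why the statement requires $n+1\leq 6$. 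The conclusion is $\sup_{\Sigma_i}|A_i|\leq C_0$.

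\emph{Step 2: volume bound from curvature bound.} With $|A_i|\leq C_0$, each $\psi_i$ is locally a union of graphs of uniform size over $n$-discs. If volumes diverge, the number of sheets through some point diverges. Passing to a subsequence, the immersions converge locally smoothly, with sheet multiplicity, to a lamination of $(S^{n+1},\overline{g})$ by minimal immersed leaves with $|A|\leq C_0$.

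Take two sheets at distance $d_i\rightarrow 0$. The normalized difference of the two graphs produces a positive Jacobi field on a limit leaf $L$. Since the index is at most one, we can in fact choose the sheets away from the single unstable region. If $L$ is compact, we obtain a stable two-sided closed minimal immersion (or a one-sided one with stable double cover), which contradicts ${\rm Ric}>0$.

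If $L$ is noncompact, the positive Jacobi field still makes $L$ (or its two-sided cover) stable. We then use the Step 1 estimates on the universal cover: a complete stable minimal immersion with bounded curvature in a manifold with ${\rm Ric}\geq (n-\epsilon)>0$ is impossible. This follows by a Schoen--Yau / Fischer-Colbrie--Schoen type argument, since ${\rm Ric}(\nu,\nu)+|A|^2\geq c>0$ forces bounded intrinsic diameter via Bonnet--Myers-type reasoning for the stability operator, hence compactness. This contradiction bounds the number of sheets, and therefore the volume.

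\emph{Main obstacle.} The main obstacle is Step 1 in the index-one case, namely extracting a stable blow-up limit. It relies on the sharp stable Bernstein results in dimensions $4,5,6$ and on controlling how the single negative direction localizes. The noncompact-leaf case of Step 2 is secondary but needs care with one-sidedness. I would first try to handle both via the Chodosh--Li--Stryker style finite-index estimates.
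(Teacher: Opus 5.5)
\textbf{The gap is in Step 1, in the index-one case.} This is where the whole difficulty of the theorem lies.

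Rescaling at a point of maximal curvature produces a complete two-sided minimal immersion in $\mathbb{R}^{n+1}$ with $|A|\leq 1$, $|A|(0)=1$ and index at most one. Such hypersurfaces exist: the catenoid in $\mathbb{R}^3$ and the higher-dimensional catenoids have index one and bounded curvature. So an index-one limit gives no contradiction. Blowing up at a second point away from the neck says nothing about the point of maximal curvature, and finite index alone does not force flatness. Point-picking plus the stable Bernstein theorem only gives curvature bounds away from one point per surface, and nothing in your proposal rules out a catenoidal neck forming there.

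What is missing is a reason, coming from the ambient geometry, why the unstable part of $\Sigma$ cannot sit near an arbitrary point of $\Sigma$. The paper gets this from the round structure:
\begin{itemize}
\item For $y\in\Sigma$ take $v=-\psi(y)$ and $f(x)=\langle x,v\rangle$, so ${\rm Hess}_{\overline{g}}f=-f\overline{g}$.
\item Let $\Omega$ be any component of $\{f\circ\psi\geq -\eta\}$. Such components exist by the maximum principle, because the sets $\{f\leq -c\}$ have strictly mean-convex boundary for $g\in U$. The function $\phi=f\circ\psi+\eta$ vanishes on $\partial\Omega$ and satisfies $Q_\Omega(\phi,\phi)\leq -\frac{n\eta}{2}\int_\Omega\phi<0$.
\item Take $\eta<\eta'<1$. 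The component of $\{f\circ\psi\leq -\eta'\}$ containing $y$ is disjoint from $\Omega$, so it is stable because the index is one.
\item Hence every point has a stable neighborhood of uniform extrinsic size. The curvature estimates for stable minimal hypersurfaces in dimensions $n+1\leq 6$ (as in \cite{mazet}) then give $|A|(y)\leq c$.
\end{itemize}
With this input, your blow-up limit at any point would be stable, and your index-zero argument would finish. Without it, or a substitute such as a full lamination and removable-singularity analysis at the blow-up point (compare Proposition \ref{product}), Step 1 is unproved.

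\textbf{Step 2 can be made to work, but your reason for the noncompact leaf is wrong as stated.} Stability together with ${\rm Ric}(\nu,\nu)+|A|^2\geq c>0$ only gives that the bottom of the spectrum of $-\Delta_L$ is at least $c$. A positive bottom of the spectrum does not bound the diameter, as hyperbolic space shows. For $n\geq 3$ the needed input is:
\begin{itemize}
\item for $n+1\leq 5$, the bi-Ricci diameter estimate of Shen--Ye \cite{shen-ye}, since metrics near $\overline{g}$ have ${\rm biRic}\geq\lambda>0$;
\item for $n+1=6$, the argument of \cite{catino-mastrolia-roncoroni}, using nonnegative sectional curvature.
\end{itemize}
Once that estimate and the curvature bound are available, the paper's route avoids laminations entirely. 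If $x,y$ realize the intrinsic diameter $r$, the disjoint balls $B_{r/2}(x)$ and $B_{r/2}(y)$ cannot both be unstable, so $r$ is bounded. The curvature bound and the Gauss equation bound ${\rm Ric}_{\psi^*(g)}$ from below, and Bishop--Gromov then bounds the volume.
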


\begin{proof}
The function $f(x)=\langle x,v\rangle$,  $v\in S^{n+1}$, is such that ${\rm Hess}_{\overline{g}} f=-f \overline{g}$.
Let $h= {\rm Hess}_gf+ f g $, and $0<\eta<1$.

Suppose  the map $\psi$ is transversal to $\{f=-\eta\}$. Let $\Omega$ be a component of $\{y\in \Sigma:f(\psi(y))\geq -\eta\}$, and $\phi:\Sigma\rightarrow \mathbb{R}$ be the function $\phi(y)=f(\psi(y))+\eta$. Then $\phi(y)=0$ for $y\in \partial\Omega$. Since $\psi(\Sigma)$ is a minimal hypersurface for the metric $g$,  it follows that
\begin{eqnarray*}
\Delta_{\psi^*(g)}\phi&=&- n f\circ \psi+ tr_{\psi(\Sigma),g}h\\
&=& -n \phi+ n\eta + tr_{\psi(\Sigma),g}h.
\end{eqnarray*}

Hence
\begin{eqnarray*}
Q_\Omega(\phi,\phi)&=&\int_\Omega ( |\nabla_{\psi^*(g)}\phi|^2 - (|A_{\psi(\Sigma)}|^2+Ric_g(N,N))\phi^2)d\Sigma\\
&=&- \int_\Omega \phi (\Delta_{\psi^*(g)}\phi + (|A_{\psi(\Sigma)}|^2+Ric_g(N,N))\phi)d\Sigma\\
&=&- \int_\Omega \phi ( -n \phi+ n\eta + tr_{\psi(\Sigma),g}h + (|A_{\psi(\Sigma)}|^2+Ric_g(N,N))\phi)d\Sigma\\
&=&-\int_\Omega |A_{\psi(\Sigma)}|^2\phi^2 d\Sigma\\
&& -\int_\Omega \phi \{ n \eta + tr_{\psi(\Sigma),g}h + (Ric_g(N,N)-n)\phi\}d\Sigma,
\end{eqnarray*}
where $Q$ is the second variation of area quadratic form.

There is a neighborhood $U$ of $\overline{g}$ such that if $g\in U$, then 
$|tr_{\psi(\Sigma),g}h + (Ric_g(N,N)-n)\phi|\leq n\eta/2.$ Hence
$$
Q_\Omega(\phi,\phi)\leq -\frac{n\eta}{2} \int_\Omega \phi d\Sigma.
$$
We can suppose that $\{x\in S^{n+1}: f(x) \leq -c\}$ has strictly mean-convex boundary for any $\eta \leq c <1$, $|v|=1$ and $g\in U$.

Let $y\in \Sigma$, and $v=-\psi(y)$.  It follows by the maximum principle that $\psi(\Sigma)$ intersects $\{x\in S^{n+1}: f(x) > -\eta\}$.  Let $\eta<\eta'<1$. Since $f(\psi(y))=-1$, $\psi(\Sigma)$ intersects $\{x\in S^{n+1}:f(x)\leq -\eta'\}$.
  The hypersurface $\psi(\Omega)$ is  unstable   since $\phi>0$  on $\Omega$. If $\Omega'$ is the component of 
  $\{y\in \Sigma: f(\psi(y))\leq -\eta'\}$ that contains $y$, then $\psi(\Omega')$ is stable since $\psi(\Sigma)$ has Morse index one and the domains $\Omega$ and $\Omega'$ are disjoint. By the curvature estimates of \cite{mazet},
  $|A_\Sigma|(y)\leq c$ where $c>0$ is a constant that does not depend on $y$, $\psi(\Sigma)$ and $g\in U$.
  
  Let $x,y\in \Sigma$ be such that $r=d_{\psi^*(g)}(x,y)={\rm diam}(\Sigma, \psi^*(g))$.  The balls $B_{r/2}(x), B_{r/2}(y)$ are disjoint, hence  either $\psi(B_{r/2}(x))$ or $\psi(B_{r/2}(y))$ is stable. This uses that the Morse index of $\psi(\Sigma)$ is one. By \cite{shen-ye}, if $(n+1)\leq 5$, since there is $\lambda>0$ such that $biRic(g)\geq \lambda$ for $g\in U$, it follows that 
  $r\leq c'/\sqrt{\lambda}$ for some uniform constant $c'>0$. If $(n+1)=6$, this follows by the proof of Theorem 1.2 in \cite{catino-mastrolia-roncoroni} since $g\in U$ has nonnegative sectional curvature.
  
  Since $|A_\Sigma|(y)\leq c$, the Ricci curvature of $\psi^*(g)$ is uniformly bounded. It follows by the diameter estimate 
  ${\rm diam}(\Sigma,\psi^*(g))\leq c'/\sqrt{\lambda}$ that the volume of $(\Sigma,\psi^*(g))$ is uniformly bounded. This finishes the proof of the theorem.
  
  \end{proof}

 \section{Fiberings and  three-manifolds}

  If $(M^{n+1},g)$ is a closed Riemannian manifold, we denote by $\lambda_k(M,g)$ the $k$-th eigenvalue of the
 Laplacian $\Delta_g$.  Then
 $$ 
 \lambda_k(M,g)= \inf_{E \in \mathcal{E}_k} \sup_{f\in E\setminus \{0\}} \frac{\int_M |\nabla_gf|^2 dv_g}{\int_M f^2 dv_g},
 $$
 where $\mathcal{E}_k$ is the set of vector spaces $E\subset W^{1,2}(M)$, ${\rm dim}(E)=k$. We denote by $W^{1,2}(M)$ the Sobolev
 space of $L^2$ functions $f$ with $\nabla_g f\in L^2$.

  \begin{prop}\label{index.coverings}
  Let $(M^{n+1},g)$ be a closed Riemannian manifold, and $\phi:M\rightarrow \mathbb{R}$ be a smooth function.
  Suppose that ${\rm index}(\Delta_g+\phi)\geq 1$. Let $k\in \mathbb{N}$ be such that  there is a sequence of finite coverings $\pi_i:M_i\rightarrow M$ with $\lambda_k(M_i,\pi_i^*(g))\rightarrow 0$. Then, for sufficiently large $i$,
  $$
  {\rm index}(\Delta_{\pi_i^*(g)}+\phi\circ \pi_i)\geq k.
  $$
  \end{prop}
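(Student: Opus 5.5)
Since ${\rm index}(\Delta_g+\phi)\geq 1$, I take the convention that index counts negative eigenvalues of $L=-\Delta_g-\phi$, i.e. of the quadratic form
$$
Q(u,u)=\int_M (|\nabla_g u|^2-\phi u^2)\,dv_g .
$$
This is the convention of the Jacobi operator. The plan is to build, on each cover $M_i$, a $k$-dimensional subspace of $W^{1,2}(M_i)$ on which $Q_i$ is negative definite. Here $Q_i$ is the analogous form for $\pi_i^*(g)$ and $\phi\circ\pi_i$, and I abbreviate $\phi_i=\phi\circ\pi_i$ and $g_i=\pi_i^*(g)$. The index is then at least $k$ by the min-max characterization of eigenvalues.

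Let $u>0$ be a first eigenfunction of $L$ on $M$, with eigenvalue $\mu<0$. Positivity holds since $M$ is closed, and can be assumed on each component; if $M$ is disconnected, I work on a component where the index is positive. Set $u_i=u\circ\pi_i$. Since $\pi_i$ is a local isometry, $L_iu_i=\mu u_i$ on $M_i$. Now let $E_i$ be the span of the first $k$ eigenfunctions $f_1,\dots,f_k$ of $\Delta_{g_i}$. Every $f\in E_i$ satisfies $\int|\nabla f|^2\leq\lambda_k(M_i,g_i)\int f^2$, with $\lambda_k(M_i,g_i)\to 0$. The test space will be $V_i=\{u_if: f\in E_i\}$, which has dimension $k$ because $u_i>0$.

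For $w=u_if$ the standard ground state identity gives
$$
Q_i(u_if,u_if)=\int_{M_i} u_i^2|\nabla f|^2\,dv_{g_i}+\mu\int_{M_i} u_i^2 f^2\,dv_{g_i}.
$$
To derive it, expand $|\nabla(u_if)|^2$, integrate the cross term $2u_if\nabla u_i\cdot\nabla f=\tfrac12\nabla(f^2)\cdot\nabla(u_i^2)$ by parts, and use $-\Delta u_i-\phi_i u_i=\mu u_i$. Set $a=\min_M u>0$ and $b=\max_M u$; these bounds are uniform in $i$ because $u_i$ is a pullback. Then
$$
Q_i(u_if,u_if)\leq b^2\lambda_k(M_i,g_i)\int f^2-|\mu|a^2\int f^2.
$$
This is negative for $f\neq0$ once $\lambda_k(M_i,g_i)<|\mu|a^2/b^2$, which holds for all large $i$. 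Hence $Q_i$ is negative definite on the $k$-dimensional space $V_i$, and therefore ${\rm index}(\Delta_{g_i}+\phi_i)\geq k$.

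The main subtlety is the choice of test functions. The naive choice $u_if$, paired with just the crude estimate $\int|\nabla(u_if)|^2\leq 2\int(u_i^2|\nabla f|^2+f^2|\nabla u_i|^2)$, does not work, because the $|\nabla u_i|^2$ term is not small. The ground state identity removes exactly this term. It relies on the positivity of the first eigenfunction and on the uniform bounds for $u_i$ coming from compactness of $M$. The rest is routine.
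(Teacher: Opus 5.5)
Your proof is correct and is essentially the paper's argument: the paper also pulls back the positive first eigenfunction $\psi$, writes $f=h\psi_i$ to obtain the ground-state identity $\int(|\nabla f|^2-(\phi\circ\pi_i)f^2)=\int(\psi_i^2|\nabla h|^2+\lambda\psi_i^2h^2)$, and then uses a $k$-dimensional space of small Rayleigh quotient on $M_i$ together with the uniform bounds $\inf_M\psi\le\psi_i\le\sup_M\psi$. You should simply drop your aside on disconnected $M$, since the statement implicitly assumes $M$ is connected, and restricting $u$ to one component fails if the small eigenvalues of $M_i$ come from sheets lying over a different component.
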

  
  \begin{proof}
  Let $\psi:M\rightarrow \mathbb{R}$, $\psi>0$, be the first eigenfunction of the linear operator $L=\Delta_g+\phi$. Hence
  $\Delta_g\psi+\phi \psi+\lambda \psi=0$, where $\lambda=\lambda_1(L)$ is the first eigenvalue of $L$. Since ${\rm index}(\Delta_g+\phi)\geq 1$, it follows that $\lambda<0$.
  
  We can define $\psi_i:M_i\rightarrow \mathbb{R}$ by $\psi_i=\psi\circ \pi_i$. Then
  $$
  \Delta_{\pi_i^*(g)}\psi_i+(\phi\circ \pi_i)\psi_i+\lambda\psi_i=0.
  $$
  
  Let $f:M_i\rightarrow \mathbb{R}$. Since $\psi_i>0$, we can define $h=f/\psi_i$. Then, if $g_i=\pi_i^*(g)$,
  \begin{eqnarray*}
 && \int_{M_i} \{|\nabla _{g_i}f|^2 - (\phi\circ \pi_i)f^2\}dv_{g_i}= \int_{M_i} \{|\nabla _{g_i}(h\psi_i)|^2 - (\phi\circ \pi_i)(h\psi_i)^2\}dv_{g_i}\\
  &=&\int_{M_i} \{\psi_i^2|\nabla_{g_i}h|^2 + 2\psi_ih \langle \nabla_{g_i}\psi_i,\nabla_{g_i}h\rangle + h^2|\nabla_{g_i}\psi_i|^2 -(\phi\circ \pi_i)(h\psi_i)^2\}dv_{g_i}.\\
\end{eqnarray*}
We have  that
\begin{eqnarray*}
&&2\int_{M_i} \psi_ih \langle \nabla_{g_i}\psi_i,\nabla_{g_i}h\rangle dv_{g_i}\\
&&=-\int_{M_i} h^2|\nabla_{g_i}\psi_i|^2 dv_{g_i}
+\int_{M_i}(\phi\circ\pi_i)h^2\psi_i^2dv_{g_i}+\lambda\int_{M_i}h^2 \psi_i^2dv_{g_i}.
\end{eqnarray*}

Therefore
$$
 \int_{M_i} \{|\nabla _{g_i}f|^2 - (\phi\circ \pi_i)f^2\}dv_{g_i}     =\int_{M_i} \{\psi_i^2|\nabla_{g_i}h|^2+\lambda \psi_i^2h^2 \}dv_{g_i}.
 $$


Notice that $c_1=(\inf_{M}\psi)^2\leq \psi_i^2 \leq c_2=(\sup_{M}\psi)^2$, $0<c_1\leq c_2$, since $\psi_i=\psi\circ \pi_i$.
Hence, because $\lambda<0$,
$$
 \int_{M_i} \{|\nabla _{g_i}f|^2 - (\phi\circ \pi_i)f^2\}dv_{g_i}
 \leq \int_{M_i} \{c_2|\nabla _{g_i}h|^2+c_1\lambda h^2\}dv_{g_i}.
$$


Since $\lambda_k(M_i,\pi_i^*(g))\rightarrow 0$, for sufficiently large $i$ there is a vector space $E_i\subset W^{1,2}(M_i)$, ${\rm dim}(E_i)=k$, such that
$$
\sup_{h\in E_i\setminus \{0\}} \frac{\int_M |\nabla_{g_i}h|^2 dv_{g_i}}{\int_M h^2 dv_{g_i}} \leq (-\lambda)c_1/(2c_2).
$$

Let $\tilde{E}_i=\{f=h\psi_i: h\in E_i\}\subset W^{1,2}(M_i)$. Then, if $f=h\psi_i\in \tilde{E}_i$, 
$$
\int_{M_i} \{|\nabla _{g_i}f|^2 - (\phi\circ \pi_i)f^2\}dv_{g_i}\leq \frac{c_1\lambda}{2} \int_M h^2 dv_{g_i}<0.
$$
Since ${\rm dim}(\tilde{E}_i)=k$, this proves that for sufficiently large $i$
$$
{\rm index}(\Delta_{\pi_i^*(g)}+\phi\circ \pi_i)\geq k.
$$
  \end{proof}

If $L=\Delta_g+\phi$ is nonnegative (${\rm index}(\Delta_g+\phi)=0$), then there is a positive eigenfunction $u:M\rightarrow \mathbb{R}$, $\Delta_gu+\phi u +\lambda u =0$, with $\lambda=\lambda_1(L)\geq 0$. If $\pi:M'\rightarrow M$ is a finite covering, then $v=u\circ \pi$  is a positive eigenfunction of $\Delta_{\pi^*(g)} v+ (\phi\circ \pi)$ with eigenvalue $\lambda$ and hence $\Delta_{\pi^*(g)} v+ (\phi\circ \pi)$ is nonnegative.

This can be applied to the stability operator $L_\Sigma$ of a two-sided, closed  immersed minimal hypersurface  $\psi:\Sigma^n\rightarrow (M^{n+1},g)$. Hence if $\psi$ is stable the hypersurface $\psi\circ \pi$ is stable for any finite covering $\pi:\Sigma'\rightarrow \Sigma$. 
  As defined in \cite{fraser-schoen} the map $\psi$ is covering stable.

 \begin{thm} \label{boundary.case}
Let $(\Omega^{n+1},g)$ be a compact  manifold with boundary such that $3\leq (n+1)\leq 7$. Suppose that  $\partial\Omega$ is a strictly stable minimal hypersurface and that the metric   $g$ is bumpy. Suppose that there is a surjective homomorphism $\varphi:\pi_1(\Omega)\rightarrow \mathbb{Z}$ such that  for a  component $\Sigma$ of $\partial \Omega$ the restriction  $\varphi:\pi_1(\Sigma)\rightarrow \mathbb{Z}$ is nontrivial.   Then $\Omega$ contains infinitely many geometrically distinct connected, smooth, two-sided, virtually embedded, closed minimal hypersurfaces  of Morse index one with unbounded $n$-volumes.
\end{thm}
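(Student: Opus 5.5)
We argue by contradiction, following the outline in the introduction. Suppose that, up to geometric equivalence, $\Omega$ contains only finitely many connected, two-sided, virtually embedded, closed minimal immersions of Morse index one, say with images $\psi_1(\Sigma_1),\dots,\psi_m(\Sigma_m)$. (If the volumes were bounded, there would likewise be only finitely many images, by bumpiness and compactness; so it suffices to produce infinitely many distinct images.)

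First we build the cyclic coverings. For each $q\in\mathbb{N}$, let $\Omega_q\to\Omega$ be the $q$-fold cyclic cover associated to the composition $\pi_1(\Omega)\xrightarrow{\varphi}\mathbb{Z}\to\mathbb{Z}_q$. Its boundary is a lift of $\partial\Omega$, so it is again a strictly stable minimal hypersurface. Covering stability, as discussed after Proposition \ref{index.coverings}, gives this, and the pulled-back metric has no stable degenerate immersed hypersurfaces. Since $\varphi$ restricted to $\pi_1(\Sigma)$ is nontrivial for the given boundary component $\Sigma$, the preimage of $\Sigma$ in $\Omega_q$ consists of connected components that are nontrivial cyclic covers of $\Sigma$, of degree growing with $q$.

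Next we track what happens to each $\psi_l$ in $\Omega_q$. Either $\varphi\circ(\psi_l)_*$ is trivial, in which case $\psi_l$ lifts to $\Omega_q$ as $q$ copies with the same index and volume. Or it is nontrivial, in which case the preimage consists of connected immersions covering $\Sigma_l$ cyclically with degree $d_q\to\infty$. In the second case Brooks' theorem \cite{brooks} gives $\lambda_k(\Sigma_l^{(q)})\to0$ for the cyclic covers, for any fixed $k$. Proposition \ref{index.coverings}, applied to the Jacobi operator, then shows that these covers have index $\geq2$ for large $q$. Hence, for large $q$, the only index one connected minimal hypersurfaces in $\Omega_q$ that project to index one hypersurfaces of $\Omega$ are lifts of the $\psi_l$ with trivial $\varphi$-image. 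These have volume at most $C=\max_l{\rm vol}(\Sigma_l)$.

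It remains to produce, for large $q$, a connected, two-sided, embedded index one minimal hypersurface in $\Omega_q$ with volume larger than $C$. Its projection to $\Omega$ is then a virtually embedded index one hypersurface, and it must be new, which is the contradiction. Because the lifts of $\Sigma$ have volume $d_q|\Sigma|\to\infty$, we apply Proposition \ref{stable.index}, or Proposition \ref{index.one.boundary} combined with the strict stability of the boundary and $W>{\bf M}$ of a boundary component. Here we work in the component region of $\Omega_q$, cut along the stable hypersurfaces of controlled volume as in the proof of Theorem \ref{connected.minimal}, that is adjacent to a lift $\tilde\Sigma$ of $\Sigma$. This yields an index one, connected, two-sided, embedded hypersurface $\Sigma^{(q)}$ with ${\rm vol}(\Sigma^{(q)})>{\rm vol}(\tilde\Sigma)\geq d_q|\Sigma|$. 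Letting $q\to\infty$ gives unboundedness as well.

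The main obstacle is this last step. A lift $\tilde\Sigma$ is a boundary component of $\Omega_q$ rather than an interior hypersurface, so we need the variant of Proposition \ref{stable.index} in which the stable hypersurface sits on the boundary. The proof there already covers this case, since $\partial\tilde\Omega_j$ strictly stable gives ${\rm vol}(\tilde\Sigma)<W^{(1)}(\tilde\Omega_j)\leq{\rm vol}(\Sigma^{(q)})$ through Proposition \ref{no.stable.two-sided.case}. The step requires checking that the region adjacent to $\tilde\Sigma$ has no cheaper stable hypersurface, or else reapplying the argument with that hypersurface. A second point to verify is that Brooks' spectral gap result applies to cyclic covers of the possibly nonorientable induced immersions, and that geometric distinctness follows from the volume growth.
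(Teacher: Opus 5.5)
Your outline matches the paper's proof: cyclic covers induced by $\varphi$, a dichotomy according to whether $\varphi$ kills the fundamental group of each index-one hypersurface, and Brooks' theorem with Proposition \ref{index.coverings} to force index $\geq 2$ on the lifts in the nontrivial case. Then Proposition \ref{stable.index} is applied to a lift of $\Sigma$, whose volume grows linearly, and Proposition \ref{curvature.estimates} gives unbounded volumes. What you call the main obstacle is not one. Proposition \ref{stable.index} allows $\Sigma'$ to be a component of $\partial\Omega$; its proof handles that case through the maximal disjoint family $\tilde\Lambda$, so no cheaper stable hypersurface has to be excluded by hand. The paper applies it directly to the lift $p_k'(\Sigma^{(k)})$, which lies in $\partial\Omega_{kj}$.

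The step that is not justified as written is the claim that every embedded index-one $S\subset\Omega_q$ whose projection has image $\psi_l(\Sigma_l)$ is a lift of $\psi_l$. Unique continuation only shows that $S$ is the image of some component $C$ of the pullback of $\psi_l$. The map $C\to S$ can be a nontrivial covering, for instance when your chosen representative $\psi_l$ is itself a cover of another immersion with the same image. Then ${\rm index}(C)\geq 2$ says nothing about ${\rm index}(S)$, so the contradiction ("it must be new") does not follow for an arbitrary choice of representatives.

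The paper avoids this as follows:
\begin{itemize}
\item It represents each image by an embedded, two-sided $\Gamma_j\subset\Omega_{i_j}$.
\item It works in a single common cyclic cover $\Omega_{k\cdot i\cdot j}$, with $i$ divisible by every $i_j$ and $i_jt_j$. There the full preimage of $\cup_r\pi_{i_r}(\Gamma_r)$ is a finite union of embedded hypersurfaces $\Sigma''_1,\dots,\Sigma''_h$. Each has index $\geq 2$ or volume at most $\max_j{\rm vol}_g(\Gamma_j)$.
\item The embedded $\Sigma_{k\cdot i}$ therefore differs from every $\Sigma''_r$ as a set. Unique continuation then gives a point of $\pi_{k\cdot i\cdot j}(\Sigma_{k\cdot i})$ outside the listed images.
\end{itemize}
Alternatively, you can repair your version by choosing each $\psi_l$ so that it is not a nontrivial cover of another two-sided immersion with the same image. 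Every two-sided immersion with that image then factors through $\psi_l$, and the lifting property forces $C\to S$ to be a diffeomorphism.
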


\begin{proof}
Let $\pi_i:\Omega_i\rightarrow \Omega$ be the finite covering such that $(p_i)_{\#}(\pi_1(\Omega_i))= \varphi^{-1}(i\cdot \mathbb{Z})$. We denote by $g$ the pull-back metrics.

Let $\varphi(\pi_1(\Sigma))=j \cdot \mathbb{Z} \subset \mathbb{Z}$, $j\in \mathbb{N}$. If $k\in \mathbb{N}$, let $p_k:\Sigma^{(k)}\rightarrow \Sigma$ be the finite covering such that $(p_k)_{\#}(\pi_1(\Sigma^{(k)}))= \{c\in \pi_1(\Sigma): \varphi(c)\in k j \cdot \mathbb{Z}\}$. The map
$p_k:\Sigma^{(k)}\rightarrow \Omega$ lifts to an embedding $p_k':\Sigma^{(k)}\rightarrow \Omega_{kj}$. Hence $p_k'(\Sigma^{(k)})$ is a stable, two-sided, embedded closed minimal hypersurface in $\Omega_{kj}$ with ${\rm vol}_g(p_k'(\Sigma^{(k)}))=k\cdot {\rm vol}(\Sigma).$ By Proposition \ref{stable.index}, there is a connected, smooth, embedded, two-sided, closed minimal hypersurface $\Sigma_k\subset {\rm int}(\Omega_{kj})$, of Morse index one, such that
$$
{\rm vol}_g(\Sigma_k)>{\rm vol}_g(p_k'(\Sigma^{(k)}))=k \cdot {\rm vol}(\Sigma).
$$
Then $(\pi_{kj})_{|\Sigma_k}:\Sigma_k\rightarrow \Omega$ is a virtually embedded, connected, smooth, two-sided, closed minimal hypersurface of Morse index one in $\Omega$.

Suppose that there are only finitely many images of virtually embedded, Morse index one, closed minimal hypersurfaces $\{\pi_{i_1}(\Gamma_1), \dots, \pi_{i_q}(\Gamma_q)\}$ in $\Omega$,  $\Gamma_j\subset \Omega_{i_j}$, $\{i_1,\dots, i_q\}\subset \mathbb{N}$, $1\leq j \leq q$. Notice that
$$
(\pi_{i_j})^{-1}(\pi_{i_j}(\Gamma_j)) =\cup_{\sigma\in {\rm Aut}(\pi_{i_j})} \sigma(\Gamma_j).
$$

Let $1\leq j\leq q$. If $\varphi \circ (\pi_{i_j})_{\#}:\pi_1(\Gamma_j)\rightarrow \mathbb{Z}$ is the trivial homomomorphism, then the map
$(\pi_{i_j})_{|\Gamma_j}:\Gamma_j\rightarrow \Omega$ lifts to any covering $\pi_i:\Omega_i\rightarrow \Omega$. Therefore, if  $\Sigma'$ is a connected component of $\pi_{k\cdot i_j}^{-1}(\pi_{i_j}(\Gamma_j))$ in $\Omega_{k\cdot i_j}$, where $k\in \mathbb{N}$, then 
${\rm vol}_g(\Sigma')\leq {\rm vol}_g(\Gamma_j)$.  Let $\Lambda_1$ be the set of such $1\leq j\leq q$.

If $\varphi \circ (\pi_{i_j})_{\#}:\pi_1(\Gamma_j)\rightarrow \mathbb{Z}$ is nontrivial, then $(\varphi \circ (\pi_{i_j})_{\#})(\pi_1(\Gamma_j))=i_j\cdot t_j\cdot \mathbb{Z}$ where $t_j\in \mathbb{N}$. Let $p_{j,k}:\Gamma_{j,k}\rightarrow \Gamma_j$ be the finite covering such that $(p_{j,k})_{\#}(\pi_1(\Gamma_{j,k}))=\{c\in \pi_1(\Gamma_j):\varphi((\pi_{i_j})_{\#}(c))\in k \cdot i_j \cdot t_j\cdot \mathbb{Z}\}.$ Then the map
$p_{j,k}:\Gamma_{j,k}\rightarrow \Omega_{i_j}$ lifts to an embedding $p_{j,k}':\Gamma_{j,k}\rightarrow \Omega_{k\cdot i_j\cdot t_j}$. Let $\Lambda_2$ be the set of such $1\leq j\leq q$.

Since $\{p_{j,k}:\Gamma_{j,k}\rightarrow \Gamma_j\}_k$ is a sequence of cyclic coverings, by Brooks \cite{brooks} $\lambda_2(\Gamma_{j,k},g)\rightarrow 0$ as $k\rightarrow \infty$. Hence, by Proposition \ref{index.coverings} there is $k_j\in \mathbb{N}$ such that   ${\rm index}(p_{j,k}'(\Gamma_{j,k}))\geq 2$ for  $k\geq k_j$. Therefore, if  $\Sigma'$ is a connected component of $\pi_{k\cdot i_j\cdot t_j}^{-1}(\pi_{i_j}(\Gamma_j))$ in $\Omega_{k\cdot i_j\cdot t_j}$, where $k\geq k_j$, then 
${\rm index}(\Sigma')\geq  2$.

Define $i=(\prod_{j\in \Lambda_1} i_j)(\prod_{j\in \Lambda_2} i_jt_j)$, and $k'=\max_{j\in \Lambda_2} k_j$.

Hence, if $k \geq k'$  then 
$$
\pi_{k\cdot i \cdot j}^{-1}(\cup_{r=1}^q \pi_{i_r}(\Gamma_{r}))
$$
is a finite union of connected, embedded, two-sided, closed minimal hypersurfaces $\Sigma''_1, \dots, \Sigma''_h$, such that  if $1\leq r\leq h$ then either
${\rm vol}_g(\Sigma''_r)\leq \max_{j\in \Lambda_1} {\rm vol}_g(\Gamma_j)$ or ${\rm index}(\Sigma''_r)\geq 2$. Therefore, if $k$ is sufficiently large then the Morse index one, smooth, embedded, closed minimal hypersurface $\Sigma_{k\cdot i}$ in $\Omega_{k\cdot i \cdot j}$ is such that $\Sigma_{k\cdot i}\neq \Sigma''_r$ for any $1\leq r\leq h$. By unique continuation of minimal hypersurfaces, $\Sigma_{k\cdot i}\setminus \Sigma''_r$ is open and dense in $\Sigma_{k\cdot i}$ for any $1\leq r \leq h$.
Hence $\Sigma_{k\cdot i}\setminus (\cup_{r=1}^h \Sigma''_r)$ is open and dense in $\Sigma_{k\cdot i}$. Therefore
there is $y\in \pi_{k\cdot i\cdot j}(\Sigma_{k\cdot i})$ such that $y\notin \cup_{r=1}^q \pi_{i_r}(\Gamma_{r})$. This is a contradiction since
$\pi_{k\cdot i\cdot j}(\Sigma_{k\cdot i})$ is the image of a  virtually embedded, Morse index one, closed minimal hypersurface in $\Omega$.

Since there is a sequence of virtually embedded, Morse index one, closed minimal hypersurfaces in $\Omega$ with geometrically distinct images, the volumes are unbounded by Proposition \ref{curvature.estimates}. This proves the theorem. 

\end{proof}

 \begin{prop}\label{curvature.estimates}
Let $M^{n+1}$ be a compact manifold possibly  with boundary, $3\leq (n+1)\leq 7$. Suppose that $g$ is a bumpy Riemannian metric on $M$ such that $\partial M$ is a strictly stable minimal hypersurface. If $\lambda>0$, define $\mathcal{V}_\lambda$ to be the set of connected,  smooth, two-sided, virtually embedded,  of Morse index one, closed minimal hypersurfaces $\psi:\Sigma\rightarrow M$ such that ${\rm vol}_g(\psi(\Sigma))\leq \lambda$. Then the set of images $\{\psi(\Sigma):\Sigma\in \mathcal{V}_\lambda\}$ is finite.
\end{prop}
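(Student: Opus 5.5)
Argue by contradiction: suppose there are infinitely many distinct images $\psi_i(\Sigma_i)$ with $\psi_i\in\mathcal{V}_\lambda$. The plan is to extract a limit and use bumpiness to force the sequence to stabilize. The key input is a uniform curvature bound for immersed index one minimal hypersurfaces with bounded volume. Since $\psi_i$ is virtually embedded, it lifts to an embedding $\psi_i'$ into a finite cover $M_i'$, where $\psi_i'(\Sigma_i)$ is an embedded, two-sided, index one minimal hypersurface. Index one implies local stability at every point away from at most one small ball: for any two disjoint domains, one of them is stable. So the curvature estimates of Schoen--Simon, which are local and hold uniformly in covers since the covers are local isometries, give $|A|$ bounded on $\psi_i(\Sigma_i)$ away from a single point $p_i$ at a scale $r_i$, where possibly $r_i\to 0$. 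Projected to $M$, the images $\psi_i(\Sigma_i)$ are immersed with uniformly bounded volume and locally bounded curvature away from at most one point. Also, $\psi_i(\Sigma_i)\subset{\rm int}(M)$ stays a definite distance from $\partial M$ by the maximum principle and strict stability of $\partial M$, unless it is a component of $\partial M$, which is stable and hence excluded.

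Pass to a subsequence with $p_i\to p$. Away from $p$, the immersed sheets converge smoothly, locally graphically with finite multiplicity (by the volume bound), to a smooth immersed minimal hypersurface $\Gamma$. By the removable singularity theorem in dimension $\le 7$, $\Gamma$ extends across $p$ as a closed immersed minimal hypersurface. If the convergence is with multiplicity one and no curvature concentrates, then $\Gamma$ has index at most one and the $\psi_i$ are eventually normal graphs over $\Gamma$. Because $g$ is bumpy, $\Gamma$ is nondegenerate, so by the implicit function theorem the only minimal hypersurfaces that are graphs over $\Gamma$ near it are $\Gamma$ itself. Hence $\psi_i(\Sigma_i)=\Gamma$ for large $i$, contradicting distinctness.

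The remaining case is multiplicity $m\ge2$ or curvature concentrating at $p$; this is the main obstacle. With multiplicity at least two, on each component of $\Gamma$ (or of its two-sided cover, in the one-sided case) the normalized difference of adjacent sheets produces, in the standard way, a positive Jacobi field on $\Gamma\setminus\{p\}$. A point singularity is removable for positive solutions in dimension $n\ge2$, so this Jacobi field extends to $\Gamma$, or to its oriented double cover in the one-sided case. This gives a stable degenerate immersed minimal hypersurface, which bumpiness forbids. The same argument covers the immersed case, working on the lifts in the covers and using that the stability operator is unchanged under coverings. I expect the delicate points to be the non-concentration at $p$ and the Jacobi-field construction through the single bad point. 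To handle these I would first try Sharp's compactness theorem (\cite{sharp}) for the embedded lifts in $M_i'$, applied to the local pictures, since index and volume bounds are local. That theorem gives smooth convergence away from finitely many points together with the Jacobi field conclusion, and it can be transported down to $M$ through the covering projections.
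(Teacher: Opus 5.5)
There is a genuine gap, located exactly in the case you identify as the main obstacle. The hypothesis bounds ${\rm vol}_g(\psi(\Sigma))$, the volume of the \emph{image}. It does not bound the volume of $\Sigma$ in the metric $\psi^*(g)$.

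A virtually embedded $\psi_i$ can factor as $\psi_i=\psi_i'\circ\pi_i'$, where $\pi_i':\Sigma_i\rightarrow\Sigma_i'$ is a nontrivial covering of arbitrarily large degree. In that case the embedded lifts in the covers $M_i'$ have unbounded volume and unbounded diameter, and they live in different manifolds. So you cannot apply Sharp's theorem to them as you propose: there is no fixed ambient manifold and no global mass bound. A purely local application gives only local convergence of sheets. It does not give the global Jacobi field, because that construction (Harnack on all of $\Gamma\setminus\{p\}$, removability at $p$) requires consistently ordered sheets over the whole limit.

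Working downstairs in $M$ does not help either. The images are only immersed, and sheets coming from different preimage balls in $M_i'$ can cross. So ``adjacent sheets'' and positivity of their difference are not available. Without positivity you control neither the removable singularity nor the possible concentration of the normalized difference at $p$.

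What is missing is the dichotomy used in the paper. Say $\pi_i|_{\Sigma_i}$ has a double cover if there are disjoint open sets $U,\tilde U\subset\Sigma_i$ with $\pi_i(U)=\pi_i(\tilde U)$.

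\emph{No double cover.} The lift has volume equal to that of the image, hence at most $\lambda$. Since ${\rm inj}(M_i)\geq r={\rm inj}(M)$, monotonicity gives a uniform bound $2r\lambda/c$ on the extrinsic diameter of $\Sigma_i$ in $M_i$. So the lifts are embedded, of index one and bounded volume, in a bounded region that can be modeled on a fixed covering space. Sharp's compactness together with bumpiness then gives the contradiction. This is the only case where your embedded-lift argument is legitimate.

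\emph{Double cover.} Then $\pi_i=\psi_i'\circ\pi_i'$ with $\pi_i'$ nontrivial, and each $p\in\Sigma_i$ has a point $q\neq p$ with the same image and $d(p,q)\geq r$. The components of $\Sigma_i\cap B_{r/2}(p)$ and $\Sigma_i\cap B_{r/2}(q)$ through $p$ and $q$ have the same image, hence the same index, so index one forces both to be stable. Thus $\Sigma_i$ is stable at a fixed scale around every point, with local volume at most $\lambda$. Schoen--Simon then gives a uniform bound on $|A|$ with no concentration point at all. The immersions $\psi_i'$ subconverge smoothly to a minimal immersion, which is degenerate because the images are distinct, contradicting bumpiness.
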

 
 \begin{proof}
 Let $\psi\in \mathcal{V}_\lambda$. It follows that there is a finite cover $\pi:M_\Sigma \rightarrow M$ which depends on $\psi$ such that $\psi$ lifts to an embedding
$\psi':\Sigma\rightarrow M_\Sigma$.
We identify $\Sigma$ with $\psi'(\Sigma)$ and consider the  lift $\psi':\Sigma\rightarrow M_\Sigma$ as the inclusion map.
 Hence $\Sigma$ is an embedded, closed minimal hypersurface in $M_\Sigma$ and $\psi=\pi_{|\Sigma}$. We would like to prove that 
 the set $\{\pi(\Sigma):\Sigma\in \mathcal{V}_\lambda\}$ is finite.

Let $\Sigma\in \mathcal{V}_\lambda$. Hence there is a finite cover $\pi:M_\Sigma\rightarrow M$ such that $\Sigma\subset M_\Sigma$.
The map $\pi_{|\Sigma}$ has a double cover if there are open sets $U,\tilde{U}\subset \Sigma$, disjoint, such that
$\pi(U)=\pi(\tilde{U})$.  Let $\mathcal{V}_\lambda'$ be the set of $\Sigma\in \mathcal{V}_\lambda$ such that $\pi_{|\Sigma}$ does not have a double cover, and  $\mathcal{V}_\lambda''=\mathcal{V}_\lambda \setminus \mathcal{V}_\lambda'.$
We are going to prove that the sets $\{\pi(\Sigma):\Sigma\in \mathcal{V}_\lambda'\}$ and
$\{\pi(\Sigma):\Sigma\in \mathcal{V}_\lambda''\}$ are finite.  The map $\pi$ depends on $\Sigma$.


Suppose, by contradiction, that there is a sequence $\Sigma_i\in \mathcal{V}_\lambda'$, $\Sigma_i\subset M_i$, $\pi_i:M_i\rightarrow M$,  
such that $\pi_i(\Sigma_i)\neq \pi_j(\Sigma_j)$ for any $i\neq j$. 
Then $\Sigma_i\subset M_i$ is a smooth, connected, two-sided, embedded, closed minimal hypersurface of Morse index one,
and $(\pi_i)_{|\Sigma_i}$ does not have a double cover. This implies that ${\rm vol}_{\pi_i^*(g)}(\Sigma_i)={\rm vol}_g(\pi_i(\Sigma_i))$, hence ${\rm vol}_{\pi_i^*(g)}(\Sigma_i)\leq \lambda$.

If  $r$ is the injectivity radius ${\rm inj}(M,g)$ of $(M,g)$, then ${\rm inj}(M_i,\pi_i^*(g))\geq r$. It follows from the  monotonicity formula for minimal surfaces in $(M,g)$ that there is a constant $c>0$ such that if $p\in \Sigma_i$, then ${\rm vol}_{\pi_i^*(g)}(\Sigma_i\cap B_r(p))\geq c$ where $B_r(p)$ is the geodesic ball in $M_i$. Hence  $d_i={\rm diam}(\Sigma_i)\leq d=2r\lambda/c$, where ${\rm diam}$ is the extrinsic diameter of $\Sigma_i\subset M_i$. By \cite{sharp}, there is  $\{j\}\subset \{i\}$ such that $\Sigma_j$ converges locally graphically away from finitely many points in a covering space. Since $g$ is bumpy this is a contradiction. This proves that $\{\pi(\Sigma):\Sigma\in \mathcal{V}_\lambda'\}$  is finite.

Suppose, by contradiction, that there is a sequence $\Sigma_i\in \mathcal{V}_\lambda''$, $\Sigma_i\subset M_i$, $\pi_i:M_i\rightarrow M$,  
such that $\pi_i(\Sigma_i)\neq \pi_j(\Sigma_j)$ for any $i\neq j$. 
Then $\Sigma_i\subset M_i$ is a smooth, connected, two-sided, embedded, closed minimal hypersurface of Morse index one,
and $(\pi_i)_{|\Sigma_i}$ has a double cover. 
Hence there is a nontrivial cover  $\pi_i':\Sigma_i\rightarrow \Sigma_i'$ such that there is an immersion 
$\psi_i':\Sigma_i'\rightarrow M$ that has no double cover satisfying $\psi_i'\circ \pi_i'=\pi_i.$ It follows that
${\rm vol}_{(\psi_i')^*(g)}(\Sigma_i')={\rm vol}_g(\psi_i'(\Sigma_i'))={\rm vol}_g(\pi_i(\Sigma_i))$, hence
${\rm vol}_{(\psi_i')^*(g)}(\Sigma_i')\leq \lambda$.

Since $\psi_i'\circ \pi_i'=\pi_i,$ we have that
$\pi_i^*(g)=(\psi_i'\circ \pi_i')^*(g)=(\pi_i')^*(\psi_i')^*(g).$ Therefore $\pi_i':(\Sigma_i,\pi_i^*(g))\rightarrow (\Sigma_i',(\psi_i')^*(g))$ is a local isometry.

Let $p\in \Sigma_i$. Since $\pi_i':\Sigma_i\rightarrow \Sigma_i'$ is not a diffeomorphism, there is $q\in \Sigma_i$ such that $\pi_i'(p)=\pi_i'(q)=y'$.  Therefore $\pi_i(p)=\pi_i(q)=y$, and hence $d(p,q)\geq r$ since ${\rm inj}(M_i,\pi_i^*(g))\geq r$. It follows that the balls $B_{r/2}(p)$ and $B_{r/2}(q)$ are disjoint.  

Let $\Sigma_i(p)$ be the connected component of $\Sigma_i\cap B_{r/2}(p)$ such that $p\in \Sigma_i(p)$, and $\Sigma_i(q)$ be the  component of $\Sigma_i\cap B_{r/2}(q)$ such that $q\in \Sigma_i(q)$. Then $\pi_i(\Sigma_i(p))$ and $\pi_i(\Sigma_i(q))$ are connected, compact minimal hypersurfaces properly embedded  in $B_{r/2}(y)$. Since $\pi_i(\Sigma_i(p))$ and $\pi_i(\Sigma_i(q))$ contain
$\psi_i'(B_{r/2}^{\Sigma_i'}(y'))$, where $B_{s}^{\Sigma_i'}(\tilde{p})$ is the intrinsic geodesic ball of radius $s$ centered at $\tilde{p}$ in the Riemannian surface $(\Sigma_i',(\psi_i')^*(g))$, it follows  that $\pi_i(\Sigma_i(p))=\pi_i(\Sigma_i(q))$.

Hence the   index of the minimal hypersurface $\Sigma_i(p)$ and   the  index of $\Sigma_i(q)$ coincide, where the index is defined  for the compactly supported variations.  If  ${\rm index}(\Sigma_i(p))\geq 1$ then
it would follow that ${\rm index}(\Sigma_i(q))\geq 1$ and hence ${\rm index}(\Sigma_i)\geq 2$. This contradicts that ${\rm index}(\Sigma_i)=1$.

Therefore $\Sigma_i(p)$ is a stable minimal hypersurface   for any $p\in \Sigma_i$. Since $(\pi_i')_{|\Sigma_i(p)}$ is
 injective, ${\rm vol}_{\pi_i^*(g)}(\Sigma_i(p))\leq {\rm vol}_{(\psi_i')^*(g)}(\Sigma_i')\leq  \lambda$.  It follows that there is a constant $C>0$ such that $|A_{\Sigma_i}|(p)\leq C$ for any $i\geq 1$ and $p\in \Sigma_i$, 
where $A_\Sigma$ denotes  the second fundamental form of  $\Sigma$. This uses the curvature estimates of \cite{schoen-simon}.

Hence  
there is $\{j\}\subset \{i\}$ such that the sequence $\psi_j':\Sigma_j'\rightarrow M$ converges  to $\psi:\Sigma'\rightarrow M$. The map $\psi$   is a degenerate minimal immersion since $\psi_i'(\Sigma_i')\neq  \psi_j'(\Sigma_j')$ for $i\neq j$. Since   $g$ is  bumpy this is a contradiction. Hence 
$\{\pi(\Sigma):\Sigma\in \mathcal{V}_\lambda''\}$ is  finite. This finishes the proof of the proposition.

\end{proof}

\begin{cor}\label{three-dimensional.boundary.case}
Let $\Omega$ be a compact three-manifold with boundary with a bumpy metric $g$ and  stable minimal boundary $\partial \Omega$. Suppose that there is a component of $\partial \Omega$ with positive genus.  Then $\Omega$ contains infinitely many geometrically distinct connected,  two-sided, virtually embedded, closed minimal hypersurfaces  of Morse index one with unbounded areas.
\end{cor}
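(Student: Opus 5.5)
The plan is to deduce the corollary from Theorem \ref{boundary.case}. That theorem needs three inputs: $\partial\Omega$ strictly stable, $g$ bumpy, and a surjective homomorphism $\varphi:\pi_1(\Omega)\rightarrow\mathbb{Z}$ whose restriction to $\pi_1$ of some boundary component is nontrivial. Bumpiness is assumed. Stability of $\partial\Omega$ upgrades to strict stability: a stable degenerate component would contradict bumpiness, since the Jacobi operator would have kernel. So the work is to construct $\varphi$ from the hypothesis that some component $\Sigma$ of $\partial\Omega$ has positive genus.

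We may assume $\Omega$ is orientable. Otherwise, pass to the orientation double cover $\hat\Omega$ with the pulled-back metric. By the remarks preceding Theorem \ref{index.area}, it has no stable degenerate immersed minimal surfaces, which is the only consequence of bumpiness that the arguments use. A positive-genus boundary component lifts to positive-genus components. Index one virtually embedded surfaces in $\hat\Omega$ project to virtually embedded surfaces in $\Omega$, and distinct images with unbounded area remain so after projection, using Proposition \ref{curvature.estimates}.

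For orientable $\Omega$ I would use the standard "half lives, half dies" fact. The image of $H_1(\partial\Omega,\mathbb{Q})\rightarrow H_1(\Omega,\mathbb{Q})$ has dimension $\frac12 b_1(\partial\Omega)$. This follows from Poincaré--Lefschetz duality applied to the long exact sequence of the pair $(\Omega,\partial\Omega)$: the kernel of $H_1(\partial\Omega)\to H_1(\Omega)$ is Lagrangian for the intersection form. A finer version is needed, namely that some class in $H_1(\Sigma)$ for the particular positive-genus component survives. Dually, the restriction $H^1(\Omega,\mathbb{Q})\rightarrow H^1(\Sigma,\mathbb{Q})$ should be nonzero. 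I would show this by applying the Lagrangian argument to $\Sigma$ alone. If every class of $H_1(\Sigma)$ died in $H_1(\Omega)$, choose classes $a,b\in H_1(\Sigma)$ with $a\cdot b=1$ on $\Sigma$. Let $A$ be a relative $2$-chain in $\Omega$ with $\partial A=a$, and $B$ one with $\partial B=b$. The intersection $A\cdot b$ equals $a\cdot b$ on $\Sigma$ and is $1$. Counted in $\Omega$, it is also the intersection of $A$ with the relative cycle $B$, which is $\pm$ the boundary-linking pairing and must vanish because $b$ bounds. This gives a contradiction.

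From a nonzero class in $H^1(\Omega,\mathbb{Q})$ restricting nontrivially to $\Sigma$, take an integral primitive class $u\in H^1(\Omega,\mathbb{Z})=\mathrm{Hom}(\pi_1(\Omega),\mathbb{Z})$ with nonzero restriction. Divide by the gcd of its values to get a surjection $\varphi$. Its restriction to $\pi_1(\Sigma)$ is still nontrivial. Theorem \ref{boundary.case} then gives infinitely many geometrically distinct index one surfaces with unbounded areas.

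The main obstacle is the topological step showing that a particular positive-genus boundary component, rather than merely $\partial\Omega$ as a whole, carries a class that survives in $H_1(\Omega,\mathbb{Q})$. I would handle it with the intersection-pairing argument above. The other delicate point is the reduction to the orientable case.
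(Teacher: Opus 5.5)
Your proposal is correct and follows essentially the paper's route: a half-lives-half-dies argument produces $\varphi:\pi_1(\Omega)\rightarrow\mathbb{Z}$ that is nontrivial on a boundary component, and Theorem \ref{boundary.case} finishes. The paper cites Hatcher's rank count for all of $\partial\Omega$ and then takes whichever component the resulting class restricts nontrivially to. That suffices because Theorem \ref{boundary.case} only asks for some component, so your component-specific refinement via isotropy of the kernel is correct but not needed.

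Your additions are fine: strict stability of $\partial\Omega$ from bumpiness, dividing by the gcd to make $\varphi$ surjective, and the orientation double cover, which the paper leaves implicit. In the double-cover step, though, the right justification is that $\pi^*(g)$ is itself bumpy. Every closed immersed minimal surface in the cover composes to one in $\Omega$ with the same Jacobi operator. This matters because Proposition \ref{curvature.estimates} uses bumpiness for possibly unstable limits, not only the absence of stable degenerate surfaces.
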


\begin{proof}
The rank of the map $H_1(\partial\Omega,\mathbb{Q})\rightarrow H_1(\Omega,\mathbb{Q})$ induced by the inclusion  $i:\partial\Omega\rightarrow \Omega$ is $\frac12 {\rm dim}(H_1(\partial\Omega,\mathbb{Q}))$  (\cite{hatcher_three-manifold}, Lemma 3.5), and hence it is positive since the boundary $\partial \Omega$ has a component with positive genus.  By naturality, using that $H_1(X,\mathbb{Q})=H_1(X,\mathbb{Z})\otimes \mathbb{Q}$, the image of the homomorphism $i_{\#}:H_1(\partial\Omega,\mathbb{Z})\rightarrow H_1(\Omega,\mathbb{Z})$ cannot be contained in the torsion part of $H_1(\Omega,\mathbb{Z})$.

Hence there is a homomorphism $\psi:H_1(\Omega,\mathbb{Z})\rightarrow \mathbb{Z}$ such that $\psi\circ i_{\#}:H_1(\partial\Omega,\mathbb{Z})\rightarrow \mathbb{Z}$ is nontrivial. It follows that there is a component $\Sigma$ of $\partial\Omega$ such that the restriction homomorphism $\psi\circ i_{\#}:H_1(\Sigma,\mathbb{Z})\rightarrow \mathbb{Z}$ is nontrivial.

Let $p:\pi_1(\Omega)\rightarrow H_1(\Omega,\mathbb{Z})$ be the natural homomorphism. Theorem \ref{boundary.case} can be used with the homomorphism  $\varphi=\psi\circ p:\pi_1(\Omega)\rightarrow \mathbb{Z}$. This finishes the proof.






\end{proof}

\begin{thm}\label{fiber.bundle}
Let $M_\phi$ be an $(n+1)$-dimensional mapping torus  with fiber $M$ such that  $b_1(M)>0$, and     $3\leq (n+1)\leq 7$.  Suppose   $M'$ is a closed manifold such that there is a finite cover $\pi:M''\rightarrow M'$  that has a continuous map $h:M''\rightarrow M_\phi$ of nontrivial degree. If $g$ is a bumpy metric on $M'$, then $(M',g)$
contains infinitely many geometrically distinct connected,  two-sided, immersed, closed minimal hypersurfaces  of Morse index one with unbounded volumes.
 \end{thm}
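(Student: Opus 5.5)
The plan is to reduce to Theorem \ref{boundary.case}. I would first produce, inside a suitable finite cover of $M'$, a compact region bounded by strictly stable minimal hypersurfaces, one of whose boundary components carries a nontrivial image under a homomorphism to $\mathbb{Z}$. The homomorphism comes from the fibering. Let $\theta:M_\phi\rightarrow S^1$ be the bundle projection and $\alpha=\theta^*(d\theta)\in H^1(M_\phi,\mathbb{Z})$. Pull back by $h$ to get $h^*\alpha\in H^1(M'',\mathbb{Z})$. Since ${\rm deg}(h)\neq 0$, Poincaré duality and the projection formula $h_*(h^*\alpha\cap[M''])=\alpha\cap {\rm deg}(h)[M_\phi]$ show that the Poincaré dual of $h^*\alpha$ maps to ${\rm deg}(h)$ times the class of a fiber $[M]\in H_n(M_\phi)$. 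This class is nonzero and has nonzero rational pairing with classes that use $b_1(M)>0$.

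Next I would minimize in the class of $h^*\alpha$. Give $M''$ the pull-back metric, which is bumpy in the sense needed (immersed hypersurfaces in covers are nondegenerate, as noted before Theorem \ref{index.area}). Minimizing ${\bf M}_g$ in the integral homology class dual to $h^*\alpha$ gives, for $3\leq n+1\leq 7$, a smooth embedded two-sided stable minimal hypersurface $S\subset M''$, possibly with several components, each strictly stable by bumpiness. Cutting $M''$ along $S$ gives a compact manifold $\Omega$ with strictly stable minimal boundary. $\Omega$ is connected if $S$ is taken with a component nonseparating.

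The required homomorphism $\varphi:\pi_1(\Omega)\rightarrow\mathbb{Z}$ should come from $b_1(M)>0$. Pick $\beta\in H^1(M_\phi,\mathbb{Q})$ whose restriction to the fiber is nonzero. I expect this to be the main obstacle: such a $\beta$ exists only if some nonzero class of $H^1(M)$ is invariant under $\phi^*$, which may fail. My first attempt is to pass to a finite cyclic cover of $M_\phi$, still a mapping torus with monodromy $\phi^m$. This does not by itself give invariance, so I would instead use the invariant vector $\sum_{i}(\phi^*)^i c$ over one period when $\phi^*$ has finite order on $H^1(M,\mathbb{Q})$. In general I would argue with the mapping torus structure of $\Omega$ itself: after cutting along a fiber-like hypersurface, $\pi_1$ of the cut piece still surjects onto $\pi_1$ of the fiber, and $b_1(M)>0$ gives a map to $\mathbb{Z}$ nontrivial on the boundary. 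Set $\varphi=h^*\beta$ restricted to $\Omega$, and check that it is nonzero on a boundary component $\Sigma\subset S$. This follows because $h_*[\Sigma]$ pairs with $\beta$ through ${\rm deg}(h)[M]$, and $\beta|_M\neq 0$ detects a curve in the fiber, so some curve in $\Sigma$ maps nontrivially. Scaling $\varphi$ makes it integral and surjective.

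With these ingredients, Theorem \ref{boundary.case} gives infinitely many geometrically distinct index one, connected, two-sided, virtually embedded minimal hypersurfaces in ${\rm int}(\Omega)\subset M''$ with unbounded volumes. Compose with $\pi:M''\rightarrow M'$; index and two-sidedness are preserved under coverings, as remarked before Theorem \ref{index.area}. Geometric distinctness in $M'$ follows because finitely many images in $M'$ would, by Proposition \ref{curvature.estimates} applied in $M'$ (a closed manifold with bumpy metric), bound the volumes in $M'$, and hence bound them in $M''$ up to the factor $\deg\pi$. This contradicts unboundedness, so a subsequence gives the required infinitely many distinct hypersurfaces with unbounded volumes.
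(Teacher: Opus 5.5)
Your overall frame is the paper's: minimize in the class Poincar\'e dual to $h^*\alpha$, cut along the resulting strictly stable hypersurfaces $\sum_i m_i\Sigma_i$, apply Theorem \ref{boundary.case} to a piece, and push down to $M'$. The gap is that the step carrying the proof, the construction of $\varphi$ nontrivial on a boundary component, is never carried out.

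\begin{itemize}
\item As you note, a class $\beta\in H^1(M_\phi,\mathbb{Q})$ with $\beta|_M\neq 0$ exists only if $\phi^*$ fixes a nonzero class of $H^1(M,\mathbb{Q})$. This fails, for instance, for torus bundles with hyperbolic monodromy, where $b_1(M_\phi)=1$. Averaging over a period only covers $\phi^*$ of finite order.
\item The fallback of using ``the mapping torus structure of $\Omega$'' is not available. $\Omega$ is a piece of $M''$, which has no fibering, and you never say how $\Omega$ acquires a map to the fiber. In the end you set $\varphi=h^*\beta|_\Omega$ anyway.
\item Even when $\beta$ exists, nontriviality on a boundary component is not justified. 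Minimization only tells you that the total class $\sum_i m_ih_*[\Sigma_i]$ equals ${\rm deg}(h)[M]$, not what $h_*[\Sigma]$ is for one component. The natural way to make your heuristic precise is to pair in $H^*(M_\phi)$, and that needs $\alpha\smile\beta\neq 0$. This can fail: in the Heisenberg nilmanifold, fibered over $S^1$ with torus fiber, all cup products of degree-one classes vanish.
\item The complement of the minimizer need not be connected, so you must choose the right component rather than arrange connectedness.
\end{itemize}

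The missing idea is to map the cut pieces to the fiber itself.
\begin{itemize}
\item A loop in the closure $\tilde\Omega$ of a component of $M''\setminus\bigcup_i\Sigma_i$ can be pushed off $\bigcup_i\Sigma_i$, so its intersection number with $PD(h^*\alpha)$ is zero. Hence $\alpha$ vanishes on $(h\circ i)_{\#}\pi_1(\tilde\Omega)$.
\item Therefore $h\circ i$ lifts to the infinite cyclic cover $p:M\times\mathbb{R}\rightarrow M_\phi$ associated with $\ker\alpha$. Composing the lift with the projection $M\times\mathbb{R}\rightarrow M$ gives $f:\tilde\Omega\rightarrow M$.
\item For every boundary component $\Sigma$ this gives $h_*i_*[\Sigma]=k_{\Sigma,\Omega}[M]$, with $k_{\Sigma,\Omega}={\rm deg}(f|_\Sigma)$.
\item Since $\sum_i m_ih_*[\Sigma_i]={\rm deg}(h)[M]\neq 0$, some pair $(\Omega,\Sigma)$ has ${\rm deg}(f|_\Sigma)\neq 0$. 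Then $(f|_\Sigma)^*$ is injective on $H^1$.
\item Any nonzero $\psi\in H^1(M,\mathbb{Z})$, which exists since $b_1(M)>0$, gives $\varphi(c)=f^*(\psi)([c])$, and $\varphi$ is nontrivial on $\pi_1(\Sigma)$. Dividing by the gcd makes it surjective.
\end{itemize}
No $\phi^*$-invariance is needed, because you pull back from $H^1(M)$ rather than from $H^1(M_\phi)$.

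The rest of your argument is fine. The passage from $M''$ to $M'$ only needs ${\rm vol}_g(\pi(\Sigma))\geq {\rm vol}_g(\Sigma)/[M'':M']$; Proposition \ref{curvature.estimates} is not required there.
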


\begin{proof}
Since $\pi:M''\rightarrow M'$ is a finite cover, and 
$${\rm vol}_g(\pi(\Sigma))\geq {\rm vol}_g(\Sigma)/([M'':M'])
$$
for any smooth immersed hypersurface $\Sigma$ in $M''$,   we can suppose that $M'$ has a  map $h:M'\rightarrow M_\phi$ of nontrivial degree $k$.

Let $PD$ denote the Poincar\'{e} duality map.  Define $\alpha=PD([M])\in H^1(M_\phi,\mathbb{Z})$, where $M$ is the fiber of $M_\phi$. If $\sigma=PD(h^*(\alpha))\in H_n(M',\mathbb{Z})$,  $h_*(\sigma)= k \cdot [M] \in H_n(M_\phi,\mathbb{Z})$. By minimizing the volume in the homology class $\sigma$, there is a disjoint collection $\{\Sigma_1,\dots, \Sigma_q\}$ of stable, two-sided, embedded closed minimal hypersurfaces of $M'$ and $\{m_1,\dots, m_q\}\subset \mathbb{N}$ such that
$
\sum_{i=1}^q m_i\Sigma_i \in \sigma.
$

Let $\Omega$ be a connected component of the complement of $\cup_{i=1}^q\Sigma_i$ in $M'$, and $\tilde{\Omega}$ be its metric closure.
Let $i:\tilde{\Omega}\rightarrow M'$ be the map induced by the inclusion $\Omega\subset M'$. If $c:S^1 \rightarrow \tilde{\Omega}$ is a loop
and $\cdot$ denotes  the algebraic intersection number, then $(i\circ c)_*([S^1]) \cdot \sigma =0$.  Hence $PD(\sigma)((i\circ c)_*([S^1]))=0$,
or $h^*(\alpha)((i\circ c)_*([S^1]))=0$. Therefore $\alpha((h\circ i\circ c)_*([S^1]))=0$.

Let $p:M\times \mathbb{R}\rightarrow M_\phi$ be the covering map such that $p_{\#}(\pi_1(M\times \mathbb{R}))=\{c'\in \pi_1(M_\phi):\alpha(c')=0\}$, and $M$ be the fiber $M\times \{0\}$.
Then $(h\circ i)_{\#}(\pi_1(\tilde{\Omega}))\subset p_{\#}(\pi_1(M\times \mathbb{R}))$, and hence there is a map $\lambda:\tilde{\Omega}\rightarrow M\times \mathbb{R}$ such that $p\circ \lambda=h\circ i$. Let $\Sigma$ be a component of $\partial \tilde{\Omega}$, with the orientation as  in $\{\Sigma_1,\dots,\Sigma_q\}$.  If $\pi':M\times \mathbb{R}\rightarrow M$ denotes the projection map $\pi'(x,t)=(x,0)$, then $\lambda_*(\Sigma)$ is homologous to $\pi'_*(\lambda_*(\Sigma))$. There is $k_{\Sigma,\Omega}\in \mathbb{Z}$ such that $\pi'_*(\lambda_*(\Sigma))=k_{\Sigma,\Omega} \cdot [M]$. Then 
$$
h_*(i_*([\Sigma]))=p_*(\lambda_*([\Sigma]))=p_*(k_{\Sigma,\Omega} \cdot [M])=k_{\Sigma,\Omega} \cdot [M]\in H_n(M_\phi,\mathbb{Z}).
$$

Since
$\sum_{i=1}^q m_ih_*([\Sigma_i])\neq 0$, there is a connected component $\Omega$  of  $M'\setminus (\cup_{i=1}^q\Sigma_i)$ and a component $\Sigma\subset \partial\Omega$ such that $k_{\Sigma,\Omega}\neq 0$. The map $f=\pi'\circ \lambda:\tilde{\Omega}\rightarrow M$ is such that
$f_*(\Sigma)=k_{\Sigma,\Omega} \cdot [M]$. Hence the map  $f_{|\Sigma}:\Sigma\rightarrow M$ has nontrivial degree and therefore
$(f_{|\Sigma})^*:H^1(M,\mathbb{Z})\rightarrow H^1(\Sigma,\mathbb{Z})$ is injective. Since $b_1(M)>0$, there is $\psi\in H^1(M,\mathbb{Z})$, $\psi\neq 0$, hence $(f_{|\Sigma})^*(\psi)\neq 0 \in H^1(\Sigma,\mathbb{Z})$. Therefore the cohomology class $f^*(\psi)\in H^1(\tilde{\Omega},\mathbb{Z})$ is such that $f^*(\psi)_{|\Sigma}\neq 0$. The theorem is proved by using Theorem \ref{boundary.case} and the homomorphism $\varphi:\pi_1(\tilde{\Omega})\rightarrow \mathbb{Z}$ defined by $\varphi(c)=f^*(\psi)([c])$.

\end{proof}

More precisely, there is a nontrivial, finite, disjoint collection of  stable, two-sided, closed embedded minimal hypersurfaces $\{\Sigma_i\}$ in $M''$ such that there is a sequence of  two-sided, closed minimal hypersurfaces  of Morse index one with unbounded volumes,  virtually embedded in a connected component $\Omega$ of the complement of $\cup\Sigma_i$.

  \begin{thm}\label{index.surfaces}
Let $(M,g)$ be a closed orientable three-dimensional manifold. Suppose that $M$ is not a connected sum of spherical quotients and copies of  $S^2\times S^1$. If $g$ is bumpy, then there is a sequence of  connected,  smooth, two-sided, virtually embedded, closed minimal surfaces $\psi_i:\Sigma_i\rightarrow M$, of Morse index one, such that 
 ${\rm area}_g(\psi_i(\Sigma_i))\rightarrow \infty.$
\end{thm}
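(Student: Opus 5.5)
The plan is to use the prime decomposition and geometrization of $M$ and split into cases, each reducing to an earlier result. Since $M$ is not a connected sum of spherical quotients and copies of $S^2\times S^1$, some prime summand $P$ of $M$ is aspherical (irreducible with infinite fundamental group). There is a degree one map $M\rightarrow P$ collapsing the other summands. By geometrization, either $P$ has a nontrivial JSJ decomposition or is Seifert fibered with infinite $\pi_1$, or $P$ is hyperbolic, or $P$ is Sol (a torus bundle).

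\emph{Hyperbolic case.} By Agol \cite{agol}, $P$ has a finite cover $P'$ that fibers over $S^1$ with fiber a closed surface of genus $\geq 2$, so $b_1>0$. Pull back the cover $P'\rightarrow P$ along $M\rightarrow P$. This gives a finite cover $M''\rightarrow M$ and a map $M''\rightarrow P'=M_\phi$ of degree one. Then Theorem \ref{fiber.bundle} gives infinitely many geometrically distinct connected, two-sided, index one minimal surfaces in $M$ with unbounded areas. I would check from the proof of Theorem \ref{fiber.bundle}, via Theorem \ref{boundary.case}, that these are virtually embedded: they are embedded in cyclic covers of a region of $M''$, and these are finite covers of $M$.

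\emph{Non-hyperbolic aspherical cases.} Torus bundles, Sol manifolds, and Seifert fibered spaces with infinite $\pi_1$ are virtually fibered over $S^1$ or virtually circle bundles over surfaces of positive genus. Graph manifolds and mixed manifolds with nontrivial JSJ decomposition are also virtually fibered except for some graph manifolds. Where a virtual fibering with fiber of positive genus exists, I apply Theorem \ref{fiber.bundle} as above. For the remaining cases I would use compact area enlargeability instead: all aspherical $P$ are compactly area enlargeable, since $\pi_1$ is residually finite by geometrization. This property passes to $M$ through the degree one map, as noted in the introduction. Theorem \ref{index.area} with empty boundary then yields, for every $\lambda$, a virtually embedded index one surface with ${\rm vol}_g(\Sigma)\geq\lambda$. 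For $n=2$ this is ${\rm area}$ of $\Sigma$ in the pull-back metric, not of the image.

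\emph{Main obstacle.} In the enlargeable route, the area of the image $\psi_i(\Sigma_i)$ could stay bounded while the $\Sigma_i$ are iterated coverings of a single immersed surface. The statement requires ${\rm area}_g(\psi_i(\Sigma_i))\rightarrow\infty$, which is precisely the geometric distinctness issue. Proposition \ref{curvature.estimates} says only finitely many images have bounded area. So I must rule out that the index one surfaces from Theorem \ref{index.area} cover finitely many images.

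To handle this I would avoid the enlargeable route when possible and use Corollary \ref{three-dimensional.boundary.case}. I would minimize area in a nontrivial class, or in the isotopy class of an incompressible torus or surface (JSJ torus, vertical torus in a Seifert fibered piece, fiber). Using bumpiness, this gives a stable, two-sided, positive genus minimal surface in $M$ or in a finite cover of $M$. Cutting along it gives a compact $\Omega$ with stable minimal boundary of positive genus. Corollary \ref{three-dimensional.boundary.case} then yields geometrically distinct index one surfaces with unbounded areas, virtually embedded in $\Omega$ and hence in $M$.

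The delicate points are the following:
\begin{enumerate}
\item producing a positive genus incompressible surface in a finite cover of every aspherical summand (for $S^1$ or $\widetilde{SL_2}$-type Seifert spaces this needs $b_1>0$ in a cover, which holds after passing to a cover with orbifold base of positive genus);
\item transferring it from $P$ to $M$ using the degree one map;
\item ensuring the region's boundary is strictly stable, by bumpiness.
\end{enumerate}
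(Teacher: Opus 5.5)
There is a genuine gap, and it is virtual embeddedness in $M$, which is part of the statement. Your mechanism for area growth is sound and is essentially the paper's: produce a strictly stable, two-sided minimal surface of positive genus by minimizing in the isotopy class of an incompressible torus or fiber (in a finite cover), cut along it, and apply Corollary \ref{three-dimensional.boundary.case}. But two of your justifications are false. You say the surfaces ``are embedded in cyclic covers of a region of $M''$, and these are finite covers of $M$'', and later that they are ``virtually embedded in $\Omega$ and hence in $M$''. A finite cyclic cover of a cut-open region $\tilde\Omega$ does not in general extend to a finite cover of $M$. Also, Theorem \ref{fiber.bundle} as stated only produces immersed surfaces.

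To lift $\psi_i$ to an embedding in a finite cover of $M$ you need Scott's criterion. This requires two things: the region must be $\pi_1$-injective, and $(\psi_i)_{\#}(\pi_1(\Sigma_i))$ must be separable in $\pi_1$ of the ambient manifold. The first already fails in your hyperbolic case. The proof of Theorem \ref{fiber.bundle} cuts along homologically area-minimizing surfaces, which may be compressible. This is why the paper reruns that argument minimizing in the \emph{isotopy class} of the fiber of Agol's fibered cover. It then uses that $\pi_1$ of that cover is LERF \cite{agol}, so surfaces virtually embedded in the cut-open region are virtually embedded in the cover.

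The second requirement is where the real work lies, and your proposal has no mechanism for it. Hyperbolic and Seifert fibered groups are LERF (\cite{agol}, \cite{wise}, \cite{scott}), but graph manifolds and mixed manifolds need not be. The index one surfaces from Corollary \ref{three-dimensional.boundary.case} are not incompressible, so you have no control over their fundamental groups. For example, if you cut a graph manifold along one JSJ or vertical torus, the region still contains several JSJ pieces, and nothing makes the resulting surfaces virtually embedded in $M$.

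The paper deals with this in four steps:
\begin{enumerate}
\item It splits $M$ along a maximal disjoint family of stable minimal spheres and projective planes, after using Proposition \ref{stable.index} to reduce to bounded-area stable surfaces. Via Meeks--Simon--Yau and the $\gamma$-reduction claim, the capped-off pieces are irreducible.
\item It minimizes all JSJ tori of an aspherical piece in their isotopy classes. The region $R$ where the index one surfaces live then caps off to a single JSJ component, whose fundamental group is LERF.
\item It passes separability from that component to the whole irreducible summand by efficiency of the JSJ decomposition (\cite{wilton-zalesskii}, Theorem A).
\item It passes from the summand to $M$ across the separating spheres.
\end{enumerate}
Without this localization to one JSJ piece and the efficiency theorem, your argument gives geometrically distinct index one immersed surfaces with unbounded image area, but not that they are virtually embedded in $M$.
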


\begin{proof}

By Propositions \ref{stable.index} and \ref{stable.one-sided.index}, we can suppose that there is a constant $c>0$ such that any smooth, connected,   embedded closed minimal hypersurface $\Sigma'\subset M$ that is either two-sided stable or one-sided with stable oriented two-cover, is such that ${\rm area}_g(\tilde{\Sigma'})\leq c$. Here 
$\tilde{\Sigma}'=\Sigma'$ if  $\Sigma'$ is a two-sided surface, and $\tilde{\Sigma}'$ is the oriented two-cover of $\Sigma'$ in the one-sided case. It follows from the curvature estimates of \cite{schoen} for stable surfaces, since $g$ is a bumpy metric, that the set $\Lambda$ of stable embedded minimal two-spheres and embedded minimal projective planes with stable oriented two-cover in $M$     is finite. Therefore there is a  set 
$\tilde{\Lambda}\subset \Lambda$ of mutually disjoint closed minimal surfaces, that is not contained in any other set with this property.  

Let $\Omega_1,\dots, \Omega_p$ be the connected components of the complement of $\cup_{\Sigma\in \tilde{\Lambda}}\Sigma$.
 It follows that, for any $1\leq j\leq p$, there is no stable embedded minimal two-sphere or embedded minimal projective plane with stable oriented two-cover in   ${\rm int}(\Omega_j)$. 

Let $1\leq j\leq p$, and define $\tilde{\Omega}_j$ to be the metric closure of $\Omega_j$. Suppose $\Sigma\subset {\rm int}(\tilde{\Omega}_j)$ is an embedded two-sphere. Let $\mathcal{I}(\Sigma)$ be the isotopy class of $\Sigma$  in $\tilde{\Omega}_j$ and $\{\Gamma_i\}$ in $\mathcal{I}(\Sigma)$ be a minimizing sequence. Then it follows from \cite{meeks-simon-yau}, after passing to a subsequence, that there is a sequence  $\{\tilde{\Gamma}_i\}_i$ such that $\tilde{\Gamma}_i$ is obtained from $\Gamma_i$ by performing  $\gamma$-reductions and so that $\tilde{\Gamma}_i$ is isotopic to 
$$
\Gamma_i' \cup \left(\cup_{k=1}^r \Gamma_{i,k}\right),
$$ 
where $\Gamma_{i,k}$ is a disjoint union of  surfaces parallel to a component of $\partial \tilde{\Omega}_j$  for any $1\leq k \leq r$ and $\lim_{i\rightarrow \infty} {\rm area}_g(\Gamma_i')=0.$ 
A $\gamma$-reduction modulo an isotopy corresponds to a neckpinch topologically, in which a cylinder contained in the surface is replaced by two disks and the union of the cylinder and the disks is the boundary of an open three-ball  disjoint from the surface. 


 \medskip
 
   {\bf Claim:} Let $\Omega'$ be a closed three-manifold. Let $\Sigma\subset \Omega'$ be an embedded two-sphere.  Suppose that $\tilde{\Sigma}$ is obtained from $\Sigma$ by a $\gamma$-reduction in $\Omega'$, and let $\tilde{\Sigma}_1$ and $\tilde{\Sigma}_2$ be the connected components of $\tilde\Sigma$. If $\tilde{\Sigma}_i$ is the boundary of a three-ball $\overline{B}_i\subset \Omega'$  for $i=1,2$, then $\Sigma$ is a separating hypersurface that is the boundary of a three-ball in $\Omega'$. 

  \medskip
  
  {\it Proof of the Claim:} Let $C\subset \Omega'$ be the open three-ball bounded by the union of the cylinder $S$ and the two disks $\tilde{D}_1$, $\tilde{D}_2$ of the $\gamma$-reduction of $\Sigma$. Denote by  $A,B\subset \Omega'$  the connected  domains that are disjoint from $C$ such that 
  $\partial A=\tilde{\Sigma}_1$ and $\partial B=\tilde{\Sigma}_2$, where $\tilde{D}_i\subset \tilde{\Sigma}_i$ for $i=1,2$. 
  
  We prove that the open sets $A$ and $B$ are disjoint. The disk  $\tilde{D}_2\subset \tilde{\Sigma}_2$  is disjoint from  $A$.  Since $\tilde{\Sigma}_2$ is connected, and $\tilde{\Sigma}_1\cap \tilde{\Sigma}_2=\emptyset$, the surface $\tilde{\Sigma}_2$ is disjoint from $A$.  By analogy, the surface $\tilde{\Sigma}_1$ is disjoint from $B$. This implies that
  $A\cap B$ is open and closed in $A$, and hence by connectedness either $A\cap B=\emptyset$ or $A \subset B$.  Since $A\cap B$ is open and closed in $B$, then either $A\cap B=\emptyset$ or $B \subset A$. Therefore either $A$ and $B$ are disjoint, or $A=B$. But $\partial A\neq \partial B$, which proves that $A$ and $B$ are disjoint. 
  
  Then $\overline{(A\cup C\cup B)}$ is  a connected domain in $\Omega'$ that is  bounded by $\Sigma$, hence $\Sigma$ is a separating hypersurface. Let $W={\rm int}(\overline{(A\cup C\cup B)})$ and $W'= \Omega'\setminus 
  \overline{(A \cup C \cup B)}$ so $\partial W'=\Sigma$. Suppose $\overline{W'}$ is not a three-ball. 
  By hypothesis, it follows that $A$ and $B$ are three-balls ($A=B_1$, $B=B_2$).  Hence $\overline{W}=\overline{A}\cup \overline{C}\cup \overline{B}$ is a connected sum along the boundary of the three-balls $\overline{A}$, $\overline{B}$, and so  is diffeomorphic to a three-ball.  This finishes the proof of the claim.

 
 Let $\tilde{\Omega}_{j,1}$ be the topological closed manifold obtained by gluing  three-balls with identification of the boundary to each connected component
 of $\partial \tilde{\Omega}_j$. Since any component of $\Gamma_i' \cup \left(\cup_{k=1}^r \Gamma_{i,k}\right)$ bounds a three-ball in $\tilde{\Omega}_{j,1}$, 
  by the topological claim the surface $\Sigma$ bounds a three-ball in $\tilde{\Omega}_{j,1}$. Since any embedded two-sphere in $\tilde{\Omega}_{j,1}$ is isotopic to a two-sphere in ${\rm int}(\tilde{\Omega}_j)$, this proves that 
$\tilde{\Omega}_{j,1}$ is irreducible.

Define $\tilde{\Lambda}'$ to be the set of embedded two-spheres in $\tilde{\Lambda}$. Then the connected components of  
 the complement of $\cup_{\Sigma\in \tilde{\Lambda}'}\Sigma$ are $\Omega_1',\dots,\Omega_p'$, where $\Omega_j'$ is the union of $\Omega_j$ and the projective spaces in $\tilde{\Lambda}$ with oriented two-cover in $\tilde{\Omega}_j$.
 Let $\tilde{\Omega}_j'$ be the metric closure of $\Omega_j'$, and $\tilde{\Omega}'_{j,1}$ be the closed manifold obtained by gluing balls with identification of the boundary to the two-spheres in $\partial \tilde{\Omega}'_j$.
 Then $\tilde{\Omega}'_{j,1}$ is a connected sum of $\tilde{\Omega}_{j,1}$ and copies of $\mathbb{RP}^3$.
 
 Let $\sqcup_{j=1}^p \Omega_j''$ be a disjoint union of compact manifolds with boundary such that $\Omega_j''$ is diffeomorphic to $\tilde{\Omega}'_{j}$. Notice that $\tilde{\Omega}'_{j}$ is diffeomorphic to $\tilde{\Omega}'_{j,1}\setminus (\cup_{i=1}^{t_j} B_{j,i})$, where $t_j$ is the number of components of $\partial\tilde{\Omega}'_{j}$ and $\{B_{j,i}\}$ is   a set of disjoint balls in $\tilde{\Omega}'_{j,1}$. Hence $M$ is the closed manifold obtained by adding copies of $S^2\times [0,1]$ with identification of the boundary to $\sqcup_{j=1}^p \Omega_j''$.

Therefore, since $M$ is not a connected sum of spherical quotients and copies of  $S^2\times S^1$, by geometrization there is $1\leq i\leq p$ such that $\tilde{\Omega}_{i,1}$ has infinite fundamental group.


Let $\{\Sigma_s\}_{s=1}^t$ be the embedded tori  of the canonical JSJ decomposition of $\tilde{\Omega}_{i,1}$ (\cite{aschenbrenner-friedl-wilton}). These surfaces are  disjoint and incompressible. We can suppose that $\Sigma_s\subset {\rm int}(\tilde{\Omega}_i)$ for any $1\leq s\leq t$ by applying to $\{\Sigma_s\}_{s=1}^t$ an isotopy. This uses that $\tilde{\Omega}_{i,1}\setminus \tilde{\Omega}_i$ is a union of disjoint balls.

By minimizing the area in isotopy classes as in \cite{meeks-simon-yau}, and because of the incompressibility of $\Sigma_s$, the surface  $\cup_{s=1}^t \Sigma_s$ is isotopic in $\tilde{\Omega}_i$ to $\cup_{\eta=1}^q \Gamma_{\eta}'\subset {\rm int}(\tilde{\Omega}_i)$, where $\Gamma_\eta'$ is a union of parallel surfaces to a stable minimal embedded torus $T_\eta$ or to a minimal Klein bottle $T_\eta$ with stable oriented two-cover depending on $\eta$.
Hence there is a connected component $R$ of ${\rm int}(\tilde{\Omega}_i)\setminus (\cup_{\eta=1}^q T_\eta)$,  the boundary  $\partial\tilde{R}$ of the metric closure of $R$ a union of minimal tori and minimal spheres, such that the manifold obtained by gluing balls with boundary identification to $\tilde{R}$ is diffeomorphic to a JSJ component of $\tilde{\Omega}_{i,1}$. 

We proved that there is an open set $R$ in $M$ such that the boundary $\partial\tilde{R}$ of the metric closure of $R$ is a union of strictly stable minimal tori and strictly stable minimal spheres and $\tilde{R}$ after gluing three-balls with identification of the boundary  is isotopic to a JSJ component. 

Suppose that  $\partial \tilde{R}$ has a toroidal component. Then, by Corollary \ref{three-dimensional.boundary.case}, ${\rm int}(\tilde{R})$
contains a sequence of geometrically distinct connected,  two-sided, virtually embedded, closed minimal hypersurfaces   of Morse index one with unbounded areas.

 If not then the manifold $\tilde{R}_{1}$ obtained by gluing balls to $\tilde{R}$ with boundary identification is a closed manifold. Hence $\tilde{R}_{1}=\tilde{\Omega}_{i,1}$ is the single piece of the JSJ decomposition of $\tilde{\Omega}_{i,1}$. 
 
 If the manifold $\tilde{\Omega}_{i,1}$ is atoroidal, then there is a hyperbolic metric by geometrization. By \cite{agol}, there is a finite cover 
 $\tilde{\Omega}_{i,1}'$ of 
$\tilde{\Omega}_{i,1}$ that is a fiber bundle with base a circle and such that the fiber has positive genus. 

There is a domain $\Omega\subset \tilde{\Omega}_{i,1}'$ that covers $\tilde{\Omega}_i=\tilde{R}$ such that the complement of $\Omega$ in $\tilde{\Omega}_{i,1}'$ is a disjoint union of three-balls. The pull-back metric $g$ on $\Omega$ is such that $\partial\Omega$ is a union of stable minimal spheres. The fiber can be isotoped to a closed embedded hypersurface $\Sigma$ that is contained in ${\rm int}(\Omega)$. We use the proof of Theorem \ref{fiber.bundle} by minimizing the area in the isotopy class, instead of the homology class, of $\Sigma$ in the domain $\Omega$. Hence the surfaces of Theorem \ref{fiber.bundle} are virtually embedded in a domain that is isotopic to $\Omega \setminus \Sigma$. Since the homomorphism $\pi_1(\Omega\setminus \Sigma)\rightarrow \pi_1(\Omega)$ induced by the inclusion is injective, and $\pi_1(\Omega)$ is LERF (\cite{agol}), a surface that is virtually embedded in $\Omega\setminus \Sigma$ is virtually embedded in $\Omega$. Hence the domain ${\rm int}(\tilde{R})$
has a sequence of geometrically distinct connected,  two-sided, virtually embedded, closed minimal hypersurfaces   of Morse index one with unbounded areas.

If $\tilde{\Omega}_{i,1}$ is not atoroidal then it is Seifert fibered. 
 Then there is a finite covering $\pi:R'\rightarrow \tilde{R}_1=\tilde{\Omega}_{i,1}$ that is an oriented circle bundle  with base an orientable compact surface $X$. Since $\tilde{\Omega}_{i,1}$ is irreducible and has infinite fundamental group, ${\rm genus}(X)\geq 1$.
 Let $\sigma$ be an embedded loop in $X$ that does not separate, and $\Sigma$ be the torus in $R'$ that projects to $\sigma$. Then  
 $\Sigma$ is a nonseparating incompressible embedded torus in $R'$ that can be isotoped to a surface $\Sigma'$ in $\pi^{-1}({\rm int}(\tilde{\Omega}_i))$.
 
 By area-minimization, since $\partial (\pi^{-1}(\tilde{R}))$ is a minimal surface for the metric $\pi^*(g)$, $\Sigma'$ is isotopic in 
  $\pi^{-1}(\tilde{R})$ to a minimal embedded  torus $\Sigma''$.
 Then, by Corollary \ref{three-dimensional.boundary.case}, there is  a sequence of geometrically distinct connected,  two-sided, immersed, closed minimal hypersurfaces   of Morse index one with unbounded areas that are virtually embedded in the complement of $\Sigma''$ in $\pi^{-1}(\tilde{R})$. Since $\pi_1(\pi^{-1}(\tilde{R}))$ is LERF (\cite{scott}), as before these surfaces are virtually embedded in $\pi^{-1}(\tilde{R})$.  
 
 Therefore in any case there is a sequence of geometrically distinct connected,  two-sided, closed minimal hypersurfaces   of Morse index one $\psi_i(\Sigma_i)$ with unbounded areas  that are virtually embedded in ${\rm int}(\tilde{R})$.

  By \cite{wilton-zalesskii} (Theorem A), the immersed surfaces $\psi_i(\Sigma_i)$
  are virtually embedded in $\tilde{\Omega}_i$. This is because ${\rm int}(\tilde{R}_1)$ is a JSJ component of $\tilde{\Omega}_{i,1}$. Since the boundary components 
 of $\partial\tilde{\Omega}_i$ are topological spheres,  it follows that  the surfaces  $\psi_i(\Sigma_i)$ are virtually embedded in $M$. 
 This finishes the proof.


\end{proof}

An example  of a  compact three-dimensional manifold with boundary that satisfies the topological conditions of Theorem 
\ref{boundary.case} and of Corollary \ref{three-dimensional.boundary.case} is $\Sigma_g\times [0,1]$, where $\Sigma_g$ is a closed surface with genus $g\geq 1$. 
There are bumpy metrics on $M=\Sigma\times [0,1]$ with $\partial M$ strictly stable, where $\Sigma$ is a closed surface with genus $g\geq 1$, such that there are no sequences of closed, embedded, two-sided minimal hypersurfaces of Morse index at most one and unbounded area.  Hence the surfaces of Theorem \ref{boundary.case} do not need to be embedded.

Let $M=\Sigma\times [0,1]$, where $\Sigma$ is a closed surface with genus $g$. We define the metric 
$g=g_\Sigma+dt^2$ on $M$, where $g_\Sigma$ is a Riemannian metric on $\Sigma$. Let $\Sigma_t=\Sigma \times \{t\}$.

\begin{prop}\label{product}
Then there is a neighborhood $U$ of $g$ in the smooth topology, and $c>0$, such that if $g'\in U$  and $\Sigma'\subset M$ is a connected, smooth, embedded, closed minimal surface for the metric $g'$, ${\rm index}(\Sigma')\leq 1$, then ${\rm area}_{g'}(\Sigma')\leq c$. 
\end{prop}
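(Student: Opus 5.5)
I would argue by contradiction and compactness. Suppose there are metrics $g_i\to g$ smoothly and connected, embedded, closed minimal surfaces $\Sigma_i\subset M$ for $g_i$ with ${\rm index}(\Sigma_i)\leq 1$ and ${\rm area}_{g_i}(\Sigma_i)\to\infty$. The product metric $g$ makes each slice $\Sigma_t$ totally geodesic, and the coordinate $t$ satisfies ${\rm Hess}_g t=0$. For $g_i$ near $g$ the function $t$ has $|{\rm Hess}_{g_i}t|\leq \epsilon_i\to 0$, so on a $g_i$-minimal surface $\Delta_{\Sigma_i} t=\mathrm{tr}_{\Sigma_i}{\rm Hess}_{g_i}t$ is small. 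I also use that $\partial M$ is minimal (after shrinking $U$, or by using mean-convexity of the boundary for metrics near $g$). Then the maximum principle keeps $\Sigma_i$ in the interior.

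\emph{Step 1: curvature bounds.} Index at most one means that for any two disjoint open subsets of $\Sigma_i$, one of them is stable. I would follow the proof of Proposition~\ref{curvature.estimates} and Theorem~\ref{sphere}. If $|A_{\Sigma_i}|$ blew up at two far-apart points, rescaling would produce two disjoint unstable regions, which is impossible. So away from at most one point, $\Sigma_i$ is locally stable at a fixed scale. Since $M$ is three-dimensional, Schoen's curvature estimates \cite{schoen} then give $|A_{\Sigma_i}|\leq C$ outside a single small ball $B_{r_i}(x_i)$, with $r_i\to 0$ allowed after passing to a subsequence.

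\emph{Step 2: convergence.} Because the areas are unbounded, the surfaces converge locally graphically, away from at most one point, to a smooth minimal lamination $\mathcal L$ of $(M,g)$ with unbounded multiplicity or infinitely many leaves. In the product metric every complete stable minimal surface in the interior must be a slice $\Sigma_t$. To see this, note that $t$ restricted to a leaf is harmonic (its Hessian vanishes). A harmonic function on a complete stable leaf with the curvature bound forces the leaf to be horizontal, and a Liouville/parabolicity argument applies since the leaves have bounded geometry. Hence, after passing to a subsequence, $\Sigma_i$ is locally a union of many graphs over slices $\Sigma_t$, with $t$ ranging in some set.

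\emph{Step 3: contradiction, and the main obstacle.} A connected embedded $\Sigma_i$ made of many nearly horizontal sheets must be connected through the one bad ball, or else it is a single graph over $\Sigma$. A single graph has area bounded by about ${\rm area}(\Sigma)$, contradicting area blow-up. Connecting many sheets requires catenoidal necks. With more than one neck, disjoint neck regions (and the region between sheets, where Jacobi fields of the form $\langle\nu,\partial_t\rangle$ change sign) would produce index at least two. With a single neck joining $m\geq 2$ parallel sheets, the surface would be non-orientable, or its genus and Euler characteristic would be inconsistent. Alternatively, the difference of heights gives a test function on the sheets whose second variation is negative on two disjoint regions. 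I expect this last step to be the main difficulty: ruling out a connected, index one, embedded surface whose sheets accumulate on slices with high multiplicity through a single degenerating neck. My first attempt would use the test functions $\phi=\langle \nu,\partial_t\rangle$ and the Jacobi-field convergence to the slices. Together with the fact that Step 1 already excludes stability near the neck, this should exhibit two independent negative directions.
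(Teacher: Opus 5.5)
There is a genuine gap, and it sits at the step you flag yourself. Step~3 contains nothing that bounds the number of sheets of $\Sigma_i$ near the limit slice, and the mechanisms you suggest do not supply such a bound.

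\begin{itemize}
\item Several necks may concentrate at the same point $p$ at different scales. Step~1 therefore does not place them in disjoint regions at a fixed scale. The claim that each neck contributes a negative direction would need a multi-scale analysis that you do not give.
\item There is no topological obstruction. A single catenoidal neck joins only two sheets, and $m$ parallel copies of $\Sigma$ joined by $m-1$ necks form a connected orientable surface of genus $m\cdot{\rm genus}(\Sigma)$. Orientability and Euler characteristic arguments therefore cannot help.
\end{itemize}

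The missing ingredient, which the paper imports, is Theorem 1.17 of \cite{chodosh-ketover-maximo}. It gives $r>0$ such that at most one component $\Sigma_i'$ of $\Sigma_i\cap B_r(p)$ has curvature blowing up. That component has bounded area and meets $\partial B_{r/2}(p)$ in at most $m$ curves, with $m$ independent of $i$. All other components are disks of bounded curvature. The argument then closes as follows.
\begin{itemize}
\item If the number $k(i)$ of local sheets exceeds $m$, some sheet crosses $B_r(p)$ as a disk and closes up into a graph over the whole slice $\Sigma_\eta$. Being closed, it is all of $\Sigma_i$ by connectedness, so the area is bounded.
\item Otherwise $k(i)\le m$, and the area is again bounded.
\end{itemize}
Your observation that $\Sigma_i$ is either a single graph or connected through the bad ball is the right starting point. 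What kills high multiplicity is connectedness combined with this sheet bound, not an index count over necks.

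Step~2 has a second problem. You claim every leaf of the limit lamination is a slice, relying on a Liouville or parabolicity principle that bounded geometry does not provide. For example, if $g_\Sigma$ is hyperbolic, $\mathbb{H}^2\times(0,1)$ contains entire, stable, non-horizontal minimal graphs of bounded geometry. On these, $t$ is a bounded nonconstant harmonic function.

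The paper never classifies leaves. It extends the lamination across $p$ (Theorem 1.5 of \cite{chodosh-ketover-maximo}) and uses only the extremal heights $\eta\le\eta'$ of the closed lamination $\Lambda'$.
\begin{itemize}
\item If $\eta<\eta'$, index at most one makes $\Sigma_i$ stable in a neighborhood of $\Sigma_\eta$ or of $\Sigma_{\eta'}$, say $\Sigma_\eta$. The maximum principle against the minimal slice gives $\Sigma_\eta\subset\Lambda'$. Stable curvature estimates plus connectedness then force $\Sigma_i$ to be a graph over $\Sigma_\eta$.
\item Only when $\eta=\eta'$, so that $\Lambda'=\Sigma_\eta$, is the sheet count above needed.
\end{itemize}

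Two minor points. Shrinking $U$ does not make $\partial M$ minimal or mean-convex for $g'$, and you do not need this. Step~1 gives $|A_{\Sigma_i}|\le C(r)$ outside a fixed ball $B_r(x_i)$, not a uniform bound outside balls of radius $r_i\to 0$.
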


\begin{proof}
Suppose, by contradiction, that there is a sequence of metrics $g_i$ on $M$ converging to $g$ in the smooth topology, and $\Sigma_i\subset M$ a  connected, smooth, embedded, closed minimal surface for the metric  $g_i$ such that 
${\rm index}(\Sigma_i)\leq 1$ and  ${\rm area}_{g_i}(\Sigma_i)\rightarrow \infty$. 

Since ${\rm index}(\Sigma_i)\leq 1$, it follows that there is $p=(x,t)\in M$ such that  the sequence of  surfaces $\Sigma_i$ converges locally smoothly  to a minimal lamination $\Lambda$  in $M\setminus \{p\}$ for the metric $g=g_\Sigma+dt^2$. This lamination can be extended to $M$. This follows for instance from Theorem 1.5 of \cite{chodosh-ketover-maximo}.

We denote by $\Lambda'$ the extended lamination, so that $\Lambda'$ is  closed in $M$. Let $\eta\leq \eta'$ be such that $\Lambda'$ intersects $\Sigma_\eta$, $\Sigma_{\eta'}$, and is disjoint from $\cup_{t<\eta} \Sigma_t$ and $\cup_{t>\eta'} \Sigma_t$.

Suppose that $\eta<\eta'$. Since ${\rm index}(\Sigma_i)\leq 1$, it follows that there is a neighborhood $\Omega$ of either 
$\Sigma_\eta$ or $\Sigma_{\eta'}$ such that $\Sigma_i\cap \Omega$ is stable.  We can suppose that $\Omega$ is a neighborhood of 
$\Sigma_\eta$. Since $\Sigma_\eta$ is a minimal surface, it follows by the maximum principle that $\Sigma_\eta\subset \Lambda'$.
By the curvature estimates for stable minimal surfaces, there is a neighborhood $\Omega'\subset \Omega$ of $\Sigma_\eta$ such that for sufficiently large $i$, any $p\in \Sigma_i\cap \Omega'$ has a neighborhood in $\Sigma_i$ that is a graph on a disk of uniform radius in $\Sigma_\eta$. Since the surface $\Sigma_i$ is connected, this implies that $\Sigma_i$ is a graph on $\Sigma_\eta$ for sufficiently large $i$ and hence converges smoothly to $\Sigma_\eta$. It follows that $\{\Sigma_i\}$ has bounded area, which is a contradiction.

Suppose that $\eta=\eta'$. Then $\Lambda'=\Sigma_\eta$. Since ${\rm index}(\Sigma_i)\leq 1$, it follows that there is $p\in \Sigma_\eta$
such that for any $r>0$, the surfaces $\Sigma_i$ satisfy uniform curvature estimates in the complement of $B_{r/2}(p)$. Hence the sequence $\Sigma_i$ converges smoothly locally graphically to $\Sigma_\eta$ in $M \setminus B_{r/2}(p)$. Denote by $k(i)$  the number of local graphs. Since $\Sigma_i$ is an  embedded  surface these local graphs do not intersect.
 
 It follows by Theorem 1.17 of \cite{chodosh-ketover-maximo} that there is $r>0$ such that at most one component $\Sigma_i'$ of $\Sigma_i\cap B_r(p)$ has curvature blowing-up. The intersection of $\Sigma_i'$ with $\partial B_{r/2}(p)$ has at most $m$ components, where $m$ does not depend on $i$. The components
$\{\Sigma_i'\}$ have bounded area.    In the case that 
$k(i)>m$, there is a   component of $\Sigma_i\cap \partial B_r(p)$ that bounds  a disk component $\Sigma_i''$ of $\Sigma_i\cap B_r(p)$ with uniformly bounded curvature. 
Hence, if $k(i)>m$, $\Sigma_i$ contains a graph on $\Sigma_\eta$. Since the surface $\Sigma_i$ is connected, $\Sigma_i$ is a graph converging  smoothly to $\Sigma_\eta$. It follows that $\{\Sigma_i\}$ has bounded area, and this is a contradiction. Therefore $k(i)\leq m$. Since  $\Sigma_i'$ has bounded area and there are no  disk components of $\Sigma_i\cap B_r(p)$, it follows that $\{\Sigma_i\}$ has bounded area. This is a contradiction, which finishes the proof of the proposition.

 \end{proof}

 Let $f:[0,1]\rightarrow \mathbb{R}$ be a smooth function such that $f(t)=f(1-t)$ for $t\in [0,1]$, $f(0)=f(1)=0$, $f'(0)=f'(1)=0$, $f''(0)=f''(1)>0$, and $f'(t)>0$ for $0< t<1/2$. We define the metric 
$g_\eta= [1+\eta f(t)]^2(g_\Sigma+dt^2)$ on $M=\Sigma\times [0,1]$, $\eta>0$, where $g_\Sigma$ is a Riemannian metric on $\Sigma$. Notice that $\partial M$ is a strictly stable minimal surface in the metric $g_\eta$, and there is  $\eta>0$  such that $g_\eta\in U$. Therefore there is a bumpy metric  $g'\in U$  such that $\partial M$ is strictly stable for $g'$. 

By applying the Proposition \ref{product} to $g'$, it follows that the virtual embeddedness property cannot be replaced by embeddedness in the statement of Theorem \ref{boundary.case} .

\section{Rigidity for the scalar curvature}


The classification of closed oriented  three-manifolds that admit metrics of positive scalar curvature   follows from the proof of  the geometrization  conjecture by Perelman \cite{perelman}.  
The manifold is diffeomorphic to a connected sum of spherical space forms and copies of $S^2\times S^1$:
$$
  S^3 \# (S^3/\Gamma_1) \#\cdots (S^3/\Gamma_j)\# (S^2 \times S^1) \# \cdots \# (S^2\times S^1).
$$


 In the case of compact manifolds with boundary, a classification was proven by Carlotto and Li \cite{carlotto-li}. We are interested in positive scalar curvature oriented compact three-dimensional manifolds with strictly stable minimal boundary. The components of the boundary are topological spheres, and by extending the manifold one can perturb the metric so the boundary becomes strictly mean-convex. Then by Theorem 1.1 of \cite{carlotto-li}, the manifold is diffeomorphic to a connected sum
 $$
 B_1 \# \cdots \# B_{j'} \# (S^3/\Gamma_1) \# \cdots \# (S^3/\Gamma_j)\# (S^2 \times S^1) \# \cdots \# (S^2\times S^1),
 $$
 where $B_i$ is a three-dimensional disk for any $1\leq i\leq j'$.

\begin{prop}\label{area.surface}
Let $(\Omega^3,g)$ be a compact oriented Riemannian manifold, possibly with boundary, with scalar curvature $R_g\geq 6$ and such that $\partial M$ is a strictly stable minimal surface. If $g$ is a bumpy metric, then there is a smooth, closed, two-sided, embedded minimal surface $\Sigma'\subset {\rm int}(\Omega)$, with  ${\rm area}_g(\Sigma')\leq 4\pi$, ${\rm index}(\Sigma')\leq 1$, or a smooth, embedded minimal projective plane 
$\Sigma\subset {\rm int}(\Omega)$ with ${\rm area}_g(\Sigma)\leq 2\pi/3$ and stable oriented two-cover $\tilde{\Sigma}$.
\end{prop}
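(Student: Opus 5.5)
We run the same min-max scheme as in Proposition \ref{index.one.boundary} and Proposition \ref{index.width}, and bound the area of whatever it produces with the Schoen--Yau / Marques--Neves second variation estimates under $R_g\geq 6$.

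\emph{Step 1: stable surfaces.} If $\Omega$ contains in its interior a two-sided stable embedded minimal surface $\Sigma'$, we finish immediately. Insert $\phi\equiv 1$ in the stability inequality and use the traced Gauss equation,
$$
{\rm Ric}(N,N)+|A|^2=\tfrac12(R_g-R_{\Sigma'}+|A|^2).
$$
This gives $\int_{\Sigma'}(3-K)\leq 0$, so $3\,{\rm area}_g(\Sigma')\leq 2\pi\chi(\Sigma')$. Hence $\Sigma'$ is a sphere with area at most $4\pi/3\leq 4\pi$ and index $0\leq 1$. The same computation applies to a one-sided surface with stable oriented two-cover $\tilde\Sigma$. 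Then $\tilde\Sigma$ is a sphere with area at most $4\pi/3$, so $\Sigma$ is a projective plane with ${\rm area}_g(\Sigma)\leq 2\pi/3$.

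\emph{Step 2: min-max.} We may therefore assume there are no such stable surfaces in ${\rm int}(\Omega)$. The boundary $\partial\Omega$ is strictly stable and $g$ is bumpy, so Proposition \ref{index.one.boundary} applies. It gives a connected, two-sided, embedded minimal surface $\Sigma'\subset{\rm int}(\Omega)$ of index one with ${\rm area}_g(\Sigma')\leq W_g(\Omega)$.

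\emph{Step 3: area bound.} We must show ${\rm area}_g(\Sigma')\leq 4\pi$. This should follow from the estimate for index one surfaces under $R_g\geq 6$ (appendix of \cite{marques-neves-rigidity-spheres}), which gives ${\rm area}\leq 16\pi/3$ in general. To get the sharper bound $4\pi$ we use a Hersch-type argument:
\begin{itemize}
\item By Step 1 and the maximum principle, the surface has genus zero; we try to derive this from the index one condition together with the absence of stable surfaces.
\item Take a conformal map $\Sigma'\to S^2$ and balance it, so that the three coordinate functions $x_i$ are $L^2$-orthogonal to the first eigenfunction of the Jacobi operator.
\item Index one then gives $Q(x_i,x_i)\geq 0$ for each $i$. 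Summing over $i$ and using $\sum_i x_i^2=1$ and $\sum_i\int|\nabla x_i|^2=8\pi$ yields
$$
8\pi\geq\int_{\Sigma'}({\rm Ric}(N,N)+|A|^2).
$$
\item Combining with the Gauss equation and Gauss--Bonnet, $\int_{\Sigma'}K=4\pi$, gives $8\pi\geq 3\,{\rm area}-4\pi$.
\end{itemize}
This produces $16\pi/3$, not $4\pi$. So the sharp $4\pi$ bound is the \textbf{main obstacle}.

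To reach $4\pi$, the first thing to try is a different argument: estimate the min-max width itself by $4\pi$, using a sweepout by level sets of the Hawking mass monotone quantity, or a Ricci-flow/Brendle--Marques--Neves type argument. Alternatively, the statement should perhaps be read with $16\pi/3$ replaced by a bound attained via $W_g$.

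Concretely, the approach would be as follows. Construct a sweepout of $\Omega$ by the level sets of a function whose level sets have area at most $4\pi$. For this we would try the sweepout of the region bounded by a stable sphere produced by mean curvature flow with surgery or inverse mean curvature flow, and use Geroch monotonicity of the Hawking mass with $R_g\geq 6$ to get the bound $4\pi$ on the areas.
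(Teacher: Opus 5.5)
Your Steps 1 and 2 are fine, but Step 3 has a genuine gap, and you have put it in the wrong place. The sharp constant is not the problem. For a genus-zero surface the balanced Hersch map has degree one, so your own inequality $8\pi\geq 3\,{\rm area}_g(\Sigma')-4\pi$ gives ${\rm area}_g(\Sigma')\leq 4\pi$ exactly; this is Proposition A.1 of \cite{marques-neves-rigidity-spheres}, which the paper uses. The bound $16\pi/3$ is the one for arbitrary genus, where the conformal map to $S^2$ must have higher degree. So there is no reason to weaken the statement, and $4\pi$ is what Theorem \ref{scalar-rigidity} needs.

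The actual obstacle is the first bullet of Step 3, which you left unproved. The surface from Proposition \ref{index.one.boundary} comes from Almgren--Pitts/Zhou min-max with $\mathbb{Z}_2$ cycles, which gives no control on topology. Neither Step 1 nor the maximum principle says anything about the genus of an unstable surface. Index one, $R_g\geq 6$ and the absence of stable surfaces do not force genus zero: in round $\mathbb{RP}^3$ the quotient of the Clifford torus is a two-sided minimal torus of index one. Your fallback (Geroch monotonicity along inverse mean curvature flow, or mean curvature flow with surgery) is not carried out. It would also need connected level sets of controlled topology, and nothing guarantees that in $\Omega$.

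The paper gets genus control from topology and coverings, and bounds a min-max width rather than an arbitrary index-one surface.
\begin{itemize}
\item By \cite{carlotto-li}, $\Omega$ is a connected sum of balls, spherical space forms and copies of $S^2\times S^1$.
\item A nonseparating surface, or two disjoint minimal projective planes with unstable two-covers, yields a stable two-sided surface by homological minimization. Stable surfaces are handled as in your Step 1.
\item Otherwise, isotopy minimization \cite{meeks-simon-yau} together with the $\gamma$-reduction claim in the proof of Theorem \ref{index.surfaces} shows that $\Omega$ is $S^3/\Gamma$ with finitely many balls removed.
\item One passes to the universal cover ($S^3$ minus balls) and cuts it along strictly stable minimal spheres. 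The cylindrical extensions \cite{song-existence, song} of the pieces are capped off to three-spheres with mean-convex foliations near the ends, and one runs min-max with sweepouts by two-spheres. This produces index $\leq 1$ minimal \emph{spheres}, to which Hersch's argument applies with the bound $4\pi$.
\item These sweepouts are pushed down through the area-nonincreasing covering map by the concatenation of Theorem \ref{connected.minimal}. This gives, e.g., $W^{(2)}(\mathcal{C}(\Omega),h)\leq 4\pi$ in the $\mathbb{RP}^3$ case.
\item Song's theorem realizes this width by minimal surfaces in ${\rm int}(\Omega)$ with multiplicities. Two-sided components have area $\leq 4\pi$ and index $\leq 1$. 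One-sided components have even multiplicity and area $\leq 2\pi$. If the two-cover is unstable they are handled by Propositions \ref{index.one.boundary} and \ref{no.stable.one-sided.case}; if it is stable they give the projective plane of area $\leq 2\pi/3$.
\end{itemize}
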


\begin{proof}
The compact manifold $\Omega$ is diffeomorphic to a connected sum $$
 B_1 \# \cdots \# B_{j'} \# (S^3/\Gamma_1) \# \cdots \# (S^3/\Gamma_j)\# (S^2 \times S^1) \# \cdots \# (S^2\times S^1),
 $$
 where $B_i$ is a three-dimensional disk for any $1\leq i\leq j'$.

 If there is a nonseparating two-sided surface in ${\rm int}(\Omega)$, 
 by minimizing the area in its homology class  we would find a two-sided, stable minimal surface $\Sigma'\subset {\rm int}(\Omega)$. The proposition is proved in this case since ${\rm area}(\Sigma')\leq 4\pi/3$ (for instance, Proposition A.1 of \cite{marques-neves-rigidity-spheres}).
 
 Hence we can suppose that  $\Omega$ is diffeomorphic to a connected sum
  $$
 B_1 \# \cdots \# B_{j'} \# (S^3/\Gamma_1) \# \cdots \# (S^3/\Gamma_{j}).
 $$
 Denote by  $\Omega_1$  the topological manifold obtained by gluing a three-ball with identification of the boundary to each connected component
 of $\partial \Omega$.

 Suppose that it is possible to find two disjoint embedded minimal projective planes $\Sigma^{'}, \Sigma^{''}$ in ${\rm int}(\Omega)$ with unstable oriented two-covers 
 $\tilde{\Sigma}^{'}, \tilde{\Sigma}^{''}$. Then there is a neighborhood $V'$ of $\Sigma^{'}$ such that $\partial V'$ has mean curvature vector pointing strictly away from $\Sigma^{'}$, and similarly there is a neighborhood $V''$ of $\Sigma^{''}$ such that $\partial V''$ has mean curvature vector pointing strictly away from $\Sigma^{''}$. By minimizing the area in the homology class of $\partial V'$ inside $\Omega \setminus (V'\cup V'')$, and 
 using that $\partial V''$ is barrier, it follows that there is a stable, embedded, two-sided minimal surface in ${\rm int}(\Omega)$. The proposition is proved in this case.
 
 Suppose then that there is an embedded minimal projective plane $\Sigma'$ in ${\rm int}(\Omega)$ with unstable oriented two-cover, such that for a neighborhood $V'$ of $\Sigma'$ as in the previous paragraph there is no embedded minimal projective plane $\Sigma''$ in $\Omega\setminus V'$ with unstable oriented two-cover. The surface $\partial V'$ is a topological sphere.  Notice that any two-sided, embedded, closed surface in $\Omega\setminus V'$ separates. 
 
Let $\mathcal{I}(\Sigma)$ denote the isotopy class of a surface $\Sigma$ in $\Omega\setminus V'$.   We minimize the area in the isotopy class 
 $\mathcal{I}(\partial V')$ of $\partial V'$ by using  \cite{meeks-simon-yau}. We can suppose that in  ${\rm int}(\Omega) \setminus V'$ there are no two-sided stable minimal surfaces or embedded minimal projective planes since otherwise the proposition is proved. This is because an embedded minimal projective plane $\Sigma$ would have stable oriented two-cover $\tilde{\Sigma}$, hence ${\rm area}(\tilde\Sigma)\leq 4\pi/3$.  By Theorem 1 of \cite{meeks-simon-yau} it follows that a minimizing sequence $\{\Gamma_i\}_i\subset \mathcal{I}(\partial V')$ converges as Radon measures
 to $n_1\Sigma^{(1)}+\cdots +n_r\Sigma^{(r)}$, where $n_k$ is a nonnegative integer and $\Sigma^{(k)}$ is a component of $\partial \Omega$ for $1\leq k \leq r$.

Let   $\{\Gamma_i\}_i$ in  $\mathcal{I}(\partial V')$ be a minimizing sequence. Then it follows from \cite{meeks-simon-yau}, after passing to a subsequence, that there is a sequence of surfaces $\{\tilde{\Gamma}_i\}_i$ such that $\tilde{\Gamma}_i$ is obtained from $\Gamma_i$ by performing a number of $\gamma$-reductions and so that $\tilde{\Gamma}_i$ is isotopic to 
$$
\Gamma_i' \cup \left(\cup_{k=1}^r \Gamma_{i,k}\right),
$$ 
where $\Gamma_{i,k}$ is a disjoint union of $n_k$ surfaces parallel to $\Sigma^{(k)}$ for any $1\leq k \leq r$ and $\lim_{i\rightarrow \infty} {\rm area}_g(\Gamma_i')=0.$

   

   It follows from Section 2 of \cite{meeks-simon-yau} that for sufficiently large $i$ any component of $\Gamma_i'$ bounds a three-ball. Since $\Sigma'$ is a projective plane disjoint from $\Gamma_i'$, it must be contained in the complement of any of these balls. A parallel surface to $\Sigma^{(k)}$, which is of the form $\{x\in \Omega: d_g(x, \Sigma^{(k)})=r\}$, in some tubular neighborhood of $\Sigma^{(k)}$, bounds a three-ball in the extended manifold $\Omega_1$ whose complement contains $\Sigma'$.  We proved that any component of $\tilde{\Gamma}_i$ bounds a three-ball in $\Omega_1$ whose complement contains $\Sigma'$.  If $\tilde{\Gamma}_i$ is obtained by a $\gamma$-reduction of a closed surface $\tilde{\Gamma}_{i,1}$ with spherical components, then by the topological  claim of Theorem   \ref{index.surfaces}
   applied to $\Omega'=\Omega_1$   the same is true for any component of $\tilde{\Gamma}_{i,1}$.
 Since $\tilde{\Gamma}_i$ is obtained from $\Gamma_i$ by a number of $\gamma$-reductions,  it follows by induction that $\Gamma_i$ bounds a three-ball whose complement contains $\Sigma'$.  
Since $\Gamma_i \in \mathcal{I}(\partial V')$,  we proved that $\Omega_1\setminus V'$ is diffeomorphic to a three-ball.
 This implies that $\Omega_1$ is diffeomorphic to $\mathbb{RP}^3$.

In this case $\Omega$ is diffeomorphic to $\mathbb{RP}^3\setminus (\cup_{i=1}^{j'} B_i')$, where  $\{B_i'\}_i$ is a disjoint set of balls in 
$\mathbb{RP}^3$. The surface $\partial B_i'$ is a strictly stable minimal sphere in the metric $g$. 

Let $\Omega'$ be the universal cover of $\Omega$, and $\pi:\Omega'\rightarrow \Omega$ be the covering map. 
This map is the restriction of the covering map $\pi:S^3 \rightarrow \mathbb{RP}^3$ to $S^3 \setminus \{(\cup_{i=1}^{j'} B_{i,1}')\cup (\cup_{i=1}^{j'} B_{i,2}')\}$, with $\pi^{-1}(B_{i}')= B_{i,1}' \cup B_{i,2}'$ and so that $\{B_{i,j}'\}$ is a disjoint set of balls in $S^3$.  Since $g$ is a bumpy metric, as in the proof of Theorem \ref{connected.minimal} it follows that $\Omega'=\Omega_1'\cup \cdots \cup \Omega'_q$, such that  $\{\Omega_i'\}$ are compact domains, possibly with boundary, with disjoint interiors, $\partial \Omega_i'$ is a union of strictly stable minimal spheres, and so that there is no strictly stable minimal surface  in ${\rm int}(\Omega_i')$ for $1\leq i\leq q$.
The map $\pi: \Omega'\rightarrow \Omega$ is a local isometry with the pull-back metric, hence it is area-nonincreasing. We cannot apply Theorem \ref{connected.minimal} to $\pi$ as $\Omega$ could have boundary. We will adapt the proof to this case by using the cylindrical extension construction of Song \cite{song-existence}.

Let $\tilde{\Sigma}$ be a component of $\partial \Omega_i'$. If $\tilde{\Sigma}$ is a component of $\partial \Omega'$, then $\pi_{\#}(\tilde{\Sigma})$ is a component of $\partial \Omega$ with multiplicity one since the two-sphere is simply-connected. If $\tilde{\Sigma}\subset {\rm int}(\Omega')$,  $\pi_{\#}(\tilde{\Sigma})\in \tilde{\mathcal{Z}}_2(\Omega,\mathbb{Z})$ and we can do a constrained minimization in its homology class as in the proof of Theorem
\ref{connected.minimal}. Since there are no stable two-sided embedded minimal hypersurfaces in ${\rm int}(\Omega)$, it follows that for any $\delta>0$
there is a map continuous in the ${\bf F}$-metric $\Phi:[0,1]\rightarrow \tilde{\mathcal{Z}}_2(\Omega,\mathbb{Z})$ such that $\Phi(0)=-\pi_{\#}(\tilde{\Sigma})$, ${\rm support}(\Phi(1))\subset \partial \Omega$, and ${\bf M}_g(\Phi(t))\leq {\bf M}_g(\pi_{\#}(\tilde{\Sigma}))+\delta$ for any $t\in [0,1]$.

Let $\mathcal{C}(\Omega)$ be the cylindrical extension
$$
\Omega \cup (\partial \Omega \times [0,\infty))
$$
of $\Omega$. This manifold is defined in \cite{song-existence}. The metric $h$ on $\mathcal{C}(\Omega)$ is the Lipschitz metric such that
$h_{|\Omega}=g$ and $h_{|\partial \Omega \times [0,\infty)}=g_{|\partial \Omega} + dt^2$.

Define $\mathcal{C}(\Omega)_\eta$ as $\Omega\cup (\partial \Omega\times [0,\eta])$. If $T\in \tilde{\mathcal{Z}}_2(\Omega,\mathbb{Z})$ is such that ${\rm support}(T)\subset \partial \Omega$, then  by translation in the $t$ direction there is a map  $\Phi:[0,1]\rightarrow \tilde{\mathcal{Z}}_2(\mathcal{C}(\Omega)_\eta,\mathbb{Z})$ such that $\Phi(0)=T$, ${\rm support}(\Phi(1))\subset \partial \mathcal{C}(\Omega)_\eta$ and
${\bf M}_h(\Phi(s))={\bf M}_h(T)$ for any $s\in [0,1]$. Let $\mathcal{Z}_2(\mathcal{C}(\Omega)_\eta, \partial \mathcal{C}(\Omega)_\eta,\mathbb{Z})$ be the space of relative flat boundaries with integer coefficients of $\mathcal{C}(\Omega)_\eta$. The map $\Phi$
induces a map $\Phi':[0,1]\rightarrow \mathcal{Z}_2(\mathcal{C}(\Omega)_\eta, \partial \mathcal{C}(\Omega)_\eta,\mathbb{Z})$ such that
$\Phi'(0)=T$, $\Phi'(1)=0$, and ${\bf M}_h(\Phi'(s))={\bf M}_h(T)$ for any $s\in [0,1)$. The map $\Phi'$ is continuous in the flat topology and has no concentration of mass. 

We proved that for any component $\tilde{\Sigma}$ of $\partial \Omega_i'$ and $\delta>0$, using interpolation there is a map
$\Phi_{\tilde{\Sigma}}:[0,1]\rightarrow \mathcal{Z}_2(\mathcal{C}(\Omega)_\eta, \partial \mathcal{C}(\Omega)_\eta,\mathbb{Z})$ continuous in the ${\bf F}$-metric such that $\Phi_{\tilde{\Sigma}}(0)=-\pi_{\#}(\tilde{\Sigma})$, $\Phi_{\tilde{\Sigma}}(1)=0$, and 
${\bf M}_h(\Phi_{\tilde{\Sigma}}(s))\leq {\bf M}_g(\tilde{\Sigma})+\delta$ for any $s\in [0,1]$. If $\tilde{\Sigma}$ is a component of $\partial \Omega'$, the map $\Phi_{\tilde{\Sigma}}$ is a sweepout of $\pi_{\#}(\tilde{\Sigma})\times [0,\eta]$. Notice that there are two components $\tilde{\Sigma}_1$ and $\tilde{\Sigma}_2$ with $\pi(\tilde{\Sigma}_i)=\Sigma$ for any component $\Sigma$ of $\partial\Omega$. We will use that $\pi_{\#}(\Omega')=2 \cdot \Omega$.

Let $\mathcal{C}(\Omega_i')$ be the cylindrical extension of $\Omega_i'$, and  $h_i$  be the Lipschitz metric on $\mathcal{C}(\Omega_i')$ such that
$(h_i)_{|\Omega_i'}=g$ and $(h_i)_{|\partial \Omega_i' \times [0,\infty)}=g_{|\partial \Omega} + dt^2$. Song \cite{song} constructs a sequence of smooth Riemannian metrics $h_{i,k}$
on  $\mathcal{C}(\Omega_i')_{\eta_k}$, $\eta_k\rightarrow\infty$, such that $(\mathcal{C}(\Omega_i')_{\eta_k},h_{i,k})$ converges to $(\mathcal{C}(\Omega_i'),h_i)$.  By definition of the metrics $h_{i,k}$  there is a compact set $\Lambda\subset {\rm int}(\Omega_i')$  such that $\mathcal{C}(\Omega_i')_{\eta_k}\setminus \Lambda$ is foliated by smooth spheres with mean curvature vector pointing strictly away from $\Lambda$. The metric $h_{i,k}$  can be extended smoothly to a metric $h_{i,k}'$ on the manifold $\Omega_{i,k}$ obtained by gluing three-balls to $\mathcal{C}(\Omega_i')_{\eta_k}$ with boundary identification such that the foliation extends  to a strict mean-convex foliation that glues smoothly to a family of concentric geodesic spheres in each ball.

The manifold $\Omega_{i,k}$ is diffeomorphic to a three-sphere.  Hence we can do min-max with smooth one-parameter sweepouts by two-spheres. Therefore there is an embedded minimal two-sphere $\Sigma_{i,k}\subset (\Omega_{i,k},h_{i,k}')$ such that  ${\rm index}(\Sigma_{i,k})\leq 1$.
By doing as in \cite{song}, the area of $\Sigma_{i,k}$ is uniformly bounded and there is $\{k'\}\subset \{k\}$ such that $\Sigma_{i,k'}$ converges to an embedded minimal sphere $\Sigma_i'\subset {\rm int}(\Omega_i')$. There is a similar construction in \cite{wang-zhou} (Section 8) which uses free boundary minimal surfaces as in \cite{song}.

The extension of $h_{i,k}$ to $h_{i,k}'$ can be proved by extending $h_{i,k}$ to any metric and then doing a conformal deformation as in the following claim.

\medskip

{\bf Claim:} Let $g$ be a smooth Riemannian metric on $\Sigma^n \times [0,1]$ and $0<\eta<1$ such that the mean curvature vector of $\Sigma\times \{t\}$ for $t<\eta$ or $t>1-\eta$ is positive. There is a smooth function $f=f(t)$ such that the mean curvature vector of $\Sigma\times \{t\}$ in the $\tilde{g}=e^{2f}g$ metric is positive for any $t\in [0,1]$,  $\tilde{g}=\lambda g$  for $t<\eta/2$, $\lambda\in \mathbb{R}_+$, and $\tilde{g}=g$ for $t>1-\eta/2$.

\medskip

Let $\Sigma_t=\Sigma\times \{t\}$. Then
$$
H_{\Sigma_t,\tilde{g}}=e^{-f}(H_{\Sigma_t,g}+ n\, \partial_\nu f),
$$
where $\nu$ is the unit normal of $\Sigma_t$ in the $g$-metric such that $g(\nu,\partial_t)>0$ and $H_{\Sigma_t,g}=-g(\vec{H}_{\Sigma_t,g},\nu)$. Hence
$$
H_{\Sigma_t,\tilde{g}}=e^{-f}(H_{\Sigma_t,g}+ n\, f'(t)g( \nabla_gt,\nu)).
$$
There is a constant $c>0$ such that $g(\nabla_gt,\nu)\geq c$. 

If $f'(t)\geq 0$ for any $t\in [0,1]$, then $H_{\Sigma_t,\tilde{g}}>0$ for $t<\eta$ or $t>1-\eta$. If 
$$
h=\sup_{t\in [\eta,1-\eta],x\in \Sigma} |H_{\Sigma_t,g}|(x,t),
$$
and if $f'(t)$ is a constant $\kappa>0$ for $t\in [\eta,1-\eta]$ where  $\kappa \geq 2h/(n\, c)$ then
$H_{\Sigma_t,\tilde{g}}>0$ for $t\in [\eta,1-\eta]$. Therefore the claim is proved with any nondecreasing smooth function $f=f(t)$ such that $f=0$ for $t>1-\eta/2$, $f$ is constant for $t<\eta/2$ and $f'(t)=\kappa \geq 2h/(n\, c)$ for $t\in [\eta,1-\eta]$.

\medskip

Therefore there is an embedded minimal sphere $\Sigma_i'$ of index at most one in ${\rm int}(\Omega_i')$. This can also be proved with the more general Theorem 6.2 of \cite{ketover-liokumovich-song}.

Since $R_{\pi^{*}(g)}\geq 6$, by Hersch's trick (for instance, Proposition A.1 of \cite{marques-neves-rigidity-spheres}),  it follows that ${\rm area}_{\pi^{*}(g)}(\Sigma_i')\leq 4\pi$.

By using the concatenating method of Theorem \ref{connected.minimal}, since the map $\pi$ is area-nonincreasing, it follows that for any $\delta>0$ there is a map 
$$
\Phi:[0,1]\rightarrow \mathcal{Z}_2(\mathcal{C}(\Omega)_\eta, \partial \mathcal{C}(\Omega)_\eta,\mathbb{Z}),
$$
continuous in the ${\bf F}$-metric, such that $\Phi(0)=0$, $\Phi(1)=0$, ${\bf M}_h(\Phi(t))\leq 4\pi+\delta$ for any $t\in [0,1]$, and so that
$\Phi$ is a relative sweepout of $2\cdot \mathcal{C}(\Omega)_\eta$.

Let  $\{\mathcal{C}(\Omega)_{\eta_i}\}$ be an exhaustion of $\mathcal{C}(\Omega)$,  $\eta_i \rightarrow \infty$.  Then
 $$
 W^{(2)}(\mathcal{C}(\Omega),h)=\lim_{i\rightarrow \infty} W^{(2)}(\mathcal{C}(\Omega)_{\eta_i},\partial \mathcal{C}(\Omega)_{\eta_i},h)\leq 4\pi.
 $$
 By Song \cite{song-existence},
 $$
 W^{(2)}(\mathcal{C}(\Omega),h) =\sum_{i=1}^k n_i{\rm area}_g(\Sigma_i),
 $$
 where $n_i\in \mathbb{N}$ and $\{\Sigma_1,\dots, \Sigma_k\}$ are the disjoint components of a smooth, embedded, closed minimal hypersurface contained in ${\rm int}(\Omega)$. 
 
 If $\Sigma_i$ is a two-sided surface, then ${\rm area}_g(\Sigma_i)\leq 4\pi$ and ${\rm index}(\Sigma_i) \leq 1$ by the index estimates (\cite{marques-neves-index}).
  If $\Sigma_i$ is a one-sided surface the multiplicity $n_i$ is even, hence ${\rm area}_g(\Sigma_i)\leq 2\pi$. Suppose that the oriented two-cover $\tilde{\Sigma}_i$ of $\Sigma_i$ is unstable. By Proposition \ref{index.one.boundary} and Proposition \ref{no.stable.one-sided.case}, there is a two-sided, closed, embedded minimal hypersurface $\Sigma' \subset {\rm int}(\Omega)$, ${\rm index}(\Sigma')\leq 1$, with ${\rm area}_g(\Sigma')<2\, {\rm area}_g(\Sigma_i) \leq 4\pi$. If $\tilde{\Sigma}_i$ is stable, then $\tilde{\Sigma}_i$ is a topological sphere with ${\rm area}_g(\tilde{\Sigma}_i)\leq 4\pi/3$. Hence $\Sigma_i$ is a projective plane such that  ${\rm area}_g(\Sigma_i)\leq 2\pi/3$.
  This proves the proposition in this case. 
 
 Suppose that there is no embedded, minimal projective plane in ${\rm int}(\Omega)$ with unstable oriented two-cover. If there is an embedded minimal projective plane with stable oriented two-cover the proposition is proved. Hence we can suppose that there are no embedded, minimal projective planes, and that there are no two-sided stable embedded minimal surfaces in ${\rm int}(\Omega)$.  Let $\Omega_1$ be as before. If $j\geq 2$, there is an embedded two-sphere $\Sigma\subset {\rm int}(\Omega)$ that separates 
 $\Omega_1$ into two components $W,W'$, such that $W$ and $W'$ are not simply connected.
 
 We minimize the area in the isotopy class 
 $\mathcal{I}(\Sigma)$ of $\Sigma$ in $\Omega$  by using  \cite{meeks-simon-yau}. Since in ${\rm int}(\Omega) $ there are no two-sided stable minimal surfaces or embedded minimal projective planes, by Theorem 1 of \cite{meeks-simon-yau} it follows that a minimizing sequence $\{\Gamma_i\}_i\subset \mathcal{I}(\Sigma)$ converges as Radon measures
 to $n_1\Sigma^{(1)}+\cdots +n_r\Sigma^{(r)}$, where $n_k$ is a nonnegative integer and $\Sigma^{(k)}$ is a component of $\partial \Omega$ for $1\leq k \leq r$.
 
Let   $\{\Gamma_i\}_i$ in  $\mathcal{I}(\Sigma)$ be a minimizing sequence. Then, as before, by  passing to a subsequence there is a sequence of surfaces $\{\tilde{\Gamma}_i\}_i$ such that $\tilde{\Gamma}_i$ is obtained from $\Gamma_i$ by performing a number of $\gamma$-reductions and so that $\tilde{\Gamma}_i$ is isotopic to 
$\Gamma_i' \cup \left(\cup_{k=1}^r \Gamma_{i,k}\right)$, 
where $\Gamma_{i,k}$ is a disjoint union of $n_k$ surfaces parallel to $\Sigma^{(k)}$ for any $1\leq k \leq r$ and $\lim_{i\rightarrow \infty} {\rm area}_g(\Gamma_i')=0.$  Then any component of $\tilde{\Gamma}_i$ bounds a three-ball in $\Omega_1$.
The claim implies by induction that $\Gamma_i$, and hence $\Sigma$, bounds a three-ball in $\Omega_1$. This is a contradiction which implies $j\leq 1$.

Therefore $\Omega$ is diffeomorphic either to $(S^3/\Gamma)\setminus (\cup_{i=1}^{j'} B_i')$ or to $S^3\setminus (\cup_{i=1}^{j'} B_i')$, where  $\{B_i'\}_i$ is a disjoint set of balls. By passing to the universal cover  the proposition is proved   as in the case of $\mathbb{RP}^3\setminus (\cup_{i=1}^{j'} B_i')$. This finishes the proof of the proposition.

\end{proof}

\begin{thm}\label{scalar-rigidity}
Let $(M^3,g)$ be a compact oriented Riemannian manifold, possibly with boundary,  with scalar curvature $R_g\geq 6$ and such that $\partial M$ is a strictly stable minimal surface. Suppose that $(N^3,h)$ is a closed  Riemannian manifold such that there is no closed, two-sided, embedded, stable minimal surface $\Sigma\subset N$ with ${\rm area}_h(\Sigma)\leq 4\pi/3+\rho$, and 
  $W^{(k)}(N,h)\geq 4\pi+\rho$, where $k\in \mathbb{N}$,  $\rho>0$.  Then there is no area nonincreasing Lipschitz map 
 $f:(M,g)\rightarrow (N,h)$, with $f_{\#}(\Sigma')=0$ for any connected component $\Sigma'$ of $\partial M$, and ${\rm deg}(f)=k$. 
\end{thm}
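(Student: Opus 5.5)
Argue by contradiction: suppose such an $f$ exists, and run the concatenation scheme of Theorem \ref{connected.minimal}, using the curvature hypotheses to obtain area bounds. First reduce to the case where $g$ is bumpy (and $h$ has no degenerate stable surfaces). The hypotheses $R_g\geq 6$ and strict stability of $\partial M$ are open conditions, so we may perturb $g$ to a nearby bumpy metric. The change in areas is then controlled by $\rho/2$, and $f$ stays area nonincreasing up to a factor $1+\epsilon$; this can be absorbed by scaling $h$ slightly, keeping both strict inequalities with $\rho/2$ in place of $\rho$. The conditions on $h$ are only used through $W^{(k)}(N,h)$ and the nonexistence of small stable surfaces, so we do not perturb $h$.

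Next, as in Theorem \ref{connected.minimal}, decompose $M$ along a maximal disjoint family $\tilde{\Lambda}$ of strictly stable two-sided minimal surfaces of area at most $W^{(k)}(N,h)$. This gives pieces $\tilde{\Omega}_1,\dots,\tilde{\Omega}_p$, each with strictly stable minimal boundary, scalar curvature $\geq 6$, and no stable two-sided minimal surface in its interior below the threshold. Every stable two-sided minimal surface $\tilde{\Sigma}$ in $(M,g)$ with $R_g\geq 6$ has area at most $4\pi/3$ (Proposition A.1 of \cite{marques-neves-rigidity-spheres}). In the constrained minimization of Theorem \ref{connected.minimal} for $f_{\#}(\tilde{\Sigma})$, alternative (ii) would therefore produce a stable two-sided surface $\tilde{\Sigma}'\subset N$ with ${\rm area}_h(\tilde{\Sigma}')\leq {\rm area}_g(\tilde{\Sigma})\leq 4\pi/3$. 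This contradicts the hypothesis on $h$, so every $f_{\#}(\tilde{\Sigma})$ can be swept out to $0$ with mass at most ${\bf M}_h(f_{\#}(\tilde{\Sigma}))+\delta$.

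In each piece $\tilde{\Omega}_j$, apply Proposition \ref{area.surface}. It gives either a two-sided minimal surface of area $\leq 4\pi$ and index $\leq 1$, or a projective plane with stable two-cover of area $\leq 2\pi/3$. We then need a sweepout $\tilde{\Psi}_j$ of $\tilde{\Omega}_j$, between the two parts of its boundary, with masses at most $4\pi+\delta$. In the two-sided unstable case we use Proposition \ref{no.stable.two-sided.case}. Indeed, the proof of Proposition \ref{area.surface} essentially produces a (relative) sweepout of mass $\leq 4\pi+\delta$ directly, via the universal-cover and cylindrical-extension argument. I would extract that statement, namely $W^{(1)}(\tilde{\Omega}_j,g)\leq 4\pi$, rather than only the existence of the surface.

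Finally, push forward: $f_{\#}\circ\tilde{\Psi}_j$ has mass $\leq 4\pi+\delta$. Concatenate with the boundary sweepouts $\Phi_{\tilde{\Sigma}}$, whose masses are bounded by the boundary areas and hence by $4\pi+\delta$, exactly as in Theorem \ref{connected.minimal}. This gives $\Phi\in\Gamma^{(k)}(N)$ with $\sup{\bf M}_h\leq 4\pi+2\delta<4\pi+\rho$, contradicting $W^{(k)}(N,h)\geq 4\pi+\rho$.

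The main obstacle is the uniform bound $W^{(1)}(\tilde{\Omega}_j,g)\leq 4\pi$. The pieces may be non-simply connected, such as punctured spherical space forms, and may contain projective planes, so the index-$\leq1$ surface from Proposition \ref{area.surface} need not directly give an optimal sweepout. I would pass to the universal cover, which is a punctured $S^3$, use Song's cylindrical extension and sweepouts by spheres of area $\leq 4\pi$, and push down. Since the covering is area nonincreasing, the pushdown yields a sweepout of a multiple of $\tilde{\Omega}_j$. The degree multiplicity is harmless here, because we only need some class whose image under $f_{\#}$ lies in $\Gamma^{(k')}$. I would therefore track degrees carefully, possibly replacing $k$ by a multiple and using $W^{(k)}\leq W^{(1)}$ concatenation bounds; this bookkeeping is where I expect difficulty.
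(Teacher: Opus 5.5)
There is a genuine gap. It sits exactly at the step you flag as the main obstacle, and the workaround you propose does not close it. The overall scheme is the paper's: argue by contradiction with $g$ bumpy, cut along stable surfaces, use Proposition~\ref{area.surface} in each piece, and get a piece sweepout from Proposition~\ref{no.stable.two-sided.case}. The constrained minimization in $N$ is excluded by the $4\pi/3+\rho$ hypothesis, and the pieces are concatenated as in Theorem~\ref{connected.minimal}.

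\textbf{The gap.} You cut only along two-sided stable surfaces. A piece $\tilde\Omega_j$ may then contain a one-sided minimal surface (a projective plane) with stable oriented two-cover. In that case Proposition~\ref{area.surface} may return only that projective plane, and you have no two-sided index-one surface to feed into Proposition~\ref{no.stable.two-sided.case}.

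\textbf{Why the fallback fails.} Pushing down sphere sweepouts from the universal cover sweeps out $d_j\tilde\Omega_j$, where $d_j$ is the covering degree, not $\tilde\Omega_j$ itself. (In Proposition~\ref{area.surface} that sweepout is moreover only a relative one of $2\cdot\mathcal{C}(\Omega)_\eta$, used to produce a minimal surface, not to bound $W^{(1)}(\tilde\Omega_j,g)$.) After concatenation you get at best a sweepout of $D\cdot M$, hence an element of $\Gamma^{(Dk)}(N)$ with masses $\le 4\pi+2\delta$. That bounds $W^{(Dk)}(N,h)$. Concatenation only gives $W^{(Dk)}\le W^{(k)}\le W^{(1)}$, never $W^{(k)}\le W^{(Dk)}$, so there is no contradiction with $W^{(k)}(N,h)\ge 4\pi+\rho$. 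Your sentence ``the degree multiplicity is harmless'' is the false step: the hypothesis concerns the specific $k={\rm deg}(f)$.

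\textbf{The missing idea.} Enlarge the cutting family.
\begin{enumerate}
\item Let $\Lambda$ consist of all two-sided stable minimal surfaces \emph{and} all one-sided minimal surfaces with stable oriented two-cover in ${\rm int}(M)$. These have area, respectively two-cover area, at most $4\pi/3$, and $\Lambda$ is finite by curvature estimates and bumpiness. Take a maximal disjoint subfamily and cut along it.
\item A new boundary component is the two-cover $\tilde\Sigma$ of a one-sided $\Sigma$. Its two sheets over $\Sigma$ carry opposite induced orientations, so its pushforward to $M$ is the zero current, and hence $f_{\#}(\tilde\Sigma)=0$. In the concatenation it is treated exactly like a component of $\partial M$, with $\Phi_{\tilde\Sigma}\equiv 0$ and $R_{\tilde\Sigma}=0$.
\item Each piece now has neither a stable two-sided surface nor a projective plane with stable two-cover in its interior. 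So Proposition~\ref{area.surface} gives a two-sided surface of area $\le 4\pi$, which must have index one.
\item Proposition~\ref{no.stable.two-sided.case} then gives a multiplicity-one sweepout of the piece with masses $\le 4\pi$.
\item The concatenation of Theorem~\ref{connected.minimal} yields an element of $\Gamma^{(k)}(N)$ with masses $\le 4\pi+\delta$. Thus $W^{(k)}(N,h)\le 4\pi$, a contradiction, and no covering degree ever enters.
\end{enumerate}

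A minor point: $R_g\ge 6$ is a closed condition, not an open one. After perturbing to a bumpy metric you must rescale, which the paper absorbs by replacing $h$ with $c\cdot h$.
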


\begin{proof}

Suppose, by contradiction, that there is a map  $f:(M,g)\rightarrow (N,h)$ that is area-nonincreasing, with $f_{\#}(\Sigma')=0$ for any connected component $\Sigma'$ of $\partial M$, of ${\rm deg}(f)=k\neq0$.
By perturbing the metric $g$ and modifying  $h$ to $c\cdot h$, $c\in\mathbb{R}$, we can suppose that $g$ is a bumpy metric.

If $\Sigma\subset (M,g)$ is a stable and orientable connected closed minimal surface, then $\Sigma$ is a two-sphere with ${\rm area}_g(\Sigma)\leq 4\pi/3$ (Proposition A.1 of \cite{marques-neves-rigidity-spheres}).   If $\Sigma\subset (M,g)$ is a one-sided connected closed minimal surface with stable oriented two-cover $\tilde\Sigma$, then $\tilde\Sigma$ is a two-sphere with ${\rm area}_g(\tilde\Sigma)\leq 4\pi/3$ and $\Sigma$ is a projective plane.   By the curvature estimates of \cite{schoen} for stable surfaces, and since $g$ is a bumpy metric, the set $\Lambda$ of connected closed minimal surfaces that are either stable and two-sided or one-sided with stable oriented two-cover in ${\rm int}(M)$  is finite. Therefore there is a  set 
$\tilde{\Lambda}\subset \Lambda$ of mutually disjoint closed minimal surfaces, that is not contained in any other set with this property.  

Let $\Omega_1,\dots, \Omega_p$ be the connected components of the complement of $\cup_{\Sigma\in \tilde{\Lambda}}\Sigma$.
 It follows that, for any $1\leq j\leq p$, there is no  closed minimal surface that is either stable and two-sided or one-sided with stable oriented two-cover in   ${\rm int}(\Omega_j)$. If $\Sigma\in \tilde{\Lambda}$ is a one-sided surface we denote by $\tilde{\Sigma}$ its oriented two-cover, otherwise $\tilde{\Sigma}=\Sigma$.

In the case that $M$ has a boundary, $\partial M=\sum_{i=1}^tS_i \in \mathcal{Z}_2(M,\mathbb{Z})$  where $S_i$ is the integral current of
a stable minimal sphere with ${\rm area}(S_i)\leq 4\pi/3$ and with the induced orientation.

Let $1\leq j\leq p$. If  $\tilde{\Omega}_j$ is  the metric closure of $\Omega_j$, then $\tilde{\Omega}_j$ is a three-dimensional compact manifold possibly with boundary such that  any  connected component of $\partial \tilde{\Omega}_j$ can be identified with one of the components of $\partial M$ or with a surface $\tilde{\Sigma}$, $\Sigma \in \tilde{\Lambda}$,
 and ${\rm int}(\tilde{\Omega}_j)=\Omega_j$.  By Proposition \ref{area.surface},  there is a  closed, two-sided, embedded minimal surface 
 $\Sigma_j'\subset {\rm int}(\tilde{\Omega}_j)$,  ${\rm area}_g(\Sigma_j')\leq 4\pi$, ${\rm index}(\Sigma_j')= 1$.

The map $f:\Omega_j\rightarrow N$ induces a Lipschitz map $f:\tilde{\Omega}_j\rightarrow N$ that is area-nonincreasing. 
 Since $R_g\geq 6$, a component $\Sigma$ of $\partial \tilde{\Omega}_j$ is a stable minimal two-sphere  with ${\rm area}_g(\Sigma)\leq 4\pi/3$ and hence ${\rm area}_h(f_{\#}(\Sigma))\leq 4\pi/3$.  If $\tilde{\Sigma}$ is a component of $\partial\tilde{\Omega}_j$ that is an oriented two-cover of a one-sided surface in $M$, then $f_{\#}(\tilde{\Sigma})=0$.

Since  there is no two-sided, embedded, closed, stable minimal surface $\Sigma$ for the metric $h$ with ${\rm area}_h(\Sigma)\leq 4\pi/3$, we can do the constrained minimization and  the   concatenation as in the proof of Theorem \ref{connected.minimal}. It follows that 
$W^{(k)}(N,h)\leq 4\pi$ for $k={\rm deg}(f)$.  This is a contradiction, which finishes the proof of the theorem.



\end{proof}


Since closed oriented three-manifolds are spin, the following theorem is a consequence of Llarull's theorem \cite{llarull} if $(N,h)=(S^3,\overline{g})$,  and the area-nonincreasing map $f$  is smooth.

\begin{thm}\label{scalar-rigidity.2}
Let $(M^3,g)$ be a closed, oriented, Riemannian manifold  with  $R_g\geq 6$. Suppose that $(N^3,h)$ is a closed,  oriented Riemannian manifold such that  there is no closed, two-sided, embedded, stable minimal surface $\Sigma\subset N$ with ${\rm area}_h(\Sigma)\leq 4\pi/3+\rho$, $\rho>0$,
and such that  $W^{(k)}(N,h)\geq 4\pi$. Let $f:(M,g)\rightarrow (N,h)$ be a  Lipschitz map that is  area-nonincreasing of nontrivial degree $k$. Then $(M,g)$ is isometric to the unit sphere and $f$ is a smooth Riemannian  isometry.
  \end{thm}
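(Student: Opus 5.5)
Theorem \ref{scalar-rigidity.2} is the borderline case $\rho=0$ of Theorem \ref{scalar-rigidity}, so I would argue by deformation followed by a rigidity step. Suppose $(M,g)$ is not isometric to the unit sphere. The first step is to perturb $g$ to increase the room available. The natural tool is the Ricci flow argument used in \cite{marques-neves-rigidity-spheres} and \cite{song}: run Ricci flow $g(t)$ from $g$ for a short time. Since $R_g\geq 6$ and $(M,g)$ is not round, the strong maximum principle applied to the evolution of scalar curvature gives, after suitable rescaling, a metric $\tilde g=c(t)g(t)$ with $R_{\tilde g}\geq 6$. The metric $\tilde g$ is also close to $g$ and strictly smaller in the relevant sense, namely $\tilde g\leq (1-\epsilon)g$ as quadratic forms, possibly after rescaling by a factor. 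Alternatively, one can use the fact that $R_g\geq 6$ with $R_g\not\equiv 6$ allows a conformal rescaling so that $R\geq 6(1+\epsilon)$.

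Concretely, I aim for a metric $g'$ on $M$ with $R_{g'}\geq 6$ and a constant $\lambda>1$ such that $f:(M,g')\to (N,h)$ is area-nonincreasing after replacing $h$ by $\lambda^{-1}h$. Equivalently, I want $f:(M,g')\rightarrow(N,h)$ with ${\rm area}_h\leq \lambda^{-1}{\rm area}_{g'}$. Set $h'=\lambda^{-1}h$, so that $W^{(k)}(N,h')=\lambda^{-1}W^{(k)}(N,h)$. This is the wrong direction, so instead I rescale $g'$ up. The condition $R\geq 6$ is not preserved under scaling up, so the real gain must come from the scalar curvature: I need $R_{g'}\geq 6\mu$ with $\mu>1$ and $f$ area-nonincreasing from $g'$. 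Scaling $g''=\mu g'$ then gives $R_{g''}\geq 6$, and $f$ satisfies ${\rm area}_h\leq \mu^{-1}{\rm area}_{g''}$. Now apply Theorem \ref{scalar-rigidity} to $f:(M,g'')\to(N,\mu h)$. For $\mu h$, stable surfaces have area at least $\mu(4\pi/3+\rho)>4\pi/3+\rho'$, and $W^{(k)}(N,\mu h)\geq 4\pi\mu\geq 4\pi+\rho'$. This yields a contradiction.

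The main obstacle is therefore producing $g'$ with $R_{g'}\geq 6\mu$, $\mu>1$, while keeping $f$ area-nonincreasing, i.e.\ $g'\geq g$ on $2$-planes. Ricci flow helps here. Under the flow, $\partial_t g=-2{\rm Ric}$, so $g$ shrinks in directions of positive Ricci curvature, which is the wrong monotonicity. I would instead follow \cite{song}: use the flow to obtain $R>6$ strictly at a slightly shrunk metric, and combine this with the fact that areas shrink at most by a factor $1-Ct$ while $R$ increases by a definite amount. Choosing $t$ small and tuning constants should make $\mu(1-Ct)>1$ work whenever $R_g$ is not identically $6$ or ${\rm Ric}$ is not identically $2g$. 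If $R_g\equiv 6$ but $g$ is not Einstein, the flow still makes $\min R$ increase at rate proportional to $|{\rm Ric}^\circ|^2$. The remaining case $g$ Einstein with ${\rm Ric}=2g$ means $(M,g)$ is a spherical space form $S^3/\Gamma$. In that case I would use the sweepout by Clifford-type level sets, or the equators lifted, to compute $W^{(k)}$ via the map. The concatenation argument of Theorem \ref{scalar-rigidity}, applied to $M=S^3/\Gamma$, gives $W^{(k)}(N,h)\leq W(S^3/\Gamma)<4\pi$ unless $\Gamma$ is trivial. Hence $M$ is the unit sphere.

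Finally, with $(M,g)$ the round unit sphere, I need $f$ to be an isometry. I would show that equality forces $f_{\#}$ of each equator to be area-preserving, and hence $df$ to preserve the area of every $2$-plane almost everywhere, so $df$ is an isometry almost everywhere. Lipschitz maps with isometric differential a.e.\ between Riemannian manifolds are local isometries, hence smooth. Degree $k\neq0$ and simple connectivity of $S^3$ then force $f$ to be a Riemannian isometry.
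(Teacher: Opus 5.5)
Your reduction is sound as far as it goes: a metric $g'$ with $R_{g'}\ge 6\mu$, $\mu>1$, and $g'\ge g$ on $2$-planes would, after scaling, contradict Theorem \ref{scalar-rigidity}. Your space-form step ($W^{(k)}(N,h)\le W^{(1)}(S^3/\Gamma,g)<4\pi$) is essentially the paper's.

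The gap is the step you flag as the main obstacle. Such a $g'$ is never produced, and neither mechanism you propose can produce it.
\begin{itemize}
\item Under $\partial_t g=-2\,{\rm Ric}$, the area of the $2$-plane spanned by orthonormal $e_1,e_2$ decays at rate ${\rm Ric}(e_1,e_1)+{\rm Ric}(e_2,e_2)$. Summing over the three coordinate planes of a frame gives $2R\ge 12$. Hence $C:=\max_{x,\sigma}\bigl({\rm Ric}(e_1,e_1)+{\rm Ric}(e_2,e_2)\bigr)\ge \tfrac23\max R\ge 4$, with equality only if ${\rm Ric}\equiv 2g$.
\item Combined with $\min R_{g(t)}\ge 6/(1-4t)$, rescaling by $1+Ct$ to restore the area comparison leaves $R\ge 6\bigl(1+(4-C)t\bigr)+O(t^2)$. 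This is a loss precisely in the non-Einstein case. The extra growth of $\min R$ you invoke lives at the minimum points, while $C$ is a global maximum.
\item No perturbative fix exists either. Suppose $g$ is round on a hemisphere $H$; non-round metrics with $R_g\ge 6$ of this type exist, by gluing a Brendle--Marques--Neves deformation of the hemisphere to a round hemisphere. For every symmetric $k$ with ${\rm tr}_\sigma k\ge 0$ on all $2$-planes, integrating against $x_4$ gives $\int_H x_4\,DR_g(k)=-\int_{\partial H}{\rm tr}_{T\partial H}k\le 0$. So $\min R$ cannot be raised at first order while keeping $g'\ge g$.
\item For conformal changes $u^4g$ with $u\ge 1$, the same test together with $u^5\ge 5u-4$ excludes every $\mu>1$.
\end{itemize}

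The idea you are missing is that the paper never needs a strict gain. It compares a global min-max quantity, for which an integrated estimate replaces your pointwise area comparison.
\begin{itemize}
\item Using bumpy approximations, the stable-surface decomposition, and the concatenation argument (which converts $W^{(k)}(N,h)\ge 4\pi$ into lower bounds on widths of pieces of $M$), it extracts a region $\tilde\Omega''\subset M$. This region has stable boundary, no interior stable surfaces, and $W^{(1)}(\tilde\Omega'',g)=4\pi$ realized by an index-one minimal surface.
\item Along a Ricci flow cut off near $\partial\tilde\Omega''$, the area of the index-one min-max surface decays at rate $\int_\Sigma(\tfrac R2+K+\tfrac12|A|^2)\le 16\pi$, by Gauss--Bonnet and Hersch's trick. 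Hence $W^{(1)}(\tilde\Omega'',g'(t))\ge 4\pi-16\pi t$.
\item On the other hand, $R\ge 6/(1-4t)$ gives $W^{(1)}(\tilde\Omega'',g'(t))\le 4\pi(1-4t)$. The forced equality is the equality case of the parabolic maximum principle, so $g$ is Einstein.
\end{itemize}

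Your final step also has gaps.
\begin{itemize}
\item That $f(\Sigma_v)$ is a multiplicity-one minimal surface of area $4\pi$ with $f_\#(\Sigma_v)=f(\Sigma_v)$ requires the paper's min-max analysis of the pushed-forward equator sweepouts. You only assert it.
\item ``$df$ isometric a.e.\ implies local isometry'' is false for Lipschitz maps, because of piecewise-isometric folds. The paper first proves injectivity, using pencils of great spheres through two points and conformality of $f|_{\Sigma_v}$, and only then invokes Cecchini--Hanke--Schick.
\item A local isometry $S^3\to N$ of nonzero degree may be a covering of a space form. That case is excluded by $W^{(1)}(S^3/\Gamma)<4\pi$, not by simple connectivity of $S^3$.
\end{itemize}
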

  
  \begin{proof}
  
 
 Let $g_i$ be a sequence of bumpy Riemannian metrics on $M$ converging smoothly to $g$ as in \cite{white-bumpy}.  Let $\Lambda$, $\tilde{\Lambda}$ and 
 $\{\Omega_1,\dots, \Omega_p\}$ be as in the proof of Theorem \ref{scalar-rigidity} for $g=g_i$. Since the map
$f:(M,g)\rightarrow (N,h)$ is area-nonincreasing, there is a sequence $\delta_i\rightarrow 0$, $\delta_i>0$, such that
$$
|df_x(v_1) \wedge df_x(v_2)|_{h}\leq (1+\delta_i) |v_1 \wedge v_2|_{g_i}
$$
for almost any $x\in M$ where $df_x$ is defined,  and $\{v_j\}_{j=1}^2\subset T_xM$.
By following the proof of Theorem \ref{scalar-rigidity},  there is $1\leq j\leq p$ such that
there is no closed, two-sided, unstable $g_i$-minimal surface in ${\rm int}(\tilde{\Omega}_j)$ with area bounded by 
$4\pi(1+\delta_i)^{-1}-\delta_i$.
This is because if not then one can use the map
$f$
 to construct a sweepout of $(N,h)$ with area bounded strictly  by $4\pi$. 
By Proposition \ref{index.one.boundary}
and Proposition \ref{no.stable.one-sided.case}, there is no closed, one-sided, embedded $g_i$-minimal surface in ${\rm int}(\tilde{\Omega}_j)$ with unstable oriented two-cover such that the area of the two-cover is bounded by $4\pi(1+\delta_i)^{-1}-\delta_i$. Hence, by min-max, $W^{(1)}(\tilde{\Omega}_j,g_i)>4\pi(1+\delta_i)^{-1}-\delta_i$.

We denote $(\Omega_i',g_i)=(\Omega_j,g_i)$.
Then 
any component of $\partial \tilde{\Omega'_i}$ can be identified with  a surface in $\tilde{\Lambda}$,
where $\tilde{\Omega'_i}$ denotes the metric closure of $\Omega_i'$. 

Let $0<\eta<2\pi/3$. By Proposition \ref{area.surface}, since $R_{g}\geq 6$, and $g_i\rightarrow g$, there is a closed, embedded, two-sided, $g_i$-minimal surface in ${\rm int}(\tilde{\Omega'_i})$, of Morse index one, and with area bounded by $4\pi+\eta$. It follows from the  Proposition \ref{no.stable.two-sided.case} that $W^{(1)}(\tilde{\Omega_i'},g_i)\leq 4\pi+\eta$. Hence ${\bf M}_{g_i}(\partial\tilde{\Omega_i'})\leq 8\pi+2\eta$.  By the monotonicity formula for minimal surfaces, the number of components of $\partial \tilde{\Omega_i'}$ is bounded by a constant $c$. Hence we can suppose that $\partial \tilde{\Omega_i'}$ converges locally smoothly by curvature estimates of stable minimal surfaces.


It follows by the maximum principle that if $\Sigma,\Sigma'$ are limits of components of $\partial \tilde{\Omega'_i}$, then either $\Sigma=\Sigma'$ or $\Sigma$ and $\Sigma'$ are disjoint.  Suppose that components $\Sigma_{i,1}$ and $\Sigma_{i,2}$ of $\partial \tilde{\Omega'_i}$ converge smoothly to a two-sided, closed minimal surface $\Sigma \subset {\rm int}(M)$.  The surface $\Sigma$ has to be stable for the metric $g$, and hence ${\rm area}_g(\Sigma)\leq 4\pi/3$. The domain $\Omega_i'$ cannot be the region bounded by $\Sigma_{i,1}$ and $\Sigma_{i,2}$ in the neighborhood of $\Sigma$. If that is the case, there would be a sweepout of $\tilde{\Omega_i'}$ connecting $\Sigma_{i,1}$ to  $\Sigma_{i,2}$ with $g_i$-areas bounded by $4\pi/3+\eta< 2\pi$.  This is a contradiction for sufficiently large $i$, since $W^{(1)}(\tilde{\Omega_i'},g_i)>4\pi(1+\delta_i)^{-1}-\delta_i$.
It  also follows that there can be 
at most two components of $\partial \tilde{\Omega'_i}$ converging to $\Sigma$. In the case of two components, the limit of $\Omega_i'$ contains a neighborhood of $\Sigma$.

Suppose that  components $\Sigma_{i,1}$ and $\Sigma_{i,2}$ of $\partial \tilde{\Omega'_i}$ converge  to a one-sided, closed minimal surface $\Sigma \subset {\rm int}(M)$.  The surfaces cannot be two-covers of one-sided surfaces in $M$, as these surfaces would have to intersect. The surfaces cannot be one-covers of two-sided surfaces in $M$ since in that case min-max  could be used as in the previous paragraph. This is because  $\Sigma_{i,2}$ is $g_i$-stable, and hence  the two-cover of $\Sigma$ is $g$-stable and has area bounded by $4\pi/3$. Suppose $\Sigma_{i,1}$ is a two-cover of a one-sided surface and 
$\Sigma_{i,2}$ is a one-cover of a two-sided surface. Then $\Sigma_{i,2}$ is the boundary of a region $\Omega$ in a neighborhood of $\Sigma$ such that $\Sigma_{i,1}\subset \Omega$.  In this case, $\Omega_i'=\Omega \setminus \Sigma_{i,1}$.   Hence there is a sweepout of $\tilde{\Omega_i'}$ connecting 
 $\Sigma_{i,2}$ to $\Sigma_{i,1}$ with areas bounded by $4\pi/3+\eta$. This is a contradiction  as  $4\pi/3+\eta<4\pi(1+\delta_i)^{-1}-\delta_i$  for sufficiently large $i$.
 
 This implies that at most one component of $\partial \tilde{\Omega'_i}$ can converge  to a  one-sided, closed minimal surface $\Sigma \subset {\rm int}(M)$. This convergence is smooth with multiplicity one if the component is a two-cover of a one-sided surface,  or locally smooth with multiplicity two if the component is a one-cover of a two-sided surface. In any case the limit of $\Omega_i'$ contains a neighborhood of $\Sigma$.

 This proves that the limit $\Omega$ of $\Omega_i'$ is a nontrivial region of $M$. Let $\tilde{\Omega}$ be its metric closure. 
A component of $\partial \tilde{\Omega}$ can be identified with 
a two-sided $g$-stable minimal surface or a $g$-stable two-cover of a one-sided minimal surface in $M$. The Riemannian manifolds $(\tilde{\Omega_i'},g_i)$ converge smoothly to $(\tilde{\Omega},g)$.
Since $W^{(1)}(\tilde{\Omega_i'},g_i)>4\pi(1+\delta_i)^{-1}-\delta_i$, it follows that 
 $W^{(1)}(\tilde{\Omega},g)\geq 4\pi$.  A two-sided  surface $\Sigma\subset \Omega$ separates $\Omega$, otherwise there would be a two-sided, $g_i$-stable minimal surface in ${\rm int}(\tilde{\Omega_i'})$.


Notice that if  $\Sigma$ is one-sided with stable two-cover, then $W^{(1)}(\tilde{\Omega},g)\geq 4\pi$ implies $W^{(1)}(\tilde{\Omega}_\Sigma,g)\geq 4\pi$ for $\Omega_\Sigma=\Omega\setminus \Sigma$.
If $\{\Sigma_j\}_j$ is a sequence of connected, one-sided, with stable two-cover, closed, embedded, minimal surfaces, then by curvature estimates there is a sequence $\{\tilde{j}\}\subset \{j\}$ such that $\Sigma_{\tilde{j}}$ converges smoothly to a one-sided, surface $\Sigma$.  This implies that for some $j'$ the surfaces $\Sigma_{j_1}$ and $\Sigma_{j_2}$ intersect if $j_1,j_2\geq j'$.  It follows that there is a finite set $\Lambda'$
of connected, one-sided, with stable two-cover, closed, embedded, minimal surfaces, mutually disjoint, such that any such closed minimal surface intersects $\cup_{\Sigma\in \Lambda'}\Sigma$.
Let $\Omega_1=\Omega \setminus (\cup_{\Sigma\in \Lambda'}\Sigma)$,
so that there is no one-sided, with stable two-cover, closed, embedded, minimal surface in ${\rm int}(\tilde{\Omega}_1)$, and  
$W^{(1)}(\tilde{\Omega}_1,g)\geq 4\pi$. 

Let $\mathcal{I}_{\Omega_1}$ be the set of connected, open domains $\Omega'\subset {\rm int}(\tilde\Omega_1)$ such that a component of $\partial \tilde{\Omega}'$ is a component of $\partial \tilde{\Omega}_1$ or a two-sided stable minimal surface in $\Omega_1$,  and $W^{(1)}(\tilde{\Omega}',g)\geq 4\pi$. 
Notice that $\Omega_1 \in \mathcal{I}_{\Omega_1}$. Let $\{\Omega^{(j)}\}_j \subset \mathcal{I}_{\Omega_1}$ be a sequence such that
$$
{\rm vol}_g(\Omega^{(j)}) \rightarrow \inf_{\Omega'\in \mathcal{I}_{\Omega_1}} {\rm vol}_g(\Omega').
$$

We want to prove that the number of components of $\partial \Omega^{(j)}$ is bounded. The space   of smooth, two-sided, connected, stable minimal surfaces in $\tilde{\Omega}_1$ is compact in the Hausdorff metric. This follows from the curvature estimates for stable surfaces, since any such surface has area bounded by $4\pi/3$ and there is no one-sided, with stable two-cover surface in $\tilde{\Omega}_1$.  Hence, if the number of components of $\partial \Omega^{(j)}$ is unbounded, there is a sequence $\{i\}\subset \{j\}$ and components $\Sigma^{(i)}_1, \Sigma^{(i)}_2$ of $\partial \Omega^{(i)}$ such that $\Sigma^{(i)}_1, \Sigma^{(i)}_2\rightarrow \Sigma$ in the Hausdorff metric. Recall that any two-sided surface in $\Omega$ separates $\Omega$. This implies that $\Omega^{(i)}$ is the region bounded by $\Sigma^{(i)}_1$ and  $\Sigma^{(i)}_2$ in the neighborhood of $\Sigma$. Therefore 
$W^{(1)}(\tilde{\Omega}^{(i)},g)\leq 4\pi/3+\eta<4\pi$ for sufficiently large $i$. This is a contradiction.

Hence the number of components of $\partial \Omega^{(j)}$ is bounded.
 Therefore there is a sequence $\{i\}\subset \{j\}$ such that $\partial \tilde{\Omega}^{(i)}$, and hence $\tilde{\Omega}^{(i)}$,  converges smoothly.  Let $\Omega''$ be the limit of $\tilde{\Omega}^{(i)}$.
 Then $\Omega'' \in \mathcal{I}_{\Omega_1}$, and ${\rm vol}_g(\Omega'')=\inf_{\Omega'\in \mathcal{I}_{\Omega_1}} {\rm vol}_g(\Omega')$. By doing as in the previous paragraph it follows that $\Omega''$ is a nontrivial domain.
 
 We are going to analyze the closed minimal surfaces in $\Omega''$. Let $\Sigma \subset \Omega''$ be an embedded, closed minimal surface for the metric $g$. Then $\Sigma \subset \Omega_i'$ for sufficiently large $i$. Suppose that $\Sigma$ is two-sided and unstable. Then there is a neighborhood $V$ of $\Sigma$ such that the mean curvature vector of $\partial V$ points strictly away from $\Sigma$. If $i$ is sufficiently large, the mean curvature vector of $\partial V$ in the metric $g_i$ points strictly away from $\Sigma$. The surface $\Sigma$ has to separate $\Omega_i'$, otherwise there is a two-sided stable $g_i$-minimal surface in $\Omega_i'$ by area minimization.
It follows from the  proof of Proposition \ref{no.stable.two-sided.case} that for any $\xi>0$, $W^{(1)}(\tilde{\Omega_i'},g_i)\leq {\rm area}_g(\Sigma)+\xi$ for sufficiently large $i$. Hence ${\rm area}_g(\Sigma)\geq 4\pi$.

We proved that a two-sided, unstable, closed embedded minimal surface $\Sigma\subset \Omega''$ 
is such that ${\rm area}_g(\Sigma)\geq 4\pi$. If $\Sigma$ is one-sided with unstable double cover $\tilde{\Sigma}$, then
${\rm area}_g(\tilde{\Sigma})\geq 4\pi$ by using the proof of Proposition \ref{no.stable.one-sided.case}.
 
 Let $\Sigma \subset \Omega''$ be a two-sided, stable, closed embedded minimal surface. Then ${\rm area}_g(\Sigma)\leq 4\pi/3$.
 There is a foliation $\{\Sigma_t\}_{t\in [-\delta,\delta]}$ of a neighborhood of $\Sigma$ such that the mean curvature of $\Sigma_t$ has a sign for any $t\in [-\delta,\delta]$. If there is an interval $[a,b]\subset [-\delta,\delta]$ such that the mean curvature vector of $\partial(\bigcup_{t\in [a,b]}\Sigma_t)$ points strictly outward, this implies as before that 
there is a sweepout of $\tilde{\Omega_i'}$ with $g_i$-area bounded by $4\pi/3+\eta$ which is a contradiction. 
If there is an interval $[a,b]\subset [-\delta,\delta]$ such that the mean curvature vector of $\partial(\bigcup_{t\in [a,b]}\Sigma_t)$ points strictly inward, then by minimizing area in the $g_i$-metric there is a two-sided, $g_i$-stable  minimal surface in 
$(\bigcup_{t\in [a,b]}\Sigma_t)\subset {\rm int}(\tilde{\Omega_i'})$ for sufficiently large $i$. This is a contradiction. This proves that the mean curvature of $\Sigma_t$ has the same sign for any $t\in [-\delta,\delta]$.

 Let $\Sigma$ be a component of $\partial\Omega''$. Then there is a foliation $\{\Sigma_t\}_{t\in [0,\delta]}$ of a neighborhood of 
 $\Sigma$ in $\Omega''$ such that $\Sigma_0=\Sigma$ and so that  the mean curvature of $\Sigma_t$ has a sign.  As in the previous paragraph, the mean curvature of $\Sigma_t$ has the same sign for any $t\in [0,\delta]$. If there is $\tilde{t}\in (0,\delta]$
 such that $\Sigma_t$ is minimal for any $t \in [0,\tilde{t}]$, then 
 $W^{(1)}(\tilde{\Omega}''\setminus (\cup_{t\in [0,\tilde{t})}\Sigma_t),g)\geq 4\pi$. This is because the surface $\Sigma_{\tilde{t}}$ can be connected to $\Sigma$ with constant area by the foliation, and $W^{(1)}(\tilde{\Omega}'',g)\geq 4\pi$. Since in this case $\Sigma_{\tilde{t}}$ is stable degenerate, 
 $$
 \Omega_{\tilde{t}}=\tilde{\Omega}''\setminus (\cup_{t\in [0,\tilde{t})}\Sigma_t)\in \mathcal{I}_{\Omega_1}.
 $$
 This is a contradiction, since ${\rm vol}_g( \Omega_{\tilde{t}})<{\rm vol}_g(\tilde{\Omega}'')$. Hence there is a sequence $t_j>0$, $t_j\rightarrow 0$, such that either $\Sigma_{t_j}$ has mean curvature vector pointing strictly away from $\Sigma$ for any $j$ or 
 the surface $\Sigma_{t_j}$ has mean curvature vector pointing strictly towards $\Sigma$ for any $j$. Hence either
 ${\rm area}_g(\Sigma_t)<{\rm area}_g(\Sigma)$ for any $t\in [0,\delta]$, or ${\rm area}_g(\Sigma_t)>{\rm area}_g(\Sigma)$ for any $t\in [0,\delta]$.
 
 If $\Gamma$ is the set of two-sided, connected, closed stable minimal surfaces in ${\rm int}(\tilde{\Omega}'')$, let $\{\Sigma^{(i)}\in \Gamma\}_i$ be a sequence such that
 $$
 {\rm area}_g(\Sigma^{(i)})\rightarrow \inf_{\Sigma \in \Gamma} {\rm area}_g(\Sigma).
 $$
 By compactness, we can suppose that $\Sigma^{(i)}$ converges smoothly to a two-sided, connected,  closed stable minimal surface $\Sigma \subset \tilde{\Omega}''$. It follows from the maximum principle that either $\Sigma \subset \Omega''$ or $\Sigma$ is a component of $\partial \tilde{\Omega}''$.
 
 Suppose $\Sigma$ is a component of $\partial \tilde{\Omega}''$, and $\{\Sigma_t\}_{t\in [0,\delta]}$  is the foliation of the neighborhood of $\Sigma$. It follows by the maximum principle, since the mean curvature vector of $\Sigma_t$ has the same sign, 
 that $\Sigma^{(i)}$ is one of the leaves of $\{\Sigma_t\}_{t\in [0,\delta]}$ for sufficiently large $i$. We can suppose that 
 ${\rm area}_g(\Sigma^{(i+1)})\leq {\rm area}_g(\Sigma^{(i)})$ for any $i$. Therefore $\{\Sigma_t\}_{t\in [0,\delta]}$ is such that
 ${\rm area}_g(\Sigma_t)>{\rm area}_g(\Sigma)$ for any $t\in [0,\delta]$. Since the mean curvature vector of $\Sigma_t$ has the same sign, the function $t \mapsto {\rm area}_g(\Sigma_t)$ is nondecreasing. But then if $\Sigma^{(i)}=\Sigma_{t_i}$, 
 $W^{(1)}(\tilde{\Omega}''\setminus (\cup_{t\in [0,t_i)}\Sigma_t),g)\geq 4\pi$. This is because the surface $\Sigma_{t_i}$ can be connected to $\Sigma$ with area  bounded by ${\rm area}_g(\Sigma_{t_i})$ using the foliation, and $W^{(1)}(\tilde{\Omega}'',g)\geq 4\pi$. Since  $\Sigma^{(i)}$ is a stable minimal surface, 
 $$
 \Omega_{t_i}=\tilde{\Omega}''\setminus (\cup_{t\in [0,t_i)}\Sigma_t)\in \mathcal{I}_{\Omega_1}.
 $$
 This is a contradiction, since ${\rm vol}_g( \Omega_{t_i})<{\rm vol}_g(\tilde{\Omega}'')$. 
 
 Therefore $\Sigma \subset \Omega''$, and hence $\Sigma \in \Gamma$. Let $\{\Sigma_t\}_{t\in [-\delta,\delta]}$ be the foliation of a neighborhood of $\Sigma$ as before. 
Since $\Sigma$ is stable, ${\rm area}_g(\Sigma)\leq 4\pi/3$.  Suppose that  $\{\Sigma_t\}_{t\in [-\delta,\delta]}$ can be extended to $\{\Sigma_t\}_{t\in [-\delta,\tilde{\delta}]}$, $0<\delta\leq \tilde{\delta}$, such that $\Sigma_t$ is a minimal surface for any $t\in [0,\tilde{\delta}]$ 
and $\Sigma_{\tilde{\delta}}$ is a component of $\partial \tilde{\Omega}''$. Then 
$W^{(1)}(\tilde{\Omega}''\setminus (\cup_{t\in [0,\tilde{\delta}]}\Sigma_t),g)\geq 4\pi,$ which is a contradiction as in the previous paragraphs.

Hence we can suppose that there is a foliation $\{\Sigma_t\}_{t\in [-\delta,\delta]}$, and $0\leq \tilde{\delta}_1,\tilde{\delta}_2<\delta$, such that $\Sigma_t$ is minimal for any $t\in [-\tilde{\delta}_2,\tilde{\delta}_1]$, the mean curvature vector of $\Sigma_t$
has the same sign, and there are sequences $t_{j,1},t_{j,2}\rightarrow 0$, $t_{j,1},t_{j,2}>0$,  so  $\Sigma_{\tilde{\delta}_1+t_{j,1}}$ has mean curvature vector pointing strictly away from $\Sigma$, and   $\Sigma_{-\tilde{\delta}_2-t_{j,2}}$ has mean curvature vector pointing strictly towards $\Sigma$.

Let $\Omega_{\tilde{\delta}_1+t_{j,1}}$ be the connected component of $\Omega''\setminus \Sigma_{\tilde{\delta}_1+t_{j,1}}$ that does not contain $\Sigma$. Suppose $\xi>0$ is such that 
$$
{\bf M}_g(\Sigma_{\tilde{\delta}_1+t_{j,1}})+2\xi\leq {\bf M}_g(\Sigma_{\tilde{\delta}_1})={\bf M}_g(\Sigma).
$$
Define $\Gamma_{\tilde{\delta}_1+t_{j,1}}$ to be the set of maps
$\Phi:[0,1]\rightarrow \mathcal{Z}_n(\Omega_{\tilde{\delta}_1+t_{j,1}},\mathbb{Z})$, continuous in the ${\bf F}$-metric, such that
$\Phi(0)=\Sigma_{\tilde{\delta}_1+t_{j,1}}$, and
$$
\sup_{t\in [0,1]}{\bf M}_g(\Phi(t))\leq {\bf M}_g(\Sigma_{\tilde{\delta}_1+t_{j,1}})+ \xi.
$$
If $\lambda=\inf_{\Phi\in \Gamma_{\tilde{\delta}_1+t_{j,1}}}{\bf M}_g(\Phi(1)),$ 
by doing as in the proof of Theorem \ref{connected.minimal} there is a map $\Phi\in \Gamma_{\tilde{\delta}_1+t_{j,1}}$ such
that ${\bf M}_g(\Phi(1))=\lambda$.  

If $\lambda>0$, since $\partial \Omega_{\tilde{\delta}_1+t_{j,1}}$ is mean-convex, the support $\Sigma'$ of $\Phi(1)$ is a two-sided, stable closed embedded minimal surface.  Since
$$
{\bf M}_g(\Sigma')\leq {\bf M}_g(\Sigma_{\tilde{\delta}_1+t_{j,1}})+\xi < {\bf M}_g(\Sigma),
$$
it follows that 
$$
\Sigma'\subset \partial \Omega_{\tilde{\delta}_1+t_{j,1}} \setminus \Sigma_{\tilde{\delta}_1+t_{j,1}}\subset \partial\Omega''.
$$
This is because ${\rm area}_g(\Sigma) =\inf_{\tilde{\Sigma}\in \Gamma}{\rm area}_g(\tilde{\Sigma})$. Since $\Phi(1)$ is 
homologous to $\Sigma_{\tilde{\delta}_1+t_{j,1}}$ in $\Omega_{\tilde{\delta}_1+t_{j,1}}$, and ${\rm support}(\Phi(1))\subset\partial \Omega_{\tilde{\delta}_1+t_{j,1}} \setminus \Sigma_{\tilde{\delta}_1+t_{j,1}}$, by the constancy theorem 
$$
\Phi(1)=\partial \Omega_{\tilde{\delta}_1+t_{j,1}} \setminus \Sigma_{\tilde{\delta}_1+t_{j,1}}.
$$

If $\Omega_\Sigma$ is the connected component of $\Omega''\setminus \Sigma$ that does not contain $\Sigma_{-\delta}$, then 
by concatenating $\Phi$ with the foliation there is a sweepout of $\tilde{\Omega}_\Sigma$ connecting $\Sigma$ to 
$\partial\Omega_\Sigma \setminus \Sigma$ with areas bounded by ${\rm area}_g(\Sigma)$.  It follows that
$W^{(1)}(\tilde{\Omega}''\setminus \Omega_\Sigma,g)\geq 4\pi$, which is a contradiction as before.

This proves that there is no two-sided, closed, stable embedded minimal surface in ${\rm int}(\tilde{\Omega}'')$.

Let $\tilde{g}_i$ be a sequence of bumpy Riemannian metrics converging smoothly to $g$ on $\tilde{\Omega}''$ such that 
$\partial \tilde{\Omega}''$ is a strictly stable closed minimal surface for the metric  $\tilde{g}_i$. Suppose that $\{\Sigma_i\}$ is
a sequence of $\tilde{g}_i$-stable, two-sided, minimal surfaces. Then the sequence converges smoothly, after passing to a subsequence, to a component of $\partial \tilde{\Omega}''$. This is because there is no two-sided, stable, or one-sided, with stable two-cover, closed minimal surface in ${\rm int}(\tilde{\Omega}'')$ for the metric $g$. If $\{\Sigma_{i,1}\}_i$ and $\{\Sigma_{i,2}\}_i$ are sequences of  $\tilde{g}_i$-stable minimal surfaces converging smoothly to a component $\Sigma$, $\Sigma_{i,1}\neq 
\Sigma_{i,2}$, then the surfaces are strictly on one side of each other. If not, there would be a changing sign Jacobi field on $
\Sigma$ which is a contradiction since $\Sigma$ is stable for the metric $g$.

Hence there is $\Omega_i''\subset \Omega''$, $\Omega_i''\rightarrow \Omega''$, $\partial \Omega_i''$ a $\tilde{g}_i$-stable closed minimal surface converging smoothly to $\partial \Omega''$, such that there is no two-sided, $\tilde{g}_i$-stable closed minimal surface in ${\rm int}(\Omega_i'')$. Hence $\lim_i W^{(1)}(\Omega_i'', \tilde{g}_i)=W^{(1)}(\tilde{\Omega}'',g)\geq 4\pi$.

Suppose $\Sigma$ is a component of $\partial \tilde{\Omega}''$, and $\{\Sigma_t\}_{t\in [0,\delta]}$  is the foliation of the neighborhood of $\Sigma$ such that the mean curvature  vector of $\Sigma_t$ has the same sign.
We can suppose the  surfaces $\{\Sigma_t\}_{t\in (0,\delta]}$ have the same sign strictly, as otherwise there would be a two-sided, stable minimal surface in ${\rm int}(\tilde\Omega'')$.

If there is $\tilde{t}\in (0,\delta]$ such that the mean curvature vector of $\Sigma_{\tilde{t}}$ points strictly away from $\Sigma$, then we could use min-max to construct a sweepout of $\tilde{\Omega}_i''\setminus \cup_{t\in [0,\tilde{t})}\Sigma_t$ connecting $
\Sigma_{\tilde{t}}$ to $\partial \tilde{\Omega}_i''\setminus \cup_{t\in [0,\tilde{t}]}\Sigma_t$ with $\tilde{g}_i$-areas bounded by $4\pi/3+2\eta$. This is a contradiction since $\lim_i W^{(1)}(\Omega_i'', \tilde{g}_i)\geq 4\pi$. This proves that for any component $
\Sigma$ of $\partial \tilde{\Omega}''$, there is a foliation $\{\Sigma_t\}_{t\in [0,\delta]}$ of a neighborhood of $\Sigma$ in $
\tilde{\Omega}''$ such that the mean curvature of $\Sigma_t$ points strictly towards $\Sigma$ for any $t\in (0,\delta]$.

Since $\lim R_{\tilde{g}_i}=R_g$ and the metric $\tilde{g}_i$ is bumpy, by using the Proposition \ref{area.surface},   there is a closed, connected, two-sided, embedded minimal surface $\Sigma_i$ for the metric $\tilde{g}_i$, of Morse index one,  such that 
$\limsup_i {\rm area}_{\tilde{g}_i}(\Sigma_i)\leq 4\pi$, in ${\rm int}(\tilde{\Omega}_i'')$. Then $\Sigma_i$ converges as varifolds to 
$k \Sigma$, where $\Sigma$ is a closed minimal surface for the metric $g$ in $\tilde{\Omega}''$. If $\Sigma$ is a component of 
$\partial \tilde{\Omega}''$, then for sufficiently large $i$ the surface $\Sigma_i$ is contained in $\cup_{t\in [0,\delta/2]}\Sigma_t$.
Since, for sufficiently large $i$, the surface $\Sigma_\delta$ has mean curvature vector in the metric $\tilde{g}_i$ pointing strictly towards $\Sigma$, by minimizing the area and using $\Sigma_i$ as barrier there would be a $\tilde{g}_i$-stable minimal surface in ${\rm int}(\Omega_i'')$ which is a contradiction.  

Hence $\Sigma \subset {\rm int}(\Omega'')$.  The surface $\Sigma$ cannot be one-sided, since in that case $k\geq 2$ and  the two-cover $\tilde{\Sigma}$ of $\Sigma$ would be stable.
Therefore the surface $\Sigma$ is two-sided and unstable, and the convergence of $\Sigma_i$ to $\Sigma$ is smooth with multiplicity one. Hence ${\rm area}_g(\Sigma)\leq 4\pi$, and ${\rm index}(\Sigma)=1$. It follows that $\Sigma$ has a neighborhood with strictly mean-concave boundary $V$ in the metric $g$. Hence $\partial V$ is strictly mean-concave   in the metric $\tilde{g}_i$ for sufficiently large $i$.

Let $\xi>0$ be such that ${\bf M}_g(\Sigma')+2\xi\leq {\bf M}_g(\Sigma)$ if $\Sigma'$ is a component of $\partial V$. Since $\Omega_i''\rightarrow 
\Omega''$, and using that $V$ has a foliation by surfaces with area in the metric $g$ bounded by ${\rm area}_g(\Sigma)$, we can construct a  sweepout for $\tilde{\Omega}''$ that is optimal for $W^{(1)}(\tilde{\Omega}'',g)$. This is because by the proof of  Proposition \ref{no.stable.two-sided.case}, since $\partial\Omega_i''\rightarrow \partial\Omega''$, there is a map $\tilde{\Phi}\in \Gamma^{(1)}(\tilde{\Omega}'')$ such that for some $0<\lambda<1/5$, ${\rm area}_g(\tilde{\Phi}(t))\leq {\rm area}_g(\Sigma)-\xi$ for $t\in [0,1]\setminus (1/2-\lambda,1/2+\lambda)$, and $\{\Sigma_t=\tilde{\Phi}(t)\}_{t\in [1/2-\lambda,1/2+\lambda]}$ is the smooth foliation of $V$ with ${\rm area}_g(\Sigma_t)<{\rm area}_g(\Sigma)$ for $t\in [1/2-\lambda,1/2+\lambda]$, $t\neq 0$. The function $f(t)={\rm area}_g(\Sigma_t)$ is such that $f''(0)<0$. The map $\tilde{\Phi}$ is optimal for $W^{(1)}(\tilde{\Omega}'',g)$ since
$$
\sup_{t\in [0,1]}{\bf M}_g(\tilde{\Phi}(t))={\rm area}_g(\Sigma)\leq 4\pi\leq W^{(1)}(\tilde{\Omega}'',g)\leq \sup_{t\in [0,1]}{\bf M}_g(\tilde{\Phi}(t)).
$$
Hence $W^{(1)}(\tilde{\Omega}'',g)=4\pi={\rm area}_g(\Sigma)$.

Let $g(t)$ be the Ricci flow on $M$ such that $g(0)=g$. Let $\psi:\Omega''\rightarrow M$ be the inclusion map, which extends to
$\psi:\tilde\Omega''\rightarrow M$ and define  $\tilde{g}(t)=\psi^*(g(t))$. Then $\tilde{g}(t)$ converges smoothly as $t\rightarrow 0$ to $g$ on $\tilde{\Omega}''$. We denote by $g$ the metric $\psi^*(g)$. 

Let $\{\Sigma_i\}$ be a sequence of connected, closed minimal surfaces in ${\rm int}(\tilde{\Omega}'')$  for the metric $g$ such that
${\rm area}_g(\Sigma_i)\leq 4\pi+\eta$ and ${\rm index}(\Sigma_i)\leq 1$. By \cite{sharp}, there is $\{\Sigma_j\}\subset \{\Sigma_i\}$
such that $\Sigma_j$ converges in varifold sense to $k\Sigma$, where $k\in \mathbb{N}$ and $\Sigma$ is a connected,  closed minimal surface. 
Let $\Lambda_{\partial \tilde\Omega''}$ be the neighborhood of $\partial \tilde\Omega''$  such that $\Lambda_{\partial \tilde\Omega''}\setminus \partial \tilde{\Omega}''$  is foliated by surfaces with mean curvature vector pointing strictly towards $\partial \tilde\Omega''$. If $\Sigma \subset \Lambda_{\partial \tilde\Omega''}$, then by the maximum principle $\Sigma_j$ is a component of $\partial \tilde{\Omega}''$ 
for $j\geq \tilde{j}$. This is a contradiction because $\Sigma_j\subset {\rm int}(\tilde{\Omega}'')$.
Hence the surface $\Sigma$ intersects $\tilde{\Omega}''\setminus \Lambda_{\partial \tilde\Omega''}$.


Therefore there is a region $\Gamma$,
$\overline{\Gamma}\subset \Omega''$, such that  any closed minimal surface $\Sigma$ with
${\rm area}_g(\Sigma)\leq 4\pi+\eta$,  ${\rm index}(\Sigma)\leq 1$ and $\Sigma\subset \Omega''$  is contained in $\Gamma$.


By doing as in the proof of Proposition \ref{area.surface}, the domain $\tilde{\Omega}''$ is diffeomorphic to the complement of finitely many balls in a space form. Let $\pi:S^3 \setminus (B_1 \cup \dots \cup B_j)\rightarrow \tilde{\Omega}''$ be the universal cover, where $B_1,\dots, B_j$ are disjoint balls in  $S^3$. The map $\pi$ induces a diffeomorphism in a neighborhood of any $\partial B_{\tilde{j}}$, $1\leq \tilde{j}\leq j$, with image a neighborhood of $\pi(\partial B_{\tilde{j}})$. Hence $\partial B_{\tilde{j}}$ has a neighborhood that is foliated by surfaces with mean curvature vector pointing strictly towards $\partial B_{\tilde{j}}$ in the metric $\pi^*(g)$, for any $1\leq \tilde{j}\leq j$. Hence, as in the previous paragraph, we can suppose by changing  $\Gamma$ if needed  that a closed minimal surface $\Sigma$ in the metric $\pi^*(g)$ with
${\rm area}_{\pi^*(g)}(\Sigma)\leq 4\pi+\eta,$  ${\rm index}(\Sigma)\leq 1$, and $\Sigma\subset {\rm int}(S^3 \setminus (B_1 \cup \dots \cup B_j))$,  is contained in $\pi^{-1}(\Gamma)$.

We can suppose, by decreasing $\Lambda_{\partial \tilde{\Omega}''}$ if needed,  that there is $\delta>0$ such that $d_g(\Lambda_{\partial \tilde{\Omega}''}, \Gamma)\geq 2\delta$. 
Let $\chi:\tilde{\Omega}'' \rightarrow \mathbb{R}$ be a smooth   function such that  $0\leq \chi\leq 1$ on $\tilde{\Omega}''$,   $\chi=0$ on $\Lambda_{\partial \tilde{\Omega}''}$,  and $\chi=1$ on $\{x:d_g(x,\Gamma)\leq \delta\}$. Define the metric $g'(t)=(1-\chi)g+\chi \tilde{g}(t)$. Then $g'(t)=g$ on $\Lambda_{\partial \tilde{\Omega}''}$, $g'(t)=\tilde{g}(t)$ on $\{x:d_g(x,\Gamma)\leq \delta\}$ and $g'(t)$ converges smoothly to $g$ as $t\rightarrow 0$. 

Let $\{\Sigma_{t_i}\}$ be a sequence of two-sided, connected, closed  surfaces, $\Sigma_{t_i}$ a stable minimal surface in $\tilde{\Omega}''$ for the metric $g'(t_i)$, $t_i\rightarrow 0$. Since the sequence has uniformly bounded area, by using the positivity of the scalar curvature, it converges to a component of $\partial \tilde{\Omega}''$. Therefore $\Sigma_{t_i}\subset \Lambda_{\partial \tilde{\Omega}''}$ for $i\geq \tilde{i}$, and hence by the maximum principle $\Sigma_{t_i}$ is a component of $\partial \tilde{\Omega}''$. It follows  that there is $\rho>0$ such that for $t\in [0,\rho]$, there is no two-sided, stable closed minimal surface for the metric $g'(t)$ in $\Omega''$. By decreasing $\rho>0$ if needed, there is no one-sided, with stable two-cover, closed minimal surface   for $g'(t)$ if $t\in [0,\rho]$.


Suppose that $\{\Sigma_{t_i}\}$ is  a sequence of two-sided, connected, closed  surfaces,  $\Sigma_{t_i}$ a minimal surface for the metric $g'(t_i)$, ${\rm area}_{g'(t_i)}(\Sigma_{t_i}) \leq 4\pi+\eta$, ${\rm index}(\Sigma_{t_i})\leq 1$, $\Sigma_{t_i}\subset {\rm int}(\tilde{\Omega}'')$,  $t_i\rightarrow 0$. Then the surface $\Sigma_{t_i}$ intersects  $\tilde{\Omega}''\setminus \Lambda_{\partial \tilde{\Omega}''}$. The sequence  $\{\Sigma_{t_i}\}$ converges in varifold sense to $k \Sigma$, where $\Sigma$  is a  closed, minimal surface  for the metric $g$, $\Sigma\subset {\rm int}(\tilde{\Omega}'')$. It follows that the surface $\Sigma$ is two-sided because there are no one-sided minimal surfaces with  stable oriented two-cover. 
Hence ${\rm area}_g(\Sigma)\leq 4\pi+\eta$, ${\rm index}(\Sigma)\leq 1$, and  $\Sigma \subset \Gamma$.  Therefore, by decreasing $\rho>0$ if needed, 
it follows that for $t\in [0,\rho]$ if $\Sigma'$ is a closed minimal surface for the metric $g'(t)$, ${\rm area}_{g'(t)}(\Sigma')\leq 4\pi+\eta$, ${\rm index}(\Sigma')\leq 1$, $\Sigma'\subset {\rm int}(\tilde{\Omega}'')$,  then $\Sigma' \subset \{x: d_g(x, \overline\Gamma)\leq \delta/2\}$.  And similarly, for $t\in [0,\rho]$ if $S$ is a closed minimal surface for the metric $\pi^*(g'(t))$, ${\rm area}_{\pi^*(g'(t))}(S)\leq 4\pi+\eta$, ${\rm index}(S)\leq 1$, 
$S \subset {\rm int}(S^3 \setminus (B_1 \cup \dots \cup B_j))$, then $S \subset \{x: d_{\pi^*(g)}(x, \pi^{-1}(\overline\Gamma))\leq \delta/2\}$.

Let $t\in [0,\rho]$. Suppose $\tilde{g}_i$ is a sequence of bumpy Riemannian metrics converging smoothly to $g'(t)$ on $\tilde{\Omega}''$ such that 
$\partial \tilde{\Omega}''$ is a strictly stable closed minimal surface for the metric  $\tilde{g}_i$.  The sequence $\tilde{g}_i$ can be constructed since $g'(t)=g$ on $\Lambda_{\partial \tilde\Omega''}$.  As in the case of $(\tilde{\Omega}'',g)$, there is a sequence of regions $\Omega_i\subset \Omega''$, $\Omega_i\rightarrow \Omega''$, such that $\partial \Omega_i$ is a  strictly stable minimal surface for $\tilde{g}_i$, and there is no two-sided, stable, or one-sided, with stable two-cover, minimal surface for $\tilde{g}_i$ in 
$\Omega_i$.

By doing as in the proof of Proposition \ref{area.surface}, 
there is a connected, two-sided,  closed minimal surface $\Sigma_i\subset \Omega_i$ for $\tilde{g}_i$, with Morse index one, and $S_i$ in $\pi^{-1}(\tilde{\Omega}_i)$ that is a minimal two-sphere for  $\pi^*(\tilde{g}_i)$, of Morse index at most one, such that  ${\rm area}_{\tilde{g}_i}(\Sigma_i)\leq {\rm area}_{\pi^*(\tilde{g}_i)}(S_i)$. By min-max for $W^{(1)}(\tilde{\Omega}_i,\tilde{g}_i)$, using
Proposition \ref{no.stable.two-sided.case} and  Proposition \ref{no.stable.one-sided.case}, we can suppose that
${\rm area}_{\tilde{g}_i}(\Sigma_i)=W^{(1)}(\tilde{\Omega}_i,\tilde{g}_i)$.

If $\Sigma_i\subset \Lambda_{\partial \tilde{\Omega}''}$, by minimizing the area in $\Lambda_{\partial \tilde{\Omega}''}$ there would be a two-sided, stable minimal surface in $(\Omega_i,\tilde{g}_i)$ which is a contradiction.  Hence the surface $\Sigma_i$ intersects 
$\tilde{\Omega}''\setminus \Lambda_{\partial \tilde{\Omega}''}$ for sufficiently large $i$.  Therefore $\Sigma_i$ converges in varifold sense to $k \Sigma$, where $\Sigma\subset \Omega''$. Since there is no two-sided, stable, or one-sided, with stable two-cover,  closed minimal surface for the metric $g'(t)$ in $\Omega''$, the surface $\Sigma$ is two-sided and unstable. Therefore $\Sigma_i$ converges smoothly to $\Sigma$, and  it follows that ${\rm area}_{g'(t)}(\Sigma)=W^{(1)}(\tilde{\Omega}'',g'(t))$ and ${\rm index}(\Sigma)=1$.

The sequence $\{S_i\}$ converges in varifold sense to a closed minimal surface that is contained in $\{x: d_{\pi^*(g)}(x, \pi^{-1}(\overline\Gamma))\leq \delta/2\}$. The sequence $\pi^*(\tilde{g}_i)$ converges smoothly to $\pi^*(g'(t))$, and $\pi^*(g'(t))=\pi^*(\tilde{g}(t))$ on $\{x: d_{\pi^*(g)}(x, \pi^{-1}(\overline\Gamma))\leq \delta/2\}$.  Hence one can use the scalar curvature bounds of $\tilde{g}(t)$ to estimate the area of $S_i$.

The metric $g$ is such that $R_g \geq 6$. It follows by the  maximum principle for parabolic equations, if applied to the evolution equation of the scalar curvature of $\tilde{g}(t)$, that
$$
R_{\tilde{g}(t)}\geq \frac{6}{1-4t}
$$
for $t\in [0,\xi]$. 
Hence (e.g. Proposition A. 1, \cite{marques-neves-rigidity-spheres}),
$$
\limsup {\rm area}_{\pi^*(\tilde{g}_i)}(S_i)\leq 4\pi(1-4t).
$$
It follows that ${\rm area}_{g'(t)}(\Sigma)\leq 4\pi(1-4t)$.

Let $\tilde{t}\in [0,\rho]$. By doing as before, there is a map $\tilde{\Phi}_{\tilde{t}}\in \Gamma^{(1)}(\tilde{\Omega}'')$ such that for some $0<\lambda<1/5$, ${\rm area}_{g'(\tilde{t})}(\tilde{\Phi}_{\tilde{t}}(s))\leq {\rm area}_{g'(\tilde{t})}(\Sigma_{\tilde{t}})-\xi$ for $s\in [0,1]\setminus (1/2-\lambda,1/2+\lambda)$, and $\{\Sigma^{(\tilde{t})}_s=\tilde{\Phi}_{\tilde{t}}(s)\}_{s\in [1/2-\lambda,1/2+\lambda]}$ is a smooth foliation of a neighborhood of $\Sigma_{\tilde{t}}$ with
$\tilde{\Phi}_{\tilde{t}}(1/2)=\Sigma_{\tilde{t}}$,  ${\rm area}_{g'(\tilde{t})}(\Sigma^{(\tilde{t})}_s)<{\rm area}_{g'(\tilde{t})}(\Sigma_{\tilde{t}})$ for $s\in [1/2-\lambda,1/2+\lambda]$, $s\neq 0$. The function $f(s)={\rm area}_{g'(\tilde{t})}(\Sigma^{(\tilde{t})}_s)$ is such that $f''(0)<0$.
The surface $\Sigma_{\tilde{t}}$ is a two-sided, closed minimal surface for $g'(\tilde{t})$, ${\rm index}(\Sigma_{\tilde{t}})=1$, ${\rm area}_{g'(\tilde{t})}(\Sigma_{\tilde{t}})=W^{(1)}(\tilde{\Omega}'',g'(\tilde{t}))\leq 4\pi(1-4\tilde{t})$.
It follows that  the map $\tilde{\Phi}_{\tilde{t}}$ is optimal for $W^{(1)}(\tilde{\Omega}'',g'(\tilde{t}))$ since
$$
\sup_{s\in [0,1]}{\bf M}_{g'(\tilde{t})}(\tilde{\Phi}_{\tilde{t}}(s))={\rm area}_{g'(\tilde{t})}(\Sigma_{\tilde{t}})=W^{(1)}(\tilde{\Omega}'',g'(\tilde{t}))\leq \sup_{s\in [0,1]}{\bf M}_{g'(\tilde{t})}(\tilde{\Phi}_{\tilde{t}}(s)).
$$

  The surface $\Sigma_{\tilde{t}}$ is contained in $\{x:d_g(x,\overline{\Gamma})\}\leq \delta/2\}$. Recall that 
 $g'(t)=\tilde{g}(t)$ on $\{x:d_g(x,\overline{\Gamma})\}\leq \delta/2\}$ for $t\in [0,\rho]$.

 Hence as in Proposition 4.2 of \cite{marques-neves-rigidity-spheres}, the  function $t \mapsto W^{(1)}(\tilde{\Omega}'',g'(t))$ which is Lipschitz is such that
 $$
 W^{(1)}(\tilde{\Omega}'',g'(t))\geq W^{(1)}(\tilde{\Omega}'',g)-16\pi t
 $$
 for $t\in [0,\xi]$. This uses that $t \mapsto \tilde{g}(t)$ is a Ricci flow. Since $W^{(1)}(\tilde{\Omega}'',g)=4\pi$, and
 $W^{(1)}(\tilde{\Omega}'',g'(t))={\rm area}_{g'(t)}(\Sigma_t)\leq 4\pi(1-4t)$, it follows that
 $$
 W^{(1)}(\tilde{\Omega}'',g'(t))={\rm area}_{g'(t)}(\Sigma_t)=4\pi(1-4t).
 $$

For $t\in (0,\rho]$,  it follows that 
there is $x\in \tilde\Omega''$ such that 
$R_{\tilde{g}(t)}(x)=\frac{6}{1-4t}$. The equality case in the parabolic maximum principle  implies that $R_g=6$ and  the metric $g$ is Einstein. Hence $g$ has constant sectional curvature one, and  $(M^3,g)$  is a closed spherical space form.

If $M$ contains non-orientable  embedded surfaces, it follows by the proof of Theorem 4.8 of \cite{marques-neves-rigidity-spheres} that there is a closed, one-sided, embedded minimal surface $\Sigma$ in $M$ with ${\rm area}_g(\Sigma)\leq 2\pi$. By Proposition \ref{no.stable.one-sided.case}, $W^{(1)}(M,g)<4\pi$. This is because $g$ has positive Ricci curvature and hence there are no two-sided stable minimal immersions.
Hence, by Theorem 4.4 of \cite{marques-neves-rigidity-spheres}, either $(M,g)$ is isometric to $(S^3,\overline{g})$ or $W^{(1)}(M,g)<4\pi$.  
  Since $4\pi\leq W^{(k)}(N,h)\leq W^{(1)}(M,g)$, it follows that $(M,g)$ is isometric to the unit sphere and that 
 $$
 W^{(k)}(N,h)=4\pi.
 $$
 
 Let $f:(S^3,\overline{g})\rightarrow (N,h)$ be an area-nonincreasing Lipschitz map with  nontrivial degree $k$. Suppose that $N$ is simply-connected.   If $v\in \mathbb{R}^4$, $|v|=1$, let $\Sigma_v =\{x\in S^3: \langle x,v\rangle=0\}$ and $\Sigma_{v,t}=\{x\in S^3:\langle x,v\rangle=t\}$, $t\in [-1,1]$.
 The map $\Phi_v:[-1,1]\rightarrow \mathcal{Z}_2(S^3,\mathbb{Z})$ defined by $\Phi_v(t)=\partial(\{x\in S^3: \langle x,v \rangle \leq t\})$ is continuous in the ${\bf F}$-metric. It follows by the Appendix of \cite{marques-neves-index} that there is a sequence
 of maps $\Phi_v^{(i)}:[-1,1]\rightarrow \mathcal{Z}_2(S^3,\mathbb{Z})$, continuous in the mass metric, such that
 $\Phi_v^{(i)}(-1)=\Phi_v^{(i)}(1)=0$,
 $\sup_{t\in [-1,1]}{\bf F}(\Phi_v^{(i)}(t),\Phi_v(t))\rightarrow 0$ as $i\rightarrow \infty$, and $\Phi_v^{(i)}$ is homotopic to $\Phi_v$ in the flat topology relative to $\{-1,1\}$. Hence $\Phi_v^{(i)}\in \Gamma^{(1)}(S^3)$, after reparametrizing.
 
 Therefore $\Psi_v^{(i)}=f_{\#}\circ \Phi_v^{(i)}$ is continuous in the mass metric, and $\Psi_v^{(i)}\in \Gamma^{(k)}(N)$ after reparametrizing. Since ${\bf M}_h(\Psi_v^{(i)}(t))={\bf M}_h(f_{\#}\circ \Phi_v^{(i)}(t))\leq {\bf M}_{\overline{g}}(\Phi_v^{(i)}(t))$ for any $t\in [-1,1]$, it follows that
 $$
 \limsup_{i\rightarrow \infty} \sup_{t\in [-1,1]} {\bf M}_h(\Psi_v^{(i)}(t))\leq 4\pi.
 $$ 
 Hence  $\{\Psi_v^{(i)}\}_i$ is an optimal sequence of sweepouts for $W^{(k)}(N,h)$. It follows by min-max theory that there are sequences $\{j\}\subset \{i\}$ and $\{t_j\in [-1,1]\}_j$ such that 
 $$
 |\Psi_{v}^{(j)}(t_j)|\rightarrow V=\sum_{i=1}^{r} m_i |\Sigma_i|
 $$
 in varifold sense, where $m_i\in \mathbb{N}$,  $\{\Sigma_1,\dots,\Sigma_r\}$ is a disjoint collection of smooth, closed, embedded minimal hypersurfaces of $(N,h)$, and 
 $$
 \sum_{i=1}^r m_i{\rm area}_h(\Sigma_i)=4\pi.
 $$
 
Since $\Psi_{v}^{(j)}(t_j)=f_{\#}(\Phi_{v}^{(j)}(t_j))$, $f$ is area-nonincreasing, and ${\bf M}_h(\Psi_{v}^{(j)}(t_j))\rightarrow {\bf M}_h(V)=4\pi$, it follows that $t_j\rightarrow 0$ and  $\Phi_{v}^{(j)}(t_j)\rightarrow \Sigma_v$ in the ${\bf F}$-metric.  Suppose that there is $x\in \Sigma_v$ such that $f(x)\notin \cup_{i=1}^r\Sigma_i$. The map $f$ is continuous, hence  there is a neighborhood $\tilde{V}$ of $x$ in $S^3$ and $\delta>0$ such that
$d_h(f(\tilde{V}), \cup_{i=1}^r\Sigma_i)\geq \delta$. Since $f$ is area-nonincreasing, it follows that 
$$
{\bf M}_h(|\Psi_{v}^{(j)}(t_j)| \cap \overline{B}_{\delta/2}(\cup_{i=1}^r\Sigma_i))\leq {\bf M}_{\overline{g}}(\Phi_{v}^{(j)}(t_j)\setminus \tilde{V}).
$$
for sufficiently large $j$.
  
  This is a contradiction, since 
$$
\limsup_{j\rightarrow \infty}{\bf M}_{\overline{g}}(\Phi_{v}^{(j)}(t_j)\setminus \tilde{V})
<4\pi
$$
and $|\Psi_{v}^{(j)}(t_j)| \cap \overline{B}_{\delta/2}(\cup_{i=1}^r\Sigma_i)\rightarrow V$ as varifolds.
 
 Therefore $f(\Sigma_v)\subset  \cup_{i=1}^r\Sigma_i$. Suppose $r\geq 2$. Since $\Sigma_v$ is connected, and $f$ is continuous,  $f(\Sigma_v)\subset \Sigma_{\tilde{i}}$ for some 
 $\tilde{i}=1,\dots,r$. Let $\xi>0$ be such that $\Sigma_{\tilde{i}}$ is disjoint from  $B_\xi(\cup_{i\neq \tilde{i}} \Sigma_i)$. The sequence $|\Phi_{v}^{(j)}(t_j)|$ converges to $|\Sigma_v|$ as varifolds.  Hence, for any $\eta>0$,
 ${\bf M}_g(|\Phi_{v}^{(j)}(t_j)| \cap (S^3\setminus B_\eta(\Sigma_v))\leq \eta$ for sufficiently large $j$. Let $\eta>0$
 be such that $f(\overline{B}_\eta(\Sigma_v))$ is disjoint from $\overline{B}_{\xi/2}(\cup_{i\neq \tilde{i}} \Sigma_i)$ and 
 $2\eta \leq \sum_{i\neq \tilde{i}}m_i{\bf M}_h(\Sigma_i)$.
 Then, by the area formula, and since $f$ is area-nonincreasing,
 $$
 {\bf M}_h(|\Psi_{v}^{(j)}(t_j)| \cap \overline{B}_{\xi/2}(\cup_{i\neq \tilde{i}} \Sigma_i))\leq {\bf M}_g(|\Phi_{v}^{(j)}(t_j)| \cap (S^3\setminus B_\eta(\Sigma_v))\leq \eta
 $$
 for sufficiently large $j$. This is a contradiction because
 $$
 |\Psi_{v}^{(j)}(t_j)| \cap \overline{B}_{\xi/2}(\cup_{i\neq \tilde{i}} \Sigma_i)\rightarrow \sum_{i\neq \tilde{i}} m_i |\Sigma_i|
 $$
 as varifolds.
 
 Hence $r=1$, and $V=m\cdot |\Sigma|$, $f(\Sigma_v)\subset \Sigma$.  Let $K=f(\Sigma_v)$. The sequence $\Phi_v(t_{j})$ converges smoothly to $\Sigma_v$. If $\delta>0$, there is $\xi>0$ such that $f(B_\xi(\Sigma_v))\subset B_\delta(K)$. Then ${\bf M}(\Phi_v(t_{j}) \cap (S^3\setminus B_\xi(\Sigma_v))=0$ for sufficiently large $j$, and 
 ${\bf F}(\Phi_v^{(j)}(t_{j}), \Phi_v(t_{j}))\rightarrow 0$, hence
 ${\bf M}(\Phi_v^{(j)}(t_{j}) \cap (S^3\setminus B_\xi(\Sigma_v))\rightarrow 0$. Hence ${\bf M}(\Psi_v^{(j)}(t_{j}) \cap (N\setminus B_\delta(K))\rightarrow 0$, and therefore ${\bf M}(V \cap (N\setminus B_\delta(K))= 0$. Since $V=m\cdot |\Sigma|$, it follows that
 $K=\Sigma$. Hence $f(\Sigma_v)=\Sigma$.

  Let $\tilde{V}\in {\bf C}(\{\Psi_v^{(j)}\})$. By doing as in the previous paragraph, it follows that ${\rm support}(\tilde{V})\subset \Sigma$. Let $\tilde{\Psi}_v^{(j)}$ be the sequence of sweepouts obtained from $\Psi_v^{(j)}$ by pull-tight. Then 
 ${\bf C}(\{\tilde{\Psi}_v^{(j)}\})\subset {\bf C}(\{\Psi_v^{(j)}\})$ and $\tilde{V}$ is stationary if $\tilde{V}\in {\bf C}(\{\tilde{\Psi}_v^{(j)}\})$.
 Suppose $\tilde{V}\in  {\bf C}(\{\tilde{\Psi}_v^{(j)}\})$. Hence $\tilde{V}$ is a stationary varifold with support contained in $\Sigma$
 and ${\bf M}_h(\tilde{V})=4\pi$, for any $\tilde{V}\in  {\bf C}(\{\tilde{\Psi}_v^{(j)}\})$. Since $\Sigma$ is a smooth, closed, embedded, minimal surface, it follows by the constancy theorem for stationary varifolds that ${\bf C}(\{\tilde{\Psi}_v^{(j)}\})=\{m|\Sigma|\}$.

The surface $\Sigma$ is two-sided since $N$ is simply-connected. 
It follows from the index estimates of \cite{marques-neves-index} that ${\rm index}(\Sigma)\leq 1$.  The metric $h$ is not necessarily bumpy, but the deformation theorems of \cite{marques-neves-index} can be applied since the number of elements of ${\bf C}(\{\tilde{\Psi}_v^{(j)}\})$ is one.

This proves that for any $v\in S^3$, $|v|=1$, $f(\Sigma_v)$ is a smooth, closed, two-sided, embedded minimal surface of $(N,h)$ such that
${\rm index}(f(\Sigma_v))\leq 1$ and  $m \, {\bf M}_h(f(\Sigma_v))=4\pi$ for some $m\in \mathbb{N}$ which depends on $v$. Define the stationary varifold $\Gamma(v)= m |f(\Sigma_v)|$ so that ${\bf M}_h(\Gamma(v))=4\pi$. Let $v_i\in S^3$, $v_i\rightarrow v$. Suppose $\{j\}\subset \{i\}$ is a sequence such
that $\Gamma(v_j)\rightarrow V$ as varifolds. Since $f$ is a continuous map, ${\rm support}(V)\subset f(\Sigma_v)$.  It follows by the constancy theorem for stationary varifolds that $V=\Gamma(v)$, since ${\bf M}(V)=4\pi$. This proves that $\Gamma(v_i)\rightarrow \Gamma(v)$ as varifolds. Hence the map $v\mapsto \Gamma(v)$ is continuous in the varifold topology.
 The proof of the Proposition 6.2 of \cite{ambrozio-marques-neves-rigidity} can be used for the set of varifolds $\{\Gamma(v)\}$, since the map $f$ is surjective. 
 
 Hence, if  $v\in S^3$ then there is no closed, embedded, minimal surface disjoint from $f(\Sigma_v)={\rm support}(\Gamma(v))$, and $f(\Sigma_v)$ is part of a local foliation so that the leaves on each of the sides of $f(\Sigma_v)$ have mean curvature vector pointing strictly away from $f(\Sigma_v)$.  Let $\{\Sigma'(t)\}_{t\in [-\eta,\eta]}$ be the foliation, $\Sigma'(0)=f(\Sigma_v)$.  The surface $f(\Sigma_v)$ separates $N$. If $\Sigma'(\eta)=\partial R_\eta$, $R_\eta$ disjoint from $f(\Sigma_v)$,  by min-max there is a sweepout 
 $\Phi_\eta:[0,1]\rightarrow \mathcal{Z}_2(N,\mathbb{Z})$ of $R_\eta$, $\Phi_\eta(0)=\Sigma'(\eta)$, $\Phi_\eta(1)=0$, such that
 $\sup_{t\in [0,1]}{\bf M}_h(\Phi_\eta(t))\leq {\bf M}_h(f(\Sigma_v))$. This uses that $R_\eta$ has strictly mean-convex boundary, and that there is no closed, embedded, minimal surface disjoint from $f(\Sigma_v)$. Similarly, if $\Sigma'(-\eta)=\partial R_{-\eta}$, $R_{-\eta}$ disjoint from 
 $f(\Sigma_v)$, there is a sweepout 
 $\Phi_{-\eta}:[0,1]\rightarrow \mathcal{Z}_2(N,\mathbb{Z})$ of $R_{-\eta}$, $\Phi_{-\eta}(0)=\Sigma'(-\eta)$, $\Phi_{-\eta}(1)=0$, such that
 $\sup_{t\in [0,1]}{\bf M}_h(\Phi_{-\eta}(t))\leq {\bf M}_h(f(\Sigma_v))$.  
 
 By concatenating $\Phi_{-\eta}$, $\{\Sigma'(t)\}_{t\in [-\eta,\eta]}$, and $\Phi_\eta$, after reparametrizing, we obtain a map $\Psi'\in \Gamma^{(1)}(N,\mathbb{Z})$ such that
 $\sup_{t\in [0,1]}{\bf M}_h(\Psi'(t))\leq {\bf M}_h(f(\Sigma_v))$. Hence
 $$
 W^{(k)}(N,h)\leq W^{(1)}(N,h) \leq {\bf M}_h(f(\Sigma_v)).
 $$
Since $W^{(k)}(N,h)=4\pi$, it follows that $\Gamma(v)$ has multiplicity one and $\Gamma(v)=|f(\Sigma_v)|$ for any $v\in S^3$.

 The proofs of Proposition 4.10 and Theorem 4.11 of 
 \cite{marques-neves-lower-bound} imply that there is an orientation of $f(\Sigma_v)$ such that  $f_{\#}(\Sigma_v)=f(\Sigma_v)$. 

  By definition, if $\omega$ is a smooth differential two-form on $N$,
 $$
 f_{\#}(\Sigma_v)(\omega)=\int_{\Sigma_v} \langle \omega(f(x)),  (df_x)_{\#}\xi(x)\rangle d\Sigma_v
 $$
 where $\xi(x)$ is the orientation of $\Sigma_v$ at $x$. Since $f_{|\Sigma_v}$ is Lipschitz, the  map $d(f_{|\Sigma_v})_x$ is well-defined for a.e. $x\in \Sigma_v$.

 Let $\tilde{\omega}$ be such that $||\tilde{\omega}||\leq 1$ and $\tilde{\omega}_{|\Sigma}$ is the area form of $\Sigma$ for the metric $h$, where $\Sigma=f(\Sigma_v)$.
 Then $\Sigma(\tilde{\omega})=\int_\Sigma \tilde{\omega}= {\bf M}_h(\Sigma)=4\pi$. Since $f_{\#}(\Sigma_v)=\Sigma$, it follows that
 $$
 \int_{\Sigma_v} \langle \tilde{\omega}(f(x)),  (df_x)_{\#}\xi(x)\rangle d\Sigma_v=4\pi.
 $$
 
 Let $\mu_{\overline{g}}$ be the volume measure of $(S^3,\overline{g})$. Since $f:S^3 \rightarrow N$ is a  Lipschitz area-nonincreasing map, there is $\Omega \subset S^3$, $\mu_{\overline{g}}(S^3\setminus \Omega)=0$, such that  $f$ is differentiable at $x$ and 
 $df_x:T_xS^3\rightarrow T_{f(x)}N$ is area-nonincreasing  for any  $x\in \Omega$.
 Therefore, for  a.e. $v$, $|v|=1$, $x\in \Omega$  for a.e. $x\in \Sigma_v$.
Hence 
 $$
 \langle \tilde{\omega}(f(x)),  (df_x)_{\#}\xi(x)\rangle\leq |\langle \tilde{\omega}(f(x)),  (df_x)_{\#}\xi(x)\rangle| \leq 1
 $$
 for a.e.  $v$, $|v|=1$, and for a.e. $x\in \Sigma_v$. Since ${\bf M}_{\overline{g}}(\Sigma_v)=4\pi$, it follows that 
 $$
 \langle \tilde{\omega}(f(x)),  (df_x)_{\#}\xi(x)\rangle= 1
 $$
  for a.e.  $v$, $|v|=1$, and for a.e. $x\in \Sigma_v$.

This proves that $df_x:T_x\Sigma_v\rightarrow T_{f(x)}\Sigma$ is area-preserving and orientation-preserving for a.e.  $v$, $|v|=1$, and for a.e. $x\in \Sigma_v$.

By the coarea formula, for a.e. $(x,\sigma)$ in the two-dimensional Grassmann bundle of $S^3$, 
$\sigma\subset T_xS^3$, if $\Sigma_\sigma$ is the equatorial sphere with $T_x\Sigma_\sigma=\sigma$, the map $f_{|\Sigma_\sigma}$ is differentiable at $x$ and $d(f_{|\Sigma_\sigma}):T_x\Sigma_\sigma=\sigma \rightarrow T_{f(x)}S^3$ is area-preserving. Let $\Lambda$ be the set of such $(x,\sigma)$. If $x\in \Omega$, let $\Omega_x$ be the set of two-dimensional planes $\sigma\subset T_xS^3$ such that $(x,\sigma)\in \Lambda$.
Hence there is $\Omega'\subset \Omega$, $\mu_{\overline{g}}(\Omega\setminus \Omega')=0$,  such that if $x\in \Omega'$ then $(x,\sigma)\in \Lambda$ for a.e. $\sigma \subset T_xS^3$.

  Hence, if $x\in \Omega'$, $(df_x)_{|\sigma}$ is area-preserving for a.e. $\sigma\subset T_xS^3$. Since $df_x$ is a linear map, it follows that $df_x:T_xS^3\rightarrow T_{f(x)}N$ is area-preserving. If $x\in \Omega'$, there is a $\overline{g}$-orthonormal basis 
  $\{e_{\overline{g},1},e_{\overline{g},2},e_{\overline{g},3}\}\subset T_xS^3$  such that $df_x(e_{\overline{g},i})=\lambda_i e_{h,i}$, 
  $\lambda_i\geq 0$, $\{e_{h,1}, e_{h,2}, e_{h,3}\}$ an $h$-orthonormal basis of $T_{f(x)}N$. 
Then $\lambda_i\lambda_j=1$ for $i\neq j$, and hence $\lambda_i=1$ for any $1\leq i\leq 3$. Therefore
$df_x:T_xS^3\rightarrow T_{f(x)}N$ is length-preserving for any $x\in \Omega'$.

  We claim that the map $f:S^3\rightarrow N$ is a homeomorphism.
 Let $p,q\in S^3$, $p\neq q$.  Let $v,w\in S^3$, $\langle v,w\rangle =0$, be such that $v,w$ are orthogonal to $p,q$. Define $v(t)=(\cos t) v + (\sin t) w$, for $t\in [0,\pi]$. Then $p,q\in \Sigma_{v(t)}$ for any $t\in [0,\pi]$. The set $\Omega'$ is such that $\mu_{\overline{g}}(S^3\setminus \Omega')=0$. Hence, if $\mathcal{H}^2$ denotes the two-dimensional Hausdorff measure, it follows that   $\mathcal{H}^2(\Sigma_{v(t)}\setminus \Omega') =0$ for a.e. $t\in [0,\pi]$. Let $\tilde{t}\in [0,\pi]$ be such that $\mathcal{H}^2(\Sigma_{v(\tilde{t})}\setminus \Omega') =0$. It follows from the previous paragraphs that there is a two-sided surface $\Sigma\subset N$ with ${\bf M}_h(\Sigma)=4\pi$ so that $f:\Sigma_{v(\tilde{t})}\rightarrow \Sigma$ is such that $df_x:T_x\Sigma_{v(\tilde{t})}\rightarrow T_{f(x)}\Sigma$ is length-preserving and orientation-preserving for a.e. $x\in \Sigma_{v(\tilde{t})}$.

 Hence the map $f_{|\Sigma_{v(\tilde{t})}}$ satisfies the Cauchy-Riemann equations a.e. in local conformal charts for $\Sigma_{v(\tilde{t})}$ and $\Sigma$.  This implies $f:\Sigma_{v(\tilde{t})}\rightarrow \Sigma$ is smooth and hence a local Riemannian isometry. The surface $(\Sigma_{v(\tilde{t})}, \overline{g}_{|\Sigma_{v(\tilde{t})}})$
 is isometric to the unit sphere, and therefore $(\Sigma,h_{|\Sigma})$ has constant curvature one.  Since ${\rm area}(\Sigma,h_{|\Sigma})={\bf M}_h(\Sigma)=4\pi$, the map $f:\Sigma_{v(\tilde{t})}\rightarrow \Sigma$ is an isometry.  This proves that $f(p)\neq f(q)$.

 This implies that  $f:S^3\rightarrow N$ is injective and hence a homeomorphism.
 Since $df_x:T_xS^3\rightarrow T_{f(x)}N$ is length-preserving for a.e. $x\in S^3$, $f$ is a metric isometry by Proposition 2.14 of \cite{cecchini-hanke-schick}. This implies that $f$ is a smooth Riemannian isometry in the case $N$ is simply-connected.

 In the case $N$ is not simply-connected, 
let $\pi:\tilde{N}\rightarrow N$ be the universal cover of $N$. The map $f$ lifts to a map $\tilde{f}:(S^3,\overline{g})\rightarrow (\tilde{N},\pi^*(h))$, $\pi \circ \tilde{f}=f$.
 If $\tilde{N}$ is noncompact, $H_3(\tilde{N},\mathbb{Z})=0$  and hence $\tilde{f}_{\#}(S^3)=\partial R$. This implies $k\cdot [N]=f_{\#}(S^3)=\partial(\pi_{\#}(R))$, which is a contradiction since $k\neq 0$. Therefore $\tilde{N}$ is compact.

 The map $\tilde{f}$ has nontrivial degree because ${\rm deg}(f)={\rm deg}(\pi)\cdot {\rm deg}(\tilde{f})$.
The map $\pi$ is a local isometry, and $\pi_{\#}([\tilde{N}])={\rm deg}(\pi)\cdot [N]$. This implies that 
$W^{{\rm deg}(f)}(N,h)\leq W^{{\rm deg}(\tilde{f})}(\tilde{N},\pi^*(h))$. Hence the map $\tilde{f}:(S^3,\overline{g})\rightarrow (\tilde{N},\pi^*(h))$ is  area-nonincreasing  with nontrivial degree, and 
$$
W^{{\rm deg}(\tilde{f})}(\tilde{N},\pi^*(h))\geq 4\pi.
$$

 It follows from the  simply-connected case  that $\tilde{f}$ is a Riemannian isometry.  Hence $(N,h)$ is isometric to a spherical space form.
 As before, it follows that either $(N,h)$ is isometric to $(S^3,\overline{g})$ or $W^{(1)}(N,h)<4\pi$.  Since $N$ is not simply-connected,  $W^{(1)}(N,h)<4\pi$ which  is a contradiction.  This finishes the proof of the theorem.

 \end{proof}

   \begin{thm}\label{rigidity.surfaces}
Let $(M^3,g)$ be a closed, oriented, Riemannian manifold  such that  $R_g\geq 6$. Suppose     $(N^3,h)$ is a closed,  oriented, Riemannian manifold such that if $\Sigma$ is a  closed,  embedded, minimal surface in $(N,h)$,  then  ${\rm area}_h(\Sigma)\geq4\pi$ if $\Sigma$ is two-sided and ${\rm area}_h(\Sigma)\geq 2\pi$ if   $\Sigma$ is one-sided. Suppose $f:(M,g)\rightarrow (N,h)$ is a Lipschitz  area-nonincreasing  map such that  ${\rm deg}(f)$ is nontrivial. Then $(M,g)$ is isometric to the unit sphere and  $f$ is a smooth Riemannian  isometry.
  \end{thm}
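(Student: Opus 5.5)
The plan is to reduce Theorem \ref{rigidity.surfaces} to Theorem \ref{scalar-rigidity.2}. That requires checking two hypotheses on $(N,h)$:
\begin{itemize}
\item[(a)] there is no closed, two-sided, embedded, stable minimal surface of area at most $4\pi/3+\rho$ for some $\rho>0$;
\item[(b)] $W^{(k)}(N,h)\geq 4\pi$ for $k={\rm deg}(f)$.
\end{itemize}
Condition (a) is immediate. Any closed embedded two-sided minimal surface has area at least $4\pi$, so (a) holds with, for instance, $\rho=\pi$.

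The substance is (b), and I would obtain it from min-max. Let $h_i$ be bumpy metrics converging smoothly to $h$. I would apply the min-max for $W^{(k)}(N,h_i)$, namely Theorem 1.7 of \cite{marques-neves-index} together with the multiplicity-one result of \cite{zhou-multiplicity}, and pass to the limit as in Propositions \ref{index.one.boundary} and \ref{index.width}. This gives
$$
W^{(k)}(N,h)=\sum_j m_j\,{\rm area}_h(\Sigma_j),
$$
where the $\Sigma_j$ form a nonempty disjoint collection of closed embedded minimal surfaces with positive integer multiplicities $m_j$. The limit argument uses upper semicontinuity of the widths via Lipschitz dependence on the metric, and $W^{(k)}>0$ by the isoperimetric inequality since $k\neq 0$.

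It then suffices to see that each term $m_j\,{\rm area}_h(\Sigma_j)$ is at least $4\pi$. If $\Sigma_j$ is two-sided, then ${\rm area}_h(\Sigma_j)\geq 4\pi$ by hypothesis. If $\Sigma_j$ is one-sided, the integral current coefficients force its multiplicity to be even. This is the same fact used in the proof of Proposition \ref{area.surface}, where a one-sided component appearing in a $\mathbb{Z}$-min-max limit carries even multiplicity. Hence $m_j\geq 2$, and with ${\rm area}_h(\Sigma_j)\geq 2\pi$ the term is at least $4\pi$. So $W^{(k)}(N,h)\geq 4\pi$.

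The main obstacle is making this lower bound survive without assuming $h$ is bumpy. Limits of the $h_i$-min-max surfaces may lose multiplicity one or two-sidedness, so I need to track that a sequence of two-sided $h_i$-surfaces can converge to a one-sided $h$-surface only with even multiplicity. Lower semicontinuity is not an issue, since only varifold convergence and mass continuity are needed for the inequality $\geq$. Alternatively, I could avoid the limit entirely by invoking the general Almgren--Pitts regularity for $W^{(k)}(N,h)$ directly, since only the structure of the critical varifold and the parity of one-sided multiplicities are required.

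With (a) and (b) verified, Theorem \ref{scalar-rigidity.2} applies and shows that $(M,g)$ is isometric to the unit sphere and $f$ is a smooth Riemannian isometry.
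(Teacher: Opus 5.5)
Your proposal is correct and is essentially the paper's argument: reduce to Theorem \ref{scalar-rigidity.2}, note that (a) holds for any $\rho<8\pi/3$, and obtain (b) from min-max together with the fact that a one-sided component carries multiplicity at least two. The paper phrases (b) by contradiction, taking the connected two-sided index-one $h_i$-surface from Proposition \ref{index.width} with ${\rm area}_{h_i}\leq W^{(k)}(N,h_i)$ and passing to the limit via \cite{sharp}, instead of bounding every term of $\sum_j m_j\,{\rm area}_h(\Sigma_j)$. One caveat: \cite{zhou-multiplicity} is a $\mathbb{Z}_2$ result and does not give two-sided multiplicity-one critical varifolds for the integral classes $\Gamma^{(k)}(N)$, so your bumpy-metric route would still need the even-multiplicity fact for one-sided components already at the $h_i$ level, which makes your direct Almgren--Pitts variant the cleaner version of your argument.
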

  
  \begin{proof}
  Suppose $W^{(k)}(N,h)<4\pi$, where $k={\rm deg}(f)$. Let $h_i$ be a sequence of bumpy metrics on $N$ converging smoothly to $h$.
  By Proposition \ref{index.width}, there is a closed, embedded, connected, two-sided, with Morse index one, $h_i$-minimal hypersurface $\Sigma_i$
  such that ${\rm area}_{h_i}(\Sigma_i)\leq W^{(k)}(N,h_i)$. The varifolds $|\Sigma_i|$ by \cite{sharp} converge to $m |\Sigma|$ where $\Sigma$ is a closed, embedded, minimal surface in $(N,h)$ and $m\in \mathbb{N}$.  Since $W^{(k)}(N,h_i)\rightarrow W^{(k)}(N,h)<4\pi$,
  it follows that ${\rm area}_h(\Sigma)< 4\pi$. If $\Sigma$ is one-sided, then $m\geq 2$ and hence ${\rm area}(\Sigma)<2\pi$. This is a contradiction and therefore $W^{(k)}(N,h)\geq 4\pi$. The hypotheses of Theorem \ref{scalar-rigidity.2} are satisfied with $\rho <4\pi-4\pi/3$.
  \end{proof}
  
  The hypothesis of Theorem \ref{rigidity.surfaces} can be phrased as $\mathcal{A}_1(N)\geq 4\pi$ (\cite{mazet-rosenberg}).

  \section{Homology and incompressible surfaces}\label{homology.incompressible}

Let $[\Sigma]$ be the integer homology class of a closed hypersurface $\Sigma \subset M$.

\begin{thm}\label{stable}
Let $M^{n+1}$, $(n+1)\geq 3$, be an oriented  closed manifold satisfying that the cup product 
$$
\smile\,: H^1(M,\mathbb{Z})\times H^1(M,\mathbb{Z})\rightarrow H^2(M,\mathbb{Z})
$$
is nontrivial, in the sense that there are cohomology classes $\alpha,\beta\in H^1(M,\mathbb{Z})$ such that $\alpha \smile \beta \neq 0$.
Then, for any smooth Riemannian metric $g$ on $M$, there is a sequence of homologically area-minimizing, two-sided,   smoothly embedded outside a set of codimension seven, connected closed  minimal 
hypersurfaces  $\Sigma_i\subset (M,g)$, with  $[\Sigma_i]\neq [\Sigma_j]$ for $i\neq j$, and ${\rm vol}_g(\Sigma_i)\rightarrow \infty$. 
\end{thm}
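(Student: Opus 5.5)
We will produce infinitely many distinct integral classes in $H_n(M,\mathbb{Z})$, each represented by a connected, homologically area-minimizing hypersurface, and show that their masses must blow up. Let $a=PD(\alpha)$, $b=PD(\beta)\in H_n(M,\mathbb{Z})$. The hypothesis $\alpha\smile\beta\neq 0$ means the intersection class $a\cdot b\in H_{n-1}(M,\mathbb{Z})$ is nonzero. In particular $a,b$ are not torsion and are not proportional rationally in a way that kills the product. We would consider the classes $\sigma_i=a+ib$ (or $ia+b$). These are pairwise distinct, since otherwise $b$ would be torsion, contradicting $a\cdot b\neq 0$.

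\textbf{Step 1: minimizers.} In each class $\sigma_i$ we minimize mass among integral currents (Federer--Fleming). The minimizer $T_i$ is a sum $\sum m_k\Sigma_k$ of disjoint, connected, two-sided hypersurfaces, smooth outside a singular set of codimension at least seven, and each component is itself homologically minimizing.

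\textbf{Step 2: connectedness.} We need a connected representative whose class is still new. The plan is to use the cup product to find a primitive, indivisible class. We pass from $\alpha,\beta$ to primitive classes and choose $\sigma_i=a+ib$ with $\gcd$ conditions that make $\sigma_i$ indivisible, for instance with $a$ primitive and $b$ independent of it. Decomposing $T_i=\sum m_k\Sigma_k$, some component $\Sigma_k$ has a class $[\Sigma_k]$ whose intersection with $b$ (respectively $a$) is nonzero; this follows because $\sigma_i\cdot a=i\,(b\cdot a)\neq 0$ grows with $i$. We then select the component $\Sigma^{(i)}$ with $[\Sigma^{(i)}]\cdot a\neq0$ of maximal size. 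It is homologically minimizing, two-sided and connected.

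\textbf{Step 3: distinctness and unbounded volume.} This is the main obstacle: ensuring that infinitely many of the chosen components represent distinct classes. We argue by contradiction. If only finitely many classes occurred among connected minimizing components, then every $T_i$ would be an integer combination of finitely many fixed classes $c_1,\dots,c_N$ with nonnegative coefficients and disjoint supports. Pairwise disjointness of components forces $c_r\cdot c_s=0$ in $H_{n-1}$ whenever they appear together. Writing $\sigma_i\cdot\sigma_i$-type pairings against $a$ and $b$ then gives a contradiction. Concretely, $\sigma_i\cdot a=i(b\cdot a)$ and $\sigma_i\cdot b=a\cdot b$, while disjoint supports and a finite set of classes make the ratio structure impossible: the $c_r$ appearing in one $T_i$ span at most a subspace on which the intersection pairing vanishes, yet $\sigma_i$ and $\sigma_{i'}$ for different $i$ pair nontrivially. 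Here we use $\sigma_i\cdot\sigma_{i'}=(i'-i)\,a\cdot b\neq0$, so the relevant classes cannot all be realized disjointly.

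Once infinitely many distinct classes $[\Sigma_i]$ are obtained, the volumes diverge. By compactness of integral currents with bounded mass (Federer--Fleming), a bounded-mass family of integral cycles lies in finitely many homology classes, so ${\rm vol}_g(\Sigma_i)\rightarrow\infty$ along a subsequence. The delicate point I expect is making Step 2 and Step 3 compatible, i.e., choosing the component with controlled pairing so that distinct $i$ genuinely yield distinct classes. I would first try pairing with a fixed loop dual to $a\cdot b$ to read off an integer invariant that grows with $i$.
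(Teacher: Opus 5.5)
Your Step 3 is essentially the paper's argument. The paper also assumes that the set $\Lambda$ of classes carried by connected, two-sided, homologically minimizing hypersurfaces is finite. It uses that the components of a minimizer are disjoint, so the Poincar\'e duals of classes occurring together span a subspace $V_h$ on which the cup product vanishes. It gets a contradiction from a class lying outside the finitely many $V_h$, and ends, as you do, with Federer--Fleming compactness.

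The only real difference is how that class is produced. The paper notes that each $V_h$ is a proper subspace of $H^1(M,\mathbb{R})$, so their finite union is not everything, and picks a rational (then integral) point outside it. You instead use the explicit line $\sigma_i=a+ib$ and the identity $\sigma_i\cdot\sigma_{i'}=(i'-i)\,a\cdot b$, which shows that an isotropic span contains at most one $\sigma_i$. Your version is more concrete and avoids the density step; the paper's shows that any integral class off $\bigcup_h V_h$ works.

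As written, you should drop Step 2 and make the counting in Step 3 explicit. Step 2's selection of a ``maximal'' component is not needed and does not by itself give distinct classes. The counting goes as follows. Suppose $\Lambda=\{c_1,\dots,c_N\}$. The minimizer $T_i$ of $\sigma_i$ splits into components whose classes form a subset $h_i\subset\Lambda$. The $\mathbb{Z}$-span of $h_i$ contains $\sigma_i$ and is isotropic, by pairwise disjointness together with $c\cdot c=0$ for classes dual to degree-one integral classes. There are at most $2^N$ such subsets, and each isotropic span contains at most one $\sigma_i$, which contradicts having infinitely many $\sigma_i$. Your phrase ``the relevant classes cannot all be realized disjointly'' does not yet say this. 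Different $T_i$ may use different disjoint families, and the contradiction only appears once two distinct $\sigma_i$ are forced into the same span.

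Two smaller points. First, writing $T_i=\sum m_k\Sigma_k$ with constant multiplicity on each connected component of the support uses that each component has connected regular part; the paper invokes Ilmanen's strong maximum principle for this. Second, $(i'-i)\,a\cdot b\neq 0$ for all $i\neq i'$ needs $a\cdot b$ to have infinite order. This does not follow from $\alpha\smile\beta\neq 0$ in $H^2(M,\mathbb{Z})$ alone. The paper's proof relies on the same thing when it asserts that the cup product on $H^1(M,\mathbb{R})$ is nontrivial.
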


The Theorem  \ref{stable} can be applied to a manifold that is diffeomorphic to  a product $\Sigma \times \Sigma'$ of closed orientable surfaces $\Sigma,\Sigma'$, if $\Sigma$ or $\Sigma'$ is not a two-sphere. The product metric $\overline{g}+ \overline{g}$ on $S^2 \times S^2$, where $\overline{g}$ is the constant curvature one metric, has positive bi-Ricci curvature.  By \cite{chu-lee-zhu}, smooth closed two-sided stable minimal hypersurfaces have bounded area for metrics in a neighborhood of $\overline{g}+ \overline{g}$.

The product metric $dt^2+\overline{g}$ on $S^1\times S^n$ has positive bi-Ricci curvature for $n\geq 2$, hence a connected sum  
$M$ of $i$ copies of $S^1\times S^n$ has a  metric $g$ of positive bi-Ricci curvature by \cite{hoelzel, shen-ye}. If  $\alpha,\beta \in H^1(M,\mathbb{Z})=\mathbb{Z}^i$, then $\alpha \smile \beta=0$. Suppose $3\leq (n+1)\leq 7$.  The two-sided stable closed  minimal hypersurfaces for the metric $g$ have bounded area by \cite{chu-lee-zhu}.

\begin{proof}

Let $\Lambda$ be the set of homology classes $\sigma\in H_n(M,\mathbb{Z})$ such that there is a  homologically area-minimizing, connected, closed,  two-sided, smoothly embedded outside a set of codimension seven, multiplicity one, minimal 
hypersurface $\Sigma$ with $[\Sigma]=\sigma$.

  Suppose that the set $\Lambda$ is finite. Let $\Lambda=\{\sigma_1,\dots,\sigma_k\}$. Define $\tilde{\Lambda}$ to be the set of 
 $h=\{\sigma_{i_1},\dots,\sigma_{i_j}\}\subset \Lambda$  such that there is $\Sigma_{i_r}$, area-minimizing, connected, closed,  two-sided, smoothly embedded outside a set of codimension seven, multiplicity one, minimal 
hypersurface, with $[\Sigma_{i_r}]=\sigma_{i_r}$, $1\leq r\leq j$, such that $\{\Sigma_{i_1},\dots, \Sigma_{i_j}\}$ is mutually disjoint.
Let $\alpha_i\in H^1(M,\mathbb{Z})$ be the Poincar\'{e} dual of $\sigma_i$, $1\leq i\leq k$.

The cup product is dual to the intersection of homology classes. Hence, if $h=\{\sigma_{i_1},\dots,\sigma_{i_j}\}\in \tilde{\Lambda}$ then
$\alpha_{i_r}\smile \alpha_{i_{r'}}=0$ for any $1\leq r,r'\leq j$. This is because the cup product of  cohomology classes of degree one is antisymmetric and   $\{\Sigma_{i_1},\dots, \Sigma_{i_j}\}$ is mutually disjoint.  Define $V_h\subset H^1(M,\mathbb{R})$ to be the real vector space
$$
V_h=\{\sum_{r=1}^j v_r \alpha_{i_r}: v_r\in \mathbb{R}\}.
$$
Then, for any $\alpha,\beta \in V_h$,  $\alpha \smile \beta =0$. Since the cup product on $H^1(M,\mathbb{R})$ is nontrivial,  it follows that
$V_h \neq H^1(M,\mathbb{R})$.

Hence, since the set $\tilde{\Lambda}$ is finite and $V_h$ is a vector space for $h\in \tilde{\Lambda}$, 
$$
(\cup_{h\in \tilde{\Lambda}} V_h) \neq H^1(M,\mathbb{R}).
$$
If $H^1(M,\mathbb{R})=\mathbb{R}^c$, $c=b_1(M,\mathbb{Z})$,  there is $(r_1,\dots,r_c) \in H^1(M,\mathbb{R})\setminus (\cup_{h\in \tilde{\Lambda}} V_h)$. By density, we can suppose that the numbers $r_i$ are rational since $\tilde{\Lambda}$ is finite. 
It follows that there is $q\in \mathbb{N}$ such that $q \cdot (r_1,\dots,r_c) \in H^1(M,\mathbb{Z})\setminus (\cup_{h\in \tilde{\Lambda}} V_h)$. Let $\sigma\in H_n(M,\mathbb{Z})$ be the Poincar\'{e} dual of $q \cdot (r_1,\dots,r_c)$.

By area-minimization, there is $T\in \sigma$ an integral current such that ${\bf M}(T)\leq {\bf M}(S)$ for any integral current $S\in \sigma$. The support ${\rm spt}(T)$ is a closed set that is a smoothly embedded outside a set of codimension seven minimal hypersurface.  Let $\{\Sigma_1,\dots,\Sigma_q\}$ be the connected components of ${\rm spt}(T)$. By \cite{ilmanen}, the regular set of $\Sigma_{r}$ is connected for $1\leq r\leq q$. It follows by the constancy theorem for integral cycles that
$$
T=m_1\Sigma_1+\dots + m_q\Sigma_q,
$$
where $\{m_1,\dots,m_q\}\subset \mathbb{Z}$, and that $\Sigma_r$ is  homologically area-minimizing and two-sided.  Therefore $[\Sigma_r]\in \Lambda$ for any $1\leq r \leq q$, and $h=\{[\Sigma_1],\dots,[\Sigma_q]\}\in \tilde{\Lambda}$
since $\{\Sigma_1,\dots,\Sigma_q\}$ is mutually disjoint. It follows that $q \cdot (r_1,\dots,r_c)\in V_h$, which is a contradiction.

Hence the set $\Lambda$ is infinite, and let $\{\sigma_i\}\subset \Lambda$ be a sequence with $\sigma_i\neq \sigma_j$ for $i\neq j$.
It follows by the definition  that  there is a connected,  two-sided, homologically area-minimizing,   smoothly embedded outside a set of codimension seven, closed  minimal 
hypersurface $\Sigma_i$ with $[\Sigma_i]=\sigma_i$ for any $i$.

 
 Suppose that there is $\lambda>0$ such that ${\bf M}_g(\Sigma_i)\leq \lambda$ for any $i$. Then, by compactness of integral currents with mass bounds,  there is a sequence $\{j\}\subset \{i\}$ such that the hypersurfaces $\Sigma_j$ converge in the flat topology. This is a contradiction since  $[\Sigma_j]\neq [\Sigma_{j'}]$ for $j\neq j'$. This  finishes the proof of the theorem.
 

\end{proof}

\begin{thm}\label{cohomology.index}
Let $M^{n+1}$, $3\leq (n+1)\leq 7$, be a closed manifold such that the cup product 
$$
\smile\,: H^1(M,\mathbb{Z})\times H^1(M,\mathbb{Z})\rightarrow H^2(M,\mathbb{Z})
$$
is nontrivial. Then, for any smooth bumpy Riemannian metric $g$ on $M$, there is a sequence of smooth, two-sided,  connected closed embedded minimal 
hypersurfaces  $\Sigma_i\subset (M,g)$, of Morse index one, such that  ${\rm vol}_g(\Sigma_i)\rightarrow \infty$. 
\end{thm}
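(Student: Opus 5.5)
Combine Theorem \ref{stable} with Proposition \ref{stable.index}. The cup product hypothesis forces $b_1(M)\geq 1$, and the argument uses Poincar\'e duality, so first reduce to $M$ orientable. If $M$ is not orientable, I would pass to the orientation double cover $\hat M$: the pullback of a nontrivial cup product should remain nontrivial (with rational coefficients, via transfer). Embedded index one hypersurfaces in $\hat M$ project to immersed ones, which only gives virtual embeddedness, so in this case I would instead run the argument directly with the homology classes of two-sided hypersurfaces. The main case is $M$ oriented.

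Since $3\leq (n+1)\leq 7$, Theorem \ref{stable} gives a sequence of connected, two-sided, smooth, homologically area-minimizing closed hypersurfaces $\tilde\Sigma_i$ with pairwise distinct homology classes and ${\rm vol}_g(\tilde\Sigma_i)\rightarrow\infty$. Each $\tilde\Sigma_i$ is stable and two-sided.

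Because $g$ is bumpy, the hypothesis of Proposition \ref{index.one.boundary} holds with $\Omega=M$ and $\partial\Omega=\emptyset$. Applying Proposition \ref{stable.index} with $\Sigma'=\tilde\Sigma_i$ then gives a smooth, connected, embedded, two-sided closed minimal hypersurface $\Sigma_i$ of Morse index one with
$$
{\rm vol}_g(\Sigma_i)>{\rm vol}_g(\tilde\Sigma_i)\rightarrow\infty .
$$
This already proves the theorem; passing to a subsequence, the $\Sigma_i$ are distinct.

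The main obstacle is checking that Proposition \ref{stable.index} applies in the closed case. In its proof, the decomposition along the maximal disjoint family $\tilde\Lambda$ must produce a component $\tilde\Omega_j$ having $\Sigma'$ as a boundary component. I expect this to hold once we note that $\Sigma'$ is homologically nontrivial, hence nonseparating, so cutting along it yields a domain with $\Sigma'$ on its boundary.

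A second point is that the proof uses strict stability, which follows from bumpiness. If $\tilde\Sigma_i$ is merely stable and degenerate, bumpiness excludes that case.
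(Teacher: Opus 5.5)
Your proposal is correct and is the paper's proof: Theorem \ref{stable} supplies connected, two-sided, homologically area-minimizing (hence stable) hypersurfaces $\Sigma_i'$ with ${\rm vol}_g(\Sigma_i')\rightarrow\infty$, and Proposition \ref{stable.index} applied to each $\Sigma_i'$ gives a connected, two-sided, index-one hypersurface of larger volume. Two of your side points are not needed. Proposition \ref{stable.index} already covers closed $\Omega$ and puts $\Sigma'$ into $\tilde{\Lambda}$, so $\Sigma'$ lies on the boundary of an adjacent $\tilde{\Omega}_j$ whether or not it separates; and the paper, like you, tacitly works in the oriented setting of Theorem \ref{stable} without treating the non-orientable case.
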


\begin{proof}
By Theorem \ref{stable},  there is a sequence of smooth, two-sided, homologically area-minimizing, connected closed embedded minimal 
hypersurfaces  $\Sigma_i'\subset (M,g)$, such that   ${\rm vol}_g(\Sigma_i')\rightarrow \infty$. 

The theorem is proved by using the Proposition \ref{stable.index}, since the hypersurfaces $\Sigma_i'$ are stable.
\end{proof}

An interesting case is that of torus bundles with hyperbolic monodromy. The gluing map $\phi:T^n\rightarrow T^n$ induces an isomorphism
$\phi_*\in GL(n,\mathbb{Z})$ of $H_1(T^n)=\mathbb{Z}^n$, the monodromy of $\phi$,  and
$$
b_1(M_\phi)=1 + {\rm dim} \, {\rm Ker}(\phi_*-I).
$$ 
Hence $b_1(M_\phi)=1$ if $\phi_*$ has no eigenvalue equal to 1. 

 If the linear map  $\phi_*$ is  hyperbolic, in the sense that it has no complex eigenvalue with modulus 1, then 1 is not an eigenvalue of  $(\phi_*)^i$ for any $i\in \mathbb{Z}$, $i\neq 0$.
  Since a finite covering of $M_\phi$ can be covered by a torus bundle with monodromy $(\phi_*)^i$ for an integer $i\neq 0$, the manifold
$M_\phi$ has first virtual Betti number $vb_1(M_\phi)=1$. This suggests that at most one stable closed minimal hypersurface modulo coverings can be produced by area minimization in homology classes.

We discuss the case of stable minimal surfaces in three-manifolds.
  
\begin{prop}\label{minimal.spheres}
Let $(M,g)$ be a closed orientable three-dimensional Riemannian manifold. There is a disjoint  finite set $\tilde{\Lambda}$ of surfaces in $M$ such that  a surface $\Sigma\in \tilde{\Lambda}$ is either a stable minimal embedded sphere or a minimal embedded projective plane with stable two-sided cover, and if $\Omega$ is a connected component of $M \setminus (\cup_{\Sigma\in \tilde{\Lambda}}\Sigma)$ then the closed manifold $\tilde{\Omega}_1$ obtained by gluing three-balls to the metric closure $\tilde{\Omega}$ of $\Omega$ with boundary identification is irreducible.
\end{prop}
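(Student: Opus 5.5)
The plan is to repeat the first half of the proof of Theorem \ref{index.surfaces}, this time without assuming that $g$ is bumpy. The finiteness of the relevant stable surfaces must then come from somewhere other than nondegeneracy.

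\textbf{Step 1: choose the disjoint family.} Let $\Lambda$ be the set of stable embedded minimal two-spheres, together with embedded minimal projective planes whose oriented two-cover is stable. Every such surface, or its double cover, is a stable sphere. Stable minimal spheres have curvature and area bounds. The area bound comes either from Schoen's estimates combined with the Gauss--Bonnet stability argument, or from the fact that a stable sphere must be area-controlled by $\sup |{\rm Ric}|$ through Hersch's argument.

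I do not want to prove that $\Lambda$ is finite, since that fails for degenerate metrics. Instead I will pick a maximal disjoint subfamily $\tilde{\Lambda}$ and show it can be taken finite. Two disjoint members of $\Lambda$ are separated by a definite distance unless they are close as graphs. Close graphical stable spheres are parallel, so they cobound a product region $S^2\times[0,1]$ or a twisted $\mathbb{RP}^2$-bundle region. Discarding one surface from each such pair does not change the topology of the complementary pieces after capping, because gluing balls to $S^2\times[0,1]$ only creates $S^3$ summands.

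So I will choose $\tilde{\Lambda}$ maximal among disjoint families in which no two spheres are parallel, that is, no two cobound an $S^2\times I$. By Kneser--Haken finiteness there is a uniform bound on the number of pairwise disjoint, nonparallel, essential spheres and projective planes in $M$. Boundary-parallel trivial spheres can also be excluded from the count. Together these give finiteness of $\tilde{\Lambda}$.

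\textbf{Step 2: irreducibility of each capped piece.} Let $\Omega$ be a component of the complement and let $\Sigma\subset{\rm int}(\tilde{\Omega})$ be an embedded two-sphere. I will minimize area in the isotopy class $\mathcal{I}(\Sigma)$ in $\tilde{\Omega}$ using \cite{meeks-simon-yau}. Here $\partial\tilde{\Omega}$ consists of stable minimal surfaces and acts as a barrier (after a small perturbation making it mean-convex if needed). After $\gamma$-reductions and isotopy, a minimizing sequence becomes $\Gamma_i'\cup(\cup_k\Gamma_{i,k})$, where the pieces are parallel copies of limit minimal spheres or projective planes with stable double cover.

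Any limit surface lying in ${\rm int}(\tilde{\Omega})$ is disjoint from $\cup_{\tilde{\Lambda}}$, after a maximum-principle argument. By maximality it must therefore be parallel to a component of $\partial\tilde{\Omega}$, or bound a ball. Hence every component of the reduced surface bounds a ball in the capped manifold $\tilde{\Omega}_1$. The topological Claim in the proof of Theorem \ref{index.surfaces}, applied inductively back through the $\gamma$-reductions, shows that $\Sigma$ bounds a ball in $\tilde{\Omega}_1$. Every sphere in $\tilde{\Omega}_1$ can be isotoped off the glued balls into ${\rm int}(\tilde{\Omega})$, so $\tilde{\Omega}_1$ is irreducible.

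\textbf{Main obstacle.} The delicate point is the maximality step in the degenerate setting. A limit sphere or projective plane $S$ produced by Meeks--Simon--Yau might intersect a member of $\tilde{\Lambda}$ tangentially, or be parallel to one while not being in $\tilde{\Lambda}$. The first thing I would try is the strong maximum principle: two distinct stable minimal surfaces touching on one side coincide. If $S$ is parallel to a member of $\tilde{\Lambda}$, the parallel region is capped to a ball and contributes nothing to reducibility, which is exactly the point of the modified maximality condition. The finiteness bound from Kneser--Haken then closes the argument.
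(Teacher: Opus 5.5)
Your proposal is correct in outline but organized differently from the paper. The paper first takes a maximal disjoint family of projective planes with stable two-cover, which is finite because each one splits off an $\mathbb{RP}^3$ summand. It then argues iteratively: if a capped piece is reducible, minimizing an essential sphere there by Meeks--Simon--Yau and running the $\gamma$-reduction claim backwards yields an interior stable minimal sphere not bounding a ball in that piece. The paper cuts along that sphere and repeats, and the prime decomposition theorem forces the process to stop. You instead fix one maximal disjoint family at the outset and derive irreducibility of every piece by contradiction from maximality, using the same analytic input. The paper's route only needs this local claim and lets prime decomposition do all the counting. Yours produces $\tilde\Lambda$ in one step, at the cost of bookkeeping about essentiality and parallelism in $M$ versus in $\tilde\Omega$.

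Two things need repair.

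\emph{The area bound is false.} Your assertion that stable minimal spheres satisfy area bounds fails for general metrics. Hersch's trick only gives $\int_\Sigma(|A|^2+{\rm Ric}(N,N))\le 8\pi$, which controls area only under positive curvature. Moreover \cite{kramer}, cited in Section 6, gives open sets of metrics with stable embedded spheres of unbounded area. Nothing in your argument actually uses this bound, since your finiteness is purely topological, so drop it together with the ``definite distance'' discussion.

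\emph{The family must exclude inessential spheres.} It must consist of spheres that are essential in $M$, together with projective planes; excluding trivial spheres only from the count is not enough. Otherwise Kneser's bound says nothing, because spheres bounding disjoint balls are pairwise nonparallel. Once cardinality is bounded, a maximal family exists. With this definition your Step 2 closes as follows:
\begin{itemize}
\item A limit sphere in $\Omega$ that bounds a ball in $M$ also bounds one in $\tilde\Omega$, since a ball contains no essential sphere and no $\mathbb{RP}^2$.
\item An essential limit sphere that is not parallel to any member of $\tilde\Lambda$ contradicts maximality.
\item An essential limit sphere parallel in $M$ to some $T\in\tilde\Lambda$ cobounds with $T$ a product region containing no other member. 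Indeed, a sphere in $S^2\times I$ either bounds a ball or is boundary-parallel, and $\mathbb{RP}^2$ does not embed there. So the limit sphere is boundary-parallel in $\tilde\Omega$.
\item Maximality excludes limit projective planes in $\Omega$.
\end{itemize}

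Finally, your worry about tangential intersections with $\tilde\Lambda$ is unnecessary. The minimization takes place inside $\tilde\Omega$, so by the maximum principle each limit component is either interior or a component of $\partial\tilde\Omega$.
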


\begin{proof}
Let $\Lambda'$ be the set of embedded minimal projective planes with stable oriented two-cover.  A neighborhood of an
embedded projective plane is diffeomorphic to the complement of a ball in $\mathbb{RP}^3$. It follows by the prime decomposition theorem that there is a finite set $\Lambda\subset \Lambda'$ of mutually disjoint surfaces  that is not contained in any other set with this property.  Let $\Omega=M\setminus (\cup_{\Sigma\in \Lambda} \Sigma)$. Then $\Omega$
  is  connected, and is such that   there is no embedded minimal projective plane with stable oriented two-cover in $\Omega$ and  the metric closure $\tilde{\Omega}$ is a compact manifold with $\partial \tilde{\Omega}$ a union of stable minimal spheres.
  
  \medskip
  
{\bf Claim:}  Let $(\Omega',h)$ be a compact Riemannian manifold possibly with a boundary $\partial\Omega'$. Suppose that $\partial \Omega'$ if nontrivial is a union of stable minimal spheres, and let $\Omega'_1$  be the closed manifold obtained by gluing three-balls to $\Omega'$ with boundary identification. Suppose that there are no embedded minimal projective planes with stable two-sided cover in $\Omega'$.  If $\Omega_1'$ is not irreducible, then there is a stable minimal embedded two-sphere $\tilde{\Sigma}\subset {\rm int}(\Omega')$ such that $\tilde{\Sigma}$  does not bound a three-ball in $\Omega_1'$.

\medskip

Since $\Omega_1'$ is not irreducible, there is an embedded sphere $\Sigma\subset \Omega_1'$ that does not bound a three-ball. By using an isotopy we can suppose that $\Sigma\subset {\rm int}(\Omega')$. Let $\mathcal{I}(\Sigma)$ be the isotopy class of the surface $\Sigma$ in $\Omega'$, and $\{\Gamma_i\}$ in $\mathcal{I}(\Sigma)$ be a minimizing sequence. Then it follows from \cite{meeks-simon-yau} that there is a sequence  $\{\tilde{\Gamma}_i\}_i$, after passing to a subsequence, such that $\tilde{\Gamma}_i$ is obtained from $\Gamma_i$ by performing  $\gamma$-reductions and so that $\tilde{\Gamma}_i$ is isotopic to 
$$
\Gamma_i' \cup \left(\cup_{k=1}^r \Gamma_{i,k}\right),
$$ 
 $\Gamma_{i,k}$ is a disjoint union of  surfaces parallel to an embedded stable minimal sphere $\Sigma_k\subset \Omega'$ for any $1\leq k \leq r$ and $\lim_{i\rightarrow \infty} {\rm area}_g(\Gamma_i')=0.$ 

If $\Sigma_k$ is a component of $\partial \Omega'$, then the surface $\Sigma_k$ is the boundary of   a three-ball in $\Omega'_1$.  The components
of $\Gamma_i'$ bound three-balls by \cite{meeks-simon-yau}. Hence, since $\Sigma$ does not bound a three-ball,   by the topological claim in the proof of Proposition \ref{area.surface} it follows that there is $1\leq k \leq r$ such that $\Sigma_k\subset {\rm int}(\Omega')$ and $\Sigma_k$ is not the boundary of a three-ball.  This finishes the proof of the claim.

Let $\tilde{\Omega}_1$ be the closed manifold obtained by gluing three-balls to $\tilde{\Omega}$ with boundary identification. Then $M$ is the connected sum of $\tilde{\Omega}_1$ and $\#(\Lambda)$ copies of $\mathbb{RP}^3$. If $\tilde{\Omega}_1$ is irreducible, the proposition is proved.

Suppose $\tilde{\Omega}_1$ is not irreducible.  It follows by the claim that there is a stable minimal embedded sphere $\tilde{\Sigma}\subset {\rm int}(\tilde{\Omega})$ that does not bound a three-ball in $\tilde{\Omega}_1$. If the surface $\tilde{\Sigma}$ is  separating in $\tilde{\Omega}$, then $\tilde{\Omega}\setminus \tilde{\Sigma}$ has two connected components $\Omega^{(1)}$, $\Omega^{(2)}$, such that the boundary of the metric closure $\tilde{\Omega}^{(i)}$ of $\Omega^{(i)}$ is a union of stable minimal spheres. If $\tilde{\Omega}^{(i)}_1$ is the manifold obtained by gluing balls with boundary identification to $\tilde{\Omega}^{(i)}$, then $\tilde{\Omega}_1$ is the connected sum of $\tilde{\Omega}^{(i)}_1$, $i=1,2$. Since $\tilde{\Sigma}$ does not bound a three-ball in $\tilde{\Omega}_1$, the closed manifolds  $\tilde{\Omega}^{(i)}_1$, $i=1,2$, are not diffeomorphic to the three-sphere. Hence $\tilde{\Omega}^{(1)}_1 \# \tilde{\Omega}^{(2)}_1$ is a nontrivial connected sum.  In this case we continue by applying the claim to $\tilde{\Omega}^{(i)}$ if $\tilde{\Omega}^{(i)}_1$ is not irreducible. If the surface $\tilde{\Sigma}$ is not  separating in $\tilde{\Omega}$, then $\Omega^{(1)}=\tilde{\Omega}\setminus \tilde{\Sigma}$ is connected  and $\partial \tilde{\Omega}^{(1)}$ is a union of stable minimal spheres. The manifold $\tilde{\Omega}_1$ is the connected sum of $\tilde{\Omega}^{(1)}_1$ with $S^2\times S^1$. We continue by applying the claim to $\tilde{\Omega}^{(1)}$ if $\tilde{\Omega}^{(1)}_1$ is not irreducible. It follows by the prime decomposition theorem that this repeated application of the claim stops after finitely many steps. This proves the proposition.

\end{proof}

There is an open set of metrics for any closed three-manifold such that there are stable embedded minimal tori     with unbounded area (\cite{colding-minicozzi}, \cite{colding-hingston}). The extension to surfaces of any genus follows by \cite{dean}, \cite{kramer}.

We prove for the stable case:

\begin{thm}\label{stable.case}
Let $M$ be a closed orientable three-dimensional manifold and $g$ be a smooth Riemannian metric on $M$.  Suppose that $M$ is not diffeomorphic to a connected sum of spherical quotients, copies of  $S^2\times S^1$ and torus bundles or semibundles that are not Seifert fibered. Then there is a sequence of  connected,  smooth, two-sided, virtually embedded, closed stable minimal surfaces $\psi_i:\Sigma_i\rightarrow M$, such that 
 ${\rm area}_g(\psi_i(\Sigma_i))\rightarrow \infty.$
\end{thm}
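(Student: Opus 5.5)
First reduce to an irreducible piece, as in the proof of Theorem \ref{index.surfaces}. By Proposition \ref{minimal.spheres} there is a disjoint finite family $\tilde{\Lambda}$ of stable minimal spheres and minimal projective planes with stable oriented two-cover such that, for every component $\Omega$ of the complement, the capped-off manifold $\tilde{\Omega}_1$ is irreducible. $M$ is recovered from the $\tilde{\Omega}_1$'s by connected sums with copies of $\mathbb{RP}^3$ and $S^2\times S^1$. By hypothesis and geometrization, some $\tilde{\Omega}_{i,1}$ has infinite fundamental group and is neither a spherical space form nor a non-Seifert-fibered torus bundle or semibundle.

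Next run the JSJ argument of Theorem \ref{index.surfaces} inside $\tilde{\Omega}_i$. Isotope the JSJ tori into ${\rm int}(\tilde{\Omega}_i)$ and minimize area in their isotopy classes using \cite{meeks-simon-yau}; incompressibility prevents the tori from degenerating. This produces stable minimal tori, or Klein bottles with stable two-cover, bounding a region $\tilde{R}$ that is a JSJ piece after capping. Then split into cases.

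\emph{Case 1: $\partial\tilde{R}$ has a toroidal component $T$.} Choose $\varphi:\pi_1(\tilde{R})\to\mathbb{Z}$ nontrivial on $T$, as in Corollary \ref{three-dimensional.boundary.case}. Take the cyclic covers $\tilde{R}_k$. The lift of $T$ to $\tilde{R}_k$ is a $k$-fold cover, hence an embedded stable torus $T^{(k)}$ of area $k\,{\rm area}(T)$, and it is stable by covering stability (the remark after Proposition \ref{index.coverings}). This already gives virtually embedded stable surfaces $T^{(k)}\to M$, but their images all equal $T$, so we need genuinely new images. Minimize area among incompressible surfaces in $\tilde{R}_k$ in classes on which $\varphi$ grows. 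Concretely, use a surface $S\subset\tilde{R}$ Poincaré dual to $\varphi$ and tube $S$ with copies of $T$ to get classes $[S]+m[T]$ in the covers.

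\emph{Case 2: the piece is closed.} Then $\tilde{\Omega}_{i,1}$ is either hyperbolic or Seifert fibered. In the hyperbolic case, Agol's theorem gives a fibered cover with $vb_1$ infinite. Minimize area in infinitely many distinct nonzero homology classes $\sigma_j$ of the fibered cover, using the norm-minimizing surfaces from the Thurston norm, whose norm is unbounded. The minimizers are stable, and they are virtually embedded in $M$ by LERF and Wilton--Zalesskii, exactly as before. In the Seifert case, since torus bundles are excluded, the base has genus $\geq1$ or the virtual Betti number is large. A cover that is a circle bundle over a surface of genus $\geq 2$, or a product, yields infinitely many incompressible vertical tori, or horizontal classes. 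Minimizing these gives stable surfaces with unbounded area.

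In each case, finish with a finiteness lemma. The stable images of bounded area in $M$ are finite by Schoen's curvature estimates together with a compactness argument in the spirit of Proposition \ref{curvature.estimates}. Combined with homology classes of unbounded Thurston norm, this forces areas to diverge, since area is bounded below by a multiple of the Thurston norm via Gauss--Bonnet for incompressible surfaces of negative Euler characteristic.

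The main obstacle is the torus bundle or semibundle case. When the piece is Sol, no vertical tori proliferate and $vb_1=1$, which is exactly why the theorem excludes it. The delicate part is the Seifert or Euclidean case, where I must produce infinitely many non-homotopic incompressible surfaces and show their images in $M$ are not all covers of finitely many stable surfaces. There I would use $\varphi$ with growing norms, and cyclic covers in which old surfaces lift with bounded area while the new classes have large norm.
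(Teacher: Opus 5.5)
Your reduction (Proposition \ref{minimal.spheres}, then minimizing the JSJ tori in their isotopy classes) matches the paper. The gaps are in the step that should make the areas of the \emph{images} in $M$ diverge.

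\textbf{The finiteness lemma is false here.} The statement allows an arbitrary smooth metric $g$, not a bumpy one, and stable minimal surfaces can form continuous families of bounded area. The flat $T^3$ satisfies the hypotheses and contains a continuum of flat minimal tori, so Proposition \ref{curvature.estimates} has no stable analogue.

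\textbf{The Gauss--Bonnet bound does not control images.} The bound ${\rm area}\geq c\,|\chi|$ for two-sided stable surfaces controls area only in the cover. In your cyclic covers, the degree with which a surface covers its image can grow, exactly as for $T^{(k)}$. For index one, the paper rules this out via \cite{brooks} and Proposition \ref{index.coverings}. Stability is preserved under finite covers, so no such mechanism exists here. Your claim that ``old surfaces lift with bounded area'' fails for every old surface on which $\varphi$ is nontrivial.

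\textbf{Case 1 is not well defined.} A surface dual to $\varphi$ with $\varphi|_{\pi_1(T)}\neq 0$ represents a relative class in $H_2(\tilde R,\partial\tilde R)$ with nonempty boundary on $T$. Tubing it to copies of $T$ never closes it up. Case 1 also never uses that the capped piece is not $T^2\times I$ or the twisted $I$-bundle over the Klein bottle. There the only closed incompressible surfaces are boundary-parallel tori (and the core Klein bottle), and that is where the excluded Sol bundles and semibundles live. So the argument cannot be right as stated.

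\textbf{The closed hyperbolic case is incomplete.} Homological minimization gives currents $\sum m_r\Sigma_r$ of large total mass. A connected component of large area needs more; Theorem \ref{stable} gets it from a nontrivial cup product, which you do not have.

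The paper avoids all of this. It uses the hypothesis to pick a JSJ piece $R'$ that is neither $T^2\times I$ nor the twisted $I$-bundle. It then splits on whether $R'$ is atoroidal or Seifert fibered, regardless of boundary.

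\emph{Atoroidal case (closed or cusped).} The paper takes closed immersed incompressible surfaces from \cite{kahn-markovic, kahn-markovic2} or \cite{kahn-wright}. Their hyperbolic least-area representatives have areas tending to infinity by \cite{assal-lowe}. It homotopes them into $\tilde R$ and minimizes $g$-area in the homotopy class. Since the genus is at least two, the maximum principle keeps them off the spheres and tori of $\partial\tilde R$. Comparing $g$ with the hyperbolic metric away from $\partial\tilde R$ gives divergence. Virtual embeddedness comes from \cite{uhlenbeck}, \cite{freedman-hass-scott}, LERF (\cite{agol}, \cite{wise}, \cite{scott}) and \cite{wilton-zalesskii}.

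\emph{Seifert case, which you leave open.} The paper works in a \emph{single} finite cover $R''$ that is a circle bundle over a surface $X$. Vertical tori over pairwise non-conjugate essential simple loops of $X$ are minimized to pairwise non-isotopic stable tori. Such loops exist when $X$ has genus at least one or is a sphere minus at least four disks \cite{colding-minicozzi}. A pair of pants is handled by a double cover, and $X=S^1\times I$ is excluded by the choice of $R'$. Compactness in the fixed compact manifold $R''$ forces the areas to diverge. The fixed degree of $R''\to R'$ passes this to the images in $M$.

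This fixed-cover device is what replaces your finiteness lemma, and it needs no bumpiness.
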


\begin{proof}

Let  $\tilde{\Lambda}$ be a set  as in  Proposition \ref{minimal.spheres}. 
We denote by   $\Omega_1,\dots, \Omega_p$  the connected components of the complement of $\cup_{\Sigma\in \tilde{\Lambda}}\Sigma$.
Define $\tilde{\Omega}_j$ to be the metric closure of $\Omega_j$, and by $\tilde{\Omega}_{j,1}$  the topological closed manifold obtained by gluing  three-balls with identification of the boundary to each connected component
 of $\partial \tilde{\Omega}_j$. 
 The manifold  $\tilde{\Omega}_{j,1}$ is irreducible for any $1\leq j\leq p$.
 
 Then $M$ is a connected sum of $\{\tilde{\Omega}_{j,1}\}_{1\leq j \leq p}$ and copies of $\mathbb{RP}^3$ and $S^2 \times S^1$.
 By hypothesis  it follows that  there is $1\leq i\leq p$ such that $\tilde{\Omega}_{i,1}$ has infinite fundamental group and is not diffeomorphic to a torus bundle or semibundle that is not Seifert fibered.
 
 

Let $\{\Sigma_s\}_{s=1}^t$ be the embedded tori  of the canonical JSJ decomposition of $\tilde{\Omega}_{i,1}$. We can suppose that $\Sigma_s\subset {\rm int}(\tilde{\Omega}_i)$ for any $1\leq s\leq t$.  If any connected component of the complement of $\cup_{s=1}^t \Sigma_s$ in $\tilde{\Omega}_{i,1}$ is diffeomorphic to $T^2\times I$ or to the twisted $I$-bundle with base a Klein bottle, then $\tilde{\Omega}_{i,1}$ is a torus bundle or semibundle. In that case by hypothesis $\tilde{\Omega}_{i,1}$ is Seifert fibered and hence there is no torus in the JSJ decomposition. Therefore we can suppose that there is a component of the complement of $\cup_{s=1}^t \Sigma_s$ in $\tilde{\Omega}_{i,1}$ that is not diffeomorphic to $T^2\times I$ or to the twisted $I$-bundle with base a Klein bottle.
 

The surface  $\cup_{s=1}^t \Sigma_s$ 
 is isotopic to $\cup_{\eta=1}^q \Gamma_{\eta}'\subset {\rm int}(\tilde{\Omega}_i)$ in ${\rm int}(\tilde{\Omega}_i)$, where $\Gamma_\eta'$ is a union of parallel surfaces to a stable minimal embedded torus $T_\eta$ or to a minimal Klein bottle $T_\eta$ with stable oriented two-cover depending on $\eta$. 
 Hence there is a connected component $R$ of $\tilde{\Omega}_i\setminus (\cup_{\eta=1}^q T_\eta)$,  the boundary  $\partial\tilde{R}$ of the metric closure of $R$ a union of minimal tori and minimal spheres, such that the manifold $R'$ obtained by gluing balls with boundary identification to $\tilde{R}$ is diffeomorphic to a JSJ component of $\tilde{\Omega}_{i,1}$ that is not   $T^2\times I$ or the twisted $I$-bundle with base a Klein bottle. Then $R'$ is either  Seifert fibered or atoroidal.

 In the case that $R'$ is atoroidal, there is a complete finite volume hyperbolic metric on ${\rm int}(R')$. If $R'$ is a closed manifold, there are immersed incompressible surfaces by \cite{kahn-markovic, kahn-markovic2}. In the case there is  a boundary $\partial R'$, there are  immersed closed incompressible surfaces in ${\rm int}(R')$ by \cite{kahn-wright}. These surfaces are homotopic to closed area-minimizing surfaces in the hyperbolic metric $\overline{g}$, and there is a  sequence of these area-minimizing surfaces $\psi_i:\Sigma_i\rightarrow {\rm int}(R')$ that are unique in their homotopy class and that become dense in the Grassmannian of  ${\rm int}(R')$  (\cite{assal-lowe}). Hence  ${\rm area}_{\overline{g}}(\psi_i(\Sigma_i))\rightarrow \infty$.  
  
  Since $R'$ is the manifold that is obtained by gluing balls with boundary identification to $\tilde{R}$, the immersion 
  $\psi_i:\Sigma_i\rightarrow {\rm int}(R')$ is homotopic to a map $\psi_i':\Sigma_i\rightarrow {\rm int}(\tilde{R})$.
  The boundary $\partial \tilde{R}$ is a minimal  surface for the metric $g$. By area-minimization in $\tilde{R}$, there is an immersed $g$-minimal surface $\tilde{\psi_i}:\Sigma_i\rightarrow \tilde{R}$ in the homotopy class of $\psi_i':\Sigma_i\rightarrow \tilde{R}$ (\cite{sacks-uhlenbeck}, \cite{schoen-yau}). Since the components of $\partial\tilde{R}$ are diffeomorphic to spheres and tori, and ${\rm genus}(\Sigma_i)\geq 2$, it follows by the maximum principle that $\tilde{\psi}_i(\Sigma_i)\subset {\rm int}(\tilde{R})$. Hence $\tilde{\psi_i}(\Sigma_i)\subset M$.  There are $\delta,C>0$ such that the map $\tilde{\psi}_i$ is homotopic to a map $\phi_i$ such that
  $\phi_i(\Sigma_i)$ is disjoint from $\{x\in \tilde{R}:d_g(x,\partial\tilde{R})\leq \delta\}$ and ${\rm area}_g(\phi_i(\Sigma_i))\leq C{\rm area}_g(\tilde{\psi}_i(\Sigma_i))$. Since ${\rm area}_{\overline{g}}(\psi_i(\Sigma_i))\leq  {\rm area}_{\overline{g}}(\phi_i(\Sigma_i))$ and the metrics $g$ and $\overline{g}$ are uniformly equivalent on $\{x\in \tilde{R}:d_g(x,\partial\tilde{R})\geq \delta\}$, it follows that ${\rm area}_g(\tilde{\psi}_i(\Sigma_i))\rightarrow \infty$.

  The surface $\tilde{\psi}_i:\Sigma_i\rightarrow {\rm int}(R')$ is in the homotopy class of  $\psi_i:\Sigma_i\rightarrow {\rm int}(R')$. 
  The principal curvatures of $\psi_i(\Sigma_i)$ in the hyperbolic metric are bounded by one, hence the map $\psi_i$ lifts to an embedding in the covering 
  $\pi_i:R'_{\Sigma_i}\rightarrow {\rm int}(R')$ inducing the group $(\psi_i)_{\#}(\pi_1(\Sigma_i))$ (\cite{uhlenbeck}). Therefore $\tilde{\psi}_i$ lifts to 
  a map $\tilde{\psi}_i':\Sigma_i\rightarrow R'_{\Sigma_i}$ that minimizes area in the metric $\pi_i^*(g)$. The map $\tilde{\psi}_i'$ is an embedding by \cite{freedman-hass-scott}. Therefore, by \cite{scott}, 
  the surfaces $\tilde{\psi}_i:\Sigma_i\rightarrow M$ are virtually embedded in $\tilde{R}$ because $\pi_1(\tilde{R})=\pi_1(R')$ is LERF (\cite{agol}, \cite{wise}). Hence the surfaces  $\tilde{\psi}_i(\Sigma_i)$ are virtually embedded in $M$ as in the proof of Theorem \ref{index.surfaces}. 
  This finishes the proof of the theorem if $R'$ is atoroidal.

  Suppose that $R'$ is Seifert fibered.  Then there is a finite covering $\pi:R''\rightarrow R'$ that is an oriented circle bundle  with base an orientable compact surface $X$.
    
  Let $\sigma$ be an embedded loop in $X$, and $\Sigma$ be the torus in $R''$ that projects to $\sigma$.  If the loop $\sigma$ is homotopically nontrivial in $X$, $\Sigma$ is an incompressible embedded torus in $R''$. The manifold $R'\setminus \tilde{R}$ is a union of disjoint open balls. Hence $\pi^{-1}(R'\setminus \tilde{R})$ is a union of disjoint balls.  Hence the surface $\Sigma$ is isotopic to a surface $\Sigma'$ in $R''$ that does not intersect $\pi^{-1}(R'\setminus \tilde{R})$. Therefore $\pi(\Sigma')\subset \tilde{R}$.
  

  By area-minimization, since $\partial (\pi^{-1}(\tilde{R}))$ is a minimal surface for the metric $\pi^*(g)$, the surface $\Sigma'$ is isotopic in 
  $\pi^{-1}(\tilde{R})$ to either a minimal   torus $\Sigma''$ or to the boundary of a tubular neighborhood of a minimal   Klein bottle. Since there is no incompressible Klein bottle in $R''$, $\Sigma'$ is isotopic to a  torus $\Sigma''$.
  
  Therefore, if $\sigma$ is a homotopically nontrivial embedded loop  in $X$ the torus that projects to $\sigma$ is isotopic to a stable minimal embedded torus $\Sigma''_\sigma$ for the metric $\pi^*(g)$ such that $\pi(\Sigma''_\sigma)\subset \tilde{R}$.
  If $\Sigma''_{\tilde{\sigma}}$ and $\Sigma''_{\sigma}$ are isotopic, then $\tilde{\sigma}$ is conjugate to either $\sigma$ or $-\sigma$ in $\pi_1(X)$. 
  
If ${\rm genus}(X)\geq 1$, then there is a sequence $\{\sigma_i\}\subset \pi_1(X)$ of embedded loops such that $\sigma_i$ is not conjugate to $\sigma_{i'}$ or $\sigma_{i'}^{-1}$ for $i\neq i'$. If $X$ is the complement of $h\geq 4$ disks in the sphere, then there is such a sequence (\cite{colding-minicozzi}). Hence in these cases there is a sequence of stable minimal embedded tori $\{\Sigma_i\}$ for the metric $\pi^*(g)$ such that $\Sigma_i$ and  $\Sigma_{i'}$ are not isotopic for $i\neq i'$.  By the curvature estimates for stable surfaces, ${\rm area}_{\pi^*(g)}(\Sigma_i)\rightarrow \infty$.  The surfaces are such that $\pi(\Sigma_i)\subset {\rm int}(\tilde{R})$ for $i\geq i'$. Since ${\rm area}_g(\pi(\Sigma_i))\geq {\rm deg}(\pi)^{-1}{\rm area}_{\pi^*(g)}(\Sigma_i)$, it follows that ${\rm area}_g(\pi(\Sigma_i))\rightarrow \infty$. The surfaces $\pi(\Sigma_i)$ are virtually embedded in $\tilde{R}$ and hence in $M$ as in the proof of  of Theorem \ref{index.surfaces}.

Suppose that the surface $X$ is planar and has three boundary components. Then there  is a two-cover $X'$ that is diffeomorphic to  the complement of four disks in the sphere. Hence the incompressible surfaces can be constructed as in the previous paragraph by using the Seifert manifold with base $X'$ that is a covering of $R''$. 
This proves the theorem if $X\neq S^1\times I$. 

If $X=S^1\times I$, then $R''=T^2\times I$ and $R'$ is a Seifert fibered space with base a quotient of $S^1\times I$.
It follows that $R'$ is diffeomorphic to either $T^2\times I$ or to the twisted $I$-bundle with base a Klein bottle. This is a contradiction, which finishes the proof of the theorem.

\end{proof}

 By \cite{lima}, a manifold that is not diffeomorphic to a spherical quotient has a stable, closed,     two-sided,  immersed  minimal surface for any metric. There is a construction  of closed, irreducible  manifolds  in \cite{lima} satisfying the hypotheses of Theorem \ref{stable.case} which  do not have closed, embedded, smooth
minimal surfaces with stable oriented two-cover.

The case of stable surfaces in a hyperbolic mapping torus is not covered by Theorem \ref{stable} and Theorem \ref{stable.case}.
The hypotheses of Theorem \ref{nonspherical-introduction}
and of  the Theorem \ref{fiber.bundle-introduction} are satisfied by these spaces.


\begin{thebibliography}{99}

\bibitem{agol}
Agol, I.,
\textit{The virtual Haken conjecture, with an Appendix by Ian Agol, Daniel Groves,
and Jason Manning,}
Documenta Mathematica 18 (2013), 1045--1087


\bibitem{almgren} 
Almgren, F., \textit{The homotopy groups of the integral cycle groups,} Topology  (1962), 257--299. 

\bibitem{almgren-varifolds}
Almgren, F., \textit{The theory of varifolds,} Mimeographed notes, Princeton (1965).

\bibitem{ambrozio-marques-neves-rigidity}
Ambrozio, L., Marques, F. C., and Neves, A.,
\textit{Rigidity theorems for the area widths of Riemannian manifolds,}
arXiv:2408.14375 [math.DG] (2024)

\bibitem{aschenbrenner-friedl-wilton}
Aschenbrenner, M.,  Friedl, S., and Wilton, H.,
\textit{Three-manifold groups,}
Series of Lectures in Mathematics, Eur. Math. Soc. (2015)

\bibitem{assal-lowe}
Assal, F. A., and Lowe, B.,
\textit{Asymptotically geodesic surfaces,}
arXiv:2502.17303 [math.DG]  (2025)




\bibitem{brooks}
Brooks, R.,
\textit{The fundamental group and the spectrum of the Laplacian,}
Comment. Math. Helv. 56 (1981), 581--598




\bibitem{carlotto-li}
Carlotto, A., and Li, C.,
\textit{Constrained deformations of positive scalar curvature metrics,}
 J.  Differ. Geom. 126 (2), (2024), 475--554.
 
 \bibitem{catino-mastrolia-roncoroni}
 Catino, G., Mastrolia, P., and  Roncoroni, A.,
 \textit{Two rigidity results for stable minimal hypersurfaces,}
  Geom. and Funct. Anal. 34 (1), (2024), 1--18.
 
 \bibitem{cecchini-hanke-schick}
 Cecchini, S.,  Hanke, B., and  Schick, T.,
 \textit{Lipschitz rigidity for scalar curvature,}
 	arXiv:2206.11796 (2022)
	
\bibitem{chodosh-ketover-maximo}
Chodosh,  O., Ketover, D., and Maximo, D.,
\textit{Minimal hypersurfaces with bounded index,}
Invent. Math. 209 (3), (2017), 617--664
 
 

 
 \bibitem{chu-lee-zhu}
 Chu, J.,  Lee, M.-C., and Zhu, J.,
 \textit{Homological $n$-systole in $(n+1)$-manifolds and bi-Ricci curvature,}
 Adv. Math. 467 (2025), 110187
 

 
 \bibitem{colding-hingston}
 Colding, T., and  Hingston, N.,
 \textit{Metrics without Morse index bounds,}
 Duke Math. J. 119 (2003), 345--365.
 
 \bibitem{colding-minicozzi}
 Colding, T., and  Minicozzi, W.,
 \textit{Examples of embedded minimal tori without area bounds,}
 Int. Math. Res. Not. 20 (1999), 1097--1100
 
 \bibitem{dean}
 Dean, B.,
 \textit{Compact embedded minimal surfaces of positive genus without area bounds,}
  Geom. Dedicata 102 (2003), 45--52. 
  
   \bibitem{fraser-schoen}
  Fraser, A., and Schoen, R.,
  \textit{Stability and largeness properties of minimal surfaces in higher codimension,}
 	arXiv:2303.07423 [math.DG] (2023)
  
 
 \bibitem{freedman-hass-scott}
 Freedman, M.,  Hass, J., and  Scott, P.,
 \textit{Least area incompressible surfaces in three-manifolds,} 
 Invent. Math.  71 (3) (1983), 609-642.
 

 
 \bibitem{gromov-lawson-scalar}
 Gromov, M., and   Lawson, H. B.,
 \textit{Spin and scalar curvature in the presence of a fundamental group I}
  Ann. of Math. 111 (2) (1980), 209--230
 
 \bibitem{gromov-lawson}
Gromov, M.,  and  Lawson, H. B.,
 \textit{Positive scalar curvature and the Dirac operator on complete Riemannian manifolds,}
 Publ. Math. IHES 58 (1983), 295--408.
 
 \bibitem{hatcher_three-manifold}
Hatcher, A.,
\textit{Notes on basic 3-manifold topology,}
online book (2007)
 
 \bibitem{hoelzel}
 Hoelzel, S.,
 \textit{Surgery stable curvature conditions,}
 Math. Ann. 365 (1) (2016), 13--47. 
 

 
 \bibitem{ilmanen}
Ilmanen, T.,
\textit{A strong maximum principle for singular minimal hypersurfaces,}
Calc. Var. Partial Differential Equations 4 (5) (1996), 443--467.

 
 \bibitem{kahn-markovic} 
 Kahn, J.,  and Markovic,  V.,
\textit{Immersing almost geodesic surfaces in a closed hyperbolic three manifold,}
Ann. of Math.  175 (2012), 1127--1190. 


\bibitem{kahn-markovic2} 
Kahn, J. and Markovic,  V.,
\textit{Counting essential surfaces in a closed hyperbolic three-manifold,} Geom. Topol. 16 (2012), 601--624. 

\bibitem{kahn-wright}
Kahn, J.  and Wright, A.,
\textit{Nearly Fuchsian surface subgroups of finite covolume Kleinian groups,}
Duke Math. J. 170 (3) (2021), 503--573

\bibitem{kazhdan}
Kazhdan, D.,
\textit{Connection of the dual space of a group with the
structure of its closed subgroups,}
 Funct. Anal. Appl. 1  (1) (1967), 63--65.

 
 
 \bibitem{ketover-liokumovich-song}
 Ketover, D., Liokumovich, Y.  and Song, A.,
 \textit{On the existence of minimal Heegaard surfaces,}
 	arXiv:1911.07161 [math.DG] (2019)
	
	\bibitem{ketover-marques-neves}
Ketover, D.,   Marques, F. C., and  Neves, A.,
 \textit{The catenoid estimate and its geometric applications,} 
J. Differ. Geom. 115 1 (2020), 1-26.



\bibitem{kramer}
Kramer, J.,
\textit{Examples of stable embedded minimal spheres without area bounds,}
	arXiv:0812.3841 [math.DG] (2008)


 
 \bibitem{lima}
 Lima, V.,
 \textit{On a conjecture of Meeks, P\'{e}rez and Ros,}
 J. Differ. Geom. 125 (3) (2023), 613--622.

\bibitem{llarull}
Llarull, M.,
\textit{Sharp estimates and the Dirac operator,}
Math. Ann. 310 (1998),   55--71.




\bibitem{marques-neves-rigidity-spheres}
Marques, F. C., and Neves, A.,
\textit{Rigidity of min-max minimal spheres in three-manifolds,}
Duke Math. J. 161 (14) (2012), 2725--2752

\bibitem{marques-neves-index}
Marques, F. C.,  and  Neves, A.,
\textit{Morse index and multiplicity of min-max minimal hypersurfaces,}
Camb. J. Math. 4 (4) (2016),  463--511



\bibitem{marques-neves-lower-bound}
Marques, F. C., and Neves, A., 
\textit{Morse index of multiplicity one min-max minimal hypersurfaces,}
 Adv. Math. 378 (2021), 107527
 
 \bibitem{mazet}
 Mazet, L.,
 \textit{Stable minimal hypersurfaces in $\mathbb{R}^6$,}
 	arXiv:2405.14676 [math.DG] (2024)
 
 \bibitem{mazet-rosenberg}
 Mazet, L., and Rosenberg, H.,
 \textit{Minimal hypersurfaces of least area,}
 J. Differ. Geom. 106 2 (2017), 283--316.
 
 \bibitem{meeks-simon-yau} 
Meeks, W., Simon, L., and Yau, S.-T., 
 \textit{Embedded minimal surfaces, exotic spheres, and manifolds with positive Ricci curvature,} 
Ann. of Math.  116 (1982), 621--659. 

\bibitem{montezuma}
 Montezuma,  R.,
 \textit{Metrics of positive scalar curvature and unbounded min-max widths,}
  Calc. of Var. and PDEs, 55 (6) (2016), p. 139.
 
 
 \bibitem{perelman}
 Perelman, G.,
  \textit{Ricci flow with surgery on three-manifolds,} arXiv:math/0303109 (2003)
 
 \bibitem{pitts} 
Pitts, J.,
 \textit{Existence and regularity of minimal surfaces on Riemannian manifolds,} Mathematical Notes 27, Princeton University Press, Princeton, (1981).

\bibitem{sacks-uhlenbeck}
Sacks, J. and Uhlenbeck, K.,  
\textit{Minimal immersions of closed Riemann surfaces,}
 Trans. of the Amer. Math. Soc., 271 (2) (1982), 639--652.

\bibitem{schoen} 
Schoen, R.,
 \textit{Estimates for stable minimal surfaces in three-dimensional manifolds,}  Seminar on minimal submanifolds, 111--126,
Ann. of Math. Stud. 103,  Princeton Univ. Press, 1983.


\bibitem{schoen-simon} 
Schoen, R., and Simon, L.,
 \textit{Regularity of stable minimal hypersurfaces,} 
Comm. Pure Appl. Math. 34 (1981), 741--797. 

\bibitem{schoen-yau}
Schoen, R., and Yau, S.T.,
 \textit{Existence of incompressible minimal surfaces and the topology of three dimensional manifolds of non-negative scalar curvature,} 
 Ann. of Math. 110 (1979), 127--142.



\bibitem{scott}
Scott, P.,
\textit{Subgroups of surface groups are almost geometric,}
J. London Math. Soc. 17 (2) (1978), 555--565. 



\bibitem{sharp}
Sharp, B.,
\textit{Compactness of minimal hypersurfaces with bounded index,} 
J. Differential Geom. 106 (2) (2017),  317--339. 

\bibitem{shen-ye}
Shen, Y., and  Ye, R.,
\textit{On the geometry and topology of manifolds of positive bi-Ricci
curvature,}
arXiv:dg-ga/9708014 (1997)

\bibitem{song}
Song, A.,
 \textit{Embeddedness of least area minimal hypersurfaces,}
J. Differ. Geom. 110 (2) (2018), 345--377.

\bibitem{song-existence}
Song, A.,
\textit{Existence of infinitely many minimal hypersurfaces in closed manifolds,}
 Ann. of Math. 197 (3) (2023), 859--895.
 
 \bibitem{uhlenbeck} 
 Uhlenbeck,  K.,
 \textit{Closed minimal surfaces in hyperbolic three-manifolds,}  Seminar on minimal submanifolds,  Ann. of Math. Stud. 103 (1983), 147--168.  

\bibitem{wang-zhou}
Wang, Z., and  Zhou, X., 
 \textit{Existence of four minimal spheres in $S^3$ with a bumpy metric,}
arXiv:2305.08755 [math.DG] (2023)



\bibitem{white-bumpy}
White, B.,
\textit{On the bumpy metrics theorem for minimal submanifolds,}
Amer. J. Math. 139 (4) (2017),  1149--1155.

\bibitem{wilton-zalesskii}
Wilton, H., and Zalesskii, P.,
\textit{Profinite properties of graph manifolds,}
Geom. Dedicata 147 (2010), 29--45

\bibitem{wise}
Wise, D.,
\textit{The structure of groups with quasiconvex hierarchy,}
Ann. of Math. Studies 366 (2021)

\bibitem{zhou-multiplicity}
Zhou, X., 
\textit{On the multiplicity one conjecture in min-max theory,}
Ann. of Math. 192 (3) (2020), 767-820.





\end{thebibliography}
\end{document}